\newif\ifdraft
\newcommand{\compat}[1]{\textcolor{purple}{\textrm{{\bf Pat:} #1}}}
\newcommand{\compav}[1]{\textcolor{blue}{\textrm{{\bf Pavel:} #1}}}
\newcommand{\compat}[1]{\ignorespaces}%
\newcommand{\compav}[1]{\ignorespaces}%
\newcommand{\notepat}[1]{\todo[color=yellow]{{\bf Pat:} #1}}
\renewcommand{\UrlFont}{\ttfamily\small}
\theoremstyle{plain}
	\newtheorem{thm}{Theorem}[section]
	\newtheorem*{thm*}{Theorem}
	\newtheorem{lemma}[thm]{Lemma}
	\newtheorem*{lemma*}{Lemma}
	\newtheorem{prop}[thm]{Proposition}
	\newtheorem{cor}[thm]{Corollary}
\theoremstyle{definition}
	\newtheorem*{defn}{Definition}
	\newtheorem{remark}[thm]{Remark}
\definecolor{dred}{rgb}{.65, 0, 0.15}
\def\cA{\mathcal{A}}\def\cB{\mathcal{B}}\def\cE{\mathcal{E}}\def\cG{\mathcal{G}}\def\cH{\mathcal{H}}\def\cO{\mathcal{O}}\def\cX{\mathcal{X}}
  \def\CC{\mathbb{C}}     \def\HH{\mathbb{H}}      \def\NN{\mathbb{N}}   
\def\QQ{\mathbb{Q}} \def\RR{\mathbb{R}} \def\SS{\mathbb{S}} \def\TT{\mathbb{T}}      \def\ZZ{\mathbb{Z}}
\def\GL{\mathrm{GL}} \def\SL{\mathrm{SL}}  
\def\SO{\mathrm{SO}}
\def\<{\langle} \def\>{\rangle}
\def\Aff{\mathrm{Aff}}
\def\Aut{\mathrm{Aut}}
\def\Dev{\mathrm{Dev}}
\def\Dehn{\mathrm{\Phi}}
\def\dist{\mathrm{dist}} 
\def\dir{\mathbf{dir}} 
\def\dim{\mathrm{dim}}
\def\GL{\mathrm{GL}} 
\def\Homeo{\mathrm{Homeo}}
\def\PGL{\mathrm{PGL}}
\def\Isom{\mathrm{Isom}} 
\def\localdegree{{\mathit{loc\,deg}}}% local degree
\def\proj{\mathrm{proj}}%
\def\Span{\mathrm{span}}% 
\def\SL{\mathrm{SL}} 
\def\PSL{\mathrm{PSL}}
\def\f{\varphi}
\def\bG{\mathbf{G}}
\def\bH{\mathbf{H}}
\def\bHs{\mathbf{H}^\circ}
\def\bN{\mathbf{N}}
\def\bS{\mathbf{S}}
\def\bSs{\mathbf{S}^\circ}
\def\hbSs{\hat{\mathbf{S}}^\circ}
\def\bC{\mathbf{C}}
\def\tbS{\tilde{\mathbf{S}}}
\def\bT{\mathbf{T}}
\def\bTs{\mathbf{T}^\circ}
\def\tbT{\tilde{\mathbf{T}}}
\def\bU{\mathbf{U}}
\def\bUs{\mathbf{U}^\circ}
\def\bHt{{\widetilde{\mathbf{H}}}}
\def\bHh{{\widehat{\mathbf{H}}}}
\def\bNt{\widetilde{\mathbf{N}}}
\def\bNh{\widehat{\mathbf{N}}}
\def\disp{\mathbf{disp}}
\def\holhom{\mathrm{hol}}
\def\hyp{\psi}
\newcommand{\bone}{{\boldsymbol 1}}% Bold 1
\newcommand{\0}{{\boldsymbol 0}}% Bold 1
\def\be{\mathbf{e}}
\def\bp{\mathbf{p}}
\def\bu{\mathbf{u}}
\def\bv{\mathbf{v}}
\def\bw{\mathbf{w}}
\def\bx{\mathbf{x}}
\def\bs{\backslash}%
\def\authom{\Psi}% Homomorphism from F_2 to Aut(\bHt)
\def\matrixhom{M}% Standard representation of F_2 in SL(2,R).
\title{The Necker Cube Surface}
\author{W. Patrick Hooper}
\address{Dept. of Mathematics, City College of New York and CUNY Graduate Center, New York, NY, USA}
\email{whooper@ccny.cuny.edu}
\urladdr{\url{http://wphooper.com}} % Delete if not wanted.
\author{Pavel Javornik}
\urladdr{\url{https://sites.google.com/site/pjavornikmath/}}
\date{\today}                                           % Activate to display a given date or no date
\begin{document}
\begin{abstract}
We study geodesics on the Necker cube surface, $\bN$, an infinite periodic Euclidean cone surface that is homeomorphic to the plane and is tiled by squares meeting three or six to a vertex. We ask: When does a geodesic on the surface close? When does a geodesic drift away periodically? We show that both questions can be answered only using knowledge about the initial direction of a geodesic. Further, there is a natural projection from $\bN$ to the plane, and we show that regions related to simple closed geodesics tile the plane periodically. We also describe the full affine symmetry group of the half-translation cover and use this to study dynamical properties of the geodesic flow on $\bN$. We prove results related to recurrence, ergodicity, and divergence rates.
\end{abstract}
\maketitle

\section{Introduction}

\begin{figure}[b]
\includegraphics[width=\textwidth]{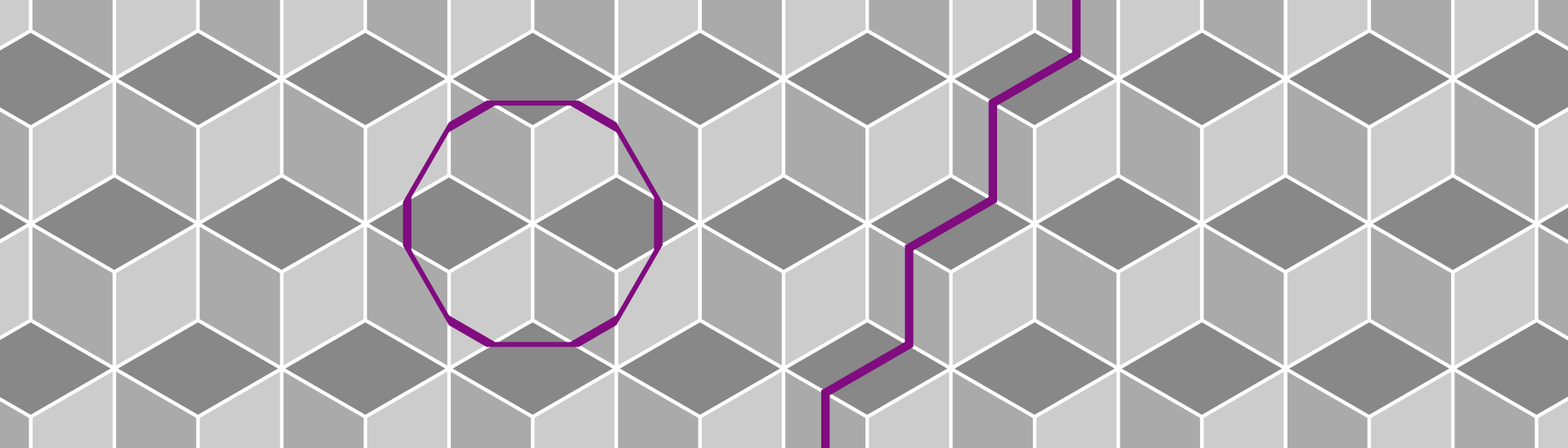}
\caption{The surface $\bN$ rendered in isometric projection together with the simplest closed and drift-periodic geodesics}
\label{fig:NCS}
\end{figure}

The {\em Necker cube surface}, $\bN$, is a periodic surface built from unit squares in $3$-space, see \Cref{fig:NCS}. We consider the unit speed geodesic flow $F:\RR \times T^1 \bN \to T^1 \bN$ . A {\em geodesic} is a trajectory of $F$, i.e., $\gamma(t)=F^t(\bv_0)$ for some $\bv_0 \in T^1 \bN$ representing the initial point and direction. Geodesics are not defined through the cone singularities on the surface, and a geodesic hitting one in forward or backward time is called {\em singular}.
A geodesic is {\em periodic} (or {\em closed}) if there is an $\ell \neq 0$ such that $\gamma(\ell+t)=\gamma(t)$ for all $t \in \RR$. A geodesic is {\em drift-periodic} if there is an $\ell \neq 0$ and a nonzero vector $\bx \in \RR^3$ such that $\gamma(\ell+t)=\bx  + \gamma(t)$ for all $t \in \RR$.

We normalize the Necker cube surface so that its square faces are parallel to the coordinate planes of $\RR^3$. Geodesics follow line segments within the faces. Suppose $\gamma(t)=F^t(\bv_0)$ is a geodesic and $t_0 \in \RR$ is such that $\gamma(t_0)$ is based at the interior of a face and whose direction is given by the unit vector $(x_0, y_0, z_0)$. Then the {\em slope} of $\gamma$ at $\gamma(t_0)$ is one of the three values $\frac{y_0}{x_0}$, $\frac{z_0}{y_0}$, or $\frac{x_0}{z_0}$, where we utilize the ratio involving the coordinates whose directions span the face.

\begin{thm}
\label{thm:main}
Let $m$ be the slope of a nonsingular geodesic $\gamma$ as it passes through the interior of a face. Then,
\begin{enumerate}
\item The geodesic $\gamma$ is periodic if and only if $m=\frac{p}{q}$ for two odd integers $p$ and $q$. Moreover, assuming these statements are true, the length of $\gamma$ is $6\sqrt{p^2+q^2}$.
\item The geodesic $\gamma$ is drift-periodic if and only if $m=\infty$ or $m=\frac{p}{q}$ for two relatively prime integers $p$ and $q$, one of which is even.
\end{enumerate}
\end{thm}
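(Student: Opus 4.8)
The plan is to split the theorem into two essentially independent parts: a dichotomy showing a nonsingular geodesic can close up --- periodically or with drift --- only when its slope is rational or $\infty$; and an explicit computation of the net displacement in $\RR^3$ accrued by such a closed orbit, which decides periodic versus drift-periodic and pins down the length. For this I would pass to the compact picture. Write $G<\RR^3$ for the lattice of translations preserving $\bN$, so that $\bN_0=\bN/G$ is a compact Euclidean cone surface, covered by the compact half-translation surface $\bM_0$ obtained by dividing the half-translation cover $\bM$ of $\bN$ by $G$. A periodic or drift-periodic geodesic on $\bN$ descends to a closed geodesic on $\bN_0$ (in the drift case one checks that the drift vector, being parallel to the geodesic, is commensurable with $G$), and conversely a closed geodesic $\bar\gamma$ on $\bN_0$ lifts to $\bN$ as a geodesic that is periodic precisely when the displacement $\disp[\bar\gamma]\in\RR^3$ vanishes and drift-periodic precisely when it does not. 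So everything reduces to: which slopes are realized by closed geodesics on $\bN_0$, and what is $\disp[\bar\gamma]$?

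For the first question I would use that $\bM_0$ is a lattice (Veech) surface: every direction either decomposes $\bM_0$, hence $\bN_0$, into cylinders, or is uniquely ergodic and carries no closed geodesic. The cylinder directions are exactly those of rational slope, once one accounts for the substitutions $m\mapsto 1/m$ and $m\mapsto -m$ that $\RotMon$ applies to the slope whenever $\gamma$ crosses an edge and the face type changes; these substitutions interchange $0$ and $\infty$ and preserve each of the two arithmetic classes in the theorem, so the classes do not depend on which face one looks at. This already gives the ``only if'' halves of (1) and (2): an irrational slope yields a geodesic that is neither periodic nor drift-periodic. It remains, for $m=p/q$ in lowest terms or $m=\infty$, to decide which of the two behaviours occurs and to compute the length.

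Next I would reduce to two base cases. The affine symmetry group acts on the rational directions with exactly two orbits, detected by the reduction mod $2$ of the primitive direction vector $(q,p)$: the orbit with $(q,p)\equiv(1,1)$ --- equivalently $pq$ odd --- and the orbit with $(q,p)\equiv(1,0)$ or $(0,1)$ --- equivalently $pq$ even, which also contains $m=\infty$. Here I use that the affine symmetry group acts transitively within each of these two orbits, that an affine automorphism intertwines $\disp$ equivariantly with its linear part, and that sending a direction $v$ to $v'$ scales the circumference of the corresponding cylinder by $|v'|/|v|$. For $m=1$, following the slope-$1$ geodesic face by face through $\bN$ --- tracking the rotations recorded by $\RotMon$ and the cyclic order of the three face types together with the $\ZZ/2$ phase of the cube pattern --- shows it closes after crossing $6$ faces, has total length $6\sqrt{2}$, and returns to its exact starting position in $\RR^3$, so $\disp=0$ and it is periodic. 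For $m=\infty$, the axis-parallel geodesic (at a non-integer transverse coordinate, so that it avoids all vertices) closes on $\bN_0$ in the same way but returns to a nonzero $G$-translate of itself, so $\disp\neq 0$ and it is drift-periodic. Transporting $m=1$ by an affine automorphism whose linear part sends $(1,1)\mapsto(q,p)$ multiplies $6\sqrt2$ by $\sqrt{p^2+q^2}/\sqrt2$, giving length $6\sqrt{p^2+q^2}$, and keeps $\disp=0$ by equivariance; the even orbit stays drift-periodic for the same reason, with $(0,1)$ and $(1,0)$ interchanged by the edge-crossing symmetry $m\mapsto 1/m$. This establishes (1) and (2).

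The step I expect to be the main obstacle is the hands-on part above: correctly bookkeeping the face-by-face traversal of the slope-$1$ and axis-parallel geodesics --- the count of $6$ faces, the exact return to the starting point, and the nonzero drift --- and proving the affine group has precisely the two claimed orbits on rational directions, which rests on the explicit description of the affine symmetry group and on the compatibility of affine automorphisms with the $G$-periodic structure, so that $\disp$ really transports equivariantly. The other load-bearing input is that $\bM_0$ is a lattice surface, used both for the rational/irrational dichotomy and for the absence of closed geodesics in irrational directions; absent that, I would instead exhibit the cylinder decomposition of $\bN_0$ in each rational direction directly and rule out saddle connections, hence closed geodesics, in the irrational ones.
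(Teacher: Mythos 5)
Your approach passes to the compact Veech surface $\bM_0=\bHt$, uses its affine group to normalize directions, computes a displacement for the base cases $m=1$ and $m=\infty$, and then transports the vanishing or non-vanishing of that displacement by ``equivariance.'' The gap is in this transport step for the even-slope case. Whether a closed geodesic $\eta$ on $\bHt$ lifts to a closed geodesic on $\bNt$ (i.e.~whether $\disp[\eta]=0$) is governed by the subspace $W_+\subset H_1(\bHt;\QQ)$ via \Cref{prop:lifting}. For this condition to transport equivariantly under $\authom(g)$ you need $\authom_*(g)(W_+)=W_+$, and \Cref{W stabilization} shows this holds precisely when $\rho(g)=I$ --- exactly the condition for $\authom(g)$ to lift to $\bNt$ at all. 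The multi-twists that carry a general even slope to $(1,0)$ or $(0,1)$ are not of this form, so ``$\disp\neq 0$'' is not a priori carried along by the transport. Nor can you retreat to the base $\bH$: the $\authom(g)$ do not descend to affine automorphisms of $\bH$, since $\matrixhom(g)$ fails to commute, even up to sign, with the $90^\circ$ rotation that is the derivative of the nontrivial deck transformation of $\bHt\to\bH$.

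The odd case escapes because the slope-$1$ core curve is actually \emph{null-homologous} in $H_1(\bHt;\QQ)$ --- a strictly stronger statement than $[\eta]\in W_+^\perp$ --- and null-homologousness is preserved by \emph{every} automorphism whether or not it lifts; this is exactly \Cref{lem:odd null-homologous}, so there you and the paper agree. The even core curves $\gamma_i$ are not null-homologous and no such shortcut exists. The paper closes this case with a genuinely different mechanism: the even-direction cylinders contain Weierstrass points on their core curves (\Cref{prop:weierstrass}), and a periodic lift would force a closed geodesic arc on $\bN$ through a square center making a $90^\circ$ angle, contradicting \Cref{cube lemma}. Your proposal could likely be repaired by a direct homology computation --- using \Cref{W is transverse} to write $\authom_*(g)(W_+)$ as the graph of $M_g=\rho(g)N\rho(g)^{-1}$, and then showing the $\gamma_i$ never lie in $(W_{M_g})^\perp$ because $\tr M_g=\det M_g=1$ --- but the appeal to bare equivariance, as written, does not close the argument.
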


\begin{figure}[bth]
	\includegraphics[height=4in]{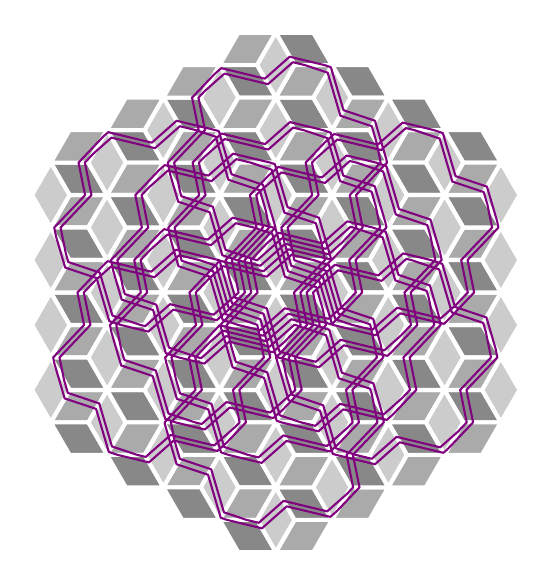}\includegraphics[height=4in]{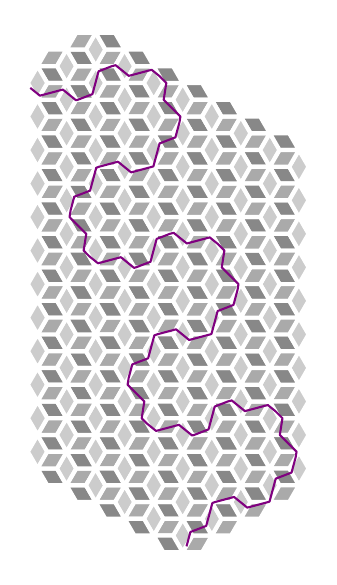}
	\caption{A periodic geodesic of slope $\frac{71}{23}$ (left) and a drift-periodic geodesic of slope $\frac{4}{11}$ (right)}
	\label{fig:N}
\end{figure}

\subsection*{Notion of direction}
It is a fundamental observation that the geodesic flow on $\bN$ splits into invariant subspaces. Given a unit tangent vector $\bv \in T^1 \bN$, which is based at a point in the interior of a square of $\bN$, we may take an orientation-preserving isometry from the containing square to the usual unit square $[0,1] \times [0,1] \subset \RR^2$. The derivative of this isometry carries $\bv$ to a vector $\bv'$ in the unit circle $\SS^1 \subset \RR^2$. This orientation-preserving isometry is unique up to postcomposition with a rotation of the unit square, so we define the {\em direction} of $\bv$ to be the equivalence class of $\bv'$ in $\SS^1/C_4$, where $C_4$ is the rotation group of order four. This notion of direction extends to a continuous map 
\begin{equation}
\label{eq:direction map}
\dir:T^1 \bN \to \SS^1/C_4,
\end{equation}
that is invariant under the unit speed geodesic flow $F$. 

Note that the collection of slopes in a direction $[(x,y)]$ is $\{\frac{y}{x}, -\frac{x}{y}\}$, so slope is not an invariant of geodesic flow. Nonetheless, rationality of the slope is an invariant of direction, as is the property of slope being the ratio of two odd integers.

\begin{figure}[bth]
	\includegraphics[width=\textwidth]{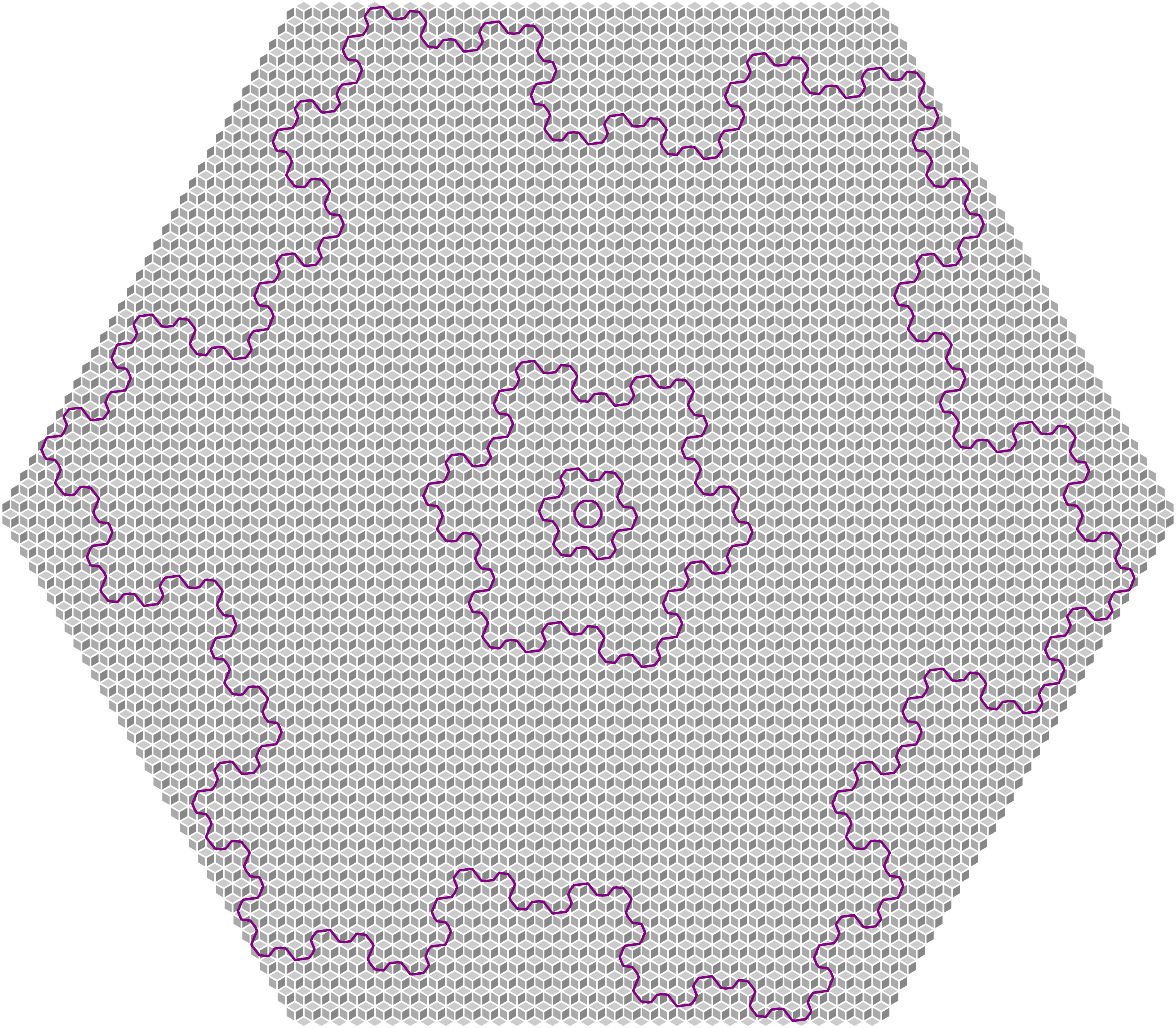}
	\caption{From the inside to the outside, simple closed geodesics of slope $\frac{1}{1}$, $\frac{3}{5}$, $\frac{13}{21}$, and $\frac{55}{89}$ (the first four ratios of consecutive odd Fibonacci numbers).}
	\label{fig:sequence}
\end{figure}

\subsection*{Closed geodesics and tilings}
Figures \ref{fig:NCS}, \ref{fig:N}, and \ref{fig:sequence} illustrate periodic trajectories. Observe the periodic trajectories all have rotational symmetry. This generally holds:
%\notepat{We could also prove that maximal drift-periodic strips exhibit both translational and $180^\circ$ rotational symmetries. I'm not sure we want to...}

\begin{thm}[Sixfold symmetry]
\label{thm:order six}
If $\gamma:\RR \to \bN$ is a periodic geodesic with least period $\ell$, then there is an order six isometry $\xi:\RR^3 \to \RR^3$ preserving
$\bN$ such that $\xi \circ \gamma(t)=\gamma(t+\frac{\ell}{6})$ for all $t \in \RR$.
\end{thm}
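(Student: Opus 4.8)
The plan is to produce $\xi$ from the ambient symmetries of $\bN$ together with a geometric analysis of the region enclosed by $\gamma$. First I would record the relevant isometries of $\RR^3$: at each vertex $\bp$ of $\bN$ where six squares meet---a cone point of angle $3\pi$---the group of isometries of $\RR^3$ preserving $\bN$ contains an order-six rotary reflection $\xi_{\bp}$ fixing $\bp$, namely the composite of the point reflection in $\bp$ with a one-third turn about a suitable line through $\bp$. On the surface $\bN$ this $\xi_{\bp}$ acts by an orientation-preserving homeomorphism that rotates the $3\pi$-cone at $\bp$ by one of its six sectors; since the direction map $\dir\colon T^1\bN\to\SS^1/C_4$ of \eqref{eq:direction map} is built from orientation-preserving charts to the standard square, every orientation-preserving automorphism of $\bN$ preserves $\dir$, so $\xi_{\bp}$ carries each geodesic to a geodesic of the same direction. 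Conversely, an orientation-preserving order-six symmetry of $\bN$ fixing a point of the surface rotates the cone there, hence preserves $\dir$ only if that cone angle is a multiple of $3\pi$; since $\bN$ has no cone angle exceeding $3\pi$, such a symmetry must be one of the $\xi_{\bp}$, so the content of the theorem is to locate the correct vertex $\bp$.

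Now let $\gamma$ be the given periodic geodesic. Having least period $\ell$ it is primitive and therefore simple (a closed geodesic on a flat surface that is not simple is a proper iterate of a shorter one), and since $\bN$ is homeomorphic to the plane, $\gamma$ bounds a closed disk $D\subset\bN$. By \Cref{thm:main} the slope of $\gamma$ is $\frac{p}{q}$ with $p,q$ odd and $\ell=6\sqrt{p^2+q^2}$, and since $D$ is a flat disk with geodesic boundary, combinatorial Gauss--Bonnet forces the cone-angle defects inside $D$---each $+\tfrac{\pi}{2}$ at a $3\pi/2$-vertex and $-\pi$ at a $3\pi$-vertex---to sum to $2\pi$. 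The crux of the proof is to show that $D$ carries a six-fold rotational symmetry coming from the ambient structure: that there is a $3\pi$-vertex $\bp$ in the interior of $D$ with $\xi_{\bp}(D)=D$. I would extract this from the mechanism behind \Cref{thm:main}: the factor $6$ in $\ell=6\sqrt{p^2+q^2}$ records that $\gamma$ breaks into six congruent subarcs that are cyclically permuted by a symmetry, and following that symmetry through the development that certifies periodicity should identify it with one of the $\xi_{\bp}$ and pin down its center $\bp$ inside $D$. Making this rigorous---showing that the combinatorial and flat structure of $D$ really does have the six-fold symmetry, with center a $3\pi$-vertex---is the step I expect to be the main obstacle, since it requires re-entering the combinatorial model used to prove \Cref{thm:main} rather than merely quoting its statement. (An alternative would be to reduce, via the affine symmetry group of the half-translation cover, to a model geodesic of slope $\tfrac{1}{1}$, whose hexagonal trajectory is visibly six-fold symmetric; but the affine map relating the two geodesics does not conjugate $\RR^3$-isometries to $\RR^3$-isometries, so one is forced to locate the correct center in any case.)

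Granting $\xi_{\bp}(D)=D$, the argument concludes by elementary bookkeeping. Then $\xi_{\bp}$ fixes $\partial D=\gamma(\RR)$, so $\xi_{\bp}\circ\gamma(t)=\gamma(\pm t+c)$ for some $c\in\RR/\ell\ZZ$; and since $\xi_{\bp}$ is orientation-preserving on $\bN$ and preserves $D$ it preserves the boundary orientation, so the sign is $+$ and $\xi_{\bp}$ acts on the circle $\gamma(\RR)$ as the rotation $t\mapsto t+c$. From $\xi_{\bp}^{\,6}=\id$ we get $6c\equiv 0\pmod{\ell}$, so $c\equiv\frac{k\ell}{6}$ for some $k\in\{0,\dots,5\}$. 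If $\gcd(k,6)>1$, then some power $\xi_{\bp}^{\,j}$ with $0<j<6$ would restrict to the identity on $\gamma(\RR)$; but each such $\xi_{\bp}^{\,j}$ is a one-third turn, a point reflection, or a rotary reflection of $\RR^3$, hence has fixed set contained in a line, while $\gamma(\RR)$ is a topological circle lying in no line---a contradiction. Therefore $k$ is a unit modulo $6$, and $\xi:=\xi_{\bp}^{\,k^{-1}}$---still an order-six isometry of $\RR^3$ preserving $\bN$---satisfies $\xi\circ\gamma(t)=\gamma\bigl(t+\tfrac{\ell}{6}\bigr)$ for all $t$, as required.
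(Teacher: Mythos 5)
Your proposal has two gaps, the second of which is the entire content of the theorem.

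First, the parenthetical claim that ``a closed geodesic on a flat surface that is not simple is a proper iterate of a shorter one'' is false in the generality you need. It is true on translation and half-translation surfaces, where parallel geodesics cannot cross transversely; but on a quarter-translation surface such as $\bN$, a geodesic can return to a point traveling in an orthogonal direction while having the \emph{same} direction in $\SS^1/C_4$, producing a transverse self-crossing without being an iterate. The paper is deliberately cautious here: \Cref{thm:tiling} is stated only for directions that admit a \emph{simple} closed geodesic, and the introduction notes that the authors did not determine which periodic directions give simple geodesics. Your argument genuinely uses simplicity (you need $\gamma$ to bound a disk $D$), whereas \Cref{thm:order six} asserts sixfold symmetry for \emph{all} periodic geodesics; so at minimum you would need a separate argument for simplicity (e.g., that the area-six maximal cylinder produced by \Cref{thm:odd-odd is closed} embeds), and if periodic geodesics can in fact be non-simple you would need a different strategy entirely.

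Second, you concede that the crux---that $D$ carries a sixfold rotational symmetry centered at a $3\pi$-vertex of $\bN$ in its interior---is left unproved (``Making this rigorous\ldots is the step I expect to be the main obstacle''). That step is the theorem. The paper proves it by a route that never needs to locate a center: it pushes $\gamma$ down to a closed geodesic $\eta$ on the compact quotient $\bH$, shows $\eta$ has the same least period $\ell$ as $\gamma$ (using that the deck group of $\bN\to\bH$ is torsion-free, isomorphic to $\ZZ^2$), invokes \Cref{lem:1/6} and \Cref{cor:1/6} to produce the explicit order-six automorphism $\varphi_{\bH}$ with $\varphi_{\bH}\circ\eta(t)=\eta(t\pm\ell/6)$ (this in turn uses \Cref{lem:odd null-homologous}, that in an odd-over-odd slope $\bHt$ is a single cylinder which $\varphi_{\bHt}$ translates), then lifts $\varphi_{\bH}$ to $\bN$, corrects by a deck transformation, and finally applies \Cref{isometries} to extend the resulting order-six isometry of $\bN$ to $\RR^3$. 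The fixed vertex $\bp$ you are trying to locate is identified only afterward, in the proof of \Cref{thm:tiling}, as the unique fixed point of that order-six map. Your closing bookkeeping (passing from a $k\ell/6$-shift to an $\ell/6$-shift via $\xi_{\bp}^{k^{-1}}$, using that proper powers of a rotary reflection have fixed set contained in a line) is correct, but it rests on the two unestablished inputs above.
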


It is a standard observation that any simple closed geodesic extends to a maximal open cylinder $C$ foliated by simple closed geodesics in the same homotopy class. We show (\Cref{simple six}) that each component of the boundary $\partial C$ is also a simple closed curve. By the Jordan curve theorem, each component bounds a disk in $\bN$. We name the two boundary components $\partial_{\textit{out}} C$ and $\partial_{\textit{in}} C$, and name the two disks they bound $D_{\textit{out}}(C)$ and $D_{\textit{in}}(C)$, respectively, following the convention that $\partial_{\textit{in}} C$ is contained in the interior of $D_{\textit{out}}(C)$. The two boundaries $\partial_{\textit{out}} C$ and $\partial_{\textit{in}} C$ have additional symmetry enabling the following result:

\begin{thm}[Tiling theorem]
\label{thm:tiling}
Let $[\bu] \in \SS^1/C_4$ be a direction whose slope is the ratio of two odd integers.
Let $\ast$ denote either $\textit{in}$ or $\textit{out}$.
If there is a simple closed geodesic on $\bN$ with direction $[\bu]$, then there is a collection ${\mathcal C}$ of maximal cylinders with direction $[\bu]$ such that the collection of disks $\{D_\ast(C):~C \in {\mathcal C}\}$ have pairwise disjoint interiors and cover $\bN$ periodically. The symmetry group of this collection contains the wallpaper group {\bf p6} in the notation of Coexter \cite{CM1972} (equivalently, $632$ in orbifold notation \cite{Conway}) and is precisely this group unless $[\bu]$ consists of vectors of slope $\pm 1$.
\end{thm}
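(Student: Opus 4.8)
The plan is to bootstrap from a single simple closed geodesic of direction $[\bu]$ to the entire periodic family, using the sixfold symmetry (\Cref{thm:order six}) and the translation periodicity of $\bN$ to generate the collection $\mathcal C$. First I would fix a maximal cylinder $C_0$ with direction $[\bu]$ containing the given simple closed geodesic, and choose $\ast \in \{\textit{in}, \textit{out}\}$; by the discussion preceding the theorem (and \Cref{simple six}), $\partial_\ast C_0$ is a simple closed curve bounding a disk $D_\ast(C_0)$, and the order-six isometry $\xi$ of \Cref{thm:order six} restricts to a symmetry of the cylinder and hence of $D_\ast(C_0)$, so the disk carries a $C_6$-action fixing a center point $c_0 \in \bN$. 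The group $\Gamma$ generated by $\xi$ together with the translation lattice $L \le \RR^3$ preserving $\bN$ (a rank-two lattice, since $\bN$ is doubly periodic in the two appropriate coordinate directions) is a wallpaper group: it contains translations and an order-six rotation, so it is either {\bf p6} or {\bf p6m}. Since $\bN$ itself has no reflection symmetry compatible with the cone structure (the faces carry a consistent orientation), $\Gamma$ contains no reflections and is therefore {\bf p6}. I would then set ${\mathcal C} = \{g \cdot C_0 : g \in \Gamma\}$ and $\{D_\ast(C) : C \in {\mathcal C}\} = \{g \cdot D_\ast(C_0) : g \in \Gamma\}$; this collection is $\Gamma$-invariant by construction, hence invariant under {\bf p6}, and the quotient $\bN/L$ being compact forces the covering to be periodic with finitely many orbit representatives.

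The two substantive claims are that these disks \emph{cover} $\bN$ and have \emph{pairwise disjoint interiors}. For disjointness, I would argue as follows: two cylinders $C, C'$ in direction $[\bu]$ that are not translates of one another within the same homotopy class either coincide or have disjoint interiors (distinct maximal cylinders in the same direction are disjoint — this is where the flow decomposition into the direction foliation is used, together with the fact that the complement of the union of cylinders in a fixed direction has measure zero, so two maximal cylinders cannot overlap partially). Consequently the disks $D_\ast(C)$ and $D_\ast(C')$ are either nested or interior-disjoint; nesting is ruled out by the $C_6$-symmetry centers being distinct lattice-translates (a $D_\ast(g C_0)$ strictly inside $D_\ast(g' C_0)$ would force the center $g' c_0$ to lie in the interior, contradicting the minimality built into the maximal-cylinder/boundary construction via \Cref{simple six}). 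For the covering property, I would use a limiting/exhaustion argument on $\bN$: the union $\bigcup_{C \in {\mathcal C}} D_\ast(C)$ is closed (locally finite by periodicity) and its complement is open and $\Gamma$-invariant; if nonempty it would contain a fundamental-domain-sized region, but the area of $\bN/L$ is exactly accounted for by the finitely many disk-orbits together with the cylinders' contributions, and a Gauss–Bonnet / Euler-characteristic count on the flat cone surface shows no leftover area remains.

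The last assertion — that the symmetry group of the disk collection is \emph{exactly} {\bf p6}, except when $[\bu]$ has slope $\pm 1$ — requires showing no extra symmetry appears. Any symmetry of the collection $\{D_\ast(C)\}$ is in particular a symmetry of $\bN$ permuting the cylinders in direction $[\bu]$, and the affine/isometry group of $\bN$ preserving this direction is generated by $L$ and $\xi$ unless the direction is special. The slope-$\pm1$ directions are fixed by an additional order-two or order-four isometry of $\bN$ (the extra symmetry visible in \Cref{fig:sequence} for the slope-$1$ geodesics, coming from the diagonal symmetry of the square faces), which enlarges {\bf p6} to {\bf p6m} or a related group; I would verify that for all other odd-slope directions this extra symmetry moves the direction $[\bu]$ and hence cannot preserve $\mathcal C$. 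I expect the main obstacle to be the covering property: pinning down that the disks leave no gap — this is where one genuinely needs the global structure of $\bN$ (via the earlier structural results on simple closed geodesics and their cylinders, \Cref{simple six}) rather than purely local or symmetry-based reasoning, and it may require a careful area/combinatorial bookkeeping of which squares of $\bN$ lie in which disk.
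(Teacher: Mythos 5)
Your proposal takes a genuinely different route from the paper, but it has both a substantive error and a substantive gap, and they are linked.

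\textbf{The group $\Gamma$ is wrong.} You define $\Gamma = \langle \xi, L \rangle$ where $L$ is the full translation lattice preserving $\bN$; this is the entire orientation-preserving isometry group of the rhombille tiling, which is a \emph{fixed} wallpaper group independent of the direction $[\bu]$. But the tiling group in the theorem depends on the slope: as the slope's numerator and denominator grow, the disks $D_\ast(C)$ grow, while $\mathrm{covol}(L)$ stays fixed. If the $\Gamma$-orbit of $D_\ast(C_0)$ tiled $\bN$ then you would need $\mathrm{area}\,D_\ast(C_0) = \mathrm{covol}(L)$, which fails for all but the smallest slopes. In the paper the correct group is $\langle \psi, \iota\rangle$, where $\iota$ is an order-two isometry fixing the midpoint of a boundary saddle connection (\Cref{prop:center}). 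Its translation subgroup is a \emph{proper} sublattice of $L$ whose covolume matches the area of the disk. Without $\iota$ there is no way to identify the right lattice, and your claim that the translates by $L$ of $D_\ast(C_0)$ have disjoint interiors would actually be false: they would overlap.

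\textbf{The covering argument is absent.} You correctly identify this as the main obstacle, but the area/Gauss--Bonnet sketch you offer cannot close the gap without first knowing the area of $D_\ast(C)$, and that area is exactly the quantity the paper's argument computes. The paper's mechanism is \Cref{632 detector}: the order-six symmetry (your condition~(1)) \emph{together with} the order-two boundary symmetry coming from $\iota$ (condition~(2)) forces, via a translation-surface gluing argument, that the region bounded by $\gamma$ has area equal to $\mathrm{covol}(\ker D)$ and is hence a fundamental domain. That abstract lemma, applied after projecting $\bN$ homeomorphically to the plane $\bone^\perp$ by $\proj$, is what produces the tiling. Your approach also needs $\iota$ for a second, independent reason: your disjointness argument rules out nesting using the position of the $C_6$-centers, but what one must actually rule out is partial overlap of disks bounded by disjoint parallel saddle connections, and again that follows from the area identity, not from the disjointness of the cylinders.

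\textbf{The exactness (p6, not larger) argument is sketchy where the paper is concrete.} You appeal to ``extra symmetries of $\bN$ fixing $[\bu]$''; the paper instead shows that $\proj(\partial_\ast C)$ fails to be a regular hexagon when the slope is not $\pm 1$, by counting bends of the boundary under $\proj$ (\Cref{shear}), and then invokes the rigidity clause of \Cref{632 detector}. Your heuristic might be made to work, but it must in particular account for the possibility that a symmetry of the tiling is an isometry of $\bN$ that preserves $[\bu]$ while permuting the cylinders nontrivially; the paper sidesteps this by working purely with the planar tiling.

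In summary: the core missing idea is the boundary involution $\iota$ (\Cref{prop:center}) and the resulting area identification of $D_\ast(C)$ with a fundamental domain of a slope-dependent sublattice. With only the order-six symmetry and the full translation lattice, both the group and the area bookkeeping come out wrong.
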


The images of these regions project to a tiling of the plane. See \Cref{fig:tiling example} for an example. We did not investigate which directions give rise to simple geodesics. Experimentally it seems ratios of consecutive odd Fibonacci numbers satisfy this property. More generally, it seems that ratios of odd integers whose continued fraction expansion contains only ones and twos satisfies this property.
\Cref{fig:sequence} seems to suggest a limiting fractal tile appearing in a rescaled limit.
Experimentally this seems to occur for other sequences of directions as well. We hope to further investigate these ideas in the future. 

\begin{figure}[tbh]
	\includegraphics[width=\textwidth]{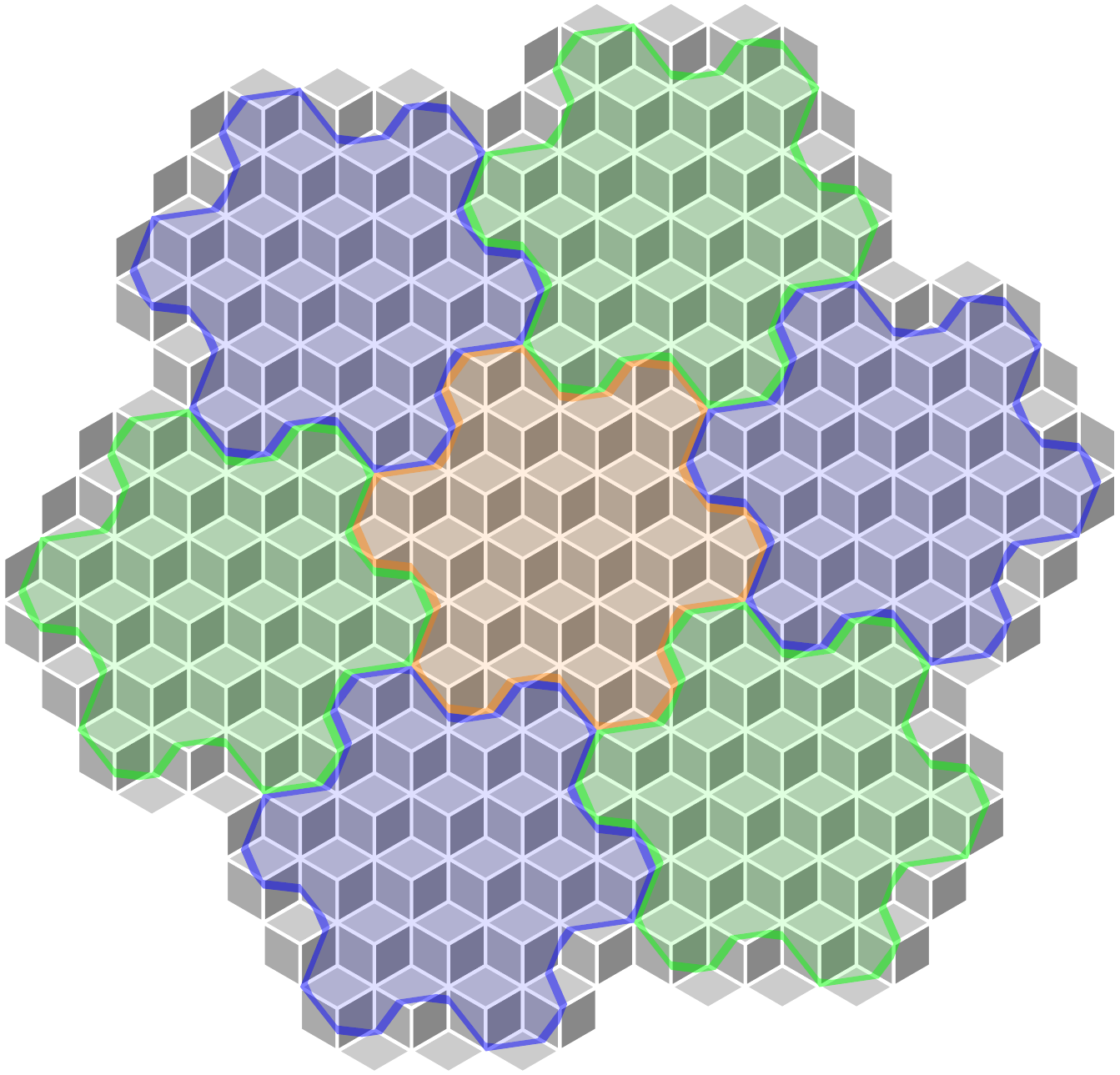}
	\caption{Part of a tiling whose tiles are bounded by outer boundaries of cylinders with slope $\frac{3}{5}$. Cylinders are shown in darker colors.}
	\label{fig:tiling example}
\end{figure}

\subsection*{Affine symmetry}
Much of the paper focuses on the affine symmetry group of the minimal half-translation cover $\bNt$ of $\bN$. This is the double cover whose branch locus is the set of singularities where three squares come together. \Cref{thm:main} guarantees that in any slope $m \in \QQ$ that is the ratio of two odd integers, $\bNt$ decomposes into isometric cylinders of slope $m$. Therefore, there is an affine symmetry $\varphi_m:\bNt \to \bNt$ that performs a right Dehn twist in each such cylinder. We show:

\begin{thm}
\label{thm:symmetries}
Any orientation-preserving affine symmetry of $\bNt$ is the composition of an isometry of $\bNt$ and an element of the group 
$$\langle \varphi_m:~\text{$m$ is the ratio of two odd integers}\rangle.$$
\end{thm}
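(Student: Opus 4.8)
The plan is to pass to the Veech-group picture. Since $\bNt$ is a half-translation surface, its affine automorphism group $\mathrm{Aff}^+(\bNt)$ fits in an exact sequence with the group of automorphisms acting trivially on the translation structure (the isometries, since $\bNt$ has no nontrivial deck-type automorphisms beyond those realized isometrically) and the Veech group $V(\bNt) \leq \PSL(2,\RR)$ (or $\GL^+(2,\RR)$, depending on normalization). Concretely, I would first record that $V(\bNt)$ contains the derivatives of all the multitwists $\varphi_m$ together with the derivatives of the isometries; the latter give a finite rotation group, and the twist $\varphi_m$ has derivative a parabolic fixing the direction orthogonal to slope $m$. So it suffices to show: (i) every element of $V(\bNt)$ is a product of these parabolics and rotations, and (ii) an affine automorphism whose derivative is a product $\prod D\varphi_{m_i}$ differs from the corresponding composition $\prod \varphi_{m_i}$ by an isometry. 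Part (ii) is essentially automatic once the derivative map is understood, because two affine automorphisms with the same derivative differ by an automorphism with trivial derivative, i.e.\ an isometry; so the real content is (i), the identification of the Veech group.

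For (i) the natural strategy is to exhibit an explicit fundamental domain for the action of $V(\bNt)$ on $\HH^2$ (or on the circle at infinity), using the cusps coming from the cylinder decompositions. \Cref{thm:main} tells us exactly which directions are completely periodic: slope $\infty$ and slopes that are ratios of two odd integers give cylinder decompositions, while slopes $p/q$ with one of $p,q$ even are the drift-periodic (hence, on the cover, also cylinder) directions — I would need to sort out on $\bNt$ which rational directions are genuinely periodic versus merely minimal, but the dichotomy in \Cref{thm:main} combined with the structure of the double cover should pin this down. The subgroup $\Gamma_0 = \langle D\varphi_m : m = p/q,\ p,q\text{ odd}\rangle$ together with the rotation by $\pi/2$ (or the relevant finite group) is a candidate for the full Veech group; I would show $\Gamma_0$ acts on $\HH^2$ with a fundamental domain an ideal polygon whose sides are paired by the generators, and that the cusps of $\HH^2/\Gamma_0$ are in bijection with the $\Gamma_0$-orbits of completely periodic directions. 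A Poincaré-polygon / ping-pong argument then forces $V(\bNt) = \Gamma_0$: any extra affine automorphism would have derivative moving the fundamental domain, contradicting either the cusp classification or the fact (from \Cref{thm:main}) that there are no unexpected periodic directions.

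The main obstacle I expect is the completeness argument — proving there are no ``exotic'' affine automorphisms beyond $\Gamma_0$ and the isometries. The slope $\pm1$ case flagged in \Cref{thm:tiling} suggests there is extra symmetry in special directions, so the fundamental-domain bookkeeping must correctly account for the larger stabilizer of the slope-$1$ cusp; getting the side-pairing and the cone angles at the finite vertices right so that Poincaré's theorem applies cleanly is the delicate step. A secondary subtlety is checking that the group in the statement, indexed by \emph{all} odd ratios rather than a generating subset, is correctly normalized by the isometry group — i.e.\ that conjugating $\varphi_m$ by an isometry yields $\varphi_{m'}$ for another odd ratio $m'$ (or a product thereof), so that the subgroup $\langle \varphi_m \rangle$ is normal and the decomposition ``isometry $\times$ multitwist'' is well defined as stated. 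Once the Veech group is identified and this normality is in place, assembling the two pieces into the statement of \Cref{thm:symmetries} is routine.
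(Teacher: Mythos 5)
Your reduction to the Veech group is the same first step the paper takes, and your part (ii) --- that two affine automorphisms with equal derivative differ by an isometry --- is exactly how the paper deduces \Cref{thm:symmetries} from its \Cref{Veech group}. But the engine you propose for part (i), a Poincar\'e polygon / ping-pong argument with cusps matched to completely periodic directions, does not close the argument, because the classification of periodic and drift-periodic directions in \Cref{thm:main} only forces a derivative in $V(\bNt)$ to permute the three classes $\cO$, $\cE_1$, $\cE_2$ of rational slopes. After composing with the $90^\circ$ rotation if needed, that pins the derivative down to the image of $\Gamma_2$ in $\PSL(2,\ZZ)$ --- a lattice much larger than $\Gamma_\cO=\langle P_{p,q}\rangle$. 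In particular, \Cref{thm:main} plus cusp bookkeeping cannot distinguish $\Gamma_\cO$ from $\Gamma_2$: both preserve the same sets of cusps, and a hypothetical extra element (say a ``fractional'' twist whose derivative is a proper root of some $P_{p,q}$, or a pseudo-Anosov in $\Gamma_2 \setminus \Gamma_\cO$) would not violate your fundamental-domain constraints. The missing ingredient is the mechanism that decides, for a given affine automorphism of the finite quotient $\bHt$, whether it \emph{lifts} to $\bNt$.

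The paper supplies exactly that mechanism: it first shows (\Cref{cor:descends}) that every affine automorphism of $\bNt$ descends to $\bHt$, then uses the cohomological lifting criterion (\Cref{cor:lifting}, \Cref{G4 auto lift}) that an automorphism of $\bHt$ lifts to $\bNt$ if and only if its action on $H_1(\bHt;\QQ)$ preserves a certain $2$-dimensional subspace $W_+$. This condition is then converted into a linear-algebra condition on the word $g\in F_2$ via the auxiliary representation $\rho:F_2\to\SL(2,\ZZ)$ (\Cref{rho conjugation}, \Cref{W stabilization}), and the upshot is that $g\in\ker\rho=F_\cO$, so the derivative lies in $\Gamma_\cO$ (\Cref{odd kernel}). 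Your sketch has no analogue of this lifting/cohomology step, and it is precisely the step that cuts the Veech group down from $\Gamma_2\cup R\Gamma_2$ to $\Gamma_\cO\cup R\Gamma_\cO$. (Separately: the normality worry you raise at the end is real but mild --- conjugating $\varphi_m$ by an isometry does land you on another $\varphi_{m'}$ since isometries of $\bNt$ have derivatives in $C_4$ and these permute the odd slopes --- and the theorem as stated does not actually require that normality anyway.)
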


The group of derivatives of orientation-preserving affine automorphisms of $\bNt$ forms its {\em Veech group}, $V(\bNt) \subset \PSL(2,\RR)$. The Veech group of $\bNt$ is therefore determined by \Cref{thm:symmetries} and knowledge of the isometry group of $\bNt$. We show $V(\bNt)$ is an infinitely-generated Fuchsian group of the first-kind (i.e., it has a dense limit set). We also describe the topology and geometry of the Teichm\"uller disk $\HH/V(\bNt)$, where $V(\bNt)$ is acting on the upper half-plane $\HH$ by M\"obius transformations. For more detail, see \Cref{sect:affine group}.

\subsection*{Dynamical results}
The affine group of a surface is often used to renormalize the geodesic flow. We use this and related ideas to establish dynamical results about the associated geodesic flow on $\bN$.
Recall the direction map $\dir:T^1\bN \to \SS^1/C_4$ of \eqref{eq:direction map} is an invariant of the geodesic flow. Fixing an equivalence class $[\bu] \in \SS^1/C_4$, we let
$F^t_{[\bu]}$ denote the restriction of the geodesic flow to the set $T^1_{[\bu]}\bN=\dir^{-1}([\bu])$. Each subset $T^1_{[\bu]}\bN$ is naturally homeomorphic to the minimal translation surface cover of $\bN$ (see \Cref{sect:setup}), and so come equipped with natural metrics and measures. Each flow $F^t_{[\bu]}$ is defined for all time almost everywhere. The spaces $\SS^1$ and $\SS^1/C_4$ have natural finite Lebesgue measures coming from angle coordinates on the circle; $\bu=(\cos \theta, \sin \theta)$.

Recall that if $F^t:X \to X$ is a flow in a metric space with an invariant measure $m$, a point $x \in X$ is {\em positive recurrent} (resp. {\em negatively recurrent}) if for every $\epsilon>0$ and each $T>0$, there is a $t>T$ (resp. $t<-T$) such that $d\big(x,F^t(x)\big)<\epsilon$. We say the flow is {\em recurrent} with respect to $m$ if $m$-almost every point is both positively and negatively recurrent. We show our directional flows tend to be recurrent, tend to have a specific divergence rate, and are sometimes ergodic. 
%Using ideas from \cite{HLT11}, we show:

\begin{thm}[Recurrence]
\label{thm:recurrence}
For almost every $[\bu] \in \SS^1/C_4$, the flow $F_{[\bu]}$ is recurrent.
\end{thm}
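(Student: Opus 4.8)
The plan is to reduce the theorem to a statement about a $\ZZ^2$-cover of a compact square-tiled surface, to show that this cover has no drift, and then to apply a recurrence criterion for such covers. By the setup of \Cref{sect:setup} the flow $F_{[\bu]}$ is isomorphic to the straight-line flow $F_\theta$ on the minimal translation cover $\bNh$, where $[\bu]=[(\cos\theta,\sin\theta)]$, so it suffices to show $F_\theta$ on $\bNh$ is recurrent for a.e.\ $\theta$. Since $\bN$ is tiled by unit squares and is periodic under a rank-two lattice $\Lambda$ of translations of $\RR^3$, one checks that $\bNh$ is a normal $\ZZ^2$-cover $p\colon\bNh\to Y$ of a compact square-tiled translation surface $Y=\bNh/\Lambda_0$, where $\Lambda_0\le\Lambda$ is the finite-index sublattice acting on $\bNh$ by genuine translations. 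Consequently the Veech group $V(Y)$ is a lattice, its Teichm\"uller geodesic flow is ergodic on the associated finite-volume homogeneous space, and by the Veech dichotomy $F^Y_\theta$ is uniquely ergodic for all but countably many $\theta$; we restrict to this full-measure set.

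The crucial point is that for such $\theta$ the cover has \emph{zero drift}. Let $h_\RR\colon H_1(Y;\RR)\to\RR^2$ be the monodromy homomorphism classifying $p$ and let $c(\theta)\in H_1(Y;\RR)$ be the Schwartzman asymptotic cycle of $F^Y_\theta$; by unique ergodicity the $\ZZ^2$-valued displacement cocycle along an orbit of length $T$ equals $h_\RR\big(c(\theta)\big)\,T+o(T)$ uniformly in the basepoint. I would prove $h_\RR\big(c(\theta)\big)=0$ using the symmetry group of $\bN$ from \Cref{sect:affine group}: there is an isometry of $\bNh$ that preserves the direction class $[\bu]$, reverses the $\ZZ^2$ deck action, and descends to an isometry of $Y$ sending $F^Y_\theta$ to itself, and this forces $h_\RR\big(c(\theta)\big)=-h_\RR\big(c(\theta)\big)$. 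This step is essential, not cosmetic: were the drift nonzero, $F_{[\bu]}$ would be drift-periodic, as it is for the exceptional slopes in part (2) of \Cref{thm:main}; and the symmetry must be chosen with care, since a naive point reflection negates both $h_\RR$ and $c(\theta)$ and gives no information. Granting this, the displacement cocycle is sub-linear, $o(T)$ uniformly.

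It remains to pass from zero drift to recurrence of the $\ZZ^2$-skew product. For a $\ZZ$-cover this is immediate from Atkinson's theorem; for rank $2$ I would renormalize with $V(Y)$. For a.e.\ $\theta$ the forward Teichm\"uller geodesic in direction $\theta$ projects to a path in the finite-volume space that returns to a fixed compact set with positive frequency, and on that compact set the displacement cocycle is controlled by the restriction of the Kontsevich--Zorich cocycle to the two-dimensional subbundle carrying the covering classes; on this subbundle the deck $\ZZ^2$-action together with the parabolic generators $\varphi_m$ of \Cref{thm:symmetries} act unipotently, so the relevant Lyapunov exponent vanishes, and a Borel--Cantelli and maximal-inequality argument upgrades the $o(T)$ estimate to a diffusive bound such as $O\big(\sqrt{T}\,\log T\big)$ for a.e.\ basepoint. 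A mean-zero $\ZZ^2$-valued cocycle with such growth over a uniquely ergodic flow is recurrent --- a planar Chung--Fuchs type recurrence criterion, equivalently Schmidt's essential-range criterion in rank $2$ --- and recurrence of $F_\theta$ on $\bNh$, hence of $F_{[\bu]}$, follows.

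I expect the main obstacle to be this last step: the quantitative control of the $\ZZ^2$-cocycle along a.e.\ Teichm\"uller geodesic, needed precisely because the deck group has rank $2$ rather than $1$. An alternative route works entirely on the Teichm\"uller disk $\HH/V(\bNt)$ of \Cref{sect:affine group} --- recurrence of $F_{[\bu]}$ then reduces to recurrence of the Teichm\"uller ray towards $[\bu]$ in $\HH/V(\bNt)$ together with the zero-drift input --- but there the obstacle becomes showing that $V(\bNt)$ is of divergence type, i.e.\ that Lebesgue-almost every boundary point is a radial limit point.
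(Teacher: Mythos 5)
Your first two paragraphs are sound and set up the right objects: $\bNh$ is a normal $\ZZ^2$-cover of a lattice square-tiled surface (the paper's $\bHh$), and the zero-drift claim via the Klein-four symmetry acting on the splitting $E^{++}\oplus E^{+-}\oplus E^{-+}\oplus E^{--}$ is essentially correct, since the asymptotic cycle lies in the invariant piece while $\hat x,\hat y$ lie in $E^{+-}\oplus E^{-+}$. You also correctly identify that rank $2$ is the genuine obstruction: Atkinson's theorem does not carry over, and some quantitative or structural input is needed.

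The fatal error is in the third paragraph, in the sentence that claims the relevant Lyapunov exponent vanishes because the deck action and the multi-twists $\varphi_m$ act unipotently on the covering subbundle. Parabolic elements of the Veech group \emph{always} act unipotently on cohomology; this places no constraint on the Lyapunov exponents of the Kontsevich--Zorich cocycle, which are governed by the Teichm\"uller geodesic flow (i.e.\ by the hyperbolic dynamics), not by the parabolics. In fact the paper's own \Cref{thm:divergence} and \Cref{divergence main}, following \cite{DHL14}, show that the Lyapunov exponent on the subbundle $E^{+-}\oplus E^{-+}$ carrying $\hat x,\hat y$ is $\pm\tfrac{2}{3}$, and the displacement cocycle satisfies $\limsup_{T}\frac{\log d(F^T\bv,\bv)}{\log T}=\tfrac{2}{3}$ for a.e.\ direction. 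So there is no diffusive $O(\sqrt{T}\log T)$ bound; the growth is genuinely superdiffusive, and your Chung--Fuchs / Schmidt essential-range step has no input to run on. (A $T^{2/3}$ upper bound alone does not give $\ZZ^2$-recurrence; the naive Borel--Cantelli heuristic would even suggest transience.)

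The paper resolves this tension by a different mechanism that never needs a diffusive estimate: it exploits the odd/odd cylinder decompositions directly. \Cref{prop:cylinder} shows a cylinder of holonomy $\bv$ forces a return of size $\lesssim|\bu\wedge\bv|$ at time $\approx|\bu\cdot\bv|$ for most starting points, and \Cref{min busemann} rephrases the product $|(\bu\cdot\bv)(\bu\wedge\bv)|$ as $\tfrac12\exp$ of the minimum of a Busemann function along the Teichm\"uller ray. The infimum of the Busemann functions over all odd/odd directions is $\Gamma_2$-invariant and descends to a continuous function on the finite-volume quotient $\Gamma_2\backslash\HH$ that is unbounded below; ergodicity of the geodesic flow there then gives a full-measure set of rays along which the infimum is $-\infty$, and \Cref{lem:recurrence} converts this into recurrence. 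So the correct picture is that displacement is $T^{2/3}$ at typical times but becomes arbitrarily small at the carefully chosen cylinder times, and the paper's argument (in the spirit of \cite{HLT11}) is built to detect precisely those times. Your alternative route via $\HH/V(\bNt)$ and divergence type of $V(\bNt)$ also diverges from the paper, which deliberately descends only to the lattice quotient $\Gamma_2\backslash\HH$ to avoid exactly that obstacle.
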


\begin{remark}
Experimental evidence suggests that not all directions of irrational slope are recurrent. For example, trajectories whose slope is the limit of the ratio of consecutive Fibonacci numbers seem to be nonrecurrent and diverge quicker than \Cref{thm:divergence} suggests. Paths tend to look like limits of the curves in \Cref{fig:sequence}.
\end{remark}

%Utilizing results from \cite{H15} we are able to show:

\begin{thm}[Ergodicity]
\label{thm:ergodicity}
The set of $[\bu] \in \SS^1/C_4$ for which $F_{[\bu]}$ is ergodic is dense and has Hausdorff dimension larger than $\frac{1}{2}$.
\end{thm}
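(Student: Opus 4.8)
The plan is to reduce the statement to an ergodicity criterion for $\ZZ^2$-extensions of translation flows, and then to build a Cantor set of directions satisfying that criterion by renormalizing with the affine group. First I would fix a direction $[\bu]$ avoiding slopes $\pm 1$, $0$ and $\infty$ and identify $T^1_{[\bu]}\bN$ with a $\ZZ^2$-cover of a compact origami. Indeed, since $\bN$ is homeomorphic to the plane and periodic under a rank-two lattice of translations of $\RR^3$, the quotient $\bN_0$ by that lattice is a flat torus with cone points and $\bN$ is its universal cover; passing to translation covers, $T^1_{[\bu]}\bN$ becomes a $\ZZ^2$-cover of the compact square-tiled surface $\bS_0$ obtained as the translation cover of $\bN_0$. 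The deck group is recorded by a cocycle $\sigma_\theta\colon \bS_0\to\ZZ^2$ over the straight-line flow $f_\theta$ on $\bS_0$, and $F_{[\bu]}$ is exactly the associated $\ZZ^2$-extension. As this is the set-up of wind-tree-type models, almost every direction is non-ergodic, so no full-measure argument is available; instead I would use the standard package: $F_{[\bu]}$ is ergodic provided (i) $f_\theta$ is uniquely ergodic, (ii) the forward Teichm\"uller geodesic $g_t r_\theta \bS_0$ returns to a fixed compact subset of the moduli space of $\bS_0$ with controlled gaps, and (iii) the group of Schmidt essential values of $\sigma_\theta$ over $f_\theta$ is all of $\ZZ^2$. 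Here (i) follows from (ii) by Masur's criterion, and (iii) is the geometric input to be extracted from cylinder decompositions, following the essential-values machinery (Schmidt; Conze; Hubert--Weiss; Fr\k{a}czek--Ulcigrai).

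Next I would set up the renormalization. By \Cref{thm:symmetries} and \Cref{thm:main}, every direction of slope $p/q$ with $p,q$ odd is a completely periodic direction of $\bNt$ in which $\varphi_m$ is a product of Dehn twists, hence a parabolic element of the infinitely generated, first-kind Veech group $V(\bNt)$, and these fixed directions are dense in $\SS^1/C_4$. In such a direction $\sigma_\theta$ is a coboundary on each cylinder, so it is degenerate; these are precisely the directions to avoid. But each parabolic $\varphi_m$ renormalizes a neighbourhood of its fixed direction, and iterating different $\varphi_m$ produces a continued-fraction-type symbolic coding of directions together with an induced sequence of cylinder decompositions of $\bS_0$. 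I would re-express conditions (ii) and (iii) purely in terms of this coding: recurrence of the Teichm\"uller geodesic becomes a balance condition on consecutive symbols, and ``essential values generate $\ZZ^2$'' becomes the requirement that infinitely often one meets a decomposition whose core-curve homology classes, pushed into $\ZZ^2$ by the covering map, contain a basis — a condition that fails only along symbolic sequences converging to a single odd/odd slope. Density of the ergodic set then follows by localizing the construction below inside any interval around any parabolic direction; for density alone a dense-$G_\delta$ argument in the style of Kerckhoff--Masur--Smillie and Hubert--Weiss would also do.

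For the Hausdorff dimension bound I would run a Cantor construction in this symbolic space. Starting from a small interval of directions, at each stage refine every surviving interval according to the cylinder decomposition attached to its current symbol, keeping the children on which the combinatorial data stays balanced (so the geodesic recurs) and the running homology data still surjects onto $\ZZ^2$, while discarding the boundedly many children whose continuation would converge to an odd/odd slope. The surviving intervals have a self-similar structure, and a mass-distribution (Frostman) estimate on the natural measure gives $\dim_H \ge \liminf \log(\#\text{children})/\log(1/\text{ratio})$. The point of the estimate is that each refinement behaves like one step of a two-dimensional Euclidean/Rauzy algorithm, where keeping $\asymp N$ children whose lengths scale like $N^{-2}$ yields exponent $\to\tfrac12$, and the additional freedom to choose among distinct parabolic directions at each stage pushes the exponent strictly above $\tfrac12$; this is the same mechanism underlying the Cheung--Hubert--Masur dichotomy for the dimension of the set of non-ergodic directions, run here on the ergodic side.

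The main obstacle, as always for rank-two covers, is condition (iii): one must carry the essential-value condition ``$\sigma_\theta$ is non-degenerate in both $\ZZ^2$-directions'' through the whole Cantor construction, keeping it compatible with the balance/recurrence condition of (ii) and, crucially, not so restrictive that it kills the child count needed for $\dim_H > \tfrac12$. Making that condition local and stable — a condition on finitely many symbols at a time that survives refinement — is where the explicit combinatorics of the cylinder decompositions of $\bNt$, together with the bookkeeping of the $C_4$ linear holonomy and the branched double cover $\bNt\to\bN$, must be worked out; once that is done, the recurrence estimate and the dimension count are routine applications of Masur's criterion and the mass distribution principle.
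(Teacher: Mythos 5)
Your proposal takes a genuinely different route from the paper. The paper's proof is a short citation: $\bNt$ is a normal $\ZZ^2$-cover of $\bHt$ (via \Cref{commute-branch-cover} and \Cref{deck groups}), $\bNt$ has twistable cylinder decompositions in the dense set of odd-over-odd slopes (\Cref{cor:odd twist}), and since $\ZZ^2$ is nilpotent, Corollary G.4 of \cite{H15} --- which is designed for exactly this setup, with Lebesgue measure being the unique Maharam measure for the trivial character --- directly yields both the density and the Hausdorff-dimension lower bound. You are proposing to reprove this from scratch via the Schmidt/Conze/Hubert--Weiss essential-values machinery plus a Cantor construction; the paper explicitly notes this alternative route exists (``Ergodic directions can also be found via the approach introduced by Hubert and Weiss\dots We do not bother to work out the details'') but does not pursue it. What the paper's route buys is brevity and the dimension bound for free; what your route would buy, if completed, is an argument not relying on \cite{H15} and more transparent quantitative control over the Cantor set.

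However, there is a concrete gap in your essential-values step. You propose that the condition ``essential values generate $\ZZ^2$'' be detected by requiring ``infinitely often one meets a decomposition whose core-curve homology classes, pushed into $\ZZ^2$ by the covering map, contain a basis.'' That criterion is never satisfiable here. The renormalizing parabolics are the odd/odd-slope multitwists of \Cref{cor:odd twist}, and by \Cref{lem:odd null-homologous} the core curves of cylinders in every odd/odd direction on $\bHt$ are \emph{null-homologous}; equivalently, they pair trivially with the classes $\beta_j$ (or $\hat x,\hat y$ on $\bHh$) defining the $\ZZ^2$-cover, which is exactly what makes these directions completely periodic on $\bN$ (\Cref{thm:odd-odd is closed}) rather than drift-periodic. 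So the $\ZZ^2$-displacement of every core curve you encounter is $0$, and your criterion excludes \emph{all} symbolic sequences, not only the eventually-constant ones. The essential values in this setting must instead be extracted from the transversal data --- how the $\ZZ^2$-cover offsets the lifts of the cylinders relative to one another, detected through saddle connections or transition curves crossing the cylinder boundaries, as in \cite{HW13}, \cite[Appendix A]{DH18}, or the approach internal to \cite{H15}. Repairing this would require reformulating condition (iii) in terms of that transversal holonomy; the rest of your outline (renormalization by $\varphi_m$, balance condition for Teichm\"uller recurrence, mass-distribution estimate) is structurally sound but inherits this dependence.
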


%We are able to use work from \cite{DHL14} to show the following:

\begin{thm}[Divergence rate]
\label{thm:divergence}
For almost every $[\bu] \in \SS^1/C_4$, for every $\bv \in T^1_{[\bu]} \bN$ with an infinite forward $F_{[\bu]}$-trajectory, we have
$$\limsup_{T \to +\infty} \frac{\log d\big(F^T_{[\bu]}(\bv),\bv\big)}{\log T}=\frac{2}{3}$$
where $d$ denotes the Euclidean distance in $\RR^3$ between the basepoints of the tangent vectors.
\end{thm}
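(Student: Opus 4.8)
The plan is to realize $F_{[\bu]}$ as the straight-line flow on a $\ZZ^2$-cover of a compact translation surface and then to extract the divergence rate from the Lyapunov spectrum of the Kontsevich--Zorich cocycle over the base.

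First I would pass to the minimal translation cover $\cN$ of $\bN$ (a double cover of $\bNt$); for each $[\bu]$ the set $T^1_{[\bu]}\bN$ is identified with $\cN$ so that $F_{[\bu]}$ becomes the straight-line flow on $\cN$ in the direction $[\bu]$. The group $\Lambda\cong\ZZ^2$ of translations of $\RR^3$ preserving $\bN$ lifts to a group of translation automorphisms of $\cN$ acting freely and cocompactly, so $\cN$ is a $\ZZ^2$-cover of the compact translation surface $\bar X=\cN/\Lambda$; write $\pi\colon\cN\to\bar X$ and let $\beta=(\beta_1,\beta_2)\in H^1(\bar X;\RR)^{2}$ be the cohomology classes classifying this cover. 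Because $\Lambda$ acts by $\RR^3$-translations, for $\bv\in T^1_{[\bu]}\bN$ with infinite forward trajectory one has $d\big(F^T_{[\bu]}(\bv),\bv\big)=\|c_T(\bv)\|+O(1)$, where $c_T(\bv)\in\Lambda\otimes\RR$ is obtained by closing up the trajectory segment $\{F^t_{[\bu]}(\bv):0\le t\le T\}$ to a loop $\gamma_T$ in $\bar X$ and setting $c_T(\bv)=\langle[\gamma_T],\beta\rangle$. Thus it suffices to show $\limsup_T\frac{\log\|\langle[\gamma_T],\beta\rangle\|}{\log T}=\tfrac23$ for almost every direction and every non-singular point.

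This last estimate is exactly the type supplied by the theory of deviation of ergodic integrals over translation flows (Zorich, Forni), adapted to $\ZZ^d$-covers (Hubert--Weiss, Ralston--Troubetzkoy, Fr\k{a}czek--Ulcigrai, and in particular Delecroix--Hubert--Leli\`evre for the Ehrenfest wind-tree model, where the same exponent $\tfrac23$ occurs). The input is: for the compact translation surface $\bar X$ with a chosen invariant measure on the relevant $\SL(2,\RR)$-orbit closure (identified via Eskin--Mirzakhani--Mohammadi; I expect $\bar X$ to be a lattice surface here, to be pinned down explicitly), and for almost every direction, the twisted integrals $\langle[\gamma_T],\beta\rangle$ grow like $T^{\nu}$, where $\nu$ is the top Lyapunov exponent of the Kontsevich--Zorich cocycle restricted to the smallest $g_t$-invariant subbundle $\cV\subseteq H^1(\bar X;\RR)$ containing $\beta_1,\beta_2$. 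The value $\nu=\tfrac23$ must then be established by analyzing $\cV$. Here I would use the symmetries of $\bNt$ — the order-six isometries of \Cref{thm:order six}, the order-four affine symmetry arising from the linear holonomy of $\bN$, and the affine automorphisms of \Cref{thm:symmetries} — to decompose $H^1(\bar X;\RR)$ into isotypic pieces for the induced action on cohomology, and I expect the classes $\beta_i$ to span (a subrepresentation of) a single such piece on which the cocycle has Lyapunov spectrum $\{\pm\tfrac23\}$. This exponent would be computed either by recognizing $\cV$ as a factor of the Hodge bundle of a family of cyclic covers of $\PP^1$ branched over few points — whose exponents are known from Eskin--Kontsevich--Zorich and Forni--Matheus--Zorich — or directly via the Eskin--Kontsevich--Zorich sum formula together with the symmetry-forced splitting. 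Showing that the contribution of $\cV$ is \emph{exactly} $\tfrac23$ (and not merely bounded above) is the main obstacle.

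Finally, I would upgrade the conclusion from almost every $\bv$ to every $\bv$ with an infinite forward trajectory. For almost every direction $[\bu]$ the Teichm\"uller geodesic obtained by applying $g_t$ to $\bar X$ (rotated so $[\bu]$ is vertical) is recurrent in moduli space, hence by Masur's criterion the straight-line flow on $\bar X$ in direction $[\bu]$ is uniquely ergodic and the Forni-type deviation bounds for the cocycle over this direction hold uniformly over all non-singular points; therefore the $\limsup$ above equals $\nu$ for every $\bv\in T^1_{[\bu]}\bN$ with an infinite forward $F_{[\bu]}$-trajectory. Combined with $d\big(F^T_{[\bu]}(\bv),\bv\big)=\|c_T(\bv)\|+O(1)$ and $\nu=\tfrac23$, this gives the theorem. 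As in Delecroix--Hubert--Leli\`evre, I anticipate that the bulk of the work is the explicit identification of $\bar X$ and its orbit closure together with the symmetry-assisted computation of the single Lyapunov exponent $\tfrac23$.
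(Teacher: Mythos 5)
Your overall plan is the right one and matches the paper's strategy at every structural joint: pass to the minimal translation cover $\bNh$, recognize it as a $\ZZ^2$-cover of a compact translation surface $\bHh$, reduce the Euclidean drift to the growth rate of the pairing of closed-up trajectory segments with the two cohomology classes classifying the cover, and extract the exponent $\tfrac{2}{3}$ from the Kontsevich--Zorich Lyapunov spectrum on the subbundle containing those classes. Your observation that the $\RR^3$-distance differs from $\|c_T(\bv)\|$ by a bounded amount is precisely the paper's quasi-isometry step.

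Where your route diverges from the paper's is in how you propose to nail down the subbundle and the exponent. You suggest decomposing $H^1(\bar X;\RR)$ into isotypic pieces for the order-six isometry of \Cref{thm:order six} and the order-four linear holonomy. That is not the symmetry that does the work. The paper instead observes that $\bHh$ is built from four square tori with removed sub-squares glued in a cyclic pattern, so that it carries a free $(\ZZ/2\ZZ)^2$-action by translation automorphisms with a genus-two $L$-shaped quotient, which places $\bHh$ in the Delecroix--Hubert--Leli\`evre locus $\cG\subset\cH(2^4)$. The Klein four-group gives the splitting $H_1(\bHh;\QQ)=E^{++}\oplus E^{+-}\oplus E^{-+}\oplus E^{--}$, one checks $\hat x,\hat y\in E^{+-}\oplus E^{-+}$, and \cite{DHL14} already compute the exponents on $E^{\pm\mp}$ to be $\{\pm\tfrac{2}{3}\}$ and prove the ``for every nonsingular point'' form of the deviation statement (their Theorem 2). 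So what you flag as the main obstacle --- pinning down $\bar X$ and computing the exponent --- is in fact the content of the paper's one real input, and it is handled by recognizing the Klein four-cover structure rather than by the dihedral/rotational symmetries of $\bNt$ or $\bN$, which do not even descend to $\bHh$ in the right way. Your separate use of Masur's criterion to upgrade from a.e.\ $\bv$ to every $\bv$ is also unnecessary once one uses DHL's Theorem 2, which already yields the conclusion for every point with infinite forward trajectory. With those two substitutions your sketch would match the paper's argument; as written it would require you to redo a nontrivial Lyapunov computation whose correct form depends on identifying the $(\ZZ/2\ZZ)^2$ structure you have not yet located.
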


\subsection{The Necker cube surface in the Arts and Culture}
The Necker cube surface depicted in \Cref{fig:NCS} is a variant of a family of famous illusions attributed to Swiss crystallographer Louis A. Necker \cite{LN32}. However, the pattern has been used as a tiling pattern (called the {\em rhombille tiling}) for floors since the ancient Greeks \cite[255]{dunbabin1999mosaics}. The surface received a brief mention in Popular Science Monthly in 1899, in an article exhibiting a variety of optical illusions \cite{JJ99}. This periodic pattern can be spotted in numerous works by M.C. Escher, and is the game board in the cabinet arcade classic, Q*bert. The illusion of cubes that may alternately be seen as popping in or out of the page captured the interest of people throughout history.

\subsection{Related work}

Recently there has been significant interest in the periodic billiard tables and similar systems. Indeed, this work was in part motivated by \cite{DH18} which considered tiling billiards on the trihexagonal tiling. (The rhombille tiling is dual to the trihexagonal tiling.) There are few other papers studying the geometry of periodic polygonal surfaces from a similar point of view; see \cite{AL21} and \cite{ORSS}.

Many of these systems can be understood by passing to a periodic translation (or half-translation) surface cover. In \cite{DH18}, it was realized that for the trihexagonal tiling, the tiling billiard system is governed by an infinite $\ZZ^2$ periodic translation surface. The surface obtained from the trihexagonal tiling is highly symmetric: It has the {\em lattice property}, i.e., the surface is stabilized by the linear action of a co-finite area subgroup of $\SL(2,\RR)$.

Our understanding of $\bN$ is similarly powered by working with a natural half-translation surface cover, but this cover is not stabilized by a lattice in $\SL(2,\RR)$. This  suggests that to obtain a more general understanding of geodesics of irrational slope on $\bN$, techniques used to study periodic translation and half-translation surfaces which do not have the lattice property should be applied. The most well-studied similar example is the periodic Wind-tree Model, which is a $\ZZ^2$-periodic billiard table obtained by inserting a periodic array of rectangular obstacles in the plane.
This system is known to be recurrent \cite{HLT11, AH17}, periodic orbits are understood \cite{HLT11},  numerous divergent trajectories have been found \cite{D13}, most directions are non-ergodic \cite{FU14}, and diffusion rates are understood \cite{HLT11,DHL14}. Other work has studied dynamics in periodic patterns of Eaton lens \cite{FS14, FSU15, A17, FS18}.

The dynamics of geodesics on $\bN$ are similar to the Wind-tree Model with parameters $a=b=1/2$ \cite{HLT11}. The translation covers of these two are both $\ZZ^2$-covers of the same square-tiled surface. But, there are some differences. For example, the question if a geodesic on this Wind-tree model closes cannot be decided based on the direction alone, but depends on the initial position as well. Our \Cref{thm:main} guarantees that in every direction in which there is a periodic geodesic, the flow is {\em completely periodic}: every nonsingular geodesic is closed.

Our proof of the recurrence result (\Cref{thm:recurrence}) is very close to the recurrence argument for Wind-tree models given in \cite{HLT11}. Recurrence turns out to follow from the existence of a large number directions with cylinder decompositions. We show that in typical directions trajectories tend to wrap arbitrarily tightly around some such cylinders. We further develop these ideas from \cite{HLT11} by connecting them to hyperbolic geometry in \Cref{sect:recurrence}.

The Necker cube surface falls cleanly within the class of surfaces studied in \cite{H15}, which uses twistable cylinder decompositions to produce directions in which locally finite ergodic invariant measures of geodesic flows can be classified. Because there are twistable
cylinder decompositions in a dense set of directions, and the nilpotent group $\ZZ^2$ acts cocompactly on $\bN$ by isometry, ergodicity in some directions follows directly from results in \cite{H15}. Ergodic directions can also be found via the approach introduced by Hubert and Weiss for $\ZZ$-covers \cite{HW13}. We do not bother to work out the details, but see \cite{A17} and \cite[Appendix A]{DH18} for details on the extension of the Hubert-Weiss ideas to $\ZZ^d$-covers.

Our proof of the divergence rate in \Cref{thm:divergence} makes heavy use of work already done by Delecroix, Hubert, and Leli\`evre when investigating divergence rates on Wind-tree models \cite{DHL14}. In brief, the minimal translation cover of $\bN$ covers a translation surface in the locus of translation surfaces studied in \cite{DHL14}. Their deviation rate arguments depend on the Lyapunov exponents within this locus, and deviation rates in $\bN$ are governed by the same Lyapunov exponents.

We are not aware of tiling-type theorems in similar systems, nor are we aware of similar exact calculations of non-lattice Veech groups.

%Consider $\bN$ as embedded in $\RR^3$. The quotient of $\bN$ by the translations of $\RR^3$ preserving $\bN$ is a square-tiled surface consisting of three squares, which we call the half-cube $\bH$. Our paper uses a detailed understanding of the half-translation surface cover $\tilde \bH$ of $\bH$.
%
% What differs is the irrelevance of an initial point, and that the building-block cover in their case is a four-fold translation surface cover with deck group isomorphic to $(\ZZ/2\ZZ)^2$.

\subsection{Future work and related ideas}
The Necker cube surface is a member of a three-parameter family of step surfaces that can be parameterized by three positive values, each dictating the length, width, and height of the rectangular tiling. See the left side of \Cref{fig:future}. Presumably some dynamical aspects of these surfaces can be understood via similar methods. It seems like ideas similar to those used to study the Ehrenfest wind-tree model are likely to apply.

Another generalization is obtained by allowing the step-sizes in each of the coordinate directions to be nonconstant. If the step sizes in each direction are summable, then you get a finite area surface which presumably makes it easier for trajectories to diverge. We wonder if the Hausdorff dimension of the nondivergent trajectories can be bounded here. If the step sizes are random or quasi-periodic, we wonder if there are still recurrent directions. See the right side of \Cref{fig:future} for a depiction of a quasiperiodic example. Surfaces of this type can be drawn using the app at \cite{Hooper_Necker_App}.

\begin{figure}[htb]
\begin{minipage}{5.7in}
  \centering
  \raisebox{-0.5\height}{\includegraphics[height=1.25in]{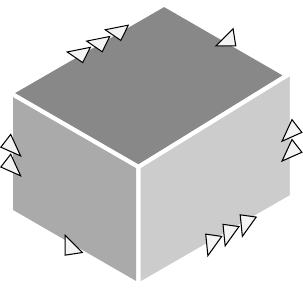}}
  \hspace*{.2in}
  \raisebox{-0.5\height}{\includegraphics[height=2.5in]{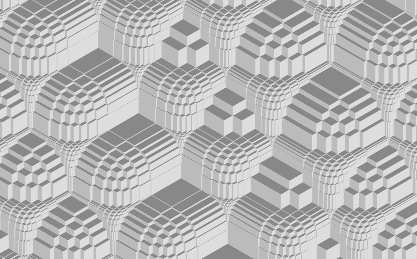}}
\end{minipage}
	\caption{Left: A variant of the Necker cube surface can be formed by taking the universal cover of this torus, built from three rectangles. Right: A variant of the Necker cube surface where step sizes in the $i$-th coordinate 
	%($i \in \{1,2,3\}$) 
	have length of the form $-\log(a_i n+b_i \mod{1})$ for $n \in \NN$ and constants $a_i, b_i \in (0,1)$.}
	\label{fig:future}
\end{figure}

\section{Background on Euclidean cone surfaces}

\subsection{Euclidean cone surfaces}
We will give a standard geometric structure definition of Euclidean cone surface following notions of $(G,X)$-structures in \cite[Chapter 3]{Thurston}.

A {\em Euclidean cone surface} is a connected oriented surface $S$ together with a closed discrete subset $\Sigma \subset X$ and a maximal atlas of charts from $S \smallsetminus \Sigma$ to the plane such that the transition functions are locally given by elements of $\Isom_+(\RR^2)$, the group of orientation-preserving isometries of $\RR^2$. We can then pullback the metric under the charts to obtain a path metric on $S \smallsetminus \Sigma$. The space $S \smallsetminus \Sigma$ is locally isometric to the plane. To be a cone surface, we insist that the metric on $S \smallsetminus \Sigma$ can be extended to all of $S$ and the extension to each point in $\Sigma$ makes that point into a cone singularity. For this paper, we will also insist that this metric is complete.
See \cite[\S 2]{Troyanov07} for more detail on Euclidean cone surfaces.
We'll use bold letters such as $\bS$ to denote a Euclidean cone surface.

We can obtain more structured surfaces by restricting transition functions to a subgroup of $\Isom_+(\RR^2)$. We'll call this subgroup the {\em structure group} of the surface. If we insist that transition functions lie in the {\em translation group} $G_1 \subset \Isom_+(\RR^2)$ consisting of of translations of $\RR^2$, we get a {\em translation surface}. If we insist that transition functions lie in the {\em half-translation group} $G_2$ consisting of translations and rotations by $180^\circ$, we get a {\em half-translation surface}. 
If we insist that transition functions lie in the {\em quarter-translation group} $G_4$ consisting of translations and rotations by multiples of $90^\circ$, we get a {\em quarter-translation surface}. 
%An orientable square-tiled surface naturally has the structure of a quarter-translation surface.

\begin{comment}
THIS EXPLAINED THE REMARK: "An orientable square-tiled surface naturally has the structure of a quarter-translation surface", BUT I DON'T THINK IT IS NECESSARY.
 Charts can be taken to orientation-preserving isometries from squares in the surface to the unit square $[0,1] \times [0,1] \subset \RR^2$ and one chart for each pair of identified edges to perform the gluing. For example, if $S_1$ and $S_2$ are two squares which are to be glued along edges, then we can take as a chart a map which takes the interior of $S_1$ isometrically to $(0,1)\times (0,1)$ and takes the interior of $S_2$ to $(1,2) \times (0,1)$ and sends the interior of edges which become identified to $\{1\} \times [0,1]$. (Small changes need to be made if two edges of the same square are identified.)
\end{comment}

\begin{remark}
The data determining these geometric structures is equivalent to the choice of a Riemann surface paired with a holomorphic 1-form, a quadratic differential, or a $4$-differential in the respective cases of translation, half-translation, and quarter-translation surface. See \cite[\S 2.3]{AAH} for a description of geometric structures induced by $k$-differentials.
\end{remark}

\subsection{Square-tiled surfaces}
In this paper, a {\em square-tiled surface} is an oriented connected surface formed by gluing together unit squares edge-to-edge by isometries along the edges. Such a surface is locally isometric to the plane at all points other than the identified vertices. If $p$ is a point formed by identifying vertices of squares, its {\em valence} is the number of vertices being identified to form $p$. The cone angle at $p$ is $\frac{\pi}{2}$ times the valence. In particular, if the valence of $p$ is not equal to four, then $p$ is a singularity. We insist that the singular set of a square-tiled surface only consist of identified vertices, but we allow points of valence four to be excluded from the singular set.

\begin{remark}
It is possible for infinitely many vertices of a square to be identified, forming a point without a neighborhood homeomorphic to an open set in the plane. This point becomes an infinite cone singularity. Technically, this case does not lead to a surface, because an infinite cone singularity does not have a neighborhood homeomorphic to an open subset of the plane. This situation will not arise in the later sections of this paper, but universal branched covers (defined below) have such points.
\end{remark}

A square-tiled surface is naturally a Euclidean cone surface with structure group given by $\Isom_+(\ZZ^2)$, the group of orientation-preserving isometries of $\RR^2$ that preserve $\ZZ^2$. Since $\Isom_+(\ZZ^2) \subset G_4$, a square-tiled surface is also a quarter-translation surface. In this paper we predominantly think of squares as quarter-translation surfaces.

\subsection{Universal branched covers}
Let $\bS$ be a connected Euclidean cone surface with singular set $\Sigma$. Let
$\bSs$ denote $\bS \smallsetminus \Sigma$. Let $\bUs$ denote the universal cover of $\bSs$.
Then $\bUs$ inherits an atlas of charts with transition functions in $\Isom_+(\RR^2)$ (or a subgroup) by pulling back the charts of $\bSs$ with simply connected domains under the covering map $\bUs \to \bSs$. We can extend this atlas to a unique maximal atlas, and, using these charts, we obtain a metric on $\bUs$.

The {\em universal branched cover} of $\bS$ is the metric completion $\bU$ of $\bUs$. Since the covering map $\bUs \to \bSs$ is $1$-Lipschitz, and since $\bS$ is complete, there is a unique continuous extension $\pi_\bS: \bU \to \bS$. The complement of $\bUs$ in $\bU$ consists of the preimages of cone singularities. These points are the {\em singularities} on $\bU$. The local geometry near a singularity of $\bU$ is that of an infinite cone singularity. See \cite[\S 2.2]{Troyanov07} for another viewpoint on this.

Observe that deck transformations of the covering $\bUs \to \bSs$ extend continuously to isometries $\bU \to \bU$.
We call the collection of these extensions the {\em deck group}, $\Delta_\bS=\Delta(\bU, \bS)$, of the universal branched cover $\bU \to \bS$. (Note that a deck transformation $\delta:\bU \to \bU$ may fix points in $\bU \smallsetminus \bUs$.)

\subsection{Developing maps} 
A {\em developing map} for a connected Euclidean cone surface $\bS$ is a continuous map 
$$\Dev:\bU \to \RR^2$$
which is compatible with the charts defining the structure on $\bUs$. That is, we insist that if $\phi:U \to \RR^2$ is a chart with $U\subset \bUs$ and $p \in U$, then there should be an open $V$ with $p \in V \subset U$ such that the restrictions of $\Dev$ and $\phi$ differ by postcomposition by an element of the structure group. Developing maps can be obtained by analytically continuing a single chart to obtain a $1$-Lipschitz map $\bUs \to \RR^2$, and then extending to the metric completion $\bU$. Any two developing maps differ by postcomposition by an element of the structure group.
For more details on the developing map see \cite[\S 2.3]{Troyanov07} or \cite[\S 3.4]{Thurston}.

\subsection{The holonomy homomorphism}
Let $\bS$ be a connected Euclidean cone surface with singular set $\Sigma$ and structure group $G \subset \Isom_+(\RR^2)$. Fix a developing map $\Dev:\bU \to \RR^2$. Observe that if $\delta: \bU \to \bU$ is a deck transformation, then $\Dev \circ \delta$ is another developing map. It follows that there is a $\holhom(\delta) \in G$ such that 
\begin{equation}
\label{eq:holonomy}
\holhom(\delta) \circ \Dev = \Dev \circ \delta.
\end{equation}
The map $\holhom:\Delta_\bS \to G$ is a group homomorphism called the {\em holonomy homomorphism}. Note that $\holhom$ depends on the choice of $\Dev$. A change in the choice of $\Dev$, will result in $\holhom$ changing by post-conjugation by an element of $G$.

Given a basepoint $p_0 \in \bSs$ and a preimage $\tilde p_0 \in \bUs$, covering space theory gives a natural isomorphism between the fundamental group $\pi_1(\bSs, p_0)$ with the deck group $\Delta_\bS$. Namely, the relative homotopy class of $\gamma:[0,1] \to \bSs$ is identified with the deck transformation $\delta:\bU \to \bU$ that carries $\tilde p_0$ to the endpoint $\tilde \gamma(1)$ of a lift $\tilde \gamma:[0,1] \to \bU$ such that $\tilde \gamma(0)=\tilde p_0$. Thus we define the {\em holonomy} around $\gamma$ to be $\holhom(\gamma)=\holhom(\delta)$.

We will generally be interested in extracting a holonomy invariant from a free homotopy class of loops in $\bSs$ rather than elements of $\pi_1(\bSs, p_0)$ as above. Recall that free homotopy classes of loops are associated to conjugacy classes in $\pi_1(\bSs, p_0)$. Further observe that the conjugacy classes are fibers of the Jacobian derivative
$J:\Isom_+(\RR^2) \to \SO(2),$
taking values in the rotation group $\SO(2) \cong \RR/2\pi \ZZ$. The {\em rotational holonomy} of a loop $\eta:\RR/\ZZ \to \bSs$ is
$$\holhom_\circ(\eta) = J \circ \holhom(\eta_0),$$
where $\eta_0$ is a representative of the free homotopy class of $\eta$ such that $\eta_0(0)$ is the basepoint of $\bSs$. From the remarks above, rotational holonomy is invariant under free homotopy in $\bSs$. Since $\SO(2)$ is abelian, this induces a group homomorphism $(\holhom_\circ)_\ast:H_1(\bSs; \ZZ) \to \SO(2)$.

\subsection{Natural maps}
\label{sect:natural maps}
Let $\bS$ and $\bT$ be Euclidean cone surface with structure groups $G_S$ and $G_T$, respectively. Let $H$ be a subgroup of the affine group $\Aff(\RR^2)$ that contains both $G_S$ and $G_T$. We'll say that a map $f:\bS \to \bT$ is an {\em $H$-mapping} if $f(\bSs) \subset \bTs$, $f(\bS \setminus \bSs) \subset \bT \setminus \bTs$, and in local coordinate charts taken near points $p \in \bSs$ and $f(p) \in \bTs$, the map behaves like an element of $H$.
This condition can be understood using any choice of developing maps $\Dev_\bS:\bU_\bS \to \RR^2$ and $\Dev_\bT:\bU_\bT \to \RR^2$: The map $f$ is an $H$-mapping if and only if for one (equivalently, any) lift $F:\bU_\bS \to \bU_\bT$, there is a $h \in H$ such that 
\begin{equation}
\label{eq:qts mapping}
\Dev_\bT \circ F = h \cdot \Dev_\bS.
\end{equation}
(Such a lift can be obtained by first lifting the restriction of $f$ to a map $\bSs \to \bTs$, and then continuously extending.) There is a natural homomorphism between the deck groups $F_\ast:\Delta_\bS \to \Delta_\bT$ such that for any $\delta_\bS \in \Delta_\bS$, 
$$\delta_\bT \circ F=F \circ \delta_\bS, \quad \text{where} \quad \delta_\bT = F_\ast(\delta_\bS).$$
Then by \eqref{eq:holonomy} and \eqref{eq:qts mapping}, for holonomy maps defined using $\Dev_\bS$ and $\Dev_\bT$, we have 
\begin{comment}
\begin{align*}
\holhom(\delta_\bT) \cdot h \circ \Dev_\bS
&=
\holhom(\delta_\bT) \cdot \Dev_\bT \circ F
=
\Dev_\bT \circ \delta_\bT \circ F
=
\Dev_\bT \circ F \circ \delta_\bS\\
&=
h \cdot \Dev_\bS \circ \delta_\bS
=
h \cdot \holhom_\bS(\delta_\bS) \cdot \Dev_\bS, \quad \text{so}
\end{align*}
\end{comment}
\begin{equation}
\holhom_\bT(\delta_\bT)=h \cdot \holhom_\bS(\delta_\bS) \cdot h^{-1}
\quad \text{where $h$ is from \eqref{eq:qts mapping}.}
\label{eq:holonomy and maps}
\end{equation}

An {\em $H$-isomorphism} $\bS \to \bT$ is an $H$-mapping that is also a homeomorphism. An {\em $H$-automorphism} of $\bS$ is an $H$-isomorphism $\bS \to \bS$. The collection of all $H$-automorphisms forms a group, denoted $\Aut_H(\bS)$. 

%We use $M_H(\bS, \bT)$ to denote the collection of $H$-mappings from $\bS \to \bT$. If $\bS$ and $\bT$ both have structure group $G$, they are {\em $G$-isomorphic} if there is a homeomorphism in $M_G(\bS, \bT)$. Such a map is a {\em $G$-isomorphism}. The collection of all $H$-mappings $\bS \to \bS$ that are homeomorphisms will be denoted $\Aut_H(\bS)$. 

An $H$-mapping that is also a covering map will be called an {\em $H$-covering map}. We allow such maps to have a branching locus in the singular set of the domain. If $f:\bS \to \bT$ is an $H$-covering and $\Sigma_\bS \subset \bS$ denotes the set of singularities, the {\em local degree function}
$$\localdegree_f:\Sigma_\bS \to \ZZ_{>0}$$
sends $x \in \Sigma_\bS$ to the smallest $n \geq 1$ such that $f$ has a restriction to a neighborhood of $x$ with degree $n$; the branching locus is the set where $\localdegree_f>1$.

Suppose $\bS$ and $\bT$ have structure group $H$. If $H \subset G$ is normal and $f:\bS \to \bT$ is a $G$-mapping, its {\em derivative} $D(f) \in G/H$ is the element $gH \in G/H$ such that
\begin{equation}
\label{eq:derivative}
\Dev_T \circ F=g \cdot \Dev_\bS
\end{equation}
for some choice of developing maps and some lift $F:\bU_\bS \to \bU_\bT$ as above. The coset $gH$ is independent of choices. Note that $f$ is also an $H$-mapping if and only if $D(f)$ is trivial in $G/H$. If $f:\bS_1 \to \bS_2$ and $g:\bS_2 \to \bS_3$ are $G$-mappings between $H$-structures, we have 
$$D(g \circ f)=D(g)D(f).$$
In particular, for any $\bS$ with structure group $H$ which is normal in $G$, the derivative
$D:\Aut_G(\bS) \to G/H$ is a group homomorphism and the following sequence is exact:
$$1 \to \Aut_H(\bS) \to \Aut_G(\bS) \xrightarrow{D} G/H \to 1.$$
The subgroup $D\big(\Aut_G(\bS)\big) \subset G/H$ is known as the {\em Veech group} of $\bS$.

%Given $f$ and a loop $\eta$ in $\bSs$, we see $f \circ \eta$ is a loop in $\bTs$. Recalling that $\holhom_\circ(\eta)$ is a conjugacy class in $H$, we see from
%\eqref{eq:derivative} and 

%Let $D:\Isom_+(\RR^2) \to \SO(2,\RR)$ be the {\em derivative}, which carries an isometry to its rotational part. 
%The composition $D \circ \holhom: \Delta_\bS \to D(G)$ gives the {\em rotational holonomy map}. For $k \in\{1,2,4\}$, $D(G_k)$ is isomorphic to $C_k$, the cyclic group of order $k$.

\subsection{Veech groups of square-tiled surfaces}

Let $\bS$ be a square-tiled translation surface or half-translation surface. Let $\bU$ be its universal branched cover. Fix a developing map $\Dev: \bU \to \RR^2$ sending vertices of squares to points in $\ZZ^2$. Let $\Lambda \subset \ZZ^2$ be the smallest subgroup containing differences between vectors in $\Dev(\Sigma_\bU) \subset \ZZ^2$ where $\Sigma_\bU \subset \bU$ is the singular set. We'll call this the {\em group of singular differences} of $\bS$. Note that because developing maps differ by an element of $G_1$ or $G_2$, the group of singular differences is independent of the choice of developing map.

Gutkin and Judge studied Veech groups of finite square-tiled surfaces in \cite{GJ00}. The following is a weak form of one of their results that holds for infinite square-tiled surfaces.

\begin{prop}
\label{SL2Z}
Suppose $\bS$ is a square-tiled translation (respectively, half-translation) surface.
If the group of singular differences $\Lambda$ is a sublattice of $\ZZ^2$ (i.e. is isomorphic as a group to $\ZZ^2$), then the Veech group of $\bS$ is the contained in the subgroup of $\GL(2,\QQ)$ (respectively, $\GL(2,\QQ)/\langle -I \rangle$) stabilizing $\Lambda$.
\end{prop}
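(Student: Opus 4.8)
The plan is to show that any element of the Veech group, realized as the linear derivative of an affine automorphism $\f$ of $\bS$, must permute the developed images of the singularities of the universal branched cover $\bU$, and therefore must preserve the group $\Lambda$ that they span. Concretely, fix the developing map $\Dev:\bU\to\RR^2$ of the statement and take an affine automorphism $\f\in\Aut_{\Aff(\RR^2)}(\bS)$. By the natural-maps formalism above, $\f$ lifts to an $\Aff(\RR^2)$-automorphism $F:\bU\to\bU$ with $\Dev\circ F=\tilde g\cdot\Dev$ for some $\tilde g\in\Aff(\RR^2)$, whose linear part $g\in\GL(2,\RR)$ represents the class $D(\f)$ in $G/H$ (unambiguously for translation surfaces; modulo $\pm I$ in the half-translation case, these being exactly the linear parts occurring in $G_1$, resp.\ $G_2$). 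Since an $\Aff(\RR^2)$-mapping carries the nonsingular part to the nonsingular part, $F$ restricts to a homeomorphism $\bUs\to\bUs$ and hence permutes the singular set: $F(\Sigma_\bU)=\Sigma_\bU$.

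Next I would push this equality forward through $\Dev$. From $\Dev\circ F=\tilde g\cdot\Dev$ one gets $\tilde g\big(\Dev(\Sigma_\bU)\big)=\Dev\big(F(\Sigma_\bU)\big)=\Dev(\Sigma_\bU)$, so the affine map $\tilde g$ preserves the set $\Dev(\Sigma_\bU)\subset\ZZ^2$ (no injectivity of $\Dev$ on $\Sigma_\bU$ is needed, only this equality of subsets of $\ZZ^2$). Because an affine map sends a difference $v-w$ to $g(v-w)$, the linear map $g$ carries the set of differences of elements of $\Dev(\Sigma_\bU)$ onto itself, and hence stabilizes the subgroup $\Lambda$ that these differences generate. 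Applying the same reasoning to $\f^{-1}$ shows $g^{-1}$ also stabilizes $\Lambda$, so $g$ lies in the stabilizer of $\Lambda$ in $\GL(2,\RR)$.

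Finally I would invoke the hypothesis that $\Lambda$ is a rank-two sublattice of $\ZZ^2$: a $\ZZ$-basis $(b_1,b_2)$ of $\Lambda$ is then also an $\RR$-basis of $\RR^2$, in which $g$ and $g^{-1}$ are represented by integer matrices; conjugating back by the integer change-of-basis matrix $B$ with columns $b_1,b_2$ exhibits $g$ as a product of matrices over $\QQ$, so $g\in\GL(2,\QQ)$ (indeed $\det g=\pm1$). Thus the Veech group sits inside the stabilizer of $\Lambda$ within $\GL(2,\QQ)$, which is the translation-surface case; for half-translation surfaces the identical computation takes place in $\GL(2,\QQ)/\langle-I\rangle$, which is legitimate because $\langle-I\rangle$ stabilizes $\Lambda$ and lies in $\GL(2,\QQ)$.

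I do not expect a genuine obstacle here — this is a soft argument, which is precisely why it is only a weak form of the Gutkin--Judge theorem. The two points that need care are: first, the hypothesis is truly used at the last step, since ``$g$ stabilizes $\Lambda$'' forces $g$ to be rational only when $\Lambda$ is of full rank (if, say, all singularities developed onto a single line, the conclusion would be false); and second, one should check that the lift $F$ really does preserve $\Sigma_\bU$, which follows from the defining property of an $\Aff(\RR^2)$-mapping that it takes $\bS\smallsetminus\bSs$ into $\bT\smallsetminus\bTs$ and hence, after lifting, $\bUs$ into $\bUs$.
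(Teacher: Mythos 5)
Your argument is correct and takes essentially the same route as the paper's: lift the affine automorphism to the universal branched cover, observe that the lift preserves the singular set (hence the developed singular set up to the affine map $\tilde g$), conclude that the linear part $g$ preserves the difference set and therefore $\Lambda$, and finally use the rank-two hypothesis to extract rationality from the integral basis. You are slightly more explicit than the paper about why both $g$ and $g^{-1}$ stabilize $\Lambda$ and about how rationality drops out of the change-of-basis computation, but there is no difference in substance.
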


In the cases we care about, $\Lambda=\ZZ^2$ and so the Veech group is contained in $\GL(2,\ZZ)$ or $\GL(2,\ZZ)/\langle -I \rangle$.

\begin{proof}
Let $\phi:\bS \to \bS$ be an affine automorphism, and let $\tilde \phi:\bU \to \bU$ be a lift. By \eqref{eq:qts mapping}, there is an $h \in \Aff(\RR^2)$ such that $h \cdot \Dev=\Dev \circ \tilde \phi$.  We have that $D\phi$ is the image of $h$ in $\GL(2,\RR)$ or $\GL(2,\RR)/\langle -I \rangle$. 
Since $\tilde \phi(\Sigma_\bU)=\Sigma_\bU$, we have $D\phi(\Lambda)=\Lambda.$ Thus Veech group elements stabilize $\Lambda$ as claimed.
Since $\Lambda$ is a lattice, we can choose linearly independent $\bv_1, \bv_2 \in \Lambda$. Then $D\phi$ is determined by the images $D\phi(\bv_1)$ and $D\phi(\bv_2)$. Since all four vectors lie in $\ZZ^2$, $D\phi$ lies in $\GL(2,\QQ)$ or $\PGL(2,\QQ)$.
\end{proof}

\subsection{Minimal covers}
Let $\bS$ be a connected Euclidean cone surface with structure group $G$. Let $H \subset G$ be a normal subgroup. We will say $\bS$ is {\em genuine with respect to $H$} if the restriction of the homomorphism $G \to G/H$ to $\holhom(\Delta_\bS) \to G/H$ is surjective.

\begin{remark}
If $\bS$ is not genuine with respect to $H$, then one can construct a subgroup $G'$ such that $H \subset G' \subsetneq G$ and a Euclidean cone surface $\bS'$ with structure group $G'$ such that $\bS'$ is isometric to $\bS$. So, it would be more natural to consider $\bS$ to have the structure group $G'$. Here $G' \subset G$ can be taken to be the preimage of the image of $\holhom(\Delta_\bS)$ in $G/H$.
\end{remark}

Let $\Gamma_\bS = \holhom^{-1}(H),$ i.e., $\Gamma_\bS \subset \Delta_\bS$ consists of the deck transformations $\bU \to \bU$ with holonomy in $H$. Note that $\Gamma_\bS$ is independent of the choice of developing map, because $H$ is normal in $G$.

The quotient $\tbS=\bU/\Gamma_\bS$ is naturally a Euclidean cone surface whose charts are obtained as composition of a lift of an open subset of $\tbS$ to $\bU$ with a fixed developing map $\Dev:\bU \to \RR^2$. By construction, $\tbS$ has structure group $H$.
The natural map $\tilde \pi_\bS: \tbS \to \bS$ is a $G$-covering map. Following \cite{GJ00}, we call $\tbS$ the {\em minimal $H$-cover} of $\bS$. The cover is clearly a normal cover with deck group isomorphic to $G/H$. In fact, we have:

\begin{lemma}
\label{deck group and derivative}
Let $\bS$ is a Euclidean cone surface with structure group $G$. Let $H \subset G$ be normal. Suppose that $\bS$ is genuine with respect to $H$ and let $\tbS$ be the minimal $H$-cover of $\bS$. Then the restriction of the derivative $D:\Aut_G(\tbS) \to G/H$ to the deck group $\Delta(\tbS, \bS)$ is an isomorphism of groups.
\end{lemma}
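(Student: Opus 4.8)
The plan is to identify the deck group $\Delta(\tbS,\bS)$ with the quotient $\Delta_\bS/\Gamma_\bS$ and to show that, under this identification, the derivative $D$ becomes the homomorphism induced by the holonomy $\holhom\colon\Delta_\bS\to G$. Once this is done, injectivity reduces to the defining property $\Gamma_\bS=\holhom^{-1}(H)$, and surjectivity reduces to the genuineness hypothesis.

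First I would set up the covers. By construction $\bS=\bU/\Delta_\bS$ and $\tbS=\bU/\Gamma_\bS$, and since $H\triangleleft G$ and $\holhom$ is a homomorphism, $\Gamma_\bS=\holhom^{-1}(H)$ is normal in $\Delta_\bS$. The space $\bUs$ is the universal cover of the nonsingular part $\tbSs=\bUs/\Gamma_\bS$, so $\bU$ is also the universal branched cover of $\tbS$, and a fixed developing map $\Dev\colon\bU\to\RR^2$ serves as a developing map for $\tbS$ in the sense used to define its charts. Standard covering space theory applied to $\bSs$, extended over the singular set, shows $\tbS\to\bS$ is a normal cover and yields a surjective homomorphism $\Delta_\bS\to\Delta(\tbS,\bS)$ with kernel $\Gamma_\bS$; write $\delta$ for the deck transformation of $\tbS\to\bS$ induced by $\tilde\delta\in\Delta_\bS$. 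Each such $\delta$ is an isometry lifting to $\tilde\delta\colon\bU\to\bU$, so by \eqref{eq:holonomy} it satisfies \eqref{eq:qts mapping} with $h=\holhom(\tilde\delta)\in G$; hence $\delta\in\Aut_G(\tbS)$ and the derivative $D(\delta)$ is defined, so the restriction of $D$ to $\Delta(\tbS,\bS)$ makes sense.

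Next I would compute $D(\delta)$. Applying the definition \eqref{eq:derivative} of the derivative with source and target both equal to $\tbS$, with lift $F=\tilde\delta$, and with the developing map $\Dev$ on both sides, the relation $\Dev\circ\tilde\delta=\holhom(\tilde\delta)\circ\Dev$ from \eqref{eq:holonomy} shows $D(\delta)=\holhom(\tilde\delta)H\in G/H$. Thus $D|_{\Delta(\tbS,\bS)}$ is precisely the map $\Delta_\bS/\Gamma_\bS\to G/H$ induced by $\holhom$; it is well defined and a homomorphism because $\holhom(\Gamma_\bS)\subseteq H$ and $\holhom$ is a homomorphism. For injectivity, if $D(\delta)$ is trivial then $\holhom(\tilde\delta)\in H$, so $\tilde\delta\in\holhom^{-1}(H)=\Gamma_\bS$, whence $\delta=\id$. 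For surjectivity, genuineness of $\bS$ with respect to $H$ says exactly that $\holhom(\Delta_\bS)\to G/H$ is onto, so every coset $gH$ equals $\holhom(\tilde\delta)H=D(\delta)$ for some $\tilde\delta$.

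The hard part will be the bookkeeping in the second and third paragraphs rather than any genuine difficulty: one must check that $\Dev$ legitimately plays the role of a developing map for $\tbS$ when invoking \eqref{eq:derivative}, and that the normal-cover structure of $\tbS\to\bS$ (hence the surjection $\Delta_\bS\to\Delta(\tbS,\bS)$ with kernel $\Gamma_\bS$) survives the passage from $\bSs$ to the branched cover $\bS$. Both are routine consequences of the construction of $\tbS=\bU/\Gamma_\bS$ in the preceding subsection, and everything else is a direct unwinding of the definitions of $\holhom$, of genuineness, and of the derivative.
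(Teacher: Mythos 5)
Your proposal is correct and takes essentially the same route as the paper: use that $\bU$ (with the same developing map) serves as the universal branched cover of both $\bS$ and $\tbS$, observe that $D(\delta')=\holhom(\tilde\delta)H$ by comparing \eqref{eq:holonomy} with \eqref{eq:derivative}, and then read off injectivity from $\Gamma_\bS=\holhom^{-1}(H)$ and surjectivity from genuineness. The only difference is that you spell out explicitly why the deck transformations of $\tbS\to\bS$ lie in $\Aut_G(\tbS)$ (so that $D$ is defined on them), a point the paper leaves implicit.
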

\begin{proof}
The universal branched cover $\bU$ of $\bS$ also serves as such for $\tbS$, and the same developing map can be used. As $\tbS \to \bS$ is normal, each deck transformation $\delta \in \Delta(\bU, \bS)$ descends to a deck transformation $\delta' \in \Delta(\tbS, \bS)$. 
Comparing the developing map versions of the definition of holonomy in \eqref{eq:holonomy} and of the derivative in \eqref{eq:derivative}, we see that 
$D(\delta')=\holhom(\delta)H \in G/H$. Since $\bS$ is genuine, the restriction of $D$ to $\Delta(\tbS, \bS)$ is surjective. By definition of $\Gamma_\bS$, we have $\Gamma_\bS$ is the kernel of $D$, but covering space theory also tells us that $\Delta(\tbS, \bS)$ is $\Delta_\bS/\Gamma_\bS$ so the restriction of $D$ to $\Delta(\tbS, \bS)$ is an isomorphism as claimed.
\end{proof}

\begin{thm}
\label{minimal cover}
Let $H$ be a normal subgroup of $G \subset \Isom_+(\RR^2)$. Suppose $\bS$ and $\bT$ are Euclidean cone surfaces with structure group $G$ and $H$, respectively. Suppose also that $\bS$ is genuine with respect to $H$. Let $\pi_\bS:\tbS \to \bS$ be a minimal $H$-covering. Then if $f:\bT \to \bS$ is a $G$-mapping, then there is a unique $H$-mapping $\tilde f:\bT \to \tbS$ such that $f=\pi_\bS \circ \tilde f$. In particular, any two minimal $H$-coverings of $\bS$ are $H$-isomorphic.
\end{thm}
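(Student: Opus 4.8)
The plan is to lift $f$ through the covering $\pi_\bS$ in the standard way of covering-space theory, applied to the punctured surfaces, and then to correct the resulting map by a deck transformation of $\tbS$ so that it becomes an $H$-mapping rather than merely a $G$-mapping. Throughout, I would use that the universal branched cover $\bU$ of $\bS$ is simultaneously one for $\tbS$, with the same developing map $\Dev_\bS:\bU\to\RR^2$; fix also a developing map $\Dev_\bT:\bU_\bT\to\RR^2$ for $\bT$, and recall that $\Gamma_\bS=\holhom_\bS^{-1}(H)$ and $\tbS=\bU/\Gamma_\bS$.

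First I would build a lift $F:\bU_\bT\to\bU$ of $f$: lift the restriction $f:\bTs\to\bSs$ along the universal covers (possible since the source is simply connected), observe it is a local isometry, hence $1$-Lipschitz for the path metrics, and extend it continuously to the metric completions. Since $f$ is a $G$-mapping, \eqref{eq:qts mapping} provides an $h_0\in G$ with $\Dev_\bS\circ F=h_0\cdot\Dev_\bT$. For each $\delta\in\Delta_\bT$ with image $\delta'=F_\ast(\delta)\in\Delta_\bS$, equation \eqref{eq:holonomy and maps} gives $\holhom_\bS(\delta')=h_0\,\holhom_\bT(\delta)\,h_0^{-1}$; since $\bT$ has structure group $H$ and $H$ is normal in $G$, this lies in $H$, so $\delta'\in\Gamma_\bS$. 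Hence $F_\ast(\Delta_\bT)\subset\Gamma_\bS$, so $F$ descends to a continuous map $\tilde f_0:\bT=\bU_\bT/\Delta_\bT\to\bU/\Gamma_\bS=\tbS$ with $\pi_\bS\circ\tilde f_0=f$. Because $F$ is a local isometry off the singular loci, $\tilde f_0$ is a $G$-mapping between the $H$-structures $\bT$ and $\tbS$, and, taking $\Dev_\bS$ as the developing map of $\tbS$, equation \eqref{eq:derivative} identifies its derivative as $D(\tilde f_0)=h_0 H\in G/H$.

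Next I would invoke that $\bS$ is genuine with respect to $H$: by \Cref{deck group and derivative}, $D$ restricts to an isomorphism $\Delta(\tbS,\bS)\to G/H$, so there is a unique deck transformation $\phi\in\Delta(\tbS,\bS)$ with $D(\phi)=h_0^{-1}H$. Set $\tilde f=\phi\circ\tilde f_0$. Then $\pi_\bS\circ\tilde f=\pi_\bS\circ\tilde f_0=f$ because $\phi$ is a deck transformation over $\bS$, and by multiplicativity of $D$ its derivative $D(\tilde f)=D(\phi)\,D(\tilde f_0)=h_0^{-1}H\cdot h_0 H$ is trivial, so $\tilde f$ is an $H$-mapping; this gives existence. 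For uniqueness, given two $H$-mappings $\tilde f_1,\tilde f_2:\bT\to\tbS$ over $f$, their restrictions to $\bTs$ are lifts of $f|_{\bTs}$ through the normal covering $\tbS\setminus\Sigma\to\bSs$, hence (as $\bTs$ is connected) differ by a deck transformation $\psi\in\Delta(\tbS,\bS)$; multiplicativity and triviality of $D(\tilde f_1),D(\tilde f_2)$ give $D(\psi)$ trivial, so $\psi=\id$ by injectivity in \Cref{deck group and derivative}, and $\tilde f_1,\tilde f_2$ agree on the dense set $\bTs$, hence everywhere. The last sentence is then formal: if $\rho:\tbS\to\bS$ and $\rho':\tbS'\to\bS$ are two minimal $H$-covers, applying the statement to the $G$-mappings $\rho'$ and $\rho$ produces $H$-mappings $g:\tbS'\to\tbS$ and $g':\tbS\to\tbS'$ with $\rho\circ g=\rho'$ and $\rho'\circ g'=\rho$; then $g\circ g'$ and $\id_{\tbS}$ are both $H$-mappings over $\rho$, so $g\circ g'=\id$ by uniqueness, and symmetrically $g'\circ g=\id$, making $g$ an $H$-isomorphism.

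I expect the crux to be the correction step in the third paragraph. The naive covering-space lift $\tilde f_0$ is only guaranteed to be a $G$-mapping, and the obstruction to its being an $H$-mapping is the well-defined class $D(\tilde f_0)\in G/H$; what makes this class removable by a deck transformation of $\tbS$ is precisely that $\bS$ is genuine with respect to $H$, which is exactly the content of \Cref{deck group and derivative}. A secondary and more routine point is the bookkeeping needed to pass between the punctured surfaces---where covering-space theory and the normality of $\tbS\setminus\Sigma\to\bSs$ are available---and the completed cone surfaces, which the $1$-Lipschitz extension to metric completions and the density of $\bTs$ in $\bT$ take care of.
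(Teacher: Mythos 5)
Your proof is correct and takes essentially the same route as the paper's: lift $f$ via covering-space theory to a $G$-mapping $\bT\to\tbS$, then correct by the unique deck transformation whose derivative cancels $D(\tilde f_0)$, which exists because $D$ restricts to an isomorphism $\Delta(\tbS,\bS)\to G/H$ by \Cref{deck group and derivative}. You spell out explicitly why the covering-space lift exists (normality of $H$ guarantees $F_\ast(\Delta_\bT)\subset\Gamma_\bS$) and why uniqueness holds, both of which the paper leaves to the reader; otherwise the two arguments coincide.
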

\begin{proof}
By standard covering space theory, there is a lift of $f$ to a map $\tilde f_1: \bT \to \tbS$. Because it is a lift, $\tilde f_1$ is a $G$-mapping. By \Cref{deck group and derivative}, we can find a deck transformation $\tilde \delta \in \Delta(\tbS, \bS)$ such that $D(\tilde f_1)=D(\tilde \delta)$. Then $\tilde f = \tilde \delta^{-1} \circ \tilde f_1$ is an $H$-mapping as desired. Uniqueness follows from the fact that there is exactly one deck transformation $\tilde \delta$ satisfying $D(\tilde f_1)=D(\tilde \delta)$ by \Cref{deck group and derivative}.

If we have two minimal $H$-coverings $\tbS_1$ and $\tbS_2$, then we get unique $H$-mappings $f=\tbS_1 \to \tbS_2$ and $g=\tbS_2 \to \tbS_1$. The composition $g \circ f$ gives an $H$-mapping $\tbS_1\to \tbS_1$ that descends to the identity, and therefore $g \circ f$ is the identity by uniqueness. Since $f \circ g$ is also the identity, both $f$ and $g$ are $H$-isomorphisms.
\end{proof}

\begin{remark}
The hypotheses of normality and genuine are necessary in \Cref{minimal cover}, despite their under emphasis in the statement of the closest result in \cite{GJ00}, their Theorem 3.6. Without these hypotheses, the two results above break. As an example relevant to this paper, the cube $\bC$ can be considered to be a Euclidean cone surface with structure group $G=\Isom_+(\ZZ^2)$. If $H=\Isom_+\big((2\ZZ)^2\big)$, then the minimal $H$-cover (as defined above) is a torus $\tilde \bC$ obtained as the double branched cover branched over four vertices. (The four vertices are colored the same in a graph-theoretic $2$-coloring of vertices. As there are two choices, the vertices branched over depend on the choice of the developing map.) The Necker cube surface $\bN$ is the universal cover of $\tilde \bC$.
\end{remark}

The following is an important special case: \compat{The tildes in $\tilde \pi$ seem superfluous below.}

\begin{cor}
\label{commute-branch-cover}
Suppose $H$ is a normal subgroup of $G \subset \Isom_+(\RR^2)$. Suppose $\bS$ and $\bT$ are 
Euclidean cone surfaces with structure group $G$ and that both are genuine with respect to $H$. Let $\tilde \pi_\bS:\tbS \to \bS$ and $\tilde \pi_\bT:\tbT \to \bT$ be their respective minimal $H$-coverings. Then if $f:\bS \to \bT$ is a $G$-covering map, there is a unique $H$-mapping $\tilde f:\tbS \to \tbT$ such that the following diagram commutes:
%\begin{center}
\begin{equation}
\label{eq:commutative}
\begin{tikzcd}
	\tbS \arrow[r,"\tilde f",dashed] \arrow[d,"\tilde \pi_\bS"]
		& \tbT \arrow[d,"\tilde \pi_\bT"] \\
	 \bS \arrow[r,"f"]
	& \bT.
\end{tikzcd}
\end{equation}
%\end{center}
Furthermore, the map $\tilde f$ is an $H$-covering map. If in addition $H$ is finite index in $G$, then for all singular $\tilde x \in \tbS$, we have
$$\localdegree_f\big(\tilde \pi_\bS(\tilde x)\big)=1 \quad \text{implies} \quad \localdegree_{\tilde f}(\tilde x)=1.$$
\end{cor}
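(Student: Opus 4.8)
The plan is to obtain $\tilde f$ from the universal property of minimal covers, \Cref{minimal cover}, and then verify the two additional assertions by restricting to the nonsingular loci and examining small loops around the singular points.

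\emph{Construction of $\tilde f$.} The composition $f \circ \tilde\pi_\bS \colon \tbS \to \bT$ is a $G$-mapping, being a composition of $G$-coverings. Since $\bT$ is genuine with respect to $H$ with minimal $H$-covering $\tilde\pi_\bT \colon \tbT \to \bT$, \Cref{minimal cover}, applied with $\bT$ in the role of the covered surface and $\tbS$ (whose structure group is $H$) in the role of the $H$-structured source, yields a unique $H$-mapping $\tilde f \colon \tbS \to \tbT$ with $\tilde\pi_\bT \circ \tilde f = f \circ \tilde\pi_\bS$. This is precisely the commutativity of \eqref{eq:commutative}, and the uniqueness of this lift is the asserted uniqueness of $\tilde f$.

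\emph{$\tilde f$ is an $H$-covering map.} Write $\bSs, \bTs, \tbSs, \tbTs$ for the complements of the respective singular sets. Since $f$ is a $G$-covering, $f^{-1}(\bTs) = \bSs$ and $f \colon \bSs \to \bTs$ is an unbranched covering; likewise $\tilde\pi_\bS$ and $\tilde\pi_\bT$ restrict to unbranched normal $G/H$-coverings $\tbSs \to \bSs$ and $\tbTs \to \bTs$. Using $\tilde\pi_\bT \circ \tilde f = f \circ \tilde\pi_\bS$ together with the fact that the $H$-mapping $\tilde f$ carries nonsingular points to nonsingular points, one checks $\tilde f^{-1}(\tbTs) = \tbSs$; then $\tilde f \colon \tbSs \to \tbTs$ is a morphism between two connected covering spaces of $\bTs$, hence itself a covering map. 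It remains to treat the discrete singular set: near a singular $\tilde x \in \tbS$, the map $\tilde f$ restricts to a covering of a punctured neighborhood of $\tilde y := \tilde f(\tilde x)$; this covering must be finite-sheeted, since otherwise $f \circ \tilde\pi_\bS = \tilde\pi_\bT \circ \tilde f$ would have infinite local degree at $\tilde x$, contradicting that $f \circ \tilde\pi_\bS$ is a branched covering. A finite covering of a punctured cone neighborhood fills in over the cone point, so $\tilde f$ is a branched covering near $\tilde x$ with finite local degree. Together with the nonsingular part this shows $\tilde f$ is a covering map in the branched sense; being also an $H$-mapping, it is an $H$-covering map. (We use here that $\tbS$ and $\tbT$ are genuine Euclidean cone surfaces, which is automatic once $H$ has finite index in $G$.)

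\emph{The local-degree implication.} Fix a singular $\tilde x \in \tbS$ and set $x = \tilde\pi_\bS(\tilde x)$, $\tilde y = \tilde f(\tilde x)$, $y = \tilde\pi_\bT(\tilde y) = f(x)$. Local degrees are multiplicative along compositions of branched coverings, so
\[
\localdegree_{\tilde\pi_\bT}(\tilde y)\,\localdegree_{\tilde f}(\tilde x) = \localdegree_f(x)\,\localdegree_{\tilde\pi_\bS}(\tilde x).
\]
Assume $\localdegree_f(x) = 1$. Because $\bS$ and $\bT$ are genuine, $\tilde\pi_\bS$ and $\tilde\pi_\bT$ are normal $G$-coverings with deck group $G/H$, and the local degree of such a cover at a preimage of a singular point equals the order in $G/H$ of the class $\holhom(\eta)H$, where $\eta$ is a small positively oriented loop around that point. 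The hypothesis $\localdegree_f(x)=1$ means $f$ is a local homeomorphism at $x$, so a primitive small loop $\eta$ around $x$ is carried by $f$ to a primitive small loop $f \circ \eta$ around $y$; hence $\localdegree_{\tilde\pi_\bT}(\tilde y)$ may be computed using $f\circ\eta$. By \eqref{eq:holonomy and maps} we have $\holhom(f\circ\eta) = h\,\holhom(\eta)\,h^{-1}$ for some $h \in G$, so $\holhom(f\circ\eta)H$ and $\holhom(\eta)H$ are conjugate in $G/H$ and therefore have equal order; that is, $\localdegree_{\tilde\pi_\bT}(\tilde y) = \localdegree_{\tilde\pi_\bS}(\tilde x)$. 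The displayed identity now forces $\localdegree_{\tilde f}(\tilde x) = 1$.

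\emph{Expected main obstacle.} The construction and the covering-map assertion are essentially formal consequences of \Cref{minimal cover} and elementary covering space theory. The real content is the last step, and its crux is twofold: identifying the local degree of a minimal $H$-cover at a singular point with the order in $G/H$ of the holonomy class of a small loop (so that local degrees become group-theoretic invariants to which \eqref{eq:holonomy and maps} applies), and the observation that $\localdegree_f(x) = 1$ forces $f$ to send a \emph{primitive} loop around $x$ to a \emph{primitive} loop around $f(x)$, so that the two holonomy classes are genuinely conjugate rather than merely related by passing to a power.
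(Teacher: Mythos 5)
Your proposal is correct and follows essentially the same route as the paper's proof: obtain $\tilde f$ from the universal property in \Cref{minimal cover}, deduce the covering property by comparing local degrees, and prove the local-degree implication by identifying $\localdegree_{\tilde\pi_\bS}(\tilde x)$ and $\localdegree_{\tilde\pi_\bT}(\tilde y)$ with the order in $G/H$ of the holonomy of a small loop, using \eqref{eq:holonomy and maps} and the fact that $\localdegree_f(x)=1$ forces $f$ to carry a primitive loop around $x$ to a primitive loop around $y$. Your version is slightly streamlined in that it proves $\localdegree_{\tilde\pi_\bT}(\tilde y)=\localdegree_{\tilde\pi_\bS}(\tilde x)$ uniformly and then invokes multiplicativity, whereas the paper first disposes of the case $\localdegree_{\tilde\pi_\bS}(\tilde x)=1$ and then argues by contradiction in the remaining case; the underlying ingredients are identical.
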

\begin{proof}
We apply \Cref{minimal cover} to the composition $f \circ \tilde \pi_\bS:\tbS \to \bT$. It gives a unique $H$-mapping $\tilde f:\tbS \to \tbT$ such that $f \circ \tilde \pi_\bS = \tilde \pi_\bT \circ \tilde f$ as desired. It follows from this identity and the definition of covering map that $\tilde f$ is a covering map because both $f \circ \tilde \pi_\bS$ and $\tilde \pi_\bT$ are. Thus $\tilde f$ is an $H$-covering map.

Now suppose that $H$ is finite index in $G$.
Let $x \in \bS$ be such that $\localdegree_f(x)=1$ and let $\tilde x \in \tilde \pi_\bS^{-1}(x)$. 
We will show $\localdegree_{\tilde f}(\tilde x)=1$. Let $y=f(x)$ and $\tilde y=\tilde f(\tilde x)$. 
Observe that from 
\eqref{eq:commutative}, we know
\begin{equation}
\label{eq:commutative local degrees}
\localdegree_{f \circ \tilde \pi_\bS}(\tilde x)=\localdegree_{\tilde \pi_\bS}\big(\tilde x) \cdot \localdegree_f\big(x)=\localdegree_{\tilde f}\big(\tilde x) \cdot \localdegree_{\tilde \pi_\bT}\big(\tilde y).
\end{equation}
Thus if $\localdegree_{\tilde \pi_\bS}\big(\tilde x)=1$, then all the other local degrees in the equation above must also be one.
Assume to the contrary that $\localdegree_{\tilde \pi_\bS}\big(\tilde x)=k>1$.
Then we can find a simply connected neighborhood $\tilde U$ of $\tilde x$ whose only singular point is $\tilde x$
such that the restriction of $\tilde \pi_\bS$ to $\tilde U$ is a branched cover of a simply connected neighborhood $U$ of $x$.
Let $\gamma:\RR/\ZZ \to U$ be a simple loop enclosing $x$.
Recall that $\holhom_\circ(\gamma)$ is a conjugacy class of $G$, say the conjugacy class of $g_1 \in G$.
By definition of the minimal $H$-cover, $k$ is the smallest positive integer such that $g_1^k \in H$. 
Then by \eqref{eq:holonomy and maps}, we see that $\holhom_\circ(f \circ \gamma)$ and $\holhom_\circ(\gamma)$ are equal as conjugacy classes.
Since $\tilde U$ has only one singular point $\tilde x$, the only possible singular point in $\tilde f(\tilde U)$ is $\tilde y$.
Since $f \circ \gamma$ is a simple curve in $f(U)$ enclosing $\tilde y$, by the definition of minimal $H$-cover we must also have
$\localdegree_{\tilde \pi_\bT}(\tilde y)=k$. Therefore by \eqref{eq:commutative local degrees} we can conclude that
$\localdegree_{\tilde f}\big(\tilde x)=1$.
\end{proof}

The following explains that in the situation above, with the additional assumption that the covering $\bS \to \bT$ is normal (in the sense that the deck group acts transitively on fibers of nonsingular points), the deck groups of the coverings $\bS \to \bT$ and $\tbS \to \tbT$ are isomorphic.
\begin{lemma}
	\label{deck groups}
Suppose $H$ is a normal subgroup of $G \subset \Isom_+(\RR^2)$. Suppose $\bS$ and $\bT$ are Euclidean cone surfaces with structure group $G$
and that $f:\bS \to \bT$ is a normal $G$-covering map. Suppose further that $\bS$ and $\bT$ are genuine with respect to $H$ and let $\tilde \pi_\bS:\tbS \to \bS$ and $\tilde \pi_\bT:\tbT \to \bT$ be their respective minimal $H$-coverings. Let $\tilde f:\tbS \to \tbT$ be the $H$-covering map from \Cref{commute-branch-cover}. Then $\tilde f$ is also a normal covering, and there is an isomorphism between the deck groups $\iota:\Delta(\bS,\bT) \to \Delta(\tbS,\tbT)$ such that for every $\delta \in \Delta(\bS,\bT)$, the following diagram commutes:
\begin{equation}
\label{eq:deck groups}
	\begin{tikzcd}
		\tbS \arrow[d,"\tilde \pi_\bS"] \arrow[r,"\iota(\delta)"] & \tbS \arrow[d,"\tilde \pi_\bS"] \\
		\bS \arrow[r,"\delta"]
		& \bS.
	\end{tikzcd}
\end{equation}
\end{lemma}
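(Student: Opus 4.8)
The plan is to realize all four Euclidean cone surfaces appearing here as quotients of a single universal branched cover, which turns the lemma into elementary group theory inside one deck group. Let $\bU$ be the universal branched cover of $\bS$. Since $f$ is in particular an $H$-mapping, its definition forces $f(\bSs)\subseteq\bTs$ and $f(\bS\smallsetminus\bSs)\subseteq\bT\smallsetminus\bTs$, so $f^{-1}(\bTs)=\bSs$ and $f$ restricts to an honest (unbranched) normal covering $\bSs\to\bTs$. Hence the universal cover of $\bSs$ is also the universal cover of $\bTs$, and passing to metric completions shows that $\bU$ is simultaneously a universal branched cover of $\bT$; as in the proof of \Cref{deck group and derivative}, it also serves as the universal branched cover of $\tbS$ and of $\tbT$. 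Fix a developing map on $\bU$ and let $\holhom:\Pi\to G$ be the corresponding holonomy homomorphism, where $\Pi=\Delta(\bU,\bT)$. Set $\Pi_\bS=\Delta(\bU,\bS)$, $\Gamma=\holhom^{-1}(H)$, and $\Gamma_\bS=\Pi_\bS\cap\Gamma$. Then $\bS=\bU/\Pi_\bS$, $\bT=\bU/\Pi$, and by the construction of minimal covers $\tbS=\bU/\Gamma_\bS$, $\tbT=\bU/\Gamma$, with all the maps in \eqref{eq:commutative} (including $\tilde f$, by the uniqueness in \Cref{commute-branch-cover}) equal to the evident quotient maps.

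Next comes the group theory. Normality of $f$ gives $\Pi_\bS\trianglelefteq\Pi$, and normality of $H$ in $G$ gives $\Gamma\trianglelefteq\Pi$, hence $\Gamma_\bS=\Pi_\bS\cap\Gamma\trianglelefteq\Pi$; in particular $\Gamma_\bS\trianglelefteq\Gamma$, which already shows $\tilde f$ is a normal covering with deck group $\Gamma/\Gamma_\bS$, while $\Delta(\bS,\bT)\cong\Pi/\Pi_\bS$. The second isomorphism theorem supplies an injection $\Gamma/\Gamma_\bS\hookrightarrow\Pi/\Pi_\bS$, $g\Gamma_\bS\mapsto g\Pi_\bS$, with image $\Pi_\bS\Gamma/\Pi_\bS$. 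The crucial point is that $\Pi_\bS\Gamma=\Pi$: because $\bS$ is genuine with respect to $H$, the composite $\holhom(\Pi_\bS)\to G/H$ is onto, so every coset of $\Gamma$ in $\Pi$ meets $\Pi_\bS$. Therefore the injection is an isomorphism, and I define $\iota:\Delta(\bS,\bT)\to\Delta(\tbS,\tbT)$ to be its inverse; concretely, a deck transformation $\delta$ of $f$ lifts to some $\hat\delta\in\Pi$, which may be written $\hat\delta=\sigma\gamma$ with $\sigma\in\Pi_\bS$ and $\gamma\in\Gamma$, and $\iota(\delta)$ is the deck transformation of $\tilde f$ induced by $\gamma$. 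To verify the square \eqref{eq:deck groups}, one notes that $\iota(\delta)$ and $\delta$ are induced by the same $\gamma\in\Gamma$ acting on $\bU$ (since $\sigma$ becomes trivial after the quotient $\bU\to\bS$), and every arrow in the square is a quotient of $\bU$ by a subgroup, compatibly with this action, so commutativity drops out upon quotienting and using surjectivity of $\bU\to\tbS$.

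The main obstacle is exactly the identity $\Pi=\Pi_\bS\Gamma$, equivalently the surjectivity of $\iota$: this is where the hypothesis that $\bS$ (and not merely $\bT$) is genuine with respect to $H$ is essential, since without it one retains only the embedding $\Delta(\tbS,\tbT)\hookrightarrow\Delta(\bS,\bT)$. The one other point requiring care, the identification of $\bU$ as a common universal branched cover of all four surfaces, rests entirely on the containments $f(\bSs)\subseteq\bTs$ and $f(\bS\smallsetminus\bSs)\subseteq\bT\smallsetminus\bTs$ that the definition of an $H$-mapping builds in, together with completeness, which lets one extend deck transformations over the discrete singular sets.
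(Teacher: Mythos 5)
Your argument is correct, and it takes a genuinely different and cleaner route than the paper's proof. The paper keeps two universal branched covers $\bU_\bS$ and $\bU_\bT$, links them by a $G$-isomorphism $F$ and two separate developing maps related by a constant $g_0$, assembles a three-column commutative diagram, and invokes the Snake Lemma in the category of (not-necessarily-abelian) groups to extract the isomorphism. You collapse everything onto a single universal branched cover $\bU$ with one developing map, identify all four surfaces as quotients $\bU/\Pi_\bS$, $\bU/\Pi$, $\bU/\Gamma_\bS$, $\bU/\Gamma$ with $\Gamma_\bS=\Pi_\bS\cap\Gamma$, and reduce the lemma to the second isomorphism theorem together with the observation that genuineness of $\bS$ forces $\Pi=\Pi_\bS\Gamma$. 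That observation is exactly the point that makes the paper's horizontal rows exact, so the two proofs carry the same essential content, but yours avoids the non-abelian Snake Lemma (which the paper itself flags as lacking a good reference) and avoids tracking the intertwining isomorphism $F$ and the conjugating element $g_0$. Your formula for $\iota$ (write a lift $\hat\delta=\sigma\gamma$, send $\delta\mapsto\gamma\Gamma_\bS$) is well-defined precisely because $\Pi_\bS\cap\Gamma=\Gamma_\bS$ and $\Pi_\bS\trianglelefteq\Pi$, and the commutativity of \eqref{eq:deck groups} drops out because $\sigma\in\Pi_\bS$ dies in $\bU/\Pi_\bS$. One tiny terminological slip: in the second sentence you call $f$ an ``$H$-mapping'' when it is a $G$-covering map (hence a $G$-mapping); the only facts you use from this, namely $f(\bSs)\subset\bTs$ and $f(\bS\smallsetminus\bSs)\subset\bT\smallsetminus\bTs$, are part of the definition of a $G$-mapping, so the substance is unaffected.
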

\begin{proof}
Let $\bU_\bS$ and $\bU_\bT$ be the universal branched covers of $\bT$ and $\bS$. Let $\Delta_\bS$ and $\Delta_\bT$ denote the respective deck groups.
Fix developing maps $\Dev_\bS$ and $\Dev_\bT$.
Let $\holhom_\bT:\Delta_\bT \to G$ be the holonomy homomorphism corresponding to $\Dev_\bT$.
Let $\holhom'_\bT$ denote the composition of $\holhom_\bT$ with the natural homomorphism $G \to G/H$. Similarly define $\holhom'_\bS$.
By definition of the minimal $H$-cover, the deck groups of the coverings $\bU_\bT \to \tbT$
and $\bU_\bS \to \tbS$ are given by
\begin{equation}
\label{eq:deck U}
\Delta(\bU_\bT, \tbT)=\ker \holhom'_\bT \subset \Delta_\bT
\quad \text{and} \quad
\Delta(\bU_\bS, \tbS)=\ker \holhom'_\bS \subset \Delta_\bS.
\end{equation}

The $G$-covering $f:\bS \to \bT$ lifts to a $G$-isomorphism $F:\bU_\bS \to \bU_\bT$ between their universal branched covers. Since $\bS$ covers $\bT$, the map $F$ induces a homomorphism $F_\ast:\Delta_\bS \to \Delta_\bT$; see \Cref{sect:natural maps}. As in \eqref{eq:holonomy and maps}, we have 
$$\holhom_T \circ F_\ast(\delta) = g_0 \cdot \holhom_S(\delta) \cdot g_0^{-1}.$$
for all $\delta \in \Delta_\bS$. It follows then that $F_\ast$ carries $\Delta(\bU_\bS, \tbS)$ into $\Delta(\bU_\bT, \tbT)$, giving us the following commutative diagram:
\begin{equation}
\label{eq:snake diagram}
	\begin{tikzcd}
	\Delta(\bU_\bS, \tbS) \arrow[r,hookrightarrow] \arrow[d,"F_\ast"] & 
	\Delta_\bS \arrow[d,"F_\ast"] \arrow[r,two heads,"\holhom_\bS'"] &
	G/H \arrow[d,"c_0"]
	\\
	\Delta(\bU_\bT, \tbT) \arrow[r,,two heads,hookrightarrow] & 
	\Delta_\bT \arrow[r,two heads,"\holhom_\bT'"] &
	G/H 
	\end{tikzcd}
\end{equation}
where $c_0$ denotes the action of conjugation by $g_0$ on $G/H$.

The horizontal rows of \eqref{eq:snake diagram} are exact, by \eqref{eq:deck U} and the assumption the surfaces $\bS$ and $\bT$ are genuine with respect to $H$. We will see that the vertical arrows have the property that images are normal in the codomain. This is clearly true for $c_0$, which is an automorphism of $G/H$. It is true for $F_\ast:\Delta_\bS \to \Delta_\bT$ because $\bS$ is a normal cover of $\bT$. 
To prove the same for the leftmost arrow, observe by commutativity that for any $\delta_\bS \in \Delta_\bS$, the conditions
 $\delta_\bS \in \ker \holhom'_\bS$ and
the condition that $F_\ast(\delta_\bS) \in \ker \holhom'_\bT$
are equivalent. Therefore, $F_\ast\big(\Delta(\bU_\bS, \tbS)\big)=F_\ast(\Delta_\bS \cap \ker \holhom'_\bS)$.
Thus,
$$F_\ast\big(\Delta(\bU_\bS, \tbS)\big)=F_\ast(\Delta_\bS) \cap \ker \holhom'_\bT=
F_\ast(\Delta_\bS) \cap \Delta(\bU_\bT, \tbT),
$$
which is normal in $\Delta(\bU_\bT, \tbT)$ because $F_\ast(\Delta_\bS)$ is normal in $\Delta_\bT$. By an application of the Snake Lemma in the category of non-abelian groups (see for instance \cite{DGJL}), since $c_0$ is an isomorphism we get the exact sequence
$$1 \to \Delta(\bU_\bT, \tbT)/F_\ast\big(\Delta(\bU_\bS, \tbS)\big) \xrightarrow{\phi} \Delta_\bT/F_\ast(\Delta_\bS) \to 1.$$
Therefore, $\phi$ is an isomorphism of groups. Further $\phi$ is the map induced by the inclusions, i.e., for $\delta_\bT \in \Delta(\bU_\bT, \tbT)$, 
$$\phi:\delta_\bT F_\ast\big(\Delta(\bU_\bS, \tbS)\big) \mapsto 
\delta_\bT F_\ast(\Delta_\bS),$$
which is well-defined since $\Delta(\bU_\bS, \tbS) \subset \Delta_\bS$.
\notepat{I learned about the Snake Lemma in the category of groups from Wikipedia: {\url{https://en.wikipedia.org/wiki/Snake_lemma\#In_the_category_of_groups}}, but there was no reference!}

Now note that the deck group of $f:\bS \to \bT$ is given by $\Delta(\bS,\bT)=\Delta_\bT/F_\ast(\Delta_\bS)$, where the deck group action is induced by the action of $\delta_\bT \in \Delta_\bT$ on $S$ given by 
\begin{equation}
\label{eq:deck action}
s \mapsto \tilde \pi_\bS \circ F^{-1} \circ \delta_\bT \circ F(\tilde s)
\end{equation}
where $\tilde s \in \bU_\bS$ is a lift of $s \in S$. Similarly, we have
$$\Delta(\tbS,\tbT)=\Delta(\bU_\bT, \tbT)/F_\ast\big(\Delta(\bU_\bS, \tbS)\big),$$
and the deck group action on $\tbS$ is induced by the same action \eqref{eq:deck action} but with $\delta_\bT \in \Delta(\bU_\bT, \tbT) \subset \Delta_\bT$, the lift taken from $\tbS$ to $\bU_\bS$, and the projection given by the covering $\bU_\bS \to \tbS$. Thus, $\iota=\phi^{-1}$ is the desired identification between the deck groups and 
\eqref{eq:deck groups} commutes because a $\delta_\bT \in \Delta(\bU_\bT, \tbT)$ acts on $\bS$ and $\tbS$ in compatible ways via \eqref{eq:deck action} and the corresponding equation for $\Delta(\tbS,\tbT)$.
\end{proof}

\subsection{Group actions}

Let $H \subset \Isom_+(\RR^2)$ and let $G \subset \Homeo_+(\RR^2)$ be a subgroup such that $H$ is a normal subgroup of $G$. Let $\bS$ be a Euclidean cone surface with structure group $H$. Let $g \in G$. We can form a new Euclidean cone surface with structure group $G$ by postcomposing each chart of $\bS$ with the affine map $g:\RR^2 \to \RR^2$. Denote the resulting surface $g(\bS)$, which has the same underlying topological surface but a new atlas. 

The identity map gives a map $\varphi_g: \bS \to g(\bS)$ with derivative $gH$. This map is a $G$-isomorphism, but typically $\bS$ and $g(\bS)$ will not be $H$-isomorphic. However if $h \in H$, then $\varphi_h:\bS \to h(\bS)$ is an $H$-isomorphism. Thus, the $G$-action on Euclidean cone surfaces with structure group $H$ (up to isomorphism) descends to an action of the quotient group $G/H$.

The subgroup $V(\bS)=D\big(\Aut_G(\bS)\big) \subset G/H$ is known as the Veech group of the surface. We have $g H \in V(\bS)$ if and only if $\bS$ and $g(\bS)$ are $H$-isomorphic. In other words, the $H$-isomorphism classes in the $G$-orbit of $\bS$ are parameterized by the cosets in $(G/H)/V(\bS)$.

In this paper $H$ will be either the translation group $G_1$ or the half-translation group $G_2$, and $G=\Aff_{+1}(\RR^2)$, the group of orientation-preserving and area-preserving affine maps of the plane. Thus $G/H$ is either 
$$\Aff_{+1}(\RR^2)/G_1\cong \SL(2,\RR) \quad \text{or} \quad \Aff_{+1}(\RR^2)/G_2\cong \PSL(2,\RR).$$

\subsection{Paths}
Let $J:\Aff_+(\RR^2) \to \GL(2,\RR)$ be the usual Jacobian derivative from multivariable calculus. This is a group homomorphism whose kernel is $G_1$, the group of translations. 

Let $\bS$ be a Euclidean cone surface with structure group $G \subset \Isom_+(\RR^2)$ and let $\Dev_\bS:\bU \to \RR^2$ be a choice of developing map. 
The {\em displacement} of a path $\gamma:[a,b] \to \bS$ is the element of the quotient space $J(G) \bs \RR^2$ obtained from a lift $\tilde \gamma:[a,b] \to \bU$ by the formula 
$$\disp(\gamma) = J(G) \cdot [\Dev_\bS \circ \tilde \gamma(b) - \Dev_\bS \circ \tilde \gamma(a)].$$
The value $\disp(\gamma)$ is independent of the choices of $\Dev_\bS$ and $\tilde \gamma$. The displacement of $\gamma$ is also invariant under homotopies within $\bSs$ relative to the endpoints.

On a translation surface, $J(G)=J(G_1)$ is trivial, so displacement is a vector. For a half-translation surface, $J(G)=J(G_2) = \{\pm I\}$ so displacement takes values in $\{\pm I\} \bs \RR^2$. For a quarter-translation surface, $J(G)=J(G_4)=C_4$, so displacement takes values in the quotient of $\RR^2$ by $C_4$.

\begin{prop}
\label{affinemap-derivative}
Let $G$ be a normal subgroup of $\Aff_+(\RR^2)$ (e.g., $G \in \{G_1, G_2\}$).
Suppose both $\bS$ and $\bT$ are Euclidean cone surfaces with structure group $G$, and let $\gamma:[a,b] \to \bS$ be a path. 
Then for any $\Aff_+(\RR^2)$-mapping $\phi:\bS \to \bT$ with derivative $D \phi \in \Aff_+(\RR^2)/G$ we have
$\disp(\phi \circ \gamma) = J(D \phi) \cdot \disp(\gamma)$.
\end{prop}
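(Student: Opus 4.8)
The plan is to unwind both displacements through the developing maps and use that an affine map acts linearly on difference vectors. Fix developing maps $\Dev_\bS:\bU_\bS \to \RR^2$ and $\Dev_\bT:\bU_\bT \to \RR^2$ with which the two displacements are computed. Since $\phi$ is an $\Aff_+(\RR^2)$-mapping between surfaces whose structure groups both lie in $\Aff_+(\RR^2)$, the discussion in \Cref{sect:natural maps} supplies a lift $\Phi:\bU_\bS \to \bU_\bT$ (so that $\pi_\bT \circ \Phi = \phi \circ \pi_\bS$) together with an element $h \in \Aff_+(\RR^2)$ satisfying $\Dev_\bT \circ \Phi = h \cdot \Dev_\bS$ as in \eqref{eq:qts mapping}; by \eqref{eq:derivative} the derivative is the coset $D\phi = hG \in \Aff_+(\RR^2)/G$. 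Writing $h(\bx) = J(h)\bx + \bx_0$ for its linear part $J(h)\in\GL(2,\RR)$ and translation vector $\bx_0$, the one elementary fact I need is $h(\bx) - h(\by) = J(h)(\bx - \by)$.

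Next I would take a lift $\tilde\gamma:[a,b] \to \bU_\bS$ of $\gamma$ as in the definition of $\disp$. Then $\Phi \circ \tilde\gamma$ is a lift of $\phi\circ\gamma$ to $\bU_\bT$, since $\pi_\bT \circ \Phi \circ \tilde\gamma = \phi \circ \pi_\bS \circ \tilde\gamma = \phi \circ \gamma$. Computing $\disp(\phi\circ\gamma)$ with this lift and $\Dev_\bT$, then substituting $\Dev_\bT \circ \Phi = h \cdot \Dev_\bS$ and using the difference-vector identity, gives
\[
\disp(\phi\circ\gamma) = J(G)\cdot\big[h\big(\Dev_\bS\circ\tilde\gamma(b)\big) - h\big(\Dev_\bS\circ\tilde\gamma(a)\big)\big] = J(G)\cdot\Big(J(h)\big[\Dev_\bS\circ\tilde\gamma(b) - \Dev_\bS\circ\tilde\gamma(a)\big]\Big),
\]
and the bracketed vector on the right, taken modulo $J(G)$, is exactly $\disp(\gamma)$ (independence of the choice of lift is already established).

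Finally I would match this against the claimed formula. Because $J:\Aff_+(\RR^2)\to\GL(2,\RR)$ is a surjective homomorphism and $G$ is normal in $\Aff_+(\RR^2)$, the image $J(G)$ is normal in $\GL(2,\RR)$, so $J$ descends to $\bar J:\Aff_+(\RR^2)/G \to \GL(2,\RR)/J(G)$ and $\GL(2,\RR)/J(G)$ acts on $J(G)\backslash\RR^2$ by $\big(A\,J(G)\big)\cdot\big(J(G)\bx\big) = J(G)(A\bx)$. Under this, $J(D\phi) = \bar J(hG) = J(h)\,J(G)$, so the displayed equality reads precisely $\disp(\phi\circ\gamma) = J(D\phi)\cdot\disp(\gamma)$. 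The only step needing genuine care, and the one place the normality hypothesis on $G$ is actually used, is checking that this action of $\GL(2,\RR)/J(G)$ on $J(G)\backslash\RR^2$ is well defined (independent of the coset representative $A$ and of the representative $\bx$); everything else is direct substitution.
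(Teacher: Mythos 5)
Your argument matches the paper's proof essentially step for step: lift $\gamma$ to $\bU_\bS$ and $\phi$ to a map $\bU_\bS\to\bU_\bT$, invoke $\Dev_\bT\circ\tilde\phi = h\circ\Dev_\bS$ as in \eqref{eq:qts mapping}, apply the identity $h(\bx)-h(\by)=J(h)(\bx-\by)$, and finish using normality of $G$. The one small overclaim is that $J$ is surjective onto $\GL(2,\RR)$ --- its image is only $\GL^+(2,\RR)$ --- but this does not matter for the argument, since all that is needed is that $J(h)$ normalizes $J(G)$, which the paper deduces directly from $h$ normalizing $G$ together with $J$ being a homomorphism.
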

\begin{proof}
Let $\tilde \gamma:[a,b] \to \bU_\bS$ be a lift of $\gamma$, and $\tilde \phi:\bU_\bS \to \bU_\bT$ be a lift of $\phi$.
Then $\tilde \phi \circ \tilde \gamma$ is a lift of $\phi \circ \gamma$. Let $\alpha \in \Aff_+(\RR^2)$ be such that
$\Dev_\bT \circ \tilde \phi = \alpha \circ \Dev_\bS$. We have $D(\phi)=\alpha G \in \Aff_+(\RR^2)/G$; see \eqref{eq:derivative}.
Then we have
$$\Dev_\bT \circ \tilde \phi \circ \tilde \gamma(b) - \Dev_\bS \circ \tilde \phi \circ \tilde \gamma(a)=
J(\alpha)[\Dev_\bS \circ \tilde \gamma(b) - \Dev_\bS \circ \tilde \gamma(a)].$$
Thus $\disp(\phi \circ \gamma) = J(G) \cdot J(\alpha)[\Dev_\bS \circ \tilde \gamma(b) - \Dev_\bS \circ \tilde \gamma(a)].$
Since $J$ is a group homomorphism and $\alpha$ normalizes $G$, we see that $J(G) \cdot J(\alpha)=J(\alpha) \cdot J(G)$. Thus,
$\disp(\phi \circ \gamma) = J(D \phi) \cdot \disp(\gamma)$ as claimed.
\end{proof}

\subsection{Geodesics}
\label{sect:geodesics}
A {\em geodesic} in a cone surface $\bS$ is a path $\gamma:I \to \bS$, where $I \subset \RR$ is an interval with interior $I^\circ$, such that 
$\gamma|_{I^\circ}$ is a nonconstant affine map in local coordinates. Letting $\tilde \gamma:I \to \tilde \bU$ be a lift to the universal branched cover, this locally affine condition is equivalent to the composition $\Dev \circ \tilde \gamma:I \to \RR^2$ being a nonconstant affine map.

Assuming $I$ is a closed and bounded interval, the {\em length} of $\gamma$ is the same as the length of $\disp(\gamma)$, denoted $\|\disp(\gamma)\|$. 
Let $\SS^1$ be the collection of equivalence classes of nonzero vectors in $\RR^2$ up to the action of positive scalars. (Each equivalence class then has a unique representative on the unit circle.)
The {\em direction} $\dir(\gamma)$ of a geodesic $\gamma$ is the image of $\disp(\gamma)$ in $\SS^1$. That is, $\dir(\gamma) \in \SS^1/J(G)$. This slightly generalizes the notion of direction introduced in the introduction where in a square-tiled surface, where we have $G=\Isom_+(\ZZ^2)$ and $J(G)=C_4$, the group consisting of rotations of the unit circle by multiples of $90^\circ$.

By \eqref{eq:qts mapping}, we see that if $\phi:\bS \to \bT$ is an $\Aff_+(\RR^2)$-mapping between cone surfaces, and $\gamma:I \to \bS$ is a geodesic, then $\phi \circ \gamma$ is also a geodesic. \Cref{affinemap-derivative} tells us how the displacement $\disp(\phi \circ \gamma)$ relates to $\disp(\gamma)$. The group $\GL(2,\RR)$ acts on $\SS^1$, and we have
$$\dir(\phi \circ \gamma)=J\big(D(\phi)\big) \cdot \dir(\gamma)$$
when the hypotheses of \Cref{affinemap-derivative} are satisfied. It is similarly easy to figure out the length of $\phi \circ \gamma$
is $c$ times the length of $\gamma$, where $c$ is the length of the image of a unit vector in the direction of $\gamma$ under the action of $D(\phi) \in \GL(2,\RR)$.

A {\em saddle connection} is a geodesic $\gamma:[a,b] \to \bS$ such that $\gamma(a)$ and $\gamma(b)$ are singular points.

A {\em bi-infinite geodesic} is a geodesic $\gamma:\RR \to \bSs$. 
A bi-infinite geodesic is {\em periodic} if there is a $p>0$ such that $\gamma(t+p)=\gamma(t)$ for all $t \in \RR$.  A {\em period} for the geodesic is an interval of the form $[a,a+p]$. A {\em closed geodesic} is the restriction of a periodic geodesic to a period. The {\em length} of a periodic geodesic is the same as the length of a corresponding closed geodesic built with minimal period.

A bi-infinite geodesic is {\em drift-periodic} if it is not periodic and there is a $G$-automorphism $\phi:\bS \to \bS$ and a $p>0$ such that $\gamma(t+p)=\phi \circ \gamma(t)$ for all $t \in \RR$.
A {\em period} is an interval $[a,a+p]$ as before.

\subsection{Periodic geodesics on square-tiled surfaces} 
Let $\bS$ be a square-tiled surface. Then $\bS$ can be considered to have structure group $\Isom_+(\ZZ^2)$. And we use this in our definition of drift-periodicity. Observe that if $\gamma:\RR \to \bSs$ is a periodic or drift-periodic geodesic, and $\tilde \gamma:I \to \bU$ is a lift then there is a $G$-automorphism $\tilde \phi: \bU \to \bU$ and a $p$ such that $\tilde \gamma(t+p)=\tilde \phi \circ \tilde \gamma(t)$ for all $t \in \RR$. Then for any developing map, there is a $g \in \Isom_+(\ZZ^2)$ such that $\Dev \circ \tilde \gamma(t+p)=\Dev \circ \tilde \gamma(t)$ for all $t$. Since $g$ translates along the line $\Dev \circ \tilde \gamma(\RR)$, we know $g$ is a translation by some $(m,n) \in \ZZ$. We conclude that:

\begin{prop}
\label{periodic implies rational}
Suppose $\gamma:\RR \to \bSs$ is a unit speed periodic or drift-periodic geodesic. Then there is a nonzero $(m,n) \in \ZZ^2$ such that for any period $[a,p]$, we have 
$\disp(\gamma|_{[a,a+p]})=C_4 \cdot (m,n).$
In particular, the length of a period of $\gamma$ is $\sqrt{m^2+n^2}$ and $\dir(\gamma)$ has rational slope (concretely, 
$\dir(\gamma)=\{\frac{\pm(m,n)}{\|(m,n)\|}, \frac{\pm(-n,m)}{\|(m,n)\|}\} \in \SS^1/C_4$).
\end{prop}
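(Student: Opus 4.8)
The plan is to follow the sketch given just before the statement, spelling out the lifting step. In both cases one has a $p>0$ and an $\Isom_+(\ZZ^2)$-automorphism $\phi:\bS\to\bS$ with $\gamma(t+p)=\phi\circ\gamma(t)$ for all $t$, taking $\phi=\id$ when $\gamma$ is periodic. Since $\gamma$ takes values in $\bSs$, it lifts (once an initial point over $\gamma(0)$ is chosen) to a unique geodesic $\tilde\gamma:\RR\to\bUs\subset\bU$. The automorphism $\phi$ lifts to an $\Isom_+(\ZZ^2)$-automorphism of $\bU$; I would choose among its lifts the one, $\tilde\phi$, with $\tilde\phi\big(\tilde\gamma(0)\big)=\tilde\gamma(p)$, which is possible after composing an arbitrary lift with a deck transformation. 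Then $t\mapsto\tilde\phi\circ\tilde\gamma(t)$ and $t\mapsto\tilde\gamma(t+p)$ are two lifts of the path $t\mapsto\gamma(t+p)$ that agree at $t=0$, hence coincide by uniqueness of path lifting; thus $\tilde\phi\circ\tilde\gamma(t)=\tilde\gamma(t+p)$ for all $t\in\RR$.

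Next, fix a developing map $\Dev:\bU\to\RR^2$ sending vertices of squares into $\ZZ^2$, so that $\Isom_+(\ZZ^2)$ is the structure group realized by these charts. By \eqref{eq:qts mapping} there is a $g\in\Isom_+(\ZZ^2)$ with $\Dev\circ\tilde\phi=g\circ\Dev$. Because $\gamma$ is unit speed and $\Dev$ is a local isometry, $\Dev\circ\tilde\gamma(t)=\bp+t\bu$ for some $\bp\in\RR^2$ and unit vector $\bu$. Developing the identity $\tilde\phi\circ\tilde\gamma(t)=\tilde\gamma(t+p)$ then gives $g(\bp+t\bu)=\bp+(t+p)\bu$ for all $t\in\RR$.

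The remaining argument is elementary. Writing $g(x)=Ax+b$ with $A\in\SO(2)$ and comparing the displayed relation at two distinct values of $t$ yields $A\bu=\bu$, so $A=I$ (a rotation fixing a nonzero vector is trivial), and then $g$ is translation by $p\bu$. As $g$ preserves $\ZZ^2$, the vector $(m,n):=p\bu$ lies in $\ZZ^2$, and it is nonzero since $p>0$. Finally, for any $a$,
$$\disp\big(\gamma|_{[a,a+p]}\big)=C_4\cdot\big(\Dev\circ\tilde\gamma(a+p)-\Dev\circ\tilde\gamma(a)\big)=C_4\cdot(p\bu)=C_4\cdot(m,n),$$
which has length $\|p\bu\|=\sqrt{m^2+n^2}$, and whose image in $\SS^1/C_4$ is the $C_4$-orbit of $(m,n)/\|(m,n)\|$, namely $\{\pm(m,n)/\|(m,n)\|,\ \pm(-n,m)/\|(m,n)\|\}$; in particular $\dir(\gamma)$ has rational slope.

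I do not anticipate a real obstacle. The one place to be careful is the choice of the lift $\tilde\phi$ so that $\tilde\phi\circ\tilde\gamma(t)=\tilde\gamma(t+p)$ holds exactly (not merely up to a deck transformation) — this is what lets us develop the relation cleanly — together with the small point that $\gamma\subset\bSs$ is precisely what guarantees the lift $\tilde\gamma$ exists and is determined by its initial point.
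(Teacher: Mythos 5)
Your proof is correct and follows the same route that the paper sketches in the paragraph immediately preceding the proposition: lift $\gamma$ and the automorphism $\phi$ to the universal branched cover, develop the relation $\tilde\phi\circ\tilde\gamma(t)=\tilde\gamma(t+p)$, observe the developed isometry $g\in\Isom_+(\ZZ^2)$ preserves a line and hence must be a translation by an integer vector, and read off the displacement. Your added care in choosing the lift $\tilde\phi$ so that the relation $\tilde\phi\circ\tilde\gamma(t)=\tilde\gamma(t+p)$ holds on the nose (not merely up to a deck transformation), and the explicit argument that the linear part $A$ fixes $\bu\neq0$ and hence is trivial, are precisely the details the paper leaves implicit; there is no gap.
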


The following is a partial converse to the result above. If $I \subset \RR$ is an interval and $\gamma:I \to \bS$ is a geodesic, then we call $\gamma$ {\em maximal} if there is no geodesic extension of $\gamma$ to a larger interval. On a square-tiled surface, a maximal geodesic is either bi-infinite or intersects a singularity.

\begin{prop}
\label{rational implies}
Let $\bS$ be a square-tiled surface and $\gamma:I \to \bSs$ be a geodesic of rational slope. 
\begin{enumerate}
\item If $\bS$ is built from finitely many squares, then $\gamma$ can be extended to either a periodic geodesic or a saddle connection. 
\item If there is a group $H$ of $\Isom_+(\ZZ^2)$-mappings $\bS \to \bS$ that send squares to squares such that there are only finitely orbits of squares under $H$, then $\gamma$ can be extended to a periodic geodesic, a drift-periodic geodesic, or a saddle connection.
\end{enumerate}
\end{prop}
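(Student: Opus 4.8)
The plan is to reduce both parts to a pigeonhole argument for a ``directional return map'' on a finite set of crossing states; part (1) will be the special case $H=\{\id\}$ of part (2).

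First I would set up the combinatorics. Extend $\gamma$ to a maximal geodesic; a lift to $\bU$ develops under $\Dev$ to a straight segment $L\subset\RR^2$ of rational slope, and we may take its direction to be a primitive integer vector $(q,p)$ with $\gcd(|p|,|q|)=1$ (with $(q,p)=(0,1)$ when the slope is $\infty$). Call a \emph{crossing state} the data of the square of $\bS$ the geodesic is about to enter, the point of that square's boundary where it enters, and the incoming direction, all recorded modulo the $C_4$-ambiguity in identifying a square with $[0,1]^2$. The rationality hypothesis enters here: the points of $L$ lying on the integer grid reduce, modulo $\ZZ^2$, to a finite set $P$ that depends only on $\gamma$, and after enlarging $P$ by the $C_4$-action it is invariant under all of $\Isom_+(\ZZ^2)$. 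Hence in case (1) the set $X$ of crossing states met by $\gamma$ is finite (finitely many squares times $P$), and in case (2) the set of crossing states met by $\gamma$ and all its $H$-translates is finite modulo $H$ (finitely many $H$-orbits of squares times $P$). The ``next crossing'' map $T$ is a partial bijection of $X$ by reversibility of geodesics; since each element of $H$ is an isometry of $\bS$ preserving the square tiling, it commutes with $T$, and since it acts freely on crossing states (an orientation-preserving isometry fixing a square together with one boundary point and direction there is the identity) the map $T$ descends to a partial bijection $\bar T$ of the finite quotient $X/H$.

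Next I would run the pigeonhole. A maximal geodesic of rational slope is either bi-infinite and meets no singularity, or has at least one endpoint at a singularity. If bi-infinite, its bi-infinite itinerary of crossing states projects to a $\bar T$-orbit in the finite set $X/H$ that is genuinely periodic, so after some $\ell\ge 1$ crossings $\gamma$ returns to its initial crossing state up to a fixed $h\in H$ (with $h=\id$ when $H$ is trivial); freeness forces the same $h$ at every crossing, and since $h$ is an isometry and a unit-speed geodesic is determined by one regular point and direction, the time elapsed over any $\ell$ consecutive crossings is a single $p>0$ with $\gamma(t+p)=h\circ\gamma(t)$ for all $t$. Thus $\gamma$ is periodic if $h=\id$ and drift-periodic otherwise (if $h\ne\id$ but $\gamma$ still happens to be periodic, the ``periodic'' conclusion applies). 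If both endpoints of $\gamma$ lie at singularities it is a saddle connection. The only remaining case --- $\gamma$ infinite backward but meeting a singularity at some time $T$ in forward time, and its time-reversal --- I expect to exclude: the pigeonhole on the backward itinerary produces $h\in H$ and $q>0$ with $h\circ\gamma(t)=\gamma(t-q)$ on a backward ray; as both sides are geodesics on $(-\infty,T]$ agreeing on an interval, they agree at $t=T$, so $h\circ\gamma(T)=\gamma(T-q)$, which is impossible since the left side is a singularity ($h$ being an isometry) while the right side is a regular point ($T-q<T$, and $\gamma$ meets no singularity before time $T$).

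The step I expect to require the most care --- rather than cleverness --- is the bookkeeping in the second paragraph: checking that the set of boundary positions visited really is finite and stable under $\Isom_+(\ZZ^2)$ despite the possibly irrational offset of $L$ and the $C_4$-ambiguity in the square charts, and that the next-crossing dynamics is a partial bijection descending correctly to $X/H$. After that, the argument is pure pigeonhole together with the rigidity of geodesics; combined with \Cref{periodic implies rational}, this would complete the characterization of slopes carrying periodic and drift-periodic geodesics on a square-tiled surface.
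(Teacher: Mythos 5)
Your proof is correct, and it takes a genuinely different route from the paper's. The paper reduces part (1) to the standard classification of geodesics on the square torus: it observes that every square-tiled surface is a branched cover of the quarter-translation quotient $\mathbf Q = \TT^2/C_4$, and that the dichotomy ``periodic or saddle connection'' lifts along finite branched covers; part (2) is then deduced by passing to the quotient $\bS/H$, applying (1) there, and examining the monodromy element $h\in H$ by which the lift of a closed geodesic fails to close up. Your approach replaces all of this with a self-contained pigeonhole on the finite (modulo $H$) set of crossing states, using reversibility of the next-crossing map and its $H$-equivariance; part (1) is then literally the case $H=\{\id\}$. What each buys: the paper's proof is shorter once the $\TT^2$ fact is taken as known, while yours reproves it from scratch, treats (1) and (2) uniformly, and sidesteps any discussion of what $\bS/H$ looks like as an object, which is slightly delicate when elements of $H$ rotate squares in place. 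Your handling of the mixed case --- one end singular, the other infinite --- via the endpoint contradiction $h\circ\gamma(T)=\gamma(T-q)$ is clean and is not something the paper needs to spell out. Two points worth making airtight if you write this up: the crossing state should be read as the intrinsic unit tangent vector based at the boundary point and pointing into the square (so that the ``$C_4$-ambiguity'' is pure bookkeeping and does not collapse distinct geodesic germs --- your freeness remark shows this is what you intend, but the phrasing risks being read as a genuine quotient that loses information); the finiteness count should record a choice of direction within the $C_4$-orbit as well as the boundary offset in $P$, a bounded factor that changes nothing but is needed for the set $X$ to actually determine germs; and a one-line remark is needed for the degenerate slopes $0$ and $\infty$, where a geodesic can run along an edge and a convention must be fixed for which adjacent square the crossing state references.
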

\begin{proof}
Statement (1) in the case of $\bS=\TT^2$ the square torus is a standard fact (see for instance \cite[Chapter 2]{Tabachnikov}). Consider the quotient ${\mathbf Q}$ obtained by $\TT^2$ modulo a $90^\circ$ rotation fixing the center of the square as depicted in \Cref{fig:quotient}. The surface $Q$ is the $\Isom_+(\ZZ^2)$ surface $\RR^2/\Isom_+(\ZZ^2)$. It has three singularities: one cone singularity with cone angle $\pi$ (corresponding to the identified midpoints of edges of the square) and two cone singularities with cone angle $\frac{\pi}{2}$ (corresponding to the center of the square and the identified vertices of the square). Since it is a quotient quarter-translation surface, $\mathbf Q$ also satisfies (1). Observe that every square-tiled surface built from finitely many squares is a finite branched cover of $\mathbf Q$ via the map that sends a square to the square making up $\TT^2$ followed by the quotient map $\TT^2 \to \mathbf Q$ as above. Since statement (1) holds for $\mathbf Q$, it also holds for every finite branched cover. This proves (1).

\begin{figure}[htb]
\centering
\includegraphics[width=5in]{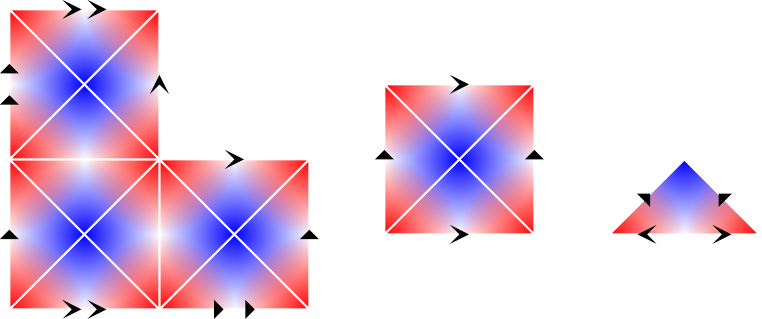}
\caption{From left to right: A square-tiled surface $\bH$, the square torus $\TT^2$, and its quotient $\mathbf Q$ from the proof of \Cref{rational implies}. Colors indicate the covering maps $\bH \to \mathbf Q$ and $\TT^2 \to \mathbf Q$.}
\label{fig:quotient}
\end{figure}

To see (2), observe that $\bS$ is still a branched cover of $\mathbf Q$. Since the group $H$ sends squares to squares, $\bS/H$ also covers $\mathbf Q$ and is a finite cover by hypothesis. Therefore $\bS/H$ satisfies (1). Given $\gamma$, let $\hat \gamma$ be the image in $\bS/H$. If $\hat \gamma$ extends to a saddle connection in $\bS/H$ then so must $\gamma$.
On the other hand if $\hat \gamma$ is a periodic geodesic on $\bS/H$ extends to a periodic geodesic, then we can extend $\gamma$ to one period of a lift and we can find an $h \in H$ such that this lift closes up modulo $h$. (That is, $h$ sends the starting point of the lift to the ending point and the starting tangent vector to the ending tangent vector.) The orbit of the lift under the subgroup $\langle h \rangle \subset H$ is the full extension of $\gamma$, so $\gamma$ extends to a periodic geodesic if $h$ is finite order and extends to a drift-periodic geodesic if $h$ is infinite order.
\end{proof}

\subsection{Algebraic intersection number} Let $\bS$ be a closed, oriented surface. We denote its integral homology by $H_1(\bS;\ZZ)$. \emph{Algebraic intersection} is a nondegenerate, symmetric bilinear form on $H_1(\bS;\ZZ):$
$$i:H_1(\bS;\ZZ)\times H_1(\bS;\ZZ)\rightarrow \ZZ$$
defined as the sum of signed intersections ($\pm 1$) over all intersection points of the two homology classes relative to the fixed orientation. Because of this definition, for any orientation-preserving homeomorphism $h:\bS \to \bS$, we have
$$i\big(h(\alpha), h(\beta)\big)= i(\alpha, \beta) \quad \text{for all $\alpha, \beta \in H_1(\bS;\ZZ)$.}$$

When $\bS$ has genus $g$, an ordered collection of classes in $H_1(\bS; \ZZ)$, $(\alpha_1, \ldots, \alpha_g; \beta_1, \ldots, \beta_g)$ is said to be a {\em standard symplectic basis} for $H_1(\bS; \QQ)$ if $i(\alpha_i, \alpha_j)=i(\beta_i, \beta_j)=0$ and $i(\alpha_i, \beta_j)=\delta_{ij}$ for all $i$ and $j$. It is a standard observation that any collection satisfying these conditions generates $H_1(\bS; \ZZ)$.

\subsection{Cylinders, cylinder decompositions, and multi-twists}
\label{sect:cylinders}
A {\em Euclidean cylinder} is a space $C_{I,\ell}=I\times \RR/\ell\ZZ$ where $I \subset \RR$ is an interval, with its natural local Euclidean structure. We call $\ell>0$ the {\em circumference}, and the length of $I$ is the {\em width} of the cylinder. Such a cylinder is foliated by the closed geodesics of the form $x \mapsto (x, t +\ell\ZZ)$ where $x$ is fixed and $t \in \RR/\ell \ZZ$ is the parameter. A {\em core curve} is a loop that is homotopic to such a geodesic. Assuming $I=[0,w]$, {\em left Dehn twist} of $C_{w,\ell}$ is the homeomorphism
$$\Dehn:C_{I,\ell} \to C_{I,\ell}; \quad (x,y+\ell\ZZ) \mapsto (x,\tfrac{\ell}{w}x + y + \ell \ZZ).$$
This map fixes all points in the boundary $\partial C_{I,\ell}$ and is an $\Aff_{+1}(\RR^2)$-isomorphism of the interior $C_{I,\ell}$ with its natural translation structure. The derivative of $\Dehn$ is determined by the {\em inverse modulus} 
$\ell/w$: $$D(\Dehn) = \begin{pmatrix}
1 & 0 \\
\frac{\ell}{w} & 1 \\
\end{pmatrix} \in \SL(2,\RR).$$
Furthermore, this map is normalized by the orientation-preserving isometry group of $C_{w,\ell}$. The {\em right Dehn twist} of $C_{w,\ell}$ is $\Dehn^{-1}$.

A \emph{cylinder} in a Euclidean cone surface $\bS$ is a continuous $\iota:C_{I,\ell} \to \bS$ whose restriction to the interior $C_{I, \ell}$ is an orientation-preserving isometric immersion (i.e., an $\Isom_+(\RR^2)$-mapping). By parallel transport, one can extend a closed geodesic to a cylinder:

\begin{prop}
Let $\bS$ be a Euclidean cone surface and $\gamma:\RR/\ZZ \to \bSs$ be a closed geodesic. Then, there is a cylinder $\iota:C_{I,\ell} \to \bS$ and an embedding $\epsilon:\RR/\ZZ \to C_{I,\ell}$ such that $\gamma=\iota \circ \epsilon$.
\end{prop}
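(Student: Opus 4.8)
The plan is to construct $\iota$ by flowing along the geodesics perpendicular to $\gamma$ on each side, over a width small enough to stay clear of the singular set $\Sigma$. First I would reparametrize $\gamma$ by arclength: since geodesics are locally affine they have constant speed, so writing $\ell>0$ for the length of $\gamma$ we obtain a unit-speed closed geodesic $\bar\gamma:\RR/\ell\ZZ \to \bSs$ with $\gamma(s+\ZZ)=\bar\gamma(\ell s+\ell\ZZ)$. The image $K=\bar\gamma(\RR/\ell\ZZ)$ is compact and disjoint from the closed discrete set $\Sigma$, hence $2\delta:=\dist(K,\Sigma)>0$. Because the metric on $\bS$ is complete, a unit-speed geodesic starting in $\bSs$ is defined on an interval at least as long as the distance from its starting point to $\Sigma$ (if it had a finite maximal time it would be Cauchy and limit to a point of $\Sigma$). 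In particular, letting $J$ denote rotation by $+90^\circ$ on tangent planes, well defined globally since $\bS$ is oriented, the geodesic through $\bar\gamma(t)$ in direction $J\bar\gamma'(t)$ is defined and remains in $\bSs$ for all times in $(-\delta,\delta)$.

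Next I would set $I=(-\delta,\delta)$ and define $\iota:C_{I,\ell}\to \bS$ by letting $\iota(x,t+\ell\ZZ)$ be the point at signed distance $x$ from $\bar\gamma(t)$ along the perpendicular geodesic, i.e.\ $\iota(x,t+\ell\ZZ)=\exp_{\bar\gamma(t)}\!\big(x\,J\bar\gamma'(t)\big)$; this is independent of the chosen representative of $t+\ell\ZZ$ because $\bar\gamma$, and hence $\bar\gamma'$, is $\ell$-periodic. To verify that $\iota$ is an orientation-preserving isometric immersion (equivalently an $\Isom_+(\RR^2)$-mapping) it suffices to argue locally: near a point $\bar\gamma(t_0)$, a flat chart of $\bSs$ develops $\bar\gamma$ to a unit-speed straight segment $t\mapsto \bp+t\bu$ and $J\bar\gamma'$ to the constant unit vector $\bu^\perp$ obtained by rotating $\bu$ by $+90^\circ$, so in this chart $\iota$ is $(x,t)\mapsto \bp+t\bu+x\bu^\perp$, an orientation-preserving isometry onto a strip. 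Thus in local charts $\iota$ agrees with an element of $\Isom_+(\RR^2)$, continuity is automatic from this description, and so $\iota$ is a cylinder in $\bS$ in the sense defined above.

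Finally I would take $\epsilon:\RR/\ZZ \to C_{I,\ell}$ to be $\epsilon(s+\ZZ)=(0,\,\ell s+\ell\ZZ)$, the embedding of $\RR/\ZZ$ onto the core circle $\{0\}\times\RR/\ell\ZZ$ (injective because multiplication by $\ell$ is a bijection $\RR/\ZZ\to\RR/\ell\ZZ$). Then $\iota\circ\epsilon(s+\ZZ)=\exp_{\bar\gamma(\ell s)}(0)=\bar\gamma(\ell s+\ell\ZZ)=\gamma(s+\ZZ)$, so $\gamma=\iota\circ\epsilon$, as required.

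The only point I expect to need genuine care is the interaction of completeness with the uniform bound $\dist(K,\Sigma)>0$: one must be sure that every perpendicular geodesic really is defined on all of $(-\delta,\delta)$ and never hits $\Sigma$ there, which is what makes $\iota$ well defined, and then one must confirm from the flat charts that the resulting map is locally isometric rather than merely smooth. Everything else is bookkeeping; in particular nothing goes wrong if $\gamma$ happens to wind several times around a shorter primitive closed geodesic, since $\iota$ is only required to be an immersion, not an embedding.
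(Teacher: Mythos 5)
Your construction is the standard ``perpendicular flow / parallel transport'' argument, which is exactly what the paper gestures at (``by parallel transport, one can extend a closed geodesic to a cylinder'') before stating the proposition without proof; the reparametrization, the use of $\dist(K,\Sigma)>0$, the appeal to completeness, and the choice of $\epsilon$ are all correct.

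One step deserves more care than the write-up gives it: your verification that $\iota$ is an $\Isom_+(\RR^2)$-mapping is phrased as an argument ``near a point $\bar\gamma(t_0)$,'' but that chart only covers a neighborhood of the core circle, not a general point $\iota(x_0,t_0)$ with $|x_0|$ comparable to $\delta$. You must check the local-isometry condition at every point of $C_{I,\ell}$, and for $x_0\neq 0$ the chart near $\bar\gamma(t_0)$ does not reach $\iota(x_0,t_0)$. The fix is routine but should be said: analytically continue a chart at $\bar\gamma(t_0)$ along the perpendicular geodesic $x\mapsto\iota(x,t_0)$ (legitimate precisely because that geodesic stays in $\bSs$ for $|x|<\delta$), obtaining a chart on a thin strip-shaped neighborhood of $\iota([0,x_0]\times\{t_0\})$ in which your formula $(x,t)\mapsto\bp+t\bu+x\bu^\perp$ is literally valid near $(x_0,t_0)$. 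Equivalently, one can argue via Jacobi fields: on a flat surface Jacobi fields are affine, and since $\nabla_{\bar\gamma'}(J\bar\gamma')=J\nabla_{\bar\gamma'}\bar\gamma'=0$, the variation field $\partial_t\iota$ is the parallel transport of $\bar\gamma'(t_0)$ along each normal geodesic, hence remains unit and orthogonal to $\partial_x\iota$, which is exactly the local isometry condition. With this point made explicit, the proof is complete.
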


We say a cylinder $f_1:C_1 \to \bS$ is {\em contained} in a cylinder $f_2:C_2 \to \bS$ if there is an isometric immersion $g:C_1 \to C_2$ such that $f_1=f_2 \circ g$. (Note that $g$ might be a covering map, so this does not completely coincide with the usual notion of containment.) Two cylinders should be considered the same if they are contained in each other. A cylinder $f:C \to \bS$ is {\em maximal} if whenever it is contained in another cylinder, that cylinder is the same.

If $\bS$ is a half-translation surface or a translation surface, then each cylinder has a well-defined {\em slope} given by the directions of the geodesic core curves. Thus, cylinders cannot self-intersect (because intersecting parallel geodesics coincide). In other words, every cylinder covers factors through an injective cylinder map. Taken up to precomposition with an isometry $C_{w,\ell} \to C_{w,\ell}$, injective cylinder maps $\iota:C_{w,\ell} \to \bSs$ can be identified with their images $\iota(C_{w,\ell})$ and so it is natural to think of a cylinder as an open subset of $\bSs$. Since the left Dehn twist of a cylinder fixes all points on its boundary, it can be extended to a homeomorphism of the whole surface $\bS$, acting trivially on the complement. The action of the left twist on homology is given by 
\begin{equation}
\label{eq:Dehn action}
\Dehn_\ast:\alpha \mapsto \alpha - i(\gamma, \alpha) \gamma,
\end{equation}
where $\gamma$ represents the homology class of a core curve (with either orientation). The formula for the action of the right twist is the same, but with $-$ changed to $+$.

A {\em cylinder decomposition} of a half-translation or translation surface $\bS$ is a collection of pairwise disjoint, parallel open cylinders such that the union of the closures of the cylinders is the whole surface. If all the cylinders in a decomposition have the same inverse modulus $m$, then there is an $f \in \Aff_{+1}(\bS)$ such that:
\begin{enumerate}
\item For every cylinder $\iota:C_{w, \ell} \to \bSs$ in the decomposition, we have $f \circ \iota = \Dehn_{w,\ell} \circ \iota$.
\item Every point of $\bS$ not contained in a cylinder is fixed by $f$.
\end{enumerate}
We call $f$ the {\em left multi-twist} in the cylinder decomposition. The derivative of $f$ is 
$$R \cdot \begin{pmatrix}
1 & 0 \\
m & 1 \\
\end{pmatrix} \cdot R^{-1}.$$
where $R$ is a rotation sending vertical lines to lines whose slope is the same as the cylinders in the decomposition. The {\em right multi-twist} is $f^{-1}$. These ideas go back to Thurston \cite{T88}. For more background see \cite{HubertSchmidt}.

\section{The Necker cube surface and its isometries}
\label{sect:isom}
Let $\bone=(1,1,1) \in \RR^3$. For $i \in \{-1,0,1\}$, let $\Lambda_i=\{\bv \in \ZZ^3:~\bv \cdot \bone = i\}$, where $\cdot$ denotes the usual dot product. The {\em Necker cube surface} $\bN$ is the union of all unit squares whose vertices lie in $\Lambda_{-1} \cup \Lambda_0 \cup \Lambda_1$. We'll say such a square is a {\em constituent square}.

Let $S_3 \subset O(3)$ denote the group of $3 \times 3$ permutation matrices, and let $\pm S_3=S_3 \cup \{-P:~P \in S_3\}$. 

From our definition of $\bN$ it follows that any isometry $f:\RR^3 \to \RR^3$ that preserves $\ZZ^3$ and the plane $\bone^\perp$ (consisting of vectors perpendicular to $\bone$), and whose derivative satisfies $Df(\bone)=\pm \bone$ stabilizes $\bN$.  From this it can be seen that any $f$ in the following group of isometries of $\RR^3$ must preserve $\bN$:
\begin{equation}
\label{eq:isometry}
\Isom(\RR^3, \bN) = \big\{\bx \mapsto P \bx + \bv
\text{ : $P \in \pm S_3$ and $\bv \in \Lambda_0$.}\big\}.
\end{equation}
The notation suggests that $\Isom(\RR^3, \bN)$ is the group of isometries of $\RR^3$ that preserve $\bN$; This is the case but we don't prove it until \Cref{isometries}.
Given $f \in \Isom(\RR^3, \bN)$, the matrix $P$ can be recovered by taking the derivative; $Df=P$.
%\notepat{There is a natural short exact sequence associated to $\Isom(\RR^3, \bN)$. Details commented out.}
%There is a short exact sequence \notepat{Maybe we don't need this?}
%\begin{equation}
%\label{eq:short exact}
%1 \to \Lambda_0 \xrightarrow{\textrm{translation}} \Isom(\RR^3, \bN) \xrightarrow{D} G \to 1.
%\end{equation}

Observe that $S_0 = [-1,0] \times \{0\} \times [0,1]$ is a constituent square. Moreover:

\begin{prop}
\label{isometries and squares}
The set of constituent squares is $\big\{f(S_0):~f \in \Isom(\RR^3, \bN)\big\}.$
\end{prop}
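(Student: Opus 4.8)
The plan is to prove the two inclusions separately.

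\medskip
\noindent\textbf{The inclusion $\{f(S_0):f\in\Isom(\RR^3,\bN)\}\subseteq\{\text{constituent squares}\}$.}
I would write $f(\bx)=P\bx+\bv$ with $P\in\pm S_3$ and $\bv\in\Lambda_0$. Since $f$ is an isometry, $f(S_0)$ is a unit square, so it suffices to check that its four vertices lie in $\Lambda_{-1}\cup\Lambda_0\cup\Lambda_1$. For any vertex $\bw$ of $S_0$ one computes $f(\bw)\cdot\bone=(P\bw)\cdot\bone+\bv\cdot\bone=\bw\cdot(P^\top\bone)$, using $\bv\in\Lambda_0$, and $P^\top\bone=\pm\bone$ because $P\in\pm S_3$ (permutation matrices and their negatives fix $\pm\bone$), so $f(\bw)\cdot\bone=\pm(\bw\cdot\bone)$. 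The vertices of $S_0$ are $(-1,0,0),(0,0,0),(-1,0,1),(0,0,1)$, with $\bone$-values $-1,0,0,1$, so their images have $\bone$-values in $\{-1,0,1\}$, giving the inclusion.

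\medskip
\noindent\textbf{The reverse inclusion: classification of constituent squares.}
Let $T$ be a constituent square. Its vertices lie in $\ZZ^3$, so its edge vectors are integer vectors of length $1$, hence each is $\pm\be_\ell$ for some $\ell$; since adjacent edges of a square are orthogonal, the two edge directions are $\pm\be_i$ and $\pm\be_j$ for distinct $i,j\in\{1,2,3\}$, so $T$ lies in a plane parallel to a coordinate plane and is a unit cell of the integer lattice in that plane. Thus, for a suitable corner $\bp\in\ZZ^3$, the vertex set of $T$ is $\{\bp,\,\bp+\be_i,\,\bp+\be_j,\,\bp+\be_i+\be_j\}$. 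Pairing with $\bone$ gives the values $c,\,c+1,\,c+1,\,c+2$ with $c=\bp\cdot\bone$; since we need $c\ge-1$ and $c+2\le1$, this forces $c=-1$, i.e.\ $\bp\in\Lambda_{-1}$. So a constituent square is exactly the data of a base point $\bp\in\Lambda_{-1}$ together with an unordered pair $\{\be_i,\be_j\}$ of coordinate directions, equivalently the omitted index $k\in\{1,2,3\}$.

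\medskip
\noindent\textbf{The reverse inclusion: constructing $f$.}
Given such a $T$, note $S_0=\{\bp_0+s\be_1+t\be_3:s,t\in[0,1]\}$ with $\bp_0=(-1,0,0)\in\Lambda_{-1}$. I would choose a permutation $\pi$ of $\{1,2,3\}$ with $\pi(2)=k$, so that $\{\pi(1),\pi(3)\}=\{i,j\}$; let $P\in S_3$ be its permutation matrix ($P\be_\ell=\be_{\pi(\ell)}$) and set $\bv=\bp-P\bp_0$. Then $f(\bx)=P\bx+\bv$ satisfies $f(S_0)=\{\bp+s\be_{\pi(1)}+t\be_{\pi(3)}:s,t\in[0,1]\}=T$. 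Finally $f\in\Isom(\RR^3,\bN)$ because $\bv\in\ZZ^3$ and $\bv\cdot\bone=\bp\cdot\bone-\bp_0\cdot(P^\top\bone)=-1-\bp_0\cdot\bone=-1-(-1)=0$, so $\bv\in\Lambda_0$.

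\medskip
I expect the classification step to be the only place requiring care: identifying unit squares with integer vertices as coordinate-plane unit cells, and bookkeeping the four vertices so that the $\bone$-constraint pins down the corner in $\Lambda_{-1}$. After that the construction of $f$ is immediate. One could alternatively organize the reverse inclusion by noting that translations by $\Lambda_0$ (the elements with $P=I$) act transitively on the candidate base points $\Lambda_{-1}$, while the $3$-cycles among the coordinates, each followed by a corrective translation, permute the three types of square at a fixed base point; but the direct construction above is shorter.
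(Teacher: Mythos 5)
Your proof is correct. The overall structure matches the paper's: forward inclusion by computing $f(\bw)\cdot\bone$, a classification of constituent squares, and then an explicit construction of $f$ from the classification. The difference is in how the classification is established. The paper isolates this in a separate lemma (\Cref{lem:squares}) and proves it with a distance argument: the minimal distance between points in a single level $\Lambda_i$ is $\sqrt 2$, as is the minimal distance from $\Lambda_{-1}$ to $\Lambda_1$, which forces the vertices of any constituent square to alternate between $\Lambda_0$ and $\Lambda_{-1}\cup\Lambda_1$, after which some casework pins down the exact shape. You instead observe directly that an integer vector of unit length must be $\pm\be_\ell$, so the square is an axis-aligned unit lattice cell $\{\bp,\bp+\be_i,\bp+\be_j,\bp+\be_i+\be_j\}$, and then the constraint that the $\bone$-values $c,c+1,c+1,c+2$ all lie in $\{-1,0,1\}$ forces $c=-1$. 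Your route is a bit more direct and bypasses the distance estimates entirely, while the paper's lemma has the side benefit of also feeding directly into \Cref{cor:squares meeting}. The final construction of $f$ (choose a permutation matrix matching the edge directions, then translate to the base corner and check $\bv\in\Lambda_0$) is essentially the same in both.
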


To prove this, we need to understand the squares making up $\bN$. For $k \in \{0,1,2\}$, let $\be_k$ denote the corresponding standard basis vector in $\RR^3$.

\begin{lemma}
\label{lem:squares}
A unit square $S$ is a constituent square if and only if there is a $\bv_0 \in \Lambda_{-1}$ and a choice of distinct $k,\ell \in \{1,2,3\}$ such that the vertices of $S$ in some cyclic order are given by $\bv_0$, $\bv_1=\bv_0+\be_k \in \Lambda_0$, $\bv_2=\bv_0+\be_k+\be_\ell \in \Lambda_1$, and $\bv_3=\bv_0+\be_\ell \in \Lambda_0$.
\end{lemma}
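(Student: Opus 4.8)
The plan is to prove both implications directly, with the content concentrated in a single normal-form observation: a unit square with all four vertices in $\ZZ^3$ is an axis-aligned coordinate square, i.e.\ of the form $\bv_0 + [0,1]\be_k + [0,1]\be_\ell$ for some $\bv_0 \in \ZZ^3$ and distinct $k,\ell$.

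For the easy implication ($\Leftarrow$), I would simply check the claim. Given $\bv_0 \in \Lambda_{-1}$ and distinct $k,\ell \in \{1,2,3\}$, set $\bv_1,\bv_2,\bv_3$ as in the statement. Since $\be_j \cdot \bone = 1$ for each standard basis vector, we get $\bv_1,\bv_3 \in \Lambda_0$ and $\bv_2 \in \Lambda_1$. The four points $\bv_0,\bv_1,\bv_2,\bv_3$ are precisely the vertices, in cyclic order, of the unit square $\bv_0 + [0,1]\be_k + [0,1]\be_\ell$, and all of them lie in $\Lambda_{-1}\cup\Lambda_0\cup\Lambda_1$; hence this square is a constituent square.

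For the reverse implication, let $S$ be a constituent square. I would first put $S$ in normal form: fix a vertex $\bw_0$ of $S$ and let $\bu_1,\bu_2$ be the two edge vectors of $S$ emanating from $\bw_0$. Each $\bu_i \in \ZZ^3$ has length $1$, so it has a single nonzero coordinate, equal to $\pm 1$; thus $\bu_i = \pm\be_{k_i}$. Orthogonality $\bu_1 \perp \bu_2$ then forces $k_1 \neq k_2$. Using the set identity $[0,1](-\be_k) = -\be_k + [0,1]\be_k$, the signs of the $\bu_i$ can be absorbed into the corner, so $S = \bv_0 + [0,1]\be_k + [0,1]\be_\ell$ for some $\bv_0 \in \ZZ^3$ and distinct $k,\ell \in \{1,2,3\}$ (concretely, $\bv_0$ is the corner of $S$ that is smallest in both the $k$- and $\ell$-coordinates).

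With $S$ in this form, its vertices are $\bv_0$, $\bv_0+\be_k$, $\bv_0+\be_\ell$, $\bv_0+\be_k+\be_\ell$, whose dot products with $\bone$ are $a$, $a+1$, $a+1$, $a+2$, where $a = \bv_0 \cdot \bone$. Since $S$ is a constituent square, all four values lie in $\{-1,0,1\}$, which forces $-1 \le a$ and $a+2 \le 1$, hence $a = -1$, i.e.\ $\bv_0 \in \Lambda_{-1}$. Then automatically $\bv_0+\be_k, \bv_0+\be_\ell \in \Lambda_0$ and $\bv_0+\be_k+\be_\ell \in \Lambda_1$, and reading the vertices in cyclic order $\bv_0,\ \bv_0+\be_k,\ \bv_0+\be_k+\be_\ell,\ \bv_0+\be_\ell$ gives exactly the asserted description. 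The only step requiring any care is the normal-form reduction (recognizing that orthogonal integer unit vectors are distinct signed basis vectors); the rest is a one-line computation with the constraint $\{-1,0,1\}$.
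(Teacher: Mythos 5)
Your proof is correct, and it takes a genuinely cleaner route than the paper's. The paper's argument first constrains the $\Lambda_i$-memberships of the vertices via a distance computation: the minimum nonzero distance within each $\Lambda_i$, and between $\Lambda_{-1}$ and $\Lambda_1$, is $\sqrt{2}$, so adjacent (distance-one) vertices must have one in $\Lambda_0$ and one in $\Lambda_{-1}\cup\Lambda_1$; it then rules out the possibility that the two non-$\Lambda_0$ vertices lie in the same plane by noting the diagonals would then be in parallel planes and could not cross, and only afterward identifies the edge vectors as $\be_k,\be_\ell$. You invert the order: you first observe that any unit square with vertices in $\ZZ^3$ must be a coordinate square $\bv_0 + [0,1]\be_k + [0,1]\be_\ell$, because an integer vector of length one is $\pm\be_j$ and orthogonality forces distinct indices, and then the $\bone$-dot-products of the four corners are $a,\,a{+}1,\,a{+}1,\,a{+}2$, which lie in $\{-1,0,1\}$ iff $a=-1$. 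This eliminates the case analysis (no separate argument about diagonals or which plane contains which opposite pair) and makes explicit the normalization step — integer unit squares are axis-aligned — which the paper actually relies on implicitly in its last sentence when it writes $\bv_1=\bv_0+\be_k$. Your version is a bit more self-contained and arguably the more natural decomposition of the claim.
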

\begin{proof}
Certainly given any $\bv_0 \in \Lambda_{-1}$ and a choice of $k<\ell$, the vertices listed determine a constituent square. Consider the converse. Let $S$ be a constituent squares. Observe that for any $i \in \{-1, 0, 1\}$ the minimal distance between points in $\Lambda_i$ is $\sqrt{2}$. Also the minimal distance between a point in $\Lambda_{-1}$ and a point in $\Lambda_1$ is $\sqrt{2}$. It follows that for any two adjacent vertices of $S$, one must lie in $\Lambda_0$ and the other must lie in $\Lambda_{-1} \cup \Lambda_1$. It follows that two opposite vertices lie in $\Lambda_0$, and the other two opposite vertices lie in $\Lambda_{-1} \cup \Lambda_1$. It can't be that both opposite vertices lie in $\Lambda_1$, because then the diagonals could not cross (since the diagonals would then lie in parallel planes). The same argument works for $\Lambda_{-1}$, so one vertex, say $\bv_0$, lies in $\Lambda_{-1}$ and the opposite vertex $\bv_2$ lies in $\Lambda_1$. Since $S$ is a unit square and $\bv_1, \bv_3 \in \Lambda_0$, it must be that $\bv_1 = \bv_0+\be_k$ and $\bv_3=\bv+\be_\ell$ for some $k,\ell \in \{0,1,2\}$. Since $\bv_1 \neq \bv_3$, we have $k \neq \ell$. Since $S$ must be a square, we have $\bv_2=\bv_0+\be_k + \be_\ell$. 
\end{proof}

\begin{proof}[Proof of \Cref{isometries and squares}]
Above it was noted $S_0 \subset \bN$. By definition of $\Isom(\RR^3, \bN)$, the orbit of $S_0$ under $\Isom(\RR^3, \bN)$ is contained in $\bN$. Conversely, if $S$ is a constituent square, we know $S$ is as described in 
\Cref{lem:squares} in terms of vertices and $k$ and $\ell$. Observe that $S=f(S_0)$ where $f(\bx)=P\bx+\bv_1$ where $\bv_1$ is as in the lemma, and $P$ is the permutation matrix such that $P(\be_3)=\be_\ell$ and $P(\be_1)=\be_k$.
\end{proof}

As a second consequence of \Cref{lem:squares}, we see:

\begin{cor}
\label{cor:squares meeting}
If $\bv \in \Lambda_{-1} \cup \Lambda_1$, there are three constituent squares with vertex $\bv$. If $\bv \in \Lambda_0$, then there are six constituent squares with vertex $\bv$.
\end{cor}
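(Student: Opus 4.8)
The plan is to derive both counts mechanically from the parametrization of constituent squares in \Cref{lem:squares}. That lemma tells us that every constituent square $S$ has exactly one vertex $\bv_0 \in \Lambda_{-1}$, exactly one vertex $\bv_0 + \be_k + \be_\ell \in \Lambda_1$, and exactly two vertices $\bv_0 + \be_k$ and $\bv_0 + \be_\ell$ in $\Lambda_0$, for some choice of distinct $k, \ell \in \{1,2,3\}$; moreover $S$ is completely determined by the pair $(\bv_0, \{k,\ell\})$, and conversely every such pair arises from a constituent square. Since $\Lambda_{-1}$, $\Lambda_0$, $\Lambda_1$ are disjoint, counting the constituent squares through a fixed vertex $\bv$ reduces to counting the admissible parameter pairs for which $\bv$ occupies the appropriate role among these four vertices.

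First I would treat $\bv \in \Lambda_{-1}$. Such a $\bv$ can only be the $\Lambda_{-1}$-vertex $\bv_0$ of a constituent square, so the squares with vertex $\bv$ are in bijection with the two-element subsets $\{k, \ell\} \subset \{1,2,3\}$, the square attached to $\{k,\ell\}$ having remaining vertices $\bv + \be_k$, $\bv + \be_k + \be_\ell$, $\bv + \be_\ell$. Distinct subsets give distinct squares, since the unordered pair of neighbours of $\bv$ in the square is $\{\bv + \be_k, \bv + \be_\ell\}$ and this recovers $\{k,\ell\}$; and all three subsets are admissible by the easy direction of \Cref{lem:squares}. Hence there are $\binom{3}{2} = 3$ such squares. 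The case $\bv \in \Lambda_1$ is symmetric: now $\bv$ can only be the $\Lambda_1$-vertex, so the squares correspond to the choices $\bv_0 = \bv - \be_k - \be_\ell$ over two-element subsets $\{k,\ell\}$, again giving $3$ squares.

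For $\bv \in \Lambda_0$ the vertex $\bv$ must be one of the two middle vertices $\bv_0 + \be_k$ or $\bv_0 + \be_\ell$. Here it is cleanest to record the data as an ordered pair $(a,b)$ of distinct indices, where $\bv - \be_a \in \Lambda_{-1}$ is the $\bv_0$-vertex and $\bv + \be_b \in \Lambda_1$ is the opposite vertex, the fourth vertex then being $\bv - \be_a + \be_b$. Every such ordered pair is admissible (it is the parameter pair $(\bv - \be_a, \{a,b\})$ of \Cref{lem:squares}), and distinct ordered pairs yield distinct squares because the $\Lambda_{-1}$-vertex of the square determines $a$ and its $\Lambda_1$-vertex determines $b$; there are $3 \cdot 2 = 6$ such pairs, hence $6$ squares. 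There is no genuinely hard step in this argument; the only point requiring care — and the reason I would switch from the unordered-pair parametrization of \Cref{lem:squares} to the ordered-pair parametrization $(a,b)$ when $\bv$ is the shared vertex — is to make sure that interchanging the roles of the two middle vertices does not overcount.
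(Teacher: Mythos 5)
Your proof is correct and takes essentially the same approach as the paper: both count constituent squares through $\bv$ by running the parametrization of \Cref{lem:squares}, using unordered pairs $\{k,\ell\}$ when $\bv$ plays the role of the $\Lambda_{\pm 1}$-vertex and ordered pairs when $\bv$ plays the role of a $\Lambda_0$-vertex. The only cosmetic difference is in the $\Lambda_1$ case, where you count directly from the parametrization while the paper instead invokes the symmetry $\bx \mapsto -\bx \in \Isom(\RR^3,\bN)$ swapping $\Lambda_1$ and $\Lambda_{-1}$; your version is if anything slightly more self-contained, and you are more explicit than the paper about injectivity of the parametrizations.
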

\begin{proof}
For $\bv \in \Lambda_{-1}$, this follows from \Cref{lem:squares} because there are three pairs $\{k,\ell\}$ of elements of $\{0,1,2\}$. This follows for vertices in $\Lambda_1$ because there are elements of $\Isom(\RR^3, \bN)$ that swap $\Lambda_1$ and $\Lambda_{-1}$, such as $\bx \mapsto -\bx$. For a vertex $\bv \in \Lambda_0$, we see that squares sharing this vertex are in bijection with ordered pairs $(k, \ell)$ of distinct elements of $\{0,1,2\}$, because the other vertices have the form $\bv + \be_k$, $\bv-\be_\ell$ and $\bv+\be_k-\be_\ell$. There are six such ordered pairs.
\end{proof}

The isometric projection used in images throughout the paper is orthogonal projection $\proj:\RR^3 \to \bone^\perp$. Observe that any $f \in \Isom(\RR^3, \bN)$ commutes with $\proj$, i.e., 
\begin{equation}
\label{eq:proj commute}
f \circ \proj(\bx)=\proj \circ f(\bx) \quad \text{for all $\bx \in \RR^3$,}
\end{equation}
because $f$ is an isometry preserving $\bone^\perp$. Also:
\begin{prop}
\label{prop:rhombille tiling}
The collection of all $\proj(S)$ taken over constituent squares $S$ of $\bN$ is the rhombille tiling, as depicted in \Cref{fig:NCS}. The restriction $\proj|_{\bN}:\bN \to \bone^\perp$ is a homeomorphism.
\end{prop}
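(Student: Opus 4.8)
The plan is to analyze the projection in three stages: the image of a single constituent square, the way these images assemble around a vertex, and finally a global covering-space argument. The heart of the matter is that $\ker\proj=\RR\bone$ and $\Lambda_0\subset\bone^\perp$, so $\proj$ fixes $\Lambda_0$ pointwise and $\proj(\Lambda_0)=\Lambda_0$ is a lattice in $\bone^\perp$. A direct computation gives $\|\proj(\be_j)\|^2=\tfrac23$ for every $j$ and $\proj(\be_j)\cdot\proj(\be_k)=-\tfrac13$ for $j\ne k$, so $\proj(\be_0),\proj(\be_1),\proj(\be_2)$ have equal length and are pairwise at $120^\circ$; hence the six vectors $\pm\proj(\be_j)$ point in six directions spaced $60^\circ$ apart. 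Feeding the vertex description of \Cref{lem:squares} through $\proj$, one sees that a constituent square $S$ with $\Lambda_{-1}$-vertex $\bv_0$, $\Lambda_0$-vertices $\bv_0+\be_k,\bv_0+\be_\ell$, and $\Lambda_1$-vertex $\bv_0+\be_k+\be_\ell$ maps to a rhombus with edge vectors $\pm\proj(\be_k),\pm\proj(\be_\ell)$, having interior angle $60^\circ$ at the two vertices coming from $\Lambda_0$ and $120^\circ$ at the two coming from $\Lambda_{\pm1}$ (equivalently, the short diagonal joins the $\Lambda_{\pm1}$-vertices). In particular each $\proj(S)$ is a nondegenerate rhombus and $\proj|_S$ is an affine isomorphism onto it.

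Next I would study the configuration around a fixed vertex $\bv$ of $\bN$. By \Cref{cor:squares meeting} there are six constituent squares at $\bv$ when $\bv\in\Lambda_0$ and three when $\bv\in\Lambda_{\pm1}$; using \eqref{eq:proj commute} and the transitivity of $\Isom(\RR^3,\bN)$ on constituent squares (\Cref{isometries and squares}) it suffices to treat one vertex of each type. If $\bv\in\Lambda_0$, the edges of $\bN$ at $\bv$ run to the six neighbors $\bv\pm\be_j$, so their images leave $\proj(\bv)$ in the six directions $\pm\proj(\be_j)$; each of the six squares at $\bv$ has a $60^\circ$ corner at $\bv$ whose image occupies exactly the sector between two consecutive of these directions, and the six sectors fit together to cover a full neighborhood of $\proj(\bv)$ with pairwise disjoint interiors. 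The case $\bv\in\Lambda_{\pm1}$ is identical with three $120^\circ$ sectors. Since the rhombi $\proj(S)$ are all congruent and have vertices in the discrete set $\proj(\Lambda_{-1}\cup\Lambda_0\cup\Lambda_1)\subset\tfrac13\ZZ^3\cap\bone^\perp$, the family $\{\proj(S)\}$ is locally finite; and since across any edge $e$ of $\bN$ the two adjoining rhombi lie on opposite sides of the line through $\proj(e)$ (immediate from the vertex picture, as a rhombus lies on one side of each of its edge-lines), $\proj|_\bN$ is a local homeomorphism at every point of $\bN$.

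Finally, $\proj|_\bN$ is proper: a compact set $K\subset\bone^\perp$ meets only finitely many of the congruent rhombi $\proj(S)$, so $\proj^{-1}(K)\cap\bN$ lies in the corresponding finite union of squares and is therefore closed and bounded. A proper local homeomorphism from a Hausdorff space onto a connected, simply connected, locally compact Hausdorff space is a covering map, whose total space being connected here forces it to be a homeomorphism; thus $\proj|_\bN:\bN\to\bone^\perp$ is a homeomorphism. Consequently the rhombi $\proj(S)$ tile $\bone^\perp$, and since they are congruent $60^\circ$--$120^\circ$ rhombi with six at each vertex of $\proj(\Lambda_0)$ and three at each vertex of $\proj(\Lambda_{\pm1})$, this is precisely the rhombille tiling of \Cref{fig:NCS}. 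I expect the second stage to be the crux: one must verify that the projected rhombi wrap around each vertex in monotone cyclic order, covering exactly $2\pi$ with no overlap, so that $\proj|_\bN$ is genuinely a local homeomorphism rather than folding back on itself; once that local statement is secured, global injectivity is delivered for free by the covering-space argument.
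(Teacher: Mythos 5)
Your argument is correct and takes a genuinely different route from the paper. The paper's proof is symmetry-driven: after computing $\proj(S_0)$ and its interior angles, it produces six rhombi around the origin using the order-six isometry $\bx \mapsto -P\bx$ (for $P \in S_3$ of order three), translates by $\Lambda_0$ to obtain the full rhombille tiling, and then establishes the homeomorphism by explicitly constructing a continuous inverse to $\proj|_{\bN}$ one rhombus at a time. You instead prove directly that $\proj|_{\bN}$ is a local homeomorphism, using the angle-sum identities $6\cdot 60^\circ = 3\cdot 120^\circ = 360^\circ$ around vertices in $\Lambda_0$ and $\Lambda_{\pm 1}$ respectively, establish properness via local finiteness, and appeal to the fact that a proper local homeomorphism from a connected Hausdorff space onto a simply connected locally compact Hausdorff space is a homeomorphism; the tiling statement then drops out as a corollary rather than being proved first. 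Both routes pass through \Cref{lem:squares}. Your approach trades the paper's explicit symmetry bookkeeping for covering-space machinery, which is cleaner and more self-contained as a topological argument; the paper's approach has the advantage that its symmetry observations feed directly into the proof of \Cref{isometries} immediately afterward. As you yourself flag, the crux is the vertex analysis, and to make it airtight you should note that two constituent squares at $\bv$ sharing an edge $e$ project to convex rhombi with disjoint interiors, each having the ray $\proj(e)$ as a bounding edge, so they must lie in the two sectors on opposite sides of $\proj(e)$; hence the cyclic order of squares around $\bv$ transfers faithfully to the cyclic order of sectors around $\proj(\bv)$, and the angle sums then force these sectors to partition a punctured neighborhood of $\proj(\bv)$ without overlap or folding.
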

\begin{proof}
Observe that the image of our favorite square $S_0$ is the rhombus $\proj(S_0)$ with vertices $\proj(\0)=\0$,
$$
\proj\begin{pmatrix} -1 \\ 0 \\ 0 \end{pmatrix}=\begin{pmatrix}
\frac{-2}{3} \\ \frac{1}{3} \\ \frac{1}{3} \end{pmatrix}, \quad
\proj\begin{pmatrix} -1 \\ 0 \\ 1 \end{pmatrix} = \begin{pmatrix} -1 \\ 0 \\ 1 \end{pmatrix},
\quad \text{and} \quad
\proj\begin{pmatrix} 0 \\ 0 \\ 1 \end{pmatrix}=\begin{pmatrix}
\frac{-1}{3} \\ \frac{-1}{3} \\ \frac{2}{3} \end{pmatrix}.
$$
At $\0$ and the opposite vertex $(-1,0,1)$, the interior angle of $\proj(S_0)$ is $\frac{\pi}{3}$, while at the other vertices the interior angle is $\frac{2\pi}{3}$. Observe that if $P \in S_3$ is a permutation matrix of order three, then the map in $\Isom(\RR^3, \bN)$ given by $f(\bx)= -P \bx$ acts on $\bone^\perp$ as a rotation of order six fixing $\0$, and thus the six tiles $\proj \circ f^i(S_0)$ fit around the origin as six tiles meeting in the rhombille tiling. From \Cref{lem:squares}, we know that every constituent square has two opposite vertices in $\Lambda_0$. Choose two of the six rhombi that share an edge, and let $\bw_1, \bw_2 \in \Lambda_0$ be the vertices in these rhombi that lie opposite $\0$. Note that $\Lambda_0=\langle \bw_1, \bw_2\rangle$. As these vertices lie in $\Lambda_0$, translations by these vectors lie in $\Isom(\RR^3, \bN)$. Observe that the group of translations by elements of $\langle \bw_1, \bw_2\rangle$ applied to the six rhombi with vertex $\0$ produces the rhombille tiling.

To see the remaining statements hold, note that the restriction of $\proj$ to $\Lambda_{-1} \cup \Lambda_0 \cup \Lambda_1$ is injective. We claim that the tiles produced above give all images $\proj(S)$ of constituent squares. This is clear because each such square has a vertex in $\Lambda_0$. By construction, every element of $\Lambda_0$ has six tiles sharing it as a vertex, and there are only six squares with this vertex by \Cref{cor:squares meeting}. 

To see that $\proj|_\bN$ is a homeomorphism, observe that each rhombus in the tiling has a unique lift to $\bN$, because $\proj$ restricts to an injective map on $\Lambda_{-1} \cup \Lambda_0 \cup \Lambda_1$. It also follows that lifts of rhombi sharing an edge are squares sharing an edge. The union of these lifts defines a continuous inverse to $\proj|_\bN$.
\end{proof}

\begin{thm}
\label{isometries}
For each isometry $g$ of the plane $\bone^\perp$ that preserves the rhombille tiling of \Cref{prop:rhombille tiling}, there is a unique $f \in \Isom(\RR^3, \bN)$ such that $g=f|_{\bone^\perp}$. For each isometry $h:\bN \to \bN$, there is a unique $f \in \Isom(\RR^3, \bN)$ such that $h=f|_\bN$.
\end{thm}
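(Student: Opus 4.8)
The plan is to prove both statements in the same way: in each case there is an evident restriction homomorphism out of $G := \Isom(\RR^3,\bN)$, and the task is to show it is a bijection. Throughout I use that every element of $G$ commutes with $\proj$ by \eqref{eq:proj commute}, that the constituent squares form a single $G$-orbit by \Cref{isometries and squares}, and that $\proj|_\bN$ is a homeomorphism onto the rhombille tiling by \Cref{prop:rhombille tiling}.

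For the first statement, let $W$ be the group of isometries of $\bone^\perp$ preserving the rhombille tiling. The three facts above show that restriction to $\bone^\perp$ defines a homomorphism $\rho\colon G\to W$. First I would check $\rho$ is injective: if $f\in G$ restricts to the identity on $\bone^\perp$, its derivative $P\in\pm S_3$ fixes $\bone$ and fixes $\bone^\perp$ pointwise, and inspecting the eigenvalues of elements of $\pm S_3$ (permutation matrices have a real eigenvalue $1$ and otherwise eigenvalues $1$ or a pair of primitive cube roots of unity) forces $P=I$, hence $f=\id$. For surjectivity I would produce generators of $W$ inside $\rho(G)$: translations by $\Lambda_0$ (which lie in $G$ by definition), an order-six rotation $\sigma$ about $\0$ (take $f(\bx)=-P\bx$ with $P$ an order-three permutation matrix, exactly as in the proof of \Cref{prop:rhombille tiling}), and a reflection $\tau$ fixing $\0$ (take $f(\bx)=P\bx$ with $P$ a transposition matrix). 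That these generate $W$ follows because the degree-six vertices of the rhombille tiling are exactly the points of $\Lambda_0$: indeed by \Cref{cor:squares meeting} the number of tiles meeting a vertex $\proj(\bv)$ equals the number of constituent squares at $\bv$, which is six precisely when $\bv\in\Lambda_0$, and $\proj$ fixes $\Lambda_0$ pointwise. Hence for $w\in W$ the point $w(\0)$ lies in $\Lambda_0$, so $w$ composed with translation by $-w(\0)$ fixes $\0$; an isometry fixing $\0$ permutes the six rhombi around $\0$ respecting their cyclic order, so the stabilizer of $\0$ in $W$ is dihedral of order at most twelve and therefore equals $\langle\sigma,\tau\rangle$. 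Thus $W=\langle\Lambda_0,\sigma,\tau\rangle\subseteq\rho(G)$.

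For the second statement, restriction to $\bN$ defines a homomorphism $G\to\Isom(\bN)$. Injectivity is immediate: $\bN$ contains four affinely independent points (three vertices of $S_0$ together with a vertex of an adjacent square lying off the plane $\{y=0\}$), and an isometry of $\RR^3$ fixing them is the identity. For surjectivity take $h\in\Isom(\bN)$. The key point is that $h$ carries constituent squares to constituent squares. Since no vertex of $\bN$ has valence four, the singular set $\Sigma$ is the vertex set, and $h$ preserves $\Sigma$ together with cone angles; moreover the edges of the square tiling are exactly the saddle connections of length $1$. This is because the closed ball of radius $1$ about a vertex $v$ stays inside the star of squares at $v$ (a geodesic from a corner of a unit square must travel distance $>1$ before exiting through a far edge), so it contains no singular point in its interior and contains exactly the vertices adjacent to $v$ on its boundary, each joined to $v$ by a unique geodesic, namely the edge. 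Consequently $h$ permutes edges, hence permutes squares. Now $h(S_0)$ is a constituent square, so by \Cref{isometries and squares} there is $f_1\in G$ with $f_1(S_0)=h(S_0)$; replacing $h$ by $f_1^{-1}\circ h$ we may assume $h(S_0)=S_0$. Because $h$ preserves cone angles, $h|_{S_0}$ preserves the splitting of the vertices of $S_0$ into the two valence-six and the two valence-three vertices, so $h|_{S_0}$ lies in the Klein four-group of rhombus symmetries of $S_0$ (the two diagonal reflections and the central half-turn), and a short check with $P\in\pm S_3$ and $\bv\in\Lambda_0$ shows each of these four is the restriction of an element of $G$ stabilizing $S_0$. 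Composing with the inverse of such an element, we may assume $h$ fixes $S_0$ pointwise. Finally, an isometry of $\bN$ fixing $S_0$ pointwise is the identity: the set of points of $\bN\smallsetminus\Sigma$ at which it agrees with the identity as a local isometry is open, closed, and nonempty in the connected surface $\bN\smallsetminus\Sigma$, hence all of it, and then $h=\id$ by continuity. Unwinding the reductions, $h$ is the restriction of an element of $G$.

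I expect the main obstacle to be the step in the second statement that $h$ maps squares to squares, i.e.\ the intrinsic metric characterization of the edges of $\bN$, since that is precisely what converts an abstract metric isometry into a combinatorial symmetry that can then be rigidified. Everything else is bookkeeping with the finite group $\pm S_3$ and with the symmetry group of the rhombille tiling.
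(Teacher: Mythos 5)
Your proposal is correct. For the first statement your argument is essentially the paper's (exhibit the stabilizer of $\0$ and the translation lattice $\Lambda_0$ inside the image of the restriction map; for uniqueness, rule out the reflection through $\bone^\perp$), though your appeal to eigenvalue multiplicities is slightly imprecise: the multiset $\{1,1,-1\}$ alone does not force $P=I$, since transpositions also have this spectrum; you need the eigenvector structure, namely that $P|_{\bone^\perp}=\id$ together with $P(\bone)=\pm\bone$ forces $P$ to be either $I$ or the orthogonal reflection through $\bone^\perp$, and the latter is not in $\pm S_3$ (the paper notes it sends $\be_1$ outside $\ZZ^3$). For the second statement you and the paper share the pivotal step — an intrinsic metric characterization of the edges of the square tiling, which forces any isometry $h$ of $\bN$ to permute constituent squares — but then diverge. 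The paper conjugates $h$ by $\proj$ to get a map $\bar h$ of $\bone^\perp$, shows $\bar h$ is affine on each rhombus and carries vertices to vertices of the same interior angle, concludes $\bar h$ is an isometry of $\bone^\perp$ preserving the rhombille tiling, and then invokes the first statement; the second statement thereby becomes a corollary of the first. You instead work entirely on $\bN$: use transitivity of $\Isom(\RR^3,\bN)$ on squares (\Cref{isometries and squares}) to reduce to $h(S_0)=S_0$, realize the Klein four-group of cone-angle-respecting symmetries of $S_0$ by elements of $\Isom(\RR^3,\bN)$ to reduce further to $h|_{S_0}=\id$, and conclude $h=\id$ by unique continuation of local Euclidean isometries on the connected surface $\bN\smallsetminus\Sigma$. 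Your route is more self-contained (it does not reuse the projection machinery for the second part) but requires the explicit realization of the four square symmetries — which you correctly flag as a computation — and the local-rigidity principle; the paper's route is shorter and derives the second statement from the first. Both are sound.
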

\begin{proof}
To see the first statement, observe that restrictions to $\bone^\perp$ of maps of the form $\bx \mapsto P\bx$ taken over $P \in \pm S_3$ form the dihedral group of symmetries of the rhombille tiling fixing the origin. This finite group together with translations by elements of $\Lambda_0$ form the symmetry group of the rhombille tiling. Thus, every symmetry of the tiling is realized by restriction of an element of $\Isom(\RR^3, \bN)$. To see uniqueness, note that there are only two extensions of an isometry of $\bone^\perp$ to an isometry of $\RR^3$; they differ by reflection in $\bone^\perp$. Since $\Isom(\RR^3, \bN)$ is a group,
if an extension were not unique then reflection in $\bone^\perp$ would be in the group. But this reflection is $\bx \mapsto \bx - \frac{2}{3} (\bx \cdot \bone) \bone$ which does not preserve $\ZZ^3$ and so is not in $\Isom(\RR^3, \bN)$.

To see the second statement, let $h:\bN \to \bN$ be an isometry. By \Cref{cor:squares meeting}, $h(\Lambda_0)=\Lambda_0$ and $h(\Lambda_{-1} \cup \Lambda_1)=\Lambda_{-1} \cup \Lambda_1$. The only paths of length one in $\bN$ joining a point in $\Lambda_0$ to a point in $\Lambda_{-1} \cup \Lambda_1$ are the edges of squares. Thus, $h$ sends constituent squares to constituent squares. By \Cref{prop:rhombille tiling}, there is an induced map defined on $\bone^\perp$ given by $\bar h = \proj|_{\bN} \circ h \circ (\proj|_{\bN})^{-1}$ which sends rhombi to rhombi.
Noting that the restriction of $\proj$ to a constituent square is affine, we see that $\bar h$ is affine when restricted to rhombi in the tiling. The map $\bar h$ preserves $\Lambda_0$. Therefore the restriction of $\bar h$ to a rhombus $R$ must carry the vertices with interior angle $\frac{\pi}{3}$ to vertices of the image rhombus $\bar h(R)$ with the same interior angles. But observe that there is an isometry $R \to \bar h(R)$ that moves vertices in the same way. Thus, $\bar h$ acts by isometry on rhombi. But for two rhombi sharing an edge, the isometry applied to each must be the same, so by induction we see that $\bar h$ is an isometry of $\bone^\perp$ that preserves the rhombille tiling. We conclude that $\bar h$ is the restriction of an element $f \in \Isom(\RR^3, \bN)$ to $\bone^\perp$. Since $f$ commutes with $\proj$, we see that $h=f|_{\bN}$. This $f$ is unique, because $\bN$ has four non-coplanar points.
\end{proof}

\section{Canonical Half-Translation Covers of the Necker Cube Surface and the Half-Cube}

We will now introduce some surfaces which are naturally related to the Necker cube surface $\bN$. Our later arguments will involve some computations in homology, so we will also establish some notation for homology classes. \compat{UPDATE this paragraph.}

\subsection{The half-cube}
\label{sect:half-cube}
The {\em half-cube surface} $\bH$ is the square-tiled surface shown in \Cref{fig:halfcube}. 
It is the quotient $\bN$ modulo the subgroup of translations in $\Isom(\RR^3, \bN)$.
It is built out of three unit squares. Opposite sides of the resulting 3-dimensional hexagon are glued together by translation. 

\begin{figure}[htb]
\labellist
\small\hair 2pt
 \pinlabel {$\beta_0^\ast$} [ ] at 104 105
 \pinlabel {$\beta_1^\ast$} [ ] at 5 75
 \pinlabel {$\beta_2^\ast$} [ ] at 71 4
\endlabellist
\centering
\includegraphics[width=1.5in]{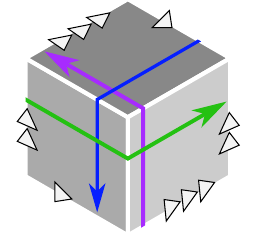}
\caption{The half-cube surface $\bH$ is made of three squares. Edges are identified by translation. Three curves are depicted.}
\label{fig:halfcube}
\end{figure}

\notepat{I'm not a fan of the notation $\beta^\ast_j$, because we don't use anything similar elsewhere. But, I left it because it seems like a pain to change it...}
Topologically, $\bH$ is a torus. Three oriented curves $\beta_0^\ast$, $\beta_1^\ast$, and $\beta_2^\ast$ are depicted in \Cref{fig:halfcube}. Any two of their homology classes generate $H_1(\bH;\ZZ) \cong \ZZ^2$.

\compat{It is unclear to me that this is the best place for these comments, which seem more about $\bN$ than $\bH$. I'm leaving them for now.}
The universal cover of $\bH$ is the Necker cube surface $\bN$. Thus a loop lifts from $\bH$ to $\bN$ if and only if it is null-homologous, or equivalently if it has trivial algebraic intersection with two of the curves $\beta^\ast_j$. 
As we take the faces of $\bN$ to be parallel to the coordinate axes, the deck transformations of the covering $\bN \to \bH$ are realized by translations of $\ZZ^3$. Covering space theory associates each deck transformation with an element of $\pi_1(\bH)$. The translation associated to $\alpha \in \pi_1(\bH)$ has components given by the intersection numbers $i(\alpha, \beta_j^\ast)$ for $j \in \{0, 1, 2\}.$

\begin{prop}
\label{aut H}
The $G_4$-automorphism group of $\bH$ is isomorphic to the cyclic group of order six, and is generated by the automorphism $\varphi_{\bH}$ depicted in \Cref{fig:descendrotation}. The induced action of $\varphi_\bH$ on homology sends $\beta_i^\ast$ to $-\beta_{i+1}^\ast$ with subscript addition in $\ZZ/3\ZZ$.
\end{prop}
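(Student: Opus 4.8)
The plan is to exhibit an explicit order-six $G_4$-automorphism $\varphi_\bH$, to read off its action on $H_1(\bH;\ZZ)$, and then to show that the homology representation of $\Aut_{G_4}(\bH)$ is faithful with finite image, which forces $\Aut_{G_4}(\bH)$ to equal $\langle\varphi_\bH\rangle$. To build $\varphi_\bH$, I would fix an order-three permutation matrix $P$ and set $\xi(\bx)=-P\bx$, so that $\xi\in\Isom(\RR^3,\bN)$ by \eqref{eq:isometry}, and $\xi$ has order six since $(-P)^3=-I$ while $(-P)^k\neq I$ for $1\le k\le 5$. A short computation gives $\xi\, t_\bv\,\xi^{-1}=t_{-P\bv}$ for the translation $t_\bv$ with $\bv\in\Lambda_0$; as $-P\bv\in\Lambda_0$, the map $\xi$ normalizes the deck group $\Lambda_0$ of $\bN\to\bH$ and descends to an automorphism $\varphi_\bH$ of $\bH$. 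It is a $G_4$-automorphism: $\xi$ permutes constituent squares by \Cref{isometries and squares}, and it is orientation-preserving on $\bN$ because it commutes with $\proj$ by \eqref{eq:proj commute} and acts on $\bone^\perp$ as rotation by $\pm 60^\circ$ via \Cref{prop:rhombille tiling}. With $P$ chosen to match the picture, $\varphi_\bH$ is the automorphism of \Cref{fig:descendrotation}.

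Next I would compute the action on homology. The discussion preceding the proposition identifies $\pi_1(\bH)=H_1(\bH;\ZZ)$ with $\Lambda_0$ via $\alpha\mapsto\big(i(\alpha,\beta_0^\ast),i(\alpha,\beta_1^\ast),i(\alpha,\beta_2^\ast)\big)$, and under this identification $\varphi_{\bH,\ast}$ is the conjugation action $\bv\mapsto -P\bv$ found above. Since $\varphi_\bH$ preserves algebraic intersection, for every $\alpha$ we have $i\big(\alpha,\varphi_{\bH,\ast}^{-1}\beta_j^\ast\big)=i\big(\varphi_{\bH,\ast}\alpha,\beta_j^\ast\big)=-\sum_k P_{jk}\,i(\alpha,\beta_k^\ast)$, so nondegeneracy of $i$ gives $\varphi_{\bH,\ast}^{-1}\beta_j^\ast=-\sum_k P_{jk}\beta_k^\ast$. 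Taking $P$ to be the cycle $\be_k\mapsto\be_{k+1}$, this reads $\varphi_{\bH,\ast}(\beta_i^\ast)=-\beta_{i+1}^\ast$ with indices in $\ZZ/3\ZZ$, the asserted formula; in particular $\varphi_{\bH,\ast}^3=-\id$ and $\varphi_{\bH,\ast}^2\neq\id$ on $H_1(\bH;\ZZ)$, confirming that $\varphi_\bH$ has order exactly six.

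Then I would prove there is nothing more. A $G_4$-automorphism of $\bH$ is an isometry preserving the singular set, hence carries the edge set of $\bH$ — the saddle connections of minimal length — to itself, so it permutes the three squares and is determined by its restriction to the closure of one square together with which of four rotations that restriction induces; thus $\Aut_{G_4}(\bH)$ is finite (of order at most twelve). The representation $\rho:\Aut_{G_4}(\bH)\to\SL(2,\ZZ)$ on $H_1(\bH;\ZZ)$ — landing in $\SL(2,\ZZ)$ because orientation-preserving maps preserve the intersection form — is faithful: if $\rho(\psi)=\id$, then a lift $\tilde\psi:\bN\to\bN$ commutes with every deck transformation $t_\bv$, $\bv\in\Lambda_0$ (the induced action on the abelian group $\Lambda_0$ is precisely $\rho(\psi)=\id$); by \Cref{isometries}, $\tilde\psi(\bx)=Q\bx+\bw$ with $Q\in\pm S_3$ and $\bw\in\Lambda_0$, and commutativity forces $Q\bv=\bv$ for all $\bv$ in the spanning set $\Lambda_0\subset\bone^\perp$, so $Q=I$ and $\tilde\psi$ is itself a deck transformation, i.e., $\psi=\id$. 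Hence $\Aut_{G_4}(\bH)$ is isomorphic to a finite subgroup of $\SL(2,\ZZ)$ containing an element of order six; since the finite subgroups of $\SL(2,\ZZ)$ are cyclic of order $1,2,3,4$, or $6$, it is cyclic of order six and must be generated by $\varphi_\bH$.

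The hard part will be the faithfulness step, and it works precisely because \Cref{isometries} pins every isometry of $\bN$ — in particular every lift of a $G_4$-automorphism of $\bH$ — to the explicit group $\Isom(\RR^3,\bN)$, after which the argument is elementary linear algebra with the matrices in $\pm S_3$. Everything else is bookkeeping; the only real choices are the labelling of the curves $\beta_i^\ast$ and of the order-three permutation $P$, which together fix which generator of $\ZZ/6\ZZ$ is named $\varphi_\bH$.
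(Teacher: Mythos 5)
Your proof is correct, but the route is genuinely different from the paper's. The paper disposes of the upper bound $|\Aut_{G_4}(\bH)|\le 6$ with a short combinatorial observation: a $G_4$-automorphism preserves the edge graph and hence the valences of vertices, and since each square has two opposite vertices of valence three while the other two are identified to a single valence-six singularity, only two of the four orientation-preserving isometries of a square respect this pattern; orbit-stabilizer over the three squares then gives $|\Aut_{G_4}(\bH)|\le 3\cdot 2=6$. You instead settle for the coarser bound of $12$ (transitivity on three squares, four rotations each), then recover the sharp bound by showing that the representation $\Aut_{G_4}(\bH)\to\SL(2,\ZZ)$ on $H_1(\bH;\ZZ)$ is faithful and invoking the classification of finite subgroups of $\SL(2,\ZZ)$. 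The faithfulness argument is the real work in your version, and it is sound: if $\psi$ acts trivially on homology, then a lift $\tilde\psi(\bx)=Q\bx+\bw$ commutes with all translations by $\Lambda_0$, forcing $Q|_{\bone^\perp}=I$, and since every $Q\in\pm S_3$ preserves the splitting $\RR^3=\Span(\bone)\oplus\bone^\perp$ and no nontrivial element of $\pm S_3$ acts trivially on $\bone^\perp$, we get $Q=I$, so $\tilde\psi$ is a deck transformation. Your construction of $\varphi_\bH$ as the descent of $\xi(\bx)=-P\bx$ (and the derivation of $\varphi_{\bH,\ast}(\beta_i^\ast)=-\beta_{i+1}^\ast$ from the conjugation action on the deck group via the nondegenerate intersection pairing) is also more explicit than the paper's, which simply asks the reader to read both facts off the figure. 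What you gain is an algebraically transparent derivation that exposes where the order six comes from (the $-P$ acting on $\Lambda_0$); what the paper's argument gains is brevity and avoidance of the faithfulness lemma and the $\SL(2,\ZZ)$ classification. The one soft spot in your write-up is the phrase ``with $P$ chosen to match the picture'': matching the abstract generator to the one in \Cref{fig:descendrotation} still requires a glance at the figure, but this is no worse than what the paper itself does.
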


\begin{figure}
\includegraphics[width=2in]{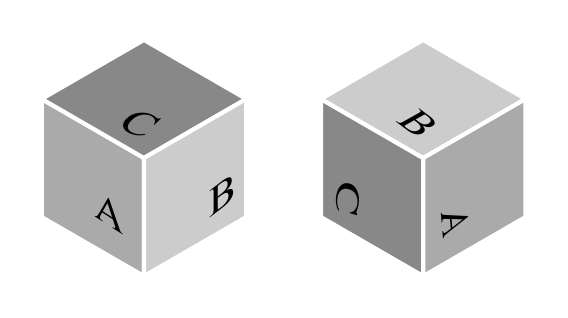}
\caption{The automorphism $\varphi_{\bH}:\bH\to\bH$. }
\label{fig:descendrotation}
\end{figure}

\begin{proof}
\Cref{fig:descendrotation} depicts $\varphi_\bH$. The reader should be able to recognize from this that $\varphi_\bH$ is a $G_4$-automorphism, that it is order six, and that it acts as claimed on homology. To see this is the full $G_4$-automorphism group note that
an isometry  of $\bH$ must preserves the paths of length one joining the singularities. The collection of such paths form a graph: the union of boundaries of the squares. Each square has two opposite vertices of valence three and the other vertices represent the same singularity with valence six. An automorphism must respect the valence of vertices, and must be orientation-preserving. So, the stabilizer of a square is at most order two. If the automorphism group were any larger than order six, the stabilizer of a square would have to be larger than two.
\end{proof}

\subsection{The half-translation surface cover of \texorpdfstring{$\bH$}{H}}
\label{sect:cover of H}
The surface $\bH$ is a quarter-translation surface. We'll denote its canonical half-translation surface cover $\bHt$ and denote the corresponding covering map by $\pi_\bH: \bHt \to \bH$.
The surface $\bHt$ is depicted in \Cref{fig:homologycover}. The surface has genus two and four singularities each with cone angle $3 \pi$.

\begin{comment}
\begin{figure}[htb]
\labellist
\small\hair 2pt
 \pinlabel {$\gamma_0$} [ ] at 76 16
 \pinlabel {$\gamma_1$} [ ] at 157 44
 \pinlabel {$\gamma_2$} [ ] at 108 155
 \pinlabel {$\gamma_3$} [ ] at 223 103
 \pinlabel {$\gamma_4$} [ ] at 203 80
 \pinlabel {$\gamma_5$} [ ] at 285 161
\endlabellist
\centering
\includegraphics[width=4in]{images/homologycover2}
\caption{The half-translation surface $\bHt$ is shown with curves $\gamma_0, \ldots, \gamma_5$. Edges are glued by matching decorations, with darts denoting edges glued by $180^\circ$ rotations.}
\label{fig:homologycover}
\end{figure}
\end{comment}

\begin{figure}[htb]
\labellist
\small\hair 2pt
 \pinlabel {$\gamma_0$} [ ] at 63 18
 \pinlabel {$\gamma_1$} [ ] at 157 36
 \pinlabel {$\gamma_2$} [ ] at 116 140
 \pinlabel {$\gamma_3$} [ ] at 75 101
 \pinlabel {$\gamma_4$} [ ] at 180 60
 \pinlabel {$\gamma_5$} [ ] at 284 162
\endlabellist
\centering
\includegraphics[width=3in]{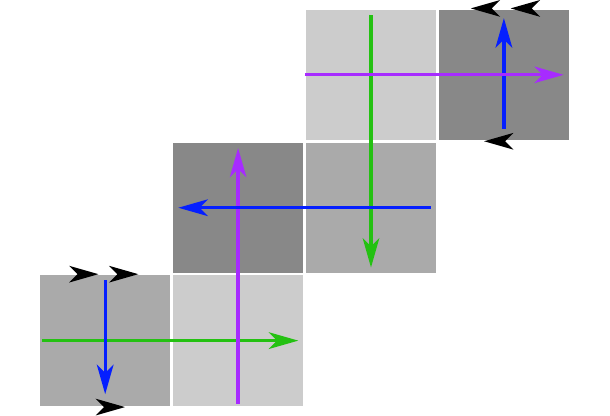}
\caption{The half-translation surface $\bHt$ is shown with curves $\gamma_0, \ldots, \gamma_5$. Darts indicate edges glued by $180^\circ$ rotations. Each unmarked edge is glued to another edge by a translation by a vector perpendicular to the edge.}
\label{fig:homologycover}
\end{figure}

The curves $\gamma_0, \ldots, \gamma_5$ shown in \Cref{fig:homologycover}
are lifts of the curves $\beta_0^\ast, \beta_1^\ast, \beta_2^\ast$ in $\bH$. We've chosen names for these curves such that $\gamma_j$ is a lift of $\beta_i^\ast$ if $j \equiv i \pmod{3}$. We have chosen our lifts in order to ensure that the algebraic intersection numbers satisfy
\begin{equation}
\label{eq:gamma intersect}
i(\gamma_j, \gamma_{j+1})=1
\quad \text{for all $j$}
\end{equation}
with subscript addition taken modulo six. Other pairs of curves do not intersect. In particular then, the quadruple of curves $(\gamma_0, \gamma_3; \gamma_1, \gamma_4)$ form a standard symplectic basis for $H_1(\bHt; \QQ) \cong \QQ^4$.
%\notepat{I commented out the line ``(See \cite[definition 15]{Bobenko2011} for background on canonical bases.)'' I doubt we use anything essential about this idea.}
Based on this fact and the intersection number formula, it can be shown that
\begin{equation}
\label{eq:gamma id}
\gamma_0+\gamma_2+\gamma_4=0 
\quad \text{and} \quad
\gamma_1+\gamma_3+\gamma_5=0
\quad \text{in $H_1(\bHt; \ZZ)$}.
\end{equation}

By virtue of $\bHt$ being the minimal $G_2$-cover of $\bH$, each $G_4$ automorphism of $\bH$ lifts to a $G_2$-automorphism of $\bHt$; see \Cref{commute-branch-cover}. We define $\tilde \varphi_\bH$ to be the $G_2$-automorphism of $\bHt$ obtained by lifting $\varphi_\bH$. The surface $\bHt$ also has a $G_4$-automorphism that is a deck transformation of the covering $\bHt \to \bH$. Perhaps surprisingly, $\bHt$ has additional symmetry:

\begin{prop}
\label{G4 auto}
The $G_4$-automorphism group of $\bHt$ is order $24$, and any orientation-preserving isometry between two squares of $\bHt$ can be extended to a $G_4$-automorphism. We have $(\tilde \varphi_\bH)_\ast(\gamma_i)=-\gamma_{i+4}$.
In general, for each $G_4$-automorphism $\psi:\bHt \to \bHt$ there is a $k \in \ZZ/6\ZZ$ and two signs $s,t \in \{\pm 1\}$ such that
$$\psi_\ast(\gamma_i)=s (-1)^{\frac{(1-t)i}{2}} \gamma_{k + t i} \quad \text{for all $i \in \ZZ/6\ZZ$.}$$
\begin{comment}
$$\psi_\ast(\gamma_i)=
\begin{cases}
-s \cdot \gamma_{k + t i} & \text{if $t=-1$ and $i$ is odd} \\
s \cdot \gamma_{k + t i} & \text{otherwise}.
\end{cases}
$$
\end{comment}
Conversely, every such action on $\{\gamma_i\}$ is realized by a $G_4$-automorphism.
\end{prop}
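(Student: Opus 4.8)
The plan is to compute the $G_4$-automorphism group of $\bHt$ by exhibiting enough automorphisms to realize all $24$ prescribed actions on $\{\gamma_i\}$, then argue no others exist, and finally check the homology formula. First I would produce a generating set. The deck group of $\bHt \to \bH$ contains a $G_4$-automorphism $\delta$ of order four (coming from $\Isom_+(\ZZ^2)$ acting on squares), and lifting $\varphi_\bH$ from \Cref{aut H} gives the order-six $G_2$-automorphism $\tilde\varphi_\bH$; I would also look for a $G_4$-automorphism covering the identity on $\bH$ but nontrivially permuting sheets, or more directly an extra symmetry visible in \Cref{fig:homologycover}. The claim ``any orientation-preserving isometry between two squares extends to a $G_4$-automorphism'' is the efficient way to organize this: $\bHt$ is built from $6$ squares, each square has $4$ rotational self-maps, giving at most $24$ candidate (square, rotation) pairs, and the asserted order $24$ says exactly one automorphism realizes each. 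So the first step is to show the group acts simply transitively on the set of (oriented square, choice of one of four edges) data.

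For the upper bound I would mimic the argument in \Cref{aut H}: a $G_4$-automorphism $\psi$ must preserve the graph $\Gamma \subset \bHt$ formed by the union of square boundaries (the length-one saddle connection network), must preserve cone angles (all four singularities have angle $3\pi$, so these four vertices are interchangeable a priori, but the combinatorics of $\Gamma$ constrains this), and must be orientation-preserving. Counting: if $\psi$ fixes a given square pointwise then $\psi$ is forced to be the identity on all neighboring squares and hence globally, by the usual connectedness/analytic-continuation argument for affine automorphisms of cone surfaces. Hence $|\Aut_{G_4}(\bHt)|$ is at most the number of (square, rotation) pairs that are actually compatible with the gluing pattern, and I would check this equals $24$ by inspecting \Cref{fig:homologycover} — equivalently, that the stabilizer of a square is trivial and there are $6$ squares in a single orbit, but with a $4$-fold rotational ambiguity that is \emph{not} broken by the square alone, giving $6 \cdot 4 = 24$. (One must be slightly careful: unlike $\bH$, where the stabilizer of a square was order $\le 2$, here a square together with a distinguished edge determines the automorphism, which is why the count is $24$ rather than $12$.)

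Next I would pin down the homology action. Using $(\gamma_0,\gamma_3;\gamma_1,\gamma_4)$ as a standard symplectic basis and the relations \eqref{eq:gamma id}, any homeomorphism preserves algebraic intersection, so $\psi_\ast$ must permute $\{\pm\gamma_0,\dots,\pm\gamma_5\}$ in a way compatible with \eqref{eq:gamma intersect}: the cyclic adjacency graph on indices $\ZZ/6\ZZ$ must be preserved, forcing $\psi_\ast(\gamma_i) = \pm \gamma_{k+ti}$ for a fixed $k \in \ZZ/6\ZZ$ and $t \in \{\pm1\}$ (a rotation or reflection of the hexagon of curves). Then the sign pattern is constrained by requiring $i(\psi_\ast\gamma_i,\psi_\ast\gamma_{i+1})=+1$ (orientation-preservation) together with \eqref{eq:gamma id}: writing $\psi_\ast(\gamma_i) = \epsilon_i \gamma_{k+ti}$, the intersection condition gives $\epsilon_i\epsilon_{i+1}t = 1$ for all $i$, which is exactly solved by $\epsilon_i = s(-1)^{(1-t)i/2}$ for a global sign $s$ — when $t=1$ all signs are equal, and when $t=-1$ the signs alternate with period two, consistent with \eqref{eq:gamma id}. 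This reproduces the displayed formula, and that there are $6 \cdot 2 \cdot 2 = 24$ such data matches the group order, so every candidate action is realized. For $\tilde\varphi_\bH$ specifically, $\varphi_\bH$ sends $\beta_i^\ast \mapsto -\beta_{i+1}^\ast$ by \Cref{aut H}, so on $\bHt$ the lift sends $\gamma_i$ to $\pm\gamma_{i+1}$ or $\pm\gamma_{i+4}$ depending on the sheet structure; checking which lift is correct using \eqref{eq:gamma intersect} and orientation gives $(\tilde\varphi_\bH)_\ast(\gamma_i) = -\gamma_{i+4}$, which is the $k=4$, $t=1$, $s=-1$ case.

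The main obstacle I anticipate is the claim that the automorphism group is \emph{exactly} order $24$ — i.e., the upper bound — since this requires a careful combinatorial analysis of the saddle-connection graph $\Gamma$ of $\bHt$ and its automorphisms, verifying that no additional identifications among the four $3\pi$-singularities or among the six squares produce symmetries beyond those forced by the square-plus-edge count. The realization (lower bound) part is essentially bookkeeping with the explicit picture, and the homology computation is linear algebra over $\ZZ$ once the permutation-plus-sign structure is extracted from \eqref{eq:gamma intersect} and \eqref{eq:gamma id}.
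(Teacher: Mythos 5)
Your approach is essentially the same as the paper's: exhibit generators realizing enough symmetries to get simple transitivity on the set of (square, edge)-flags, concluding that the order is $24$, and then derive the homology formula. There are, however, a couple of issues.

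First, a factual slip. The cover $\bHt \to \bH$ has degree two (since $G_4/G_2 \cong \ZZ/2\ZZ$), so its deck group is cyclic of order two and contains no order-four element. The order-four $G_4$-automorphism you want (rotating a single square by $90^\circ$) is the ``extra symmetry'' you mention, but it does not cover the identity on $\bH$. Relatedly, ``the stabilizer of a square is trivial'' is off: the setwise stabilizer of a square has order four; it is the stabilizer of a (square, edge)-flag that is trivial.

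Second, and more substantively, the claim ``any homeomorphism preserves algebraic intersection, so $\psi_\ast$ must permute $\{\pm\gamma_0,\dots,\pm\gamma_5\}$'' does not follow as stated. Preservation of the intersection form only puts $\psi_\ast$ in $\mathrm{Sp}(4,\ZZ)$, and a general symplectic automorphism need not permute any particular $12$-element set of homology classes. What you actually need is the geometric observation that the $\gamma_i$ are the core curves of the horizontal and vertical cylinder decompositions of $\bHt$, and that a $G_4$-automorphism (having derivative in $C_4$) permutes the set of horizontal and vertical cylinders, hence sends each $\gamma_i$ to a curve isotopic to some $\pm\gamma_j$. Once that is in place, your intersection computation $\epsilon_i\epsilon_{i+1} t = 1$ is clean and correct, and the derivation of the sign pattern goes through. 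Your final realization step (``there are $6 \cdot 2 \cdot 2 = 24$ such data, matches the group order, so every candidate action is realized'') also silently assumes that distinct automorphisms yield distinct actions on $\{\gamma_i\}$, which needs justification. The paper avoids both of these points by computing the homology actions of the three explicit generators directly and verifying by hand that the generated group of actions is the one displayed in the statement.
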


\begin{figure}[htb]
\centering
\includegraphics[width=\textwidth]{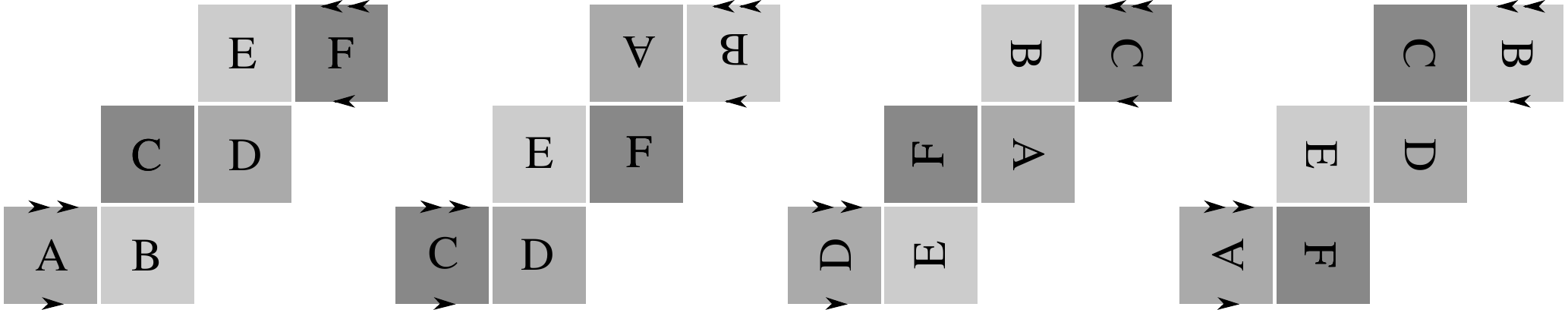}
\caption{The surface $\bHt$ with labeled squares is depicted far left. The remaining groups of squares indicate the action of three maps on these labeled squares. From left to right, these are the images under $\tilde \varphi_\bH$, the images under the unique nontrivial deck transformation of $\bHt \to \bH$, and images under a $G_4$-automorphism rotating the square labeled $A$ by $90^\circ$.}
\label{fig:order24}
\end{figure}

\begin{proof}
The $G_4$-automorphism group is generated by the maps described in \Cref{fig:order24}. Observe that the lift $\tilde \varphi_\bH$ of $\varphi_\bH$ is still order six and acts on labeled squares according to the permutation described in cycle notation as $(AEC)(BFD)$. By inspection, we observe $\tilde \varphi_\bH$ acts as claimed on homology. Let $\tilde \delta:\bHt \to \bHt$ be the nontrivial deck transformation of the cover $\bHt \to \bH$. By definition of the curves $\gamma_i$, we have $\tilde \delta_\ast(\gamma_i)=\gamma_{i+3}$. The action on squares as depicted in the figure can be deduced from the action on these curves. Observe that the action on squares is given by $(AD)(BE)(CF)$. Thus $\langle \tilde \varphi_\bH, \tilde \delta\rangle$ acts transitively on squares.

Let $\tilde \epsilon:\bHt \to \bHt$ be the third automorphism depicted in \Cref{fig:order24}, which rotates the square labeled $A$ by $90^\circ$. Thus, the stabilizer of square $A$ in the automorphism group is $\langle \tilde \epsilon\rangle$. Combining this with the transitive action of $\langle \tilde \varphi_\bH, \tilde \delta\rangle$ tells us that an element of $\langle \tilde \varphi_\bH, \tilde \delta, \tilde \epsilon\rangle$ extends any orientation-preserving isometry between squares as claimed. Since a $G_4$-automorphism must send squares to squares, this is the largest possible size for the $G_4$-automorphism group.

By inspection we see that $\tilde \epsilon_\ast(\gamma_i)=(-1)^i \gamma_{1-i}$. It is elementary to check that the generated group of actions on homology is as described.
\end{proof}

\subsection{The half-translation surface cover of \texorpdfstring{$\bN$}{N}}

The Necker cube surface $\bN$ is an infinite quarter-translation surface, and we'll call its half-translation surface cover $\bNt$. As a consequence of \Cref{commute-branch-cover} we obtain the following commutative diagram: 
\begin{equation}
\begin{tikzcd}
	\bNt \arrow[d, "\pi_\bN"] \arrow[r]
		& \bHt \arrow[d, "\pi_\bH"] \\
	 \bN \arrow[r]
	& \bH.
\end{tikzcd}
\label{eq:cd}
\end{equation}
Here, $\pi_\bN$ and $\pi_\bH$ denote the canonical half-translation covers of quarter-translation surfaces $\bN$ and $\bH$, respectively.

Define the following homology classes  in $H_1(\bHt;\ZZ)$:
$$\beta_j = \gamma_j + \gamma_{j+3} \quad \text{for $j \in \{0,1,2\}$}.$$
We have defined $\beta_j$ to be the sum of the homology classes of the two lifts of $\beta_j^\ast$ to $\bHt$. It follows that we have
\begin{equation}
\label{eq:intersection numbers match}
i_{\bHt}(\beta_j, \tilde \alpha) = i_{\bH}(\beta_j^\ast, \pi_\bH(\tilde \alpha)\big)
\end{equation}
for all $\tilde \alpha \in H_1(\bHt;\ZZ)$ and all $j \in \{0,1,2\}$.
Consequently:

\begin{prop}
\label{prop:lifting}
A loop $\tilde \alpha$ in $\bHt$ lifts to $\bNt$ if and only if $i(\beta_j,\tilde \alpha)=0$ for two (equivalently all) $j \in \{0,1,2\}.$
\end{prop}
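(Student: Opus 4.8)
The plan is to transport the lifting criterion for the universal cover $\bN \to \bH$ recorded in \Cref{sect:half-cube} up the left-hand side of the commutative square \eqref{eq:cd}. As a preliminary, I would note that the covering $\bNt \to \bHt$ appearing in \eqref{eq:cd} is unbranched: this is the last assertion of \Cref{commute-branch-cover}, since the covering $\bN \to \bH$ has local degree $1$ everywhere. It is also normal, and by \Cref{deck groups} its deck group $\Delta(\bNt,\bHt)$ is carried isomorphically by $\iota^{-1}$ onto $\Delta(\bN,\bH) \cong \ZZ^2$, compatibly with $\pi_\bN$ as in \eqref{eq:deck groups}. Because this deck group is abelian, whether a loop in $\bHt$ lifts to a loop in $\bNt$ is detected by the vanishing of its monodromy, and the monodromy depends only on the homology class $[\tilde \alpha] \in H_1(\bHt;\ZZ)$ — consistent with the right-hand side of the proposition, which is homological in nature.

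The main point, and the step I expect to require the most care, is the claim that $\tilde \alpha$ lifts to a loop in $\bNt$ if and only if $\pi_\bH \circ \tilde \alpha$ lifts to a loop in $\bN$. One implication is immediate from the commutativity of \eqref{eq:cd}: a loop lift $\hat \alpha$ of $\tilde \alpha$ in $\bNt$ projects under $\pi_\bN$ to a loop in $\bN$ lying over $\pi_\bH \circ \tilde \alpha$. For the converse I would lift $\tilde \alpha$ to a path $\hat \alpha$ in $\bNt$ starting at a chosen point of the fiber, so that $\hat \alpha$ terminates at $\mu(\tilde \alpha)$ applied to that point, where $\mu(\tilde \alpha) \in \Delta(\bNt,\bHt)$ is the monodromy; pushing down by $\pi_\bN$ and invoking \eqref{eq:deck groups}, the path $\pi_\bN \circ \hat \alpha$ lies over $\pi_\bH \circ \tilde \alpha$ in $\bN$ and has monodromy $\iota^{-1}\big(\mu(\tilde \alpha)\big)$ in the covering $\bN \to \bH$. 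Thus $\mu(\tilde \alpha)$ is trivial exactly when the monodromy of $\pi_\bH \circ \tilde \alpha$ in $\bN \to \bH$ is trivial, i.e., exactly when $\pi_\bH \circ \tilde \alpha$ lifts to a loop. The delicate part of this paragraph is keeping the basepoints in the various fibers consistent through the diagram chase; the remainder is bookkeeping.

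It then remains to translate the downstairs condition into intersection numbers on $\bHt$. By \Cref{sect:half-cube}, $\pi_\bH \circ \tilde \alpha$ lifts to $\bN$ if and only if it is null-homologous in $\bH$, i.e., if and only if $i_\bH\big(\beta_j^\ast, (\pi_\bH)_\ast[\tilde \alpha]\big) = 0$ for two (equivalently all) $j \in \{0,1,2\}$. By \eqref{eq:intersection numbers match} this quantity equals $i_{\bHt}(\beta_j, \tilde \alpha)$, so $\tilde \alpha$ lifts to $\bNt$ if and only if $i(\beta_j, \tilde \alpha) = 0$ for two of the $j$. Finally, $\beta_0 + \beta_1 + \beta_2 = (\gamma_0 + \gamma_2 + \gamma_4) + (\gamma_1 + \gamma_3 + \gamma_5) = 0$ in $H_1(\bHt;\ZZ)$ by \eqref{eq:gamma id}, so if two of the three numbers $i(\beta_j,\tilde \alpha)$ vanish then so does the third; this yields the "equivalently all" clause and completes the argument.
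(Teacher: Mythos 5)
Your proof is correct, and it takes a genuinely different route from the paper's. You transport the lifting criterion through the square \eqref{eq:cd} by invoking \Cref{deck groups}: the isomorphism $\iota:\Delta(\bN,\bH)\to\Delta(\bNt,\bHt)$ together with the commuting diagram \eqref{eq:deck groups} gives the monodromy correspondence, and hence the equivalence "$\tilde\alpha$ lifts to $\bNt$ iff $\pi_\bH\circ\tilde\alpha$ lifts to $\bN$" essentially for free. The paper does not use \Cref{deck groups} here; instead it constructs the lift by hand. Given $i(\beta_j,\tilde\alpha)=0$, the paper first lifts $\alpha=\pi_\bH(\tilde\alpha)$ to a loop $\eta$ in $\bN$ using the fact that two of the $\beta_j^\ast$ generate $H_1(\bH;\ZZ)$, then lifts $\eta$ to a loop $\tilde\eta$ in $\bNt$ using the holonomy criterion $\holhom(\eta)\in G_2$ (which requires an auxiliary reduction to loops avoiding singularities, justified by the unbranched-cover observation), and finally verifies via uniqueness of path lifting that $\tilde\eta$ lies over $\tilde\alpha$. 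Your abstraction via \Cref{deck groups} sidesteps the holonomy argument and the singularity bookkeeping entirely; the paper's version is more self-contained and doesn't lean on the heavier machinery of that lemma (whose proof invokes the nonabelian snake lemma). One small point worth adding to your writeup: the "delicate" basepoint chase you mention is actually harmless here precisely because the deck groups are abelian ($\cong\ZZ^2$), so the monodromy element is basepoint-independent within a fiber; you may as well say so explicitly, since that observation is what makes the diagram chase routine.
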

\begin{proof}
First of all, suppose $\tilde \alpha$ has a lift to a loop $\tilde \eta$ in $\bNt$. 
Let $\eta=\pi_\bN(\tilde \eta)$ be the projection to $\bN$ and $\alpha=\pi_\bH(\tilde \alpha)$ be the projection to $\bH$. By \eqref{eq:cd}, $\eta$ is a lift of $\alpha$ so we know $i(\beta^\ast_j, \alpha)=0$ for each $j$. But then by \eqref{eq:intersection numbers match}, we know that $i(\beta_j,\tilde \alpha)=0$ for each $j$.

Now suppose that $i(\beta_j,\tilde \alpha)=0$ for two $j$.
Note that the covering $\bNt \to \bHt$ is unbranched because $\bN \to \bH$ is; see \Cref{commute-branch-cover}.
It follows that both lifting and algebraic intersection number are homotopy invariant, so we may assume that $\tilde \alpha$ does not pass through the singularities of $\bHt$. Let $\tilde x_0$ denote a choice of a starting point of $\tilde \alpha$, which we also use as the basepoint of $\bHs$. Since $\bHt$ is a half-translation surface, $\holhom(\tilde \alpha) \in G_2$. Choose a lift $\tilde y_0 \in \bNt$ of $\tilde x_0$, and define $y_0=\pi_\bN(\tilde y_0)$ and $x_0=\pi_\bH(\tilde x_0)$. \Cref{eq:cd} implies that $y_0$ projects to $x_0$ under $\bN \to \bH$. Let $\alpha=\pi_\bH(\tilde \alpha)$. Then we get $i(\beta_j^\ast, \alpha)=0$  for the same two $j$ from 
\eqref{eq:intersection numbers match}. These two $\beta_j^\ast$ generate the homology on the torus $\bH$, so $\alpha$ lifts to a loop $\eta$ in $\bN$ starting at $y_0$. Since the covering maps are local isometries, $\eta$ has the same holonomy as $\alpha$ (up to conjugation depending on which developing maps we use). Thus, $\holhom(\eta)\in G_2$ and so $\eta$ has a lift to $\tilde \eta$ on $\bNt$ starting at $\tilde y_0$. We claim that $\tilde \eta$ is a lift of $\tilde \alpha$. Let $\tilde \alpha'$ denote the projection of $\tilde \eta$ to $\bHt$. To prove the claim, it is enough to show that $\tilde \alpha'=\tilde \alpha$. But, $\pi_\bH(\tilde \alpha')=\pi_\bH(\tilde \alpha)=\alpha$. Since
$\tilde \alpha'$ and $\tilde \alpha$ start at the same point, $\tilde x_0$, and are both lifts of the same curve $\alpha$, and $\tilde \alpha$ does not pass through branch points of $\pi_\bH$, the curves $\tilde \alpha'$ and $\tilde \alpha$ coincide.
\end{proof}

We let $W_+ = \Span~\{\beta_0, \beta_1, \beta_2 \} \subset H_1(\bHt; \QQ)$. By \eqref{eq:gamma id}, we have
$\beta_0+\beta_1+\beta_2=0$ and $\beta_0$ and $\beta_1$ are distinct sums of classes in our symplectic basis $(\gamma_0, \gamma_3; \gamma_1, \gamma_4)$, so $W_+$ is a two dimensional subspace. As a consequence of \Cref{prop:lifting} and standard covering space theory, we have:

\begin{cor}
\label{cor:lifting}
Let $h:\bHt \to \bHt$ be a homeomorphism. Then $h$ has a lift $\tilde h:\bNt \to \bNt$ if and only if $h_\ast(W_+)=W_+$.
\end{cor}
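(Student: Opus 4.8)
\emph{Plan.} The plan is to run the standard lifting criterion of covering space theory, using \Cref{prop:lifting} to pin down the subgroup of $\pi_1(\bHt)$ corresponding to the cover $\bNt\to\bHt$, and then to convert the resulting condition on the action of $h$ on $H_1(\bHt;\ZZ)$ into the stated condition on $W_+$ by symplectic duality. Write $q:\bNt\to\bHt$ for the covering map of \eqref{eq:cd} and $p:\pi_1(\bHt)\to H_1(\bHt;\ZZ)$ for the Hurewicz homomorphism. First I would note that $q$ is a normal covering (\Cref{deck groups}, since $\bN\to\bH$ is the universal cover and hence normal), so $N:=q_\ast\bigl(\pi_1(\bNt)\bigr)$ is a well-defined normal subgroup of $\pi_1(\bHt)$, independent of basepoint. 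By \Cref{prop:lifting}, a loop of $\bHt$ lies in $N$ exactly when its class lies in $K:=\{c\in H_1(\bHt;\ZZ):i(\beta_j,c)=0\text{ for }j=0,1,2\}$, i.e.\ $N=p^{-1}(K)$; in particular $N$ contains the commutator subgroup $[\pi_1(\bHt),\pi_1(\bHt)]$.

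Next I would apply the lifting criterion to the map $h\circ q:\bNt\to\bHt$: it factors through $q$ --- equivalently $h$ has a lift $\tilde h:\bNt\to\bNt$ --- if and only if $(h\circ q)_\ast\bigl(\pi_1(\bNt)\bigr)=h_\ast(N)\subseteq N$. Since $h_\ast$ is a group automorphism it preserves the commutator subgroup, and $N\supseteq[\pi_1(\bHt),\pi_1(\bHt)]$, so this is equivalent to the homological condition $h_\ast(K)\subseteq K$, where $h_\ast$ now denotes the induced automorphism of $H_1(\bHt;\ZZ)$. (When it holds, the same argument applied to $h^{-1}$ gives a lift of $h^{-1}$, and composing lifts forces $\tilde h$ to be a homeomorphism, so ``has a lift'' is unambiguous.)

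It remains to see that $h_\ast(K)\subseteq K$ is equivalent to $h_\ast(W_+)=W_+$. A direct computation from \eqref{eq:gamma intersect} and $\beta_j=\gamma_j+\gamma_{j+3}$ gives $i(\beta_0,\beta_1)=2\neq 0$, so $W_+$ is a nondegenerate $2$-dimensional subspace of $H_1(\bHt;\QQ)$ for the symplectic intersection form; hence its orthogonal complement $W_+^{\perp}$ is the complementary symplectic $2$-plane, $K\otimes\QQ=W_+^{\perp}$, and in fact $K=W_+^{\perp}\cap H_1(\bHt;\ZZ)$. Because $h_\ast$ scales $i$ by $\pm1$ (according to whether $h$ preserves or reverses orientation), it satisfies $h_\ast(V^{\perp})=(h_\ast V)^{\perp}$ for every subspace $V$, and it preserves the lattice $H_1(\bHt;\ZZ)$; therefore $h_\ast(K)\subseteq K$ is equivalent to $h_\ast(W_+^{\perp})\subseteq W_+^{\perp}$, and since an injective linear endomorphism of the finite-dimensional space $W_+^{\perp}$ is surjective, this in turn is equivalent to $h_\ast(W_+^{\perp})=W_+^{\perp}$. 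Taking orthogonal complements and using that $\perp$ is an involution on subspaces of a nondegenerate symplectic space, this says exactly $h_\ast(W_+)=W_+$, completing the chain of equivalences.

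\emph{Main obstacle.} I do not anticipate a genuine obstacle: the argument is essentially the bookkeeping the corollary alludes to when it cites ``standard covering space theory.'' The only points deserving a moment's attention are the nondegeneracy of $W_+$ (ensuring $W_+^{\perp}$ truly complements it and that $\perp$ is involutive) and the back-and-forth between the integral containment $h_\ast(K)\subseteq K$ and the rational equality $h_\ast(W_+^{\perp})=W_+^{\perp}$ --- both handled by the explicit value $i(\beta_0,\beta_1)=2$ together with $h_\ast$ being a lattice automorphism.
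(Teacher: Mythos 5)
Your proof is correct and is exactly the argument the paper is gesturing at when it says ``as a consequence of \Cref{prop:lifting} and standard covering space theory'' and defers to \cite{HW12}: normality of $\bNt\to\bHt$ gives the subgroup $N=p^{-1}(K)$, the lifting criterion reduces to $h_\ast(K)\subseteq K$, and nondegeneracy of the intersection form on $W_+$ (via $i(\beta_0,\beta_1)=2$) converts the annihilator condition on $K=W_+^\perp\cap H_1(\bHt;\ZZ)$ into $h_\ast(W_+)=W_+$. You also correctly close the loop on the containment-versus-equality issue using finite-dimensionality, and on the homeomorphism issue by lifting $h^{-1}$ and invoking uniqueness of lifts.
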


See \cite{HW12} for a more detailed discussion of this lifting criterion.

\begin{cor}
\label{G4 auto lift}
Let $\psi:\bHt \to \bHt$ be the $G_4$-automorphism determined by $k \in \ZZ/6\ZZ$ and two signs $s,t \in \{\pm 1\}$ as in \Cref{G4 auto}.
Then $\psi$ lifts to a homeomorphism $\bNt \to \bNt$ if and only if $t=1$. If $\psi$ does not lift, then $\psi_\ast(W_+)=W_-$ where
$$W_- = \Span~\{\gamma_0-\gamma_3, \gamma_1-\gamma_4, \gamma_2-\gamma_5 \} \subset H_1(\bHt; \QQ).$$
\end{cor}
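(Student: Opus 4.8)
The plan is to reduce everything to the lifting criterion of \Cref{cor:lifting}, which says that the homeomorphism $\psi$ lifts to $\bNt$ if and only if $\psi_\ast(W_+)=W_+$, and then to compute $\psi_\ast(W_+)$ by hand using the explicit formula $\psi_\ast(\gamma_i)=s(-1)^{(1-t)i/2}\gamma_{k+ti}$ from \Cref{G4 auto}. Since $W_+$ is spanned by the classes $\beta_j=\gamma_j+\gamma_{j+3}$, the whole computation amounts to evaluating $\psi_\ast(\beta_j)$ for $j\in\{0,1,2\}$ and recognizing the answer inside $W_+$ or $W_-$. Throughout, subscripts on $\gamma$ are read in $\ZZ/6\ZZ$, and one notes that $\gamma_m+\gamma_{m+3}$ and $\gamma_m-\gamma_{m+3}$ depend only on $m \bmod 3$ (up to sign), so these reduce to $\pm$ a generator of $W_+$ or $W_-$ respectively.

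For the case $t=1$, the exponent $(1-t)i/2$ vanishes, so $\psi_\ast(\gamma_i)=s\,\gamma_{k+i}$ and hence $\psi_\ast(\beta_j)=s(\gamma_{k+j}+\gamma_{k+j+3})$, which is $s$ times the generator $\beta_{(k+j)\bmod 3}$ of $W_+$. Thus $\psi_\ast$ permutes the spanning set $\{\beta_0,\beta_1,\beta_2\}$ up to an overall sign, so $\psi_\ast(W_+)=W_+$ and $\psi$ lifts by \Cref{cor:lifting}. For the case $t=-1$ the exponent becomes $i$, so $\psi_\ast(\gamma_i)=s(-1)^i\gamma_{k-i}$ and
$$\psi_\ast(\beta_j)=s(-1)^j\gamma_{k-j}+s(-1)^{j+3}\gamma_{k-j-3}=s(-1)^j\big(\gamma_{k-j}-\gamma_{k-j+3}\big),$$
using $(-1)^{j+3}=-(-1)^j$ and $-3\equiv 3\pmod 6$. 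Every element $\gamma_m-\gamma_{m+3}$ is $\pm$ a generator of $W_-$, so $\psi_\ast(\beta_j)\in W_-$ for all $j$; since $\psi_\ast$ is a linear isomorphism, it carries the two-dimensional space $W_+$ onto a two-dimensional subspace of $W_-$, which must be all of $W_-$. Hence $\psi_\ast(W_+)=W_-$.

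To finish I would record that $W_-\ne W_+$, so that \Cref{cor:lifting} indeed forces $\psi$ not to lift when $t=-1$. Using $\gamma_0+\gamma_2+\gamma_4=0$ and $\gamma_1+\gamma_3+\gamma_5=0$ from \eqref{eq:gamma id} to eliminate $\gamma_2,\gamma_5$, the subspace $W_+$ has coordinates $\Span\{(1,1,0,0),(0,0,1,1)\}$ with respect to the symplectic basis $(\gamma_0,\gamma_3;\gamma_1,\gamma_4)$, while $W_-$ is $\Span\{(1,-1,0,0),(0,0,1,-1)\}$; these are visibly distinct (in fact they intersect trivially). This closes both directions of the equivalence and identifies the image in the non-lifting case.

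There is no substantive obstacle here: the argument is pure linear algebra in the four-dimensional space $H_1(\bHt;\QQ)$, driven by the formula of \Cref{G4 auto} and the criterion of \Cref{cor:lifting}. The only point demanding care is the index bookkeeping — the $\gamma_i$ are indexed mod $6$ while the $\beta_j$ and the generators of $W_\pm$ are indexed mod $3$ — so one must consistently reduce $\gamma_m\pm\gamma_{m+3}$ to a $\pm$ multiple of the generator with $j=m\bmod 3$; this is where a sign error would most easily creep in.
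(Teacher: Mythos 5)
Your proof is correct and follows essentially the same route as the paper's: compute $\psi_\ast(\beta_j)$ directly from the formula in \Cref{G4 auto}, recognize the result as landing in $W_+$ (when $t=1$) or $W_-$ (when $t=-1$), and conclude via \Cref{cor:lifting} and the observation that $W_+\neq W_-$. The only cosmetic difference is that you handle all $j$ at once with the $(-1)^j$ factor, whereas the paper computes only for even $i$; your version is if anything slightly more explicit, and your coordinate check that $W_+\cap W_-=0$ in the symplectic basis is a clean way to justify the final distinctness step.
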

\begin{proof}
If $t=1$, then from the formula in \Cref{G4 auto}, we see that 
$$\psi_\ast(\beta_i)=\psi_\ast(\gamma_i+\gamma_{i+3})=s (\gamma_{k+i}+\gamma_{k+i+3}) \in W_+.$$
Thus in this case $\psi_\ast$ preserves $W_+$ and so $\psi$ lifts by \Cref{cor:lifting}. On the other hand, if $t=-1$ and $i$ is even, we have
$$\psi_\ast(\beta_i)=\psi_\ast(\gamma_i+\gamma_{i+3})=s (\gamma_{k-i}-\gamma_{k-i-3}) \in W_-.$$
Iterating over $i$ even, we see $\psi_\ast(W_+)=W_-$. We can see that $W_+ \neq W_-$ because $\Span(W_+ \cup W_-)=H_1(\bHt; \QQ)$, while $W_+$ is only two dimensional. Thus, $\psi$ does not lift.
\end{proof}

%
%Most importantly, $\bN$ inherits its flat metric and intrinsic geometric properties from $\bH$. In order to define our canonical half-translation covers of both $\bN$ and $\bH$, we must considered the surfaces punctured at their cone singularities. We will denote the punctured surfaces as $\bNs$, $\bHs$ respectively. All homomorphisms that determine our covering spaces will take fundamental group elements to abelian groups, and for that reason we will simply consider homomorphisms from their integral homology groups. Let $\phi_1:H_1(\bH;\ZZ)\rightarrow\ZZ^2$ be defined as $x\mapsto (i(u^*,x),i(v^*,x))$, where $u^*$ is the downward-oriented vertical curve and $v^*$ is the rightward-oriented horizontal curve in \Cref{fig:halfcube}.
%
%Then the Necker Cube surface, $\bNs$, is the $\ZZ^2$ cover of $\bHs$ with fundamental group associated with the kernel of $\phi_1$, $\ker\phi_1$. Let $\phi_2=r\circ Mon$, so that the half-translation cover of the surface $\bHs$ is the surface $\bHts$ associated with $\ker \phi_2$. Observe that there is an angle deficit or surplus of $\pi/2$ around any cone singularity of $\bHs$, so the image of $\phi_2$ is isomorphic to $\ZZ/2\ZZ$. In particular, $\bHts$ can be realized as a two-fold cover of $\bHs$ with the following graphical representation:

\subsection{Multi-twists of \texorpdfstring{$\bHt$}{H}}
\label{sect:multi-twists}
The half-translation surface $\bHt$ has a larger affine symmetry group, but we will focus on a subgroup that is generated by multi-twists. This subgroup turns out to be isomorphic to the free group of rank two, $F_2$. Since the horizontal and vertical twists will generate our subgroup, we will use $h$ and $v$ to denote the generators of $F_2$.

The surface $\bHt$ decomposes in the horizontal direction into cylinders whose circumference is twice their widths. Thus $\bHt$ admits an affine automorphism $\authom(h):\bHt \to \bHt$ which acts as a single right Dehn twist in each horizontal cylinder and whose derivative is $\pm \matrixhom(h) \in \PSL(2,\RR)$ where $\matrixhom(h)=(\begin{smallmatrix}1 & 2 \\ 0 & 1\end{smallmatrix})$. 

Similarly, considering the vertical decomposition of $\bHt$ we see that there is an affine automorphism $\authom(v):\bHt \to \bHt$ which acts as a single left Dehn twist in each vertical cylinder and has derivative $\pm \matrixhom(v) \in \PSL(2,\RR)$ where $\matrixhom(v) = (\begin{smallmatrix}1 & 0 \\ 2 & 1\end{smallmatrix})$. 

Thus, there is a homomorphism from $F_2$ to the orientation-preserving affine automorphism group
$$\authom:F_2 \to \Aff_{+1}(\bHt)$$
such that $\authom(h)$ and $\authom(v)$ are the multi-twists defined above. Also, the definitions of $M(h)$ and $M(v)$ extend to a  homomorphism 
\begin{equation}
\label{eq:matrixhom}
\matrixhom: F_2 \to \SL(2,\RR) \quad \text{such that} \quad D \circ \authom(g)=\pm \matrixhom(g) \text{ for all $g \in F_2$}
\end{equation}
where $D: \Aff_{+1}(\bHt) \to \PSL(2,\RR)$ is the derivative.

\subsection{The congruence \texorpdfstring{$2$}{2} subgroup}
\label{sect:cong2}

Let $\Gamma_2 \subset \SL(2,\RR)$ be the  principal congruence subgroup of level 2, i.e. 
given $M \in \SL(2,\RR)$, we have $M \in \Gamma_2$ if and only if $M \pmod{2}$ is the identity matrix in $\SL(2,\ZZ/2\ZZ)$. It is an elementary exercise (see \cite[pp. 34, Exercise 6]{Yoshida}) that $\Gamma_2$ is generated by $\matrixhom(h)$, $\matrixhom(v)$ and $-I$. Since $-I$ is an involution in the center of $\SL(2,\RR)$, we have
\begin{equation}
\label{eq:Gamma 2 -I}
\Gamma_2 = \matrixhom(F_2) \cup \{-N:~ N \in \matrixhom(F_2)\}.
\end{equation}
In particular, we have:

\begin{prop}
The subgroup $D \circ \authom(F_2) \subset \PSL(2,\RR)$ is the image of $\Gamma_2 \subset \SL(2,\RR)$ under the natural homomorphism $\SL(2,\RR) \to \PSL(2,\RR)$.
\end{prop}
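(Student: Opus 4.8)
The plan is to simply chase the two displayed facts already established, namely \eqref{eq:matrixhom} and \eqref{eq:Gamma 2 -I}. Write $q:\SL(2,\RR)\to\PSL(2,\RR)$ for the natural quotient homomorphism, whose kernel is $\{\pm I\}$. First I would observe that by \eqref{eq:matrixhom}, for every $g\in F_2$ we have $D\circ\authom(g)=\pm\matrixhom(g)$ in $\PSL(2,\RR)$; since $\matrixhom(g)$ and $-\matrixhom(g)$ have the same image under $q$, this says exactly that $D\circ\authom(g)=q\big(\matrixhom(g)\big)$. Ranging over $g\in F_2$ and using that all of these maps are group homomorphisms, it follows that $D\circ\authom(F_2)=q\big(\matrixhom(F_2)\big)$ as subgroups of $\PSL(2,\RR)$.

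Next I would invoke \eqref{eq:Gamma 2 -I}, which states $\Gamma_2=\matrixhom(F_2)\cup\{-N:~N\in\matrixhom(F_2)\}$. Applying $q$ and using $q(-N)=q(N)$, we obtain $q(\Gamma_2)=q\big(\matrixhom(F_2)\big)$. Combining this with the previous paragraph gives $D\circ\authom(F_2)=q\big(\matrixhom(F_2)\big)=q(\Gamma_2)$, which is precisely the asserted equality.

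There is essentially no real obstacle here: the substantive input has been front-loaded into the cited fact (see \cite[pp.~34, Exercise~6]{Yoshida}) that $\Gamma_2$ is generated by $\matrixhom(h)$, $\matrixhom(v)$, and $-I$, together with the explicit derivative formula \eqref{eq:matrixhom} coming from the multi-twist construction of \Cref{sect:multi-twists}. The only points worth a word of care are that $D\circ\authom$ genuinely takes values in $\PSL(2,\RR)$ (so that the symbol "$\pm\matrixhom(g)$" denotes an honest element there, as set up in \Cref{sect:multi-twists}), and that $q$ restricted to $\matrixhom(F_2)$ need not be injective — but injectivity plays no role, since we only need equality of images, not an isomorphism.
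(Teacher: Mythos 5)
Your argument is correct and is exactly the argument the paper leaves implicit when it writes ``In particular, we have:'' — the proposition is a direct consequence of the identity $D\circ\authom(g)=\pm\matrixhom(g)$ from \eqref{eq:matrixhom} together with \eqref{eq:Gamma 2 -I}, just as you chase through. Nothing more to add.
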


It will be important for us to understand the action of $\Gamma_2$ on rational directions, so we make the following definitions:
\begin{defn}
Let $\cO\subset \ZZ^2$ be the set of all vectors with components that are relatively prime and both odd. Let $\cE_1$ (respectively, $\cE_2$) denote the subsets of $\ZZ^2$ consisting of relatively prime pairs whose first (respectively, second) component is even. Let $\cE=\cE_1 \cup \cE_2$.
\end{defn}

\begin{lemma}
	\label{lem:transitive}
	The sets $\cO$, $\cE_1$, $\cE_2$ are each invariant under left-multiplication by elements of $\Gamma_2$. Moreover, the group acts transitively on each set.
	\begin{proof}
The fact that the generating elements of $\Gamma_2$ preserves each set follows from an argument in modular arithmetic and an application of B\'ezout's identity. Let $v=(x,y)\in\cO$, and $u=(1,1)$. Let $A=(\begin{smallmatrix}a & b \\ c & d\end{smallmatrix})$. Since $x,y$ are relatively prime, there exist integers $m,n$ such that $mx+ny=1$. Choose $b=n(xy-1)$, $c=m(xy-1)$, $a=x-b$,  and $d=y-c$. Since $xy-1$ is even, so are $b$ and $c$. Since $x$ and $y$ are odd, so are $a$ and $d$. Therefore, $A$ modulo $2$ is the identity in $\SL(2,\ZZ/2\ZZ)$. A calculation of $\det A$ shows that it is $1$. Thus, $A$ is in $\Gamma_2$. With our choice of coefficients, we see $Au=v$. This proves that $\Gamma_2$ acts transitively on $\cO$. 
		
		Now let $v=(x,y)\in \cE_2$, and $u=(1,0) \in \cE_2$. Again, let $m,n$ be integers such that $mx+ny=1$. Without loss of generality, suppose $n$ is even. (If $n$ was odd, then $(m-y)x + (n+x) y=1$ and $n+x$ is even.) Since $n$ is even and $mx+ny=1$, $m$ must be odd. Let $A=(\begin{smallmatrix}a & b \\ c & d\end{smallmatrix})$ with $a=x$, $b=-n$, $c=y$, $d=m$. Then $\det A=1$ and $A\in \Gamma_2$. Since $Au=v$, we conclude that the action on $\cE_2$ is transitive. The proof is similar for $\cE_1$.
	\end{proof}
\end{lemma}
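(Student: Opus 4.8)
The plan is to separate the two assertions, treating invariance first (it is the routine half) and then reducing transitivity to moving an arbitrary vector to a single fixed representative of each set.

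For invariance, I would first observe that $\cO$, $\cE_1$, $\cE_2$ are precisely the three residue classes modulo $2$ of primitive vectors in $\ZZ^2$: a primitive vector cannot have both coordinates even, so its reduction mod $2$ is $(1,1)$, $(0,1)$, or $(1,0)$. Each generator of $\Gamma_2$ — namely $\matrixhom(h)$, $\matrixhom(v)$, and $-I$ — lies in $\GL(2,\ZZ)$ and is congruent to the identity mod $2$; hence every $A\in\Gamma_2$ is an integer matrix with $\det A=1$ and $A\equiv I\pmod 2$. Therefore, for primitive $v$, the vector $Av$ is again primitive (because $A\in\GL(2,\ZZ)$) and satisfies $Av\equiv v\pmod 2$, so it remains in the same one of the three sets.

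For transitivity, since $\Gamma_2$ is a group it suffices to send an arbitrary element of each set to one fixed base point: $(1,1)$ for $\cO$, $(1,0)$ for $\cE_2$, and $(0,1)$ for $\cE_1$. Equivalently, I would construct, for a given $v=(x,y)$ in the relevant set, an element $B\in\Gamma_2$ carrying the base point to $v$ — for the base points $(1,0)$ and $(0,1)$ this amounts to prescribing the first, respectively second, column of $B$, and for $(1,1)$ the two column sums. In each case the entries of $B$ are written down from a B\'ezout relation $mx+ny=1$: for $v\in\cO$ set $b=n(xy-1)$, $c=m(xy-1)$, $a=x-b$, $d=y-c$, so that $B(1,1)=v$, $\det B=xy-(xc+by)=xy-(xy-1)=1$, and $b,c$ are even because $xy-1$ is even; for $v\in\cE_2$ set $B=(\begin{smallmatrix} x & -n \\ y & m \end{smallmatrix})$, so $B(1,0)=v$ and $\det B=mx+ny=1$; and the case $v\in\cE_1$ is the mirror image, prescribing the second column with the roles of the two coordinates exchanged. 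One then checks in each case that the matrix reduces to the identity mod $2$, so it lies in $\Gamma_2$.

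The only delicate point — and the closest thing to an obstacle — is the parity bookkeeping in this last step: one must guarantee that the constructed matrix is $\equiv I\pmod 2$ (hence genuinely in $\Gamma_2$), not merely unimodular. For $\cO$ this is automatic once one notes $xy-1$ is even. For $\cE_1$ and $\cE_2$ the B\'ezout coefficients may have the wrong parity, but replacing $(m,n)$ by $(m-y,\,n+x)$ preserves the relation $mx+ny=1$ while flipping the parity of one coefficient (using that the odd coordinate is odd), so one may assume the parities are as required. I would present this elementary argument since it is self-contained given the material already developed; alternatively, one could argue structurally — $\Gamma_2\triangleleft\SL(2,\ZZ)$ with quotient $\SL(2,\ZZ/2\ZZ)\cong S_3$, $\SL(2,\ZZ)$ acts transitively on primitive vectors, and the stabilizer of $(1,0)$ surjects onto an order-two subgroup, so orbit counting gives exactly three $\Gamma_2$-orbits, necessarily the three nonempty invariant sets.
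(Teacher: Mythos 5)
Your proposal is correct and follows essentially the same route as the paper's proof: the same B\'ezout construction of the matrix column-by-column (with identical choices $b=n(xy-1)$, $c=m(xy-1)$, $a=x-b$, $d=y-c$ for $\cO$, and $B=(\begin{smallmatrix}x&-n\\ y&m\end{smallmatrix})$ with the $(m,n)\mapsto(m-y,n+x)$ parity adjustment for $\cE_2$). You spell out the invariance step more fully than the paper does, and your closing remark about the structural orbit-count alternative is a nice observation, but the core argument matches.
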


\subsection{Closed geodesics in \texorpdfstring{$\bHt$}{H}}

Because $\bHt$ is a square-tiled surface built from finitely many squares, a nonsingular geodesic is closed if and only if it has rational slope. In the next two lemmas, we explain how the geometry and topology of closed geodesics of $\bHt$ depend on their direction.

\begin{lemma}
\label{lem:even geodesics}
Suppose $\eta$ is a simple closed geodesic on $\bHt$ that is parallel to $(q,p) \in \cE$. Then there exists $g \in F_2$ and $i \in \{0,\ldots, 5\}$ such that $\authom(g) \circ \gamma_i$ is homologous to either $\eta$ or $-\eta$ as an element of $H_1(\bHt; \ZZ)$. Furthermore, $\eta$ has length $2 \sqrt{p^2+q^2}$. 
\end{lemma}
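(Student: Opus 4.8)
The plan is to reduce the statement to the two coordinate directions $(1,0)$ and $(0,1)$ using the transitivity of $\Gamma_2$ on $\cE_1$ and $\cE_2$ (\Cref{lem:transitive}), settle those directions by reading the cylinder decomposition of $\bHt$ off of \Cref{fig:homologycover}, and then transport the conclusion along an affine automorphism coming from $\authom$.

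First I would record the base cases. By the proposition of \S\ref{sect:cylinders} that extends a closed geodesic to a cylinder, any simple closed geodesic on $\bHt$ is a core curve of one of the maximal cylinders in its direction, and these maximal cylinders form a cylinder decomposition of $\bHt$. Inspecting the horizontal decomposition in \Cref{fig:homologycover} one checks that $\bHt$ is a union of three horizontal cylinders, each of width $1$ and circumference $2$, whose core curves represent (with suitable orientations) classes in $\{\pm\gamma_0,\pm\gamma_3\}$; in particular every horizontal simple closed geodesic has length $2$ and is homologous to $\pm\gamma_0$ or $\pm\gamma_3$. Applying the lift of $\varphi_\bH$ (or repeating the inspection in the vertical direction) gives the same conclusion for the vertical direction, with $\{\pm\gamma_0,\pm\gamma_3\}$ replaced by $\{\pm\gamma_1,\pm\gamma_4\}$. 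This base-case analysis is where the real content sits.

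Next comes the reduction. Suppose $(q,p)\in\cE_2$ and set $\bu=(1,0)\in\cE_2$. By \Cref{lem:transitive} there is $A\in\Gamma_2$ with $A\bu=(q,p)$, and by \eqref{eq:Gamma 2 -I} we may write $A=\pm\matrixhom(g)$ for some $g\in F_2$. Since $\bHt$ is a half-translation surface, $\dir$ takes values in $\SS^1/\{\pm I\}$, so by \eqref{eq:matrixhom} and the description of the action of affine maps on directions in \S\ref{sect:geodesics}, $\authom(g)$ carries closed geodesics of direction $[(1,0)]$ to closed geodesics parallel to $A(1,0)=\pm(q,p)$, and $\authom(g)^{-1}$ does the reverse. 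Moreover $\authom(g)\in\Aff_{+1}(\bHt)$ permutes maximal cylinders, so it carries the horizontal cylinder decomposition onto the cylinder decomposition in direction $(q,p)$; since $\matrixhom(g)\in\SL(2,\ZZ)$ sends the primitive vector $(1,0)$ to the primitive vector $\pm(q,p)$, it scales lengths of horizontal segments by exactly $\sqrt{p^2+q^2}$. (For $(q,p)\in\cE_1$ one argues identically with $(1,0)$ replaced by $(0,1)$ and $\{\pm\gamma_0,\pm\gamma_3\}$ by $\{\pm\gamma_1,\pm\gamma_4\}$.)

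Finally I would assemble the statement. Given $\eta$ parallel to $(q,p)\in\cE_2$, the curve $\authom(g)^{-1}(\eta)$ is a simple closed geodesic in the horizontal direction, so by the base case it has length $2$ and is homologous to $\pm\gamma_i$ for some $i\in\{0,3\}$. Pushing forward by $\authom(g)$, the length is multiplied by $\sqrt{p^2+q^2}$, giving length $2\sqrt{p^2+q^2}$ for $\eta$; and taking homology classes, $[\authom(g)\circ\gamma_i]=\authom(g)_\ast[\gamma_i]=\pm\authom(g)_\ast[\authom(g)^{-1}\eta]=\pm[\eta]$, so $\authom(g)\circ\gamma_i$ is homologous to $\eta$ or $-\eta$. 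The case $(q,p)\in\cE_1$ is the same with $i\in\{1,4\}$. The main obstacle is the base-case step: correctly extracting the horizontal and vertical cylinder decompositions of $\bHt$ from \Cref{fig:homologycover}, checking that in each of these two directions all core curves have circumference $2$, and verifying that every core curve represents one of $\pm\gamma_0,\pm\gamma_3$ (respectively $\pm\gamma_1,\pm\gamma_4$) in $H_1(\bHt;\ZZ)$; everything after that is a routine application of \Cref{lem:transitive} and the behaviour of $\disp$ and $\dir$ under affine maps.
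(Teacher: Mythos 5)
Your proposal follows essentially the same route as the paper's proof: use \Cref{lem:transitive} and \eqref{eq:Gamma 2 -I} to find $g\in F_2$ moving $(q,p)$ to a coordinate direction, observe that $\authom(g)$ carries $\eta$ to a horizontal or vertical simple closed geodesic which must be a core curve homologous to some $\pm\gamma_i$, and transport the length computation back using \Cref{affinemap-derivative}. The logic is sound and the assembly step is correct.

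One factual slip worth flagging: you identify the horizontal core curves as $\{\pm\gamma_0,\pm\gamma_3\}$ and the vertical ones as $\{\pm\gamma_1,\pm\gamma_4\}$. This is inconsistent with the paper (see the proof of \Cref{U invariant}), where $\authom(h)$ twists in horizontal cylinders with core curves $\{\gamma_1,\gamma_3,\gamma_5\}$ and $\authom(v)$ in vertical cylinders with core curves $\{\gamma_0,\gamma_2,\gamma_4\}$. It is also internally inconsistent with your own count: three cylinders whose core classes sum to zero (via \eqref{eq:gamma id}) cannot be concentrated in just two homology classes up to sign. Since the lemma only asserts existence of some $i\in\{0,\ldots,5\}$, this mislabelling does not break the argument — but your claim that $i\in\{0,3\}$ (resp.\ $i\in\{1,4\}$) in the two cases should be replaced by $i\in\{1,3,5\}$ (resp.\ $i\in\{0,2,4\}$).
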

\begin{proof}
	By \eqref{eq:Gamma 2 -I} and \Cref{lem:transitive}, there is a $g \in F_2$ such that $$\left[\matrixhom(g)\right] \begin{pmatrix}
	q \\
	p
\end{pmatrix}	 \in \{(\pm 1,0),(0,\pm 1)\}.$$	
	Then the derivative of $\authom(g)$ is $\pm \matrixhom(g) \in \PSL(2,\RR)$. 
	From remarks in \Cref{sect:geodesics}, $\authom(g) \circ \eta$
	is a closed geodesic in the horizontal or vertical direction. Since $\bHt$ has horizontal and vertical cylinder decompositions and the homology classes of the core curves are given by the curves $\gamma_i$ we see that $\authom(g) \circ \eta$ must be homologous to some $\pm \gamma_i$. Thus, $\eta$ is homologous to $\authom(g^{-1}) \circ \gamma_i$. Now observe that for any $i$,
$$\disp(\gamma_i) = \pm 2 \matrixhom(g)\begin{pmatrix}
	q \\
	p
\end{pmatrix} \in \{\pm(2, 0), \pm(0, 2)\} \subset \RR^2/\pm I$$
By \Cref{affinemap-derivative}. Thus we have
$$\disp(\eta) = \matrixhom(g^{-1}) \disp(\gamma_i)= \pm 2 \begin{pmatrix}
	q \\
	p
\end{pmatrix} \in \RR^2/\langle -I \rangle.$$
From the discussion in \Cref{sect:geodesics}, $\eta$ has length $2 \sqrt{p^2+q^2}$.
\end{proof}

\begin{lemma}
\label{lem:odd null-homologous}
	Suppose $\zeta$ is a closed geodesic on $\bHt$ with initial direction parallel to an element $(q,p) \in \cO$. Then $\zeta$ is null-homologous and has length $6 \sqrt{p^2+q^2}$. The maximal cylinder containing $\zeta$ has area six and six singularities on each boundary counting multiplicity.
\end{lemma}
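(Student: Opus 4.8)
The plan is to reduce to the slope-$1$ direction and then read everything off the explicit flat picture of $\bHt$ in \Cref{fig:homologycover}. By \eqref{eq:Gamma 2 -I} and \Cref{lem:transitive}, I would first choose $g\in F_2$ with $\matrixhom(g)(q,p)=\pm(1,1)$ and set $\zeta_0=\authom(g)\circ\zeta$, a closed geodesic of slope $1$ on $\bHt$ (we may take $\zeta$, hence $\zeta_0$, primitive, since any closed geodesic is a multiple of a primitive one). Because $\authom(g)$ is an orientation- and area-preserving affine automorphism, it carries the maximal cylinder through $\zeta$ to the maximal cylinder through $\zeta_0$, preserving its area and the combinatorics of the singularities on its boundary; it induces an automorphism of $H_1(\bHt;\ZZ)$; and by \Cref{affinemap-derivative} one has $\disp(\zeta)=\pm\matrixhom(g^{-1})\disp(\zeta_0)$. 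Since $\matrixhom(g^{-1})(1,1)=\pm(q,p)$, once $\disp(\zeta_0)=\pm(6,6)$ is known it follows that $\disp(\zeta)=\pm 6(q,p)$ has length $6\sqrt{p^2+q^2}$, and $\zeta$ is null-homologous iff $\zeta_0$ is. So everything reduces to the slope-$1$ case.

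Next I would analyze the slope-$1$ direction on $\bHt$ directly. Following a slope-$1$ trajectory through the six constituent squares of \Cref{fig:homologycover}, one checks that it visits each square exactly once along a diagonal and then closes up; hence $\bHt$ has a single slope-$1$ cylinder $C$, whose core curve has circumference $6\sqrt2$, and $\zeta_0$ is freely homotopic to this core. Since the slope-$1$ cylinders exhaust $\bHt$ up to the measure-zero union of boundary saddle connections and $C$ already has area $6=\operatorname{area}(\bHt)$, the cylinder $C$ is the only one and fills $\bHt$, giving the area statement. For the boundary count: $\bHt$ is a half-translation surface with four cone points of angle $3\pi$, so each has exactly three slope-$1$ separatrices (slope-$1$ rays recur every $\pi$ of cone angle), for $12$ separatrices, i.e.\ six saddle connections, each of length $\sqrt2$. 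The two boundary circles of $C$ each have length $6\sqrt2$, so each is a cyclic concatenation of six of these saddle connections and therefore meets six singularities counted with multiplicity.

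The one genuinely delicate point is the null-homology of $\zeta_0$, and I expect it to be the main obstacle: it is not a soft consequence of "$C$ fills $\bHt$" (a filling cylinder need not have separating core), but depends on how the six slope-$1$ saddle connections attach to the four singularities. Concretely, I would verify from \Cref{fig:homologycover} that the two singular leaves of the slope-$1$ foliation — the images of the two boundary circles of $C$ — pass through disjoint pairs of the four singularities, so that $\zeta_0$ separates $\bHt$ into two once-punctured tori; since a simple closed curve on a closed orientable surface is null-homologous over $\ZZ$ precisely when it separates, and $H_1(\bHt;\ZZ)\cong\ZZ^4$ is torsion-free, this yields $[\zeta_0]=0$. (Equivalently, one could compute the intersection numbers $i(\zeta_0,\gamma_i)$: since the horizontal and vertical core curves $\gamma_i$ are drawn explicitly, one checks that inside each square $\zeta_0$ crosses $\gamma_i$ at most once with signs cancelling in pairs, whence $i(\zeta_0,\gamma_i)=0$ for every $i$ and nondegeneracy of the intersection form forces $[\zeta_0]=0$.) Transporting back through $\authom(g^{-1})$ then gives $[\zeta]=0$, which together with the length and cylinder statements already obtained finishes the proof.
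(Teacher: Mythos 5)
Your proposal follows the same route as the paper: reduce to the slope-$1$ direction via the multitwist action $\authom$ and \Cref{lem:transitive}, read off the slope-$1$ facts (single filling cylinder of circumference $6\sqrt2$, area six, boundary data) from the flat picture of $\bHt$, and transport back via \Cref{affinemap-derivative}. The paper's argument for null-homology of the slope-$1$ core curve is exactly your secondary option --- its algebraic intersection number with each $\gamma_i$ vanishes --- while your lead ``separating curve'' reformulation is an equivalent, and correct, way to see the same fact.
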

\begin{proof}
	It can be checked that in the slope one direction, $\bHt$ is decomposed into a single cylinder of circumference $6 \sqrt{2}$. This is the maximal cylinder in this direction and has area six. Observe that the core curve of this cylinder is trivial as an element of $H_1(\bHt,\ZZ)$ since it has trivial algebraic intersection number with each element in our generating set $\{\gamma_0, \ldots, \gamma_5\}$.

	Let $\zeta$ be as in the lemma. By \Cref{lem:transitive}, there is a $g \in F_2$ such that $\matrixhom(g)(q,p) \in \{\pm(1,1)\}$. Then the curve $\alpha=\authom(g) \circ \zeta$ is a closed geodesic with slope one, and is therefore null-homologous.
Since the action of $\authom(g)$ induces an automorphism of $H_1(\bHt, \ZZ)$ we conclude that $\zeta$ is also null-homologous.	
Furthermore since $\authom(g^{-1})$ has derivative $\pm \matrixhom(g^{-1})$ we see that
$$\disp(\zeta)=\matrixhom(g^{-1}) \disp(\alpha)=\pm \matrixhom(g^{-1}) \begin{pmatrix}
6 \\
6
\end{pmatrix}
=\pm\begin{pmatrix}
6q \\
6p
\end{pmatrix}.$$
Thus the length of $\zeta$ is as claimed. The maximal cylinder containing $\zeta$ is the image of the slope one maximal cylinder under $\authom(g^{-1})$ and so also has area six and six singularities on each boundary counting multiplicity.
\end{proof}

\section{Drift-periodic trajectories}
\label{sec:drift}

In this section, we prove that any geodesic on $\bN$ which is parallel to a vector in $\cE$ is drift-periodic as per statement $(2)$ of \Cref{thm:main}.
We make use of Weierstrass points and the fact that $\bN$ covers the cube.

%Since $\bN$ is a square-tiled surface that is the universal cover of a finite square-tiled surface, \Cref{rational implies}
%guarantees that a nonsingular geodesic parallel to a rational direction is either periodic or drift-periodic. We thus only have to rule out periodicity.

\subsection{Weierstrass points}
A compact flat surface $\bS$ of genus $g$ is said to be {\em hyperelliptic} 
if there is an order two orientation-preserving isometry $\hyp_\bS:\bS \to \bS$ which fixes $2g+2$ points of $\bS$. This involution is called the {\em hyperelliptic involution} and it is unique if $g \geq 2$ \cite[\S III.7]{FK92}. 
The fixed points of $\hyp_\bS$ are the {\em Weierstrass points} of $\bS$. 
By Riemann-Roch, the quotient $\bS/\hyp$ is a sphere.

Observe that the surface $\bHt$ is hyperelliptic. It has genus two and the Weierstrass points are the centers of the squares making up $\bHt$. We denote the hyperelliptic involution by $\hyp_\bHt:\bHt \to \bHt$. This involution rotates every square making up $\bHt$ by $180^\circ$. See \Cref{fig:Weierstrass}.

\begin{figure}
	\includegraphics[height=2in]{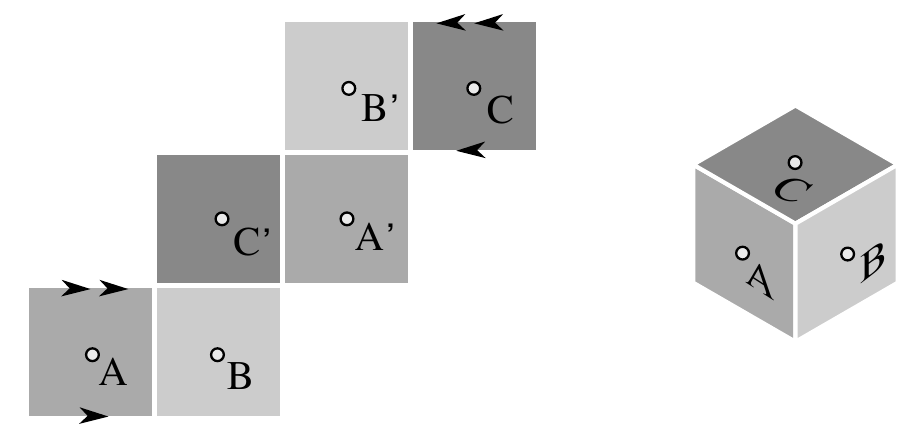}
	\caption{The surface $\bHt$ with with labeled Weierstrass points is shown on the left, while the surface $\bH$ with the images of the Weierstrass points are shown at right. Unmarked edges of $\bHt$ are glued by a translation by a vector perpendicular to the edge.}
	\label{fig:Weierstrass}
\end{figure}

We will need the following observation:

\begin{prop}
Each element of $\authom(F_2)$ permutes the Weierstrass points. 
\end{prop}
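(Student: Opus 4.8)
The plan is to exploit the uniqueness of the hyperelliptic involution together with the fact that elements of $\authom(F_2)$ are orientation-preserving isometries. Recall that $\bHt$ has genus two, so by \cite[\S III.7]{FK92} its hyperelliptic involution $\hyp_\bHt$ is the \emph{unique} order-two orientation-preserving isometry with $2g+2 = 6$ fixed points; in particular, $\hyp_\bHt$ lies in the center of the group of orientation-preserving isometries of $\bHt$. The Weierstrass points are exactly the fixed-point set $\mathrm{Fix}(\hyp_\bHt)$.

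First I would observe that each $\authom(g)$ for $g \in F_2$ is an \emph{isometry} of $\bHt$ — not merely an affine automorphism — because $\authom(g)$ acts on each cylinder of a horizontal or vertical cylinder decomposition by a Dehn twist, and a Dehn twist (being built from the shear $\Dehn$, which is normalized by the orientation-preserving isometry group of the cylinder, as noted in \Cref{sect:cylinders}) is an isometry of that cylinder in its flat metric; since the maps agree with the identity on the boundaries, they patch to a global isometry. Wait — more carefully, a Dehn twist is an affine map with nontrivial derivative, so it is \emph{not} an isometry. Let me instead argue directly via conjugation.

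The correct approach: for any $g \in F_2$, the map $\authom(g) \circ \hyp_\bHt \circ \authom(g)^{-1}$ is again an order-two orientation-preserving homeomorphism of $\bHt$, and since $\hyp_\bHt$ is an isometry and $\authom(g)$ is affine, this conjugate is an orientation-preserving isometry of $\bHt$ (conjugating an isometry by an affine map of derivative $A$ yields an isometry, because the flat metric is affinely natural: the conjugate has derivative $A(-I)A^{-1} = -I$, so it is again a half-translation automorphism with trivial derivative in $\PSL(2,\RR)$, hence an isometry). It has exactly $6$ fixed points, namely $\authom(g)\big(\mathrm{Fix}(\hyp_\bHt)\big)$. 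By the uniqueness of the hyperelliptic involution in genus two, $\authom(g) \circ \hyp_\bHt \circ \authom(g)^{-1} = \hyp_\bHt$. Applying this to the fixed-point sets gives $\authom(g)\big(\mathrm{Fix}(\hyp_\bHt)\big) = \mathrm{Fix}(\hyp_\bHt)$, i.e.\ $\authom(g)$ permutes the Weierstrass points, as claimed.

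The main obstacle — and the one point requiring care — is the claim that conjugating the isometry $\hyp_\bHt$ by the affine map $\authom(g)$ produces an \emph{isometry} (equivalently, a half-translation automorphism with trivial derivative), so that genus-two uniqueness applies. This follows because $D\big(\authom(g) \circ \hyp_\bHt \circ \authom(g)^{-1}\big) = M(g)\,(-I)\,M(g)^{-1} = -I$ in $\PSL(2,\RR)$ is trivial, and by the discussion of derivatives in \Cref{sect:natural maps} a $G_2$-automorphism with trivial derivative is exactly a $G_2$-isomorphism, i.e.\ an isometry. Everything else is a direct application of the uniqueness statement for the hyperelliptic involution.
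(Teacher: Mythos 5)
Your argument is correct, and it is exactly the alternative route the authors explicitly acknowledge before the proof: they write that the proposition ``can be proved using the uniqueness of the hyperelliptic involution mentioned earlier, but we give a sketch of a more elementary proof.'' You fill in the uniqueness route; the paper instead verifies the claim directly on the two generators $\authom(h)$ and $\authom(v)$, reading off from \Cref{fig:Weierstrass} how each twist permutes the six labeled Weierstrass points and then appealing to the fact that it suffices to check a generating set. Your approach buys generality and avoids any picture-checking: the same conjugation argument shows that \emph{every} element of $\Aff_{+1}(\bHt)$, not just those in $\authom(F_2)$, permutes the Weierstrass points, since the only input is that the conjugate $\authom(g)\circ\hyp_\bHt\circ\authom(g)^{-1}$ has trivial derivative in $\PSL(2,\RR)$, hence is a $G_2$-automorphism (an isometry) with six fixed points and order two, and is therefore forced to equal $\hyp_\bHt$ by genus-two uniqueness. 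The paper's check on generators is shorter and more self-contained for readers who don't want to invoke the uniqueness theorem, and it also records explicitly which permutation of $\{A,B,C,A',B',C'\}$ each generator induces, which is concrete information the conceptual argument does not provide.

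One small stylistic remark: the false start in your proposal (first arguing that $\authom(g)$ is an isometry because Dehn twists are, then retracting) should be cut in a final write-up; the actual content is the conjugation argument. Your remark that $\hyp_\bHt$ lies in the center of the isometry group is also not what you use---what you use is uniqueness among isometries, together with the derivative computation showing the conjugate is again an isometry. The derivative computation is the right hinge and you identify it as such; the step $D\big(\authom(g)\circ\hyp_\bHt\circ\authom(g)^{-1}\big)=\pm M(g)\cdot(\pm I)\cdot\pm M(g)^{-1}=\pm I$, trivial in $\PSL(2,\RR)$, combined with the exact sequence $1\to\Aut_{G_2}(\bHt)\to\Aut_{\Aff_{+1}}(\bHt)\xrightarrow{D}\PSL(2,\RR)$ from \Cref{sect:natural maps}, is exactly what makes the uniqueness theorem applicable.
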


This can be proved using the uniqueness of the hyperelliptic involution mentioned earlier, but we give a sketch of a more elementary proof:

\begin{proof}
It suffices to check that the generating elements $\authom(h)$ and $\authom(v)$ permute the Weierstrass points. Using the point names from \Cref{fig:Weierstrass}, observe that
$\authom(h)$ swaps $A$ with $B$, $C'$ with $A'$, and $B'$ with $C$, while $\authom(v)$ swaps $A$ with $C$, $B$ with $C'$, and $A$ with $B'$.
\end{proof}

\Cref{lem:even geodesics} tells us that any cylinder parallel to a vector in $\cE$
is in the orbit of a horizontal or vertical cylinder under $\authom(F_2)$. As a consequence:

\begin{prop}
\label{prop:weierstrass}
Let $Y \subset \bHt$ be a maximal cylinder parallel to a vector in $\cE$. Then, $Y$ is fixed by the hyperelliptic involution and contains exactly two Weierstrass points on the central core curve.\end{prop}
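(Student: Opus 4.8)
The plan is to reduce the statement about an arbitrary cylinder parallel to a vector in $\cE$ to the cases of the horizontal and vertical cylinder decompositions, using the fact (from \Cref{lem:even geodesics}) that any such cylinder lies in the $\authom(F_2)$-orbit of a horizontal or vertical cylinder, together with the preceding proposition that each element of $\authom(F_2)$ permutes the Weierstrass points. First I would verify the claim directly for the horizontal cylinders of $\bHt$: by inspection of the gluing pattern in \Cref{fig:homologycover} (or \Cref{fig:Weierstrass}), $\bHt$ decomposes horizontally into cylinders, and one checks that each such cylinder is invariant under $\hyp_\bHt$ — indeed $\hyp_\bHt$ rotates each square by $180^\circ$, hence preserves the horizontal foliation and maps each maximal horizontal cylinder to itself — and that each such cylinder contains exactly two of the six Weierstrass points (the centers of squares), lying on the central core curve. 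The same computation applies to the vertical decomposition by the symmetry exchanging horizontal and vertical (e.g.\ via a $90^\circ$ rotation realized by a $G_4$-automorphism from \Cref{G4 auto}, or simply by direct inspection).

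Next I would transport this to the general case. Let $Y$ be a maximal cylinder parallel to $(q,p) \in \cE$. By \Cref{lem:even geodesics} there is $g \in F_2$ such that $\authom(g) \circ \eta$ is horizontal or vertical for a core curve $\eta$ of $Y$; since $\authom(g)$ is an affine automorphism it sends maximal cylinders to maximal cylinders, so $\authom(g)(Y)$ is one of the horizontal or vertical maximal cylinders $Y_0$ treated above. We know $\hyp_\bHt(Y_0) = Y_0$. Since $\hyp_\bHt$ is the (unique) hyperelliptic involution of the genus-two surface $\bHt$, it is central in the affine automorphism group — in particular it commutes with $\authom(g)$ — so
$$\hyp_\bHt(Y) = \hyp_\bHt \circ \authom(g)^{-1}(Y_0) = \authom(g)^{-1} \circ \hyp_\bHt(Y_0) = \authom(g)^{-1}(Y_0) = Y,$$
i.e.\ $Y$ is fixed by the hyperelliptic involution. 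For the Weierstrass point count: $\authom(g)$ permutes the Weierstrass points by the preceding proposition, so the Weierstrass points in $Y$ are exactly the $\authom(g)^{-1}$-images of the two Weierstrass points in $Y_0$; that gives exactly two, and since $\authom(g)^{-1}$ is affine it carries the central core curve of $Y_0$ to the central core curve of $Y$, placing both Weierstrass points on the central core curve of $Y$.

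The one point requiring a little care — and the main thing to get right — is the assertion that $\hyp_\bHt$ commutes with every $\authom(g)$: this uses that $\bHt$ has genus two, so by \cite[\S III.7]{FK92} its hyperelliptic involution is unique, hence fixed by conjugation by any orientation-preserving affine automorphism (which must send a hyperelliptic involution to a hyperelliptic involution). Alternatively one can avoid invoking uniqueness and instead check commutation on the generators $\authom(h), \authom(v)$ directly, as in the sketch proof of the preceding proposition. Everything else is either a finite inspection of the two explicit cylinder decompositions of $\bHt$ or a formal consequence of $\authom(g)$ being an affine automorphism.
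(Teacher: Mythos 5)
Your proof is correct and follows the same overall strategy as the paper's: verify the claim by inspection for the horizontal and vertical cylinder decompositions, then transport the general case to one of these via the $\authom(F_2)$-action (using \Cref{lem:even geodesics}) together with the preceding proposition that $\authom(F_2)$ permutes the Weierstrass points. The one place you diverge is in establishing $\hyp_\bHt(Y)=Y$: you invoke centrality of the hyperelliptic involution in $\Aff(\bHt)$ (via uniqueness in genus two), so that $\hyp_\bHt$ commutes with $\authom(g)$ and invariance of $Y$ follows formally; the paper instead obtains the two transported Weierstrass points on the central core curve of $Y$ first, and then deduces $\hyp_\bHt(Y)=Y$ from the fact that $\hyp_\bHt$ fixes those points and preserves directions (its derivative is $-I$), so the maximal cylinder through a fixed point in a given direction is carried to itself. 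Both arguments are sound; yours leans on a standard general fact that the paper itself mentions as an alternative route in the remark before its elementary proof that $\authom(F_2)$ permutes Weierstrass points, while the paper's version extracts the cylinder-invariance as a byproduct of the Weierstrass-point count and thus avoids appealing to centrality at all.
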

\begin{proof}
In the horizontal and vertical direction, it is clear by inspection. See \Cref{fig:Weierstrass}. 

Now let $Y$ be a cylinder parallel to $\bv \in \cE$. By 
\Cref{lem:transitive} and \eqref{eq:Gamma 2 -I}, there is a 
$g \in F_2$ such that $$\matrixhom(g) \bv \in \{(\pm 1,0),(0,\pm 1)\}.$$	
Then $\authom(g)(Y)$ is a horizontal or vertical cylinder and contains two Weierstrass points $p_1$ and $p_2$ on the central core curve of $\authom(g)(Y)$ from the first paragraph. Then $Y$ contains $\authom(g^{-1})(p_1)$ and $\authom(g^{-1})(p_2)$ which are also Weierstrass points and lie on the central core curve of $Y$ since $\authom(g^{-1})$ is acting affinely. Since these two Weierstrass points are fixed and lie in the central core curve of $Y$, the hyperelliptic involution fixes $Y$.
\end{proof}

\subsection{The cube}

We make use of the following lemma:

\begin{lemma}
The Necker cube surface is a branched cover of the unit cube, $\mathbf{C}$.
\begin{proof}
We refer the reader to \Cref{fig:cubecover}, which illustrates the cover. The cube $\bC$ can be seen to be the quotient of $\bN$ under the group of isometries of $\bN$ generated by the isometries of $\RR^3$ of the form $\bv \mapsto 2 \bp - \bv$ where $\bp \in \Lambda_0$, fixing a point $\bp$ where six squares come together. (While these isometries of $\RR^3$ are orientation-reversing, they act as orientation-preserving involutions on $\bN$.)
\end{proof}
\end{lemma}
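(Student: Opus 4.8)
The plan is to exhibit the branched covering $\bN \to \bC$ as the quotient map for the action on $\bN$ of the group $\Gamma$ generated by the point reflections $\sigma_\bp:\RR^3\to\RR^3$, $\bx\mapsto 2\bp-\bx$, one for each $\bp\in\Lambda_0$. The first step is to note that $\Gamma\subset\Isom(\RR^3,\bN)$: writing $\sigma_\bp(\bx)=(-I)\bx+2\bp$ we have $-I\in\pm S_3$ and $2\bp\in\Lambda_0$, so $\sigma_\bp$ has the form required by \eqref{eq:isometry}. Since $\sigma_\bp\circ\sigma_{\bp'}$ is translation by $2(\bp-\bp')\in 2\Lambda_0$, the group $\Gamma$ contains the translation lattice $2\Lambda_0$ as an index-two subgroup; thus $\Gamma$ is a subgroup of $\Isom(\RR^3,\bN)$ containing a finite-index subgroup of the translation group $\Lambda_0$, and therefore acts on $\bN$ properly discontinuously, cocompactly, and by maps carrying constituent squares to constituent squares.

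Next I would establish two facts that make the quotient well behaved. First, although each $\sigma_\bp$ reverses the orientation of $\RR^3$, it preserves that of $\bN$: since $\sigma_\bp(\bx)\cdot\bone=-\,\bx\cdot\bone$, the map $\sigma_\bp$ interchanges the two sides $\{\bx\cdot\bone>1\}$ and $\{\bx\cdot\bone<-1\}$ of the properly embedded surface $\bN$, and an orientation-reversing isometry of $\RR^3$ that swaps the two sides of $\bN$ restricts to an orientation-preserving homeomorphism of $\bN$. Second, the fixed-point set: each $\sigma_\bp$ fixes only $\bp$ and translations fix nothing, so the $\Gamma$-stabilizer of a point of $\bN$ is trivial except at the points of $\Lambda_0$, where it is $\langle\sigma_\bp\rangle\cong\ZZ/2$; and because an orientation-preserving involution of a flat cone fixing its apex is the rotation by half the cone angle, $\sigma_\bp$ acts near $\bp$ as the rotation by $\frac{3\pi}{2}$ of the cone of angle $3\pi$ (the $\Lambda_0$-vertices have valence six by \Cref{cor:squares meeting}). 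One also checks easily that $\Gamma$ acts freely on the set of constituent squares and on the set of their edges, since by \Cref{lem:squares} the center of a constituent square and the midpoint of each edge have a non-integer or non-$\bone^\perp$ coordinate and hence cannot lie in $\Lambda_0$.

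Consequently $\bN/\Gamma$ is a compact square-tiled surface, and the quotient map $p:\bN\to\bN/\Gamma$ is an honest covering away from $\Lambda_0$ and a two-to-one branched cover near each point of $\Lambda_0$ (local degree $3\pi/\frac{3\pi}{2}=2$); so $p$ is a branched covering map with branch set $p(\Lambda_0)$. To identify $\bN/\Gamma$ with the cube, I would count: the number of $\Gamma$-orbits of constituent squares is $\frac12\cdot\big(3\cdot[\Lambda_0:2\Lambda_0]\big)=6$ (using that $\bN/\Lambda_0=\bH$ has three squares), and — using that $\sigma_\bp$ swaps $\Lambda_1$ with $\Lambda_{-1}$ and acts trivially on $\Lambda_0/2\Lambda_0\cong(\ZZ/2)^2$ — there are four $\Gamma$-orbits of $\Lambda_0$ and four of $\Lambda_{-1}\cup\Lambda_1$, giving eight cone points, each of cone angle $\frac{3\pi}{2}$ (the four coming from $\Lambda_0$ by the order-two quotient, the four from $\Lambda_{\pm1}$ because those are valence-three points on which $\Gamma$ acts freely). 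Hence $\bN/\Gamma$ is a sphere (Euler characteristic $8-12+6=2$) tiled by six unit squares with eight valence-three vertices, i.e.\ it has the combinatorics of the cube; to pin down the isometry one chooses a fundamental domain consisting of six constituent squares, one from each $\sigma_\bp$-pair, and reads off the edge identifications, which are those of $\bC$, as recorded in \Cref{fig:cubecover}. In particular $p$ is branched over exactly four of the eight vertices of $\bC$, forming one class of the graph-theoretic two-coloring of the vertices, consistent with the fact that $\bN$ is the universal cover of the torus $\tilde\bC$ that double covers $\bC$ branched over those four vertices.

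The hard part is the final identification of $\bN/\Gamma$ with $\bC$: the orbit count determines $\bN/\Gamma$ only as a square-tiled sphere with six squares and eight valence-three vertices, and upgrading ``has the combinatorics of the cube'' to ``is isometric to $\bC$'' needs either a short classification of such square-tiled spheres or — as in the paper — the explicit fundamental domain together with a direct check of its gluings against those of the cube, which is precisely what \Cref{fig:cubecover} is for.
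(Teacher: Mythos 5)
Your proposal is correct and takes essentially the same approach as the paper: quotient $\bN$ by the group generated by the point-reflections $\sigma_\bp$ at points $\bp\in\Lambda_0$, using the observation that these are orientation-preserving on $\bN$ despite reversing orientation on $\RR^3$. You fill in considerably more detail than the paper does (the structure of $\Gamma$, the freeness of the action on squares and edges, the orbit/Euler-characteristic counts), and you correctly note that both arguments ultimately rest on the figure to pin down the isometry with $\bC$ rather than merely the combinatorial type.
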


\begin{figure}
\includegraphics[width=4in]{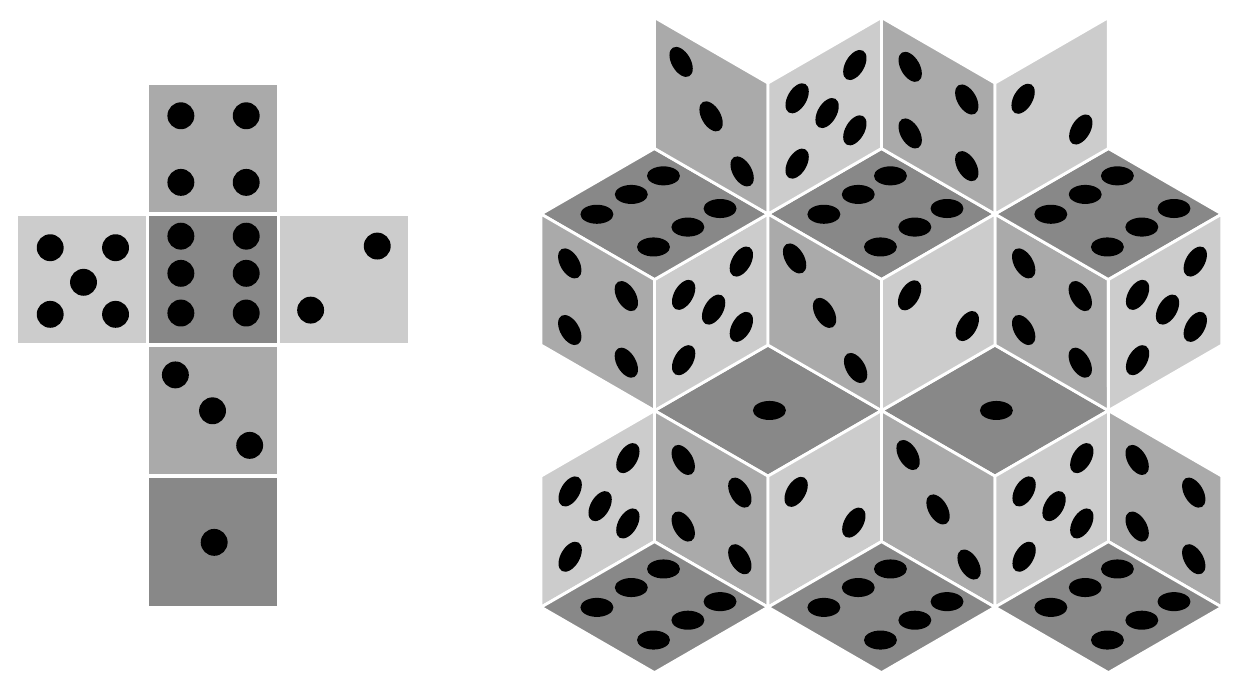}
\caption{A visual reference as to how the branched cover from $\bN$ to the unit cube is constructed.}
\label{fig:cubecover}
\end{figure}

\begin{lemma}[Cube Lemma]
\label{cube lemma}
	Let $\bC$ denote the cube and $p \in \bC$ be the center of a square face. There is no geodesic segment (not passing through a vertex) which leaves $p$ and later returns to $p$ making a $90^\circ$ angle at $p$.
\end{lemma}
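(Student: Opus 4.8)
The plan is to argue by contradiction, unfolding the putative loop and then exploiting the order-four rotational symmetry of the cube that fixes $p$.

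Suppose such a segment $\gamma$ exists, leaving $p$ with unit velocity $\bv_0$ and returning with unit velocity $\bv_1$, the angle between $\bv_0$ and $\bv_1$ being $90^\circ$. We may assume $\gamma$ does not revisit $p$ in its interior: otherwise one of the two sub-loops $\gamma$ cuts out again turns by $90^\circ$ at $p$ (rotational holonomies multiply, and $90^\circ$ is not a product of two turns each of which is $0^\circ$ or $180^\circ$), and we may replace $\gamma$ by it. Now develop $\gamma$: after lifting to the universal branched cover and composing with a developing map that carries vertices to $\ZZ^2$, the segment $\gamma$ becomes a straight segment $[q_0,q_1]\subset\RR^2$. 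Since $p$ is the center of a unit square face, every preimage of $p$ develops into the coset $(\tfrac12,\tfrac12)+\ZZ^2$; hence $q_0,q_1\in(\tfrac12,\tfrac12)+\ZZ^2$, the interior of $[q_0,q_1]$ avoids $\ZZ^2$ (as $\gamma$ avoids vertices), and the holonomy $g\in\Isom_+(\ZZ^2)$ of $\gamma$ satisfies $q_1=g(q_0)$. Comparing the chart at $q_0$ with the chart at $q_1$ shows that $\gamma$ turns by $90^\circ$ at $p$ precisely when the rotational part $J(g)\in C_4$ has order $4$. In particular $q_1-q_0\in\ZZ^2$, so $\gamma$ has rational slope.

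Now symmetrize. Let $\rho$ be the order-four isometry of $\bC$ induced by the $90^\circ$ rotation of $[0,1]^3$ about the axis through $p$ perpendicular to the face containing $p$; then $\rho$ fixes $p$, acts on $T_p\bC$ as a quarter turn, and permutes the eight vertices of $\bC$. Replacing $\rho$ by $\rho^{-1}$ if necessary, $\bv_1=D\rho(\bv_0)$, so the concatenation $\delta=\gamma\cdot\rho(\gamma)\cdot\rho^2(\gamma)\cdot\rho^3(\gamma)$ is smooth at each of its four junction points (all equal to $p$) and closes up. Thus $\delta$ is a closed geodesic on $\bC$ which avoids the vertices, is $\rho$-invariant, and passes through $p$ exactly at these four junctions with the velocities $D\rho^k(\bv_0)$, $k=0,1,2,3$; these span two perpendicular lines through $p$. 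In other words, $\delta$ is a closed geodesic on the cube whose trace near $p$ is a ``plus sign'' --- two perpendicular geodesic diameters meeting transversally --- centered at a face center.

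It remains to rule this out, which is the crux. Here is the plan. The cube is a square-tiled surface, hence a lattice (``Veech'') surface; its affine group acts on the rational directions with finitely many orbits, and every rational direction decomposes $\bC$ into finitely many metric cylinders, together with finitely many faces foliated in two perpendicular ways. Passing to the minimal translation cover $\pi\colon\hat\bC\to\bC$ --- on which direction is a genuine invariant of the geodesic flow, and whose deck group over the smooth locus is cyclic of order four, generated by an automorphism $\hat\rho$ whose derivative is a quarter turn --- and tracking the chart rotations of $\pi$ along a lift, the existence of $\gamma$ translates into the existence of a singularity-free geodesic segment of $\hat\bC$ running from some preimage $\hat p$ of $p$ to $\hat\rho(\hat p)$ (equivalently to $\hat\rho^{-1}(\hat p)$), whereas the ``harmless'' returns to $p$ (turning by $0^\circ$ or $180^\circ$) correspond to geodesics from $\hat p$ to $\hat p$ or to $\hat\rho^{2}(\hat p)$. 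After normalizing the direction by an affine automorphism one is left with only finitely many explicit cylinder decompositions, and in each the assertion is a direct inspection: through a face center $p$ there pass at most two leaves of the given direction, and following each leaf around the cube shows it returns to $p$ only with its original direction, never the direction rotated by $90^\circ$. The unfolding and the $C_4$-symmetrization are routine; the one genuine difficulty is this final, essentially finite, check that a ``plus sign at a face center'' cannot occur.
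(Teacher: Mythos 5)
The proposal shares the key observation with the paper — exploit the order-four orientation-preserving isometry $R$ of $\bC$ that fixes the face center $p$ and rotates $T_p\bC$ by a quarter turn — but then diverges, and the divergence leaves a genuine gap.

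In the paper, the rotation is used to produce an immediate contradiction with uniqueness of geodesics: choose $R$ so that $R_\ast\big(\gamma'(L)\big)=\gamma'(0)$, set $\eta(t)=R\circ\gamma(L-t)$, and observe that $\eta$ agrees with $\gamma$ at time $L$ in both position and velocity (so must equal $\gamma$) yet has $\eta'(0)=-\gamma'(0)$. That is the whole argument; no development, no translation cover, no Veech group, no cylinder decompositions.

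You instead use $\rho$ to concatenate $\gamma\cdot\rho(\gamma)\cdot\rho^2(\gamma)\cdot\rho^3(\gamma)$ into a $\rho$-invariant closed geodesic $\delta$ exhibiting a ``plus sign'' at $p$, and then propose to rule out that configuration by normalizing the direction via the affine group and inspecting finitely many cylinder decompositions of $\bC$. The construction of $\delta$ is correct, as is the preliminary reduction to a loop that does not revisit $p$ (and the unfolding used to get rationality of the slope is fine, though neither of these preliminaries is needed by the paper's argument). But you explicitly stop at the crux: \emph{``the one genuine difficulty is this final, essentially finite, check \ldots''} is not a proof, it is an unexecuted plan. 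Worse, as stated the plan is nearly circular: ``following each leaf around the cube shows it returns to $p$ only with its original direction, never the direction rotated by $90^\circ$'' is essentially the Cube Lemma itself, asserted rather than verified. To make your route rigorous you would have to exhibit generators of the Veech group of (the minimal translation cover of) $\bC$, enumerate the orbits of rational directions, describe the cylinder decompositions through each face center in each representative direction, and check the claim case by case; none of that appears. So the proposal is incomplete exactly where it needs to deliver.

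You were very close to the paper's proof: once you have the quarter-turn $\rho$ with $D\rho(\bv_0)=\bv_1$, rather than symmetrizing into a four-loop $\delta$, compare $\gamma$ with $\rho\circ\gamma(L-\,\cdot\,)$ directly. Both emanate from $p$; one initial velocity is $\gamma'(L)$ and the other is $-\gamma'(L)$, while their endpoints and terminal velocities coincide — contradicting uniqueness of geodesics. That replaces your entire finite check with one line.
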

\begin{proof}
	Suppose to the contrary that $\gamma:[0,L] \to \bC$ is a unit speed geodesic satisfying that $\gamma(0)=\gamma(L)=p$ and that $\gamma'(0) \perp \gamma'(L)$.
	Let $R:\bC \to \bC$ be the order four orientation-preserving isometry of $\bC$ which preserves $p$ and acts on $T_p \bC$ satisfying $R_\ast\big(\gamma'(L)\big)=\gamma'(0)$. Because $\gamma'(0)$ and $\gamma'(L)$ are orthogonal, and $R$ is acting by a rotation of order four we must have that 
	$R_\ast\big(\gamma'(0)\big)=-\gamma'(L)$.
	
	Define $\eta:[0,L] \to \bC$ by $\eta(t)=R \circ \gamma(L-t)$. Observe that 
	$$\eta'(0)=R_\ast\big(-\gamma'(L)\big)=-\gamma'(0)
	\quad \text{while} \quad
	\eta'(L)=R_\ast\big(-\gamma'(0)\big)=\gamma'(L).$$
	This is a contradiction: Since $\eta'(L)=\gamma'(L)$ these must be the same unit speed geodesics, but they are not the same geodesics since $\eta'(0) \neq \gamma'(0)$.
\end{proof}

\subsection{Drift-periodic directions}

\begin{thm}
\label{thm:even-drift-periodic}
A nonsingular geodesic in $\bN$ parallel to a vector in $\cE$ is drift-periodic.
\end{thm}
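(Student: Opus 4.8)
\textit{Plan of proof.} The plan is to reduce the assertion to a non-periodicity statement and then to contradict the Cube Lemma.

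First I would apply \Cref{rational implies}(2) to $\bN$, taking for $H$ the group of translations inside $\Isom(\RR^3,\bN)$. Since $\bN/H=\bH$ is built from three squares, $H$ has finitely many orbits of constituent squares; and since $\gamma$ is nonsingular with rational slope (the direction of $(q,p)\in\cE$), its maximal extension is a periodic geodesic, a drift-periodic geodesic, or a saddle connection, the last being excluded because $\gamma$ is nonsingular. So it suffices to prove that no nonsingular geodesic whose direction lies in $\cE$ is periodic.

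Suppose one is. Then it lies on a closed geodesic and hence inside a maximal cylinder $C\subset\bN$ of direction $[(q,p)]$, which in turn covers the corresponding maximal cylinder in $\bH$, itself covered by the maximal cylinder $Y\subset\bHt$ of that direction (periodicity forces trivial rotational holonomy, so the relevant core curves really do close up upstairs). By \Cref{prop:weierstrass}, $Y$ is invariant under the hyperelliptic involution and meets exactly two Weierstrass points of $\bHt$ — centers of squares — on its central core curve. Transporting these two points down to $\bH$ and back up into $C$ yields a closed geodesic $\delta\subset C$ running through centers of squares of $\bN$. Along $\delta$ the local slope read in a square chart alternates between the two values $\tfrac{p}{q}$ and $-\tfrac{q}{p}$ represented by the direction, and because exactly one of $p,q$ is even these two values have opposite parity; this is the feature of $\cE$ (absent for $\cO$) that drives the contradiction.

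The heart of the argument, and the step I expect to be the main obstacle, is to assemble this into a forbidden configuration on the cube. Concretely, I would show that two of the square centers on $\delta$ — two Weierstrass points upstairs — are identified by the branched covering $\pi:\bN\to\bC$ to a single face center $p$ of $\bC$, while the local directions of $\delta$ at these two centers, being directions of slopes of opposite parity, differ by a right angle; then $\pi\circ\delta$ is a geodesic on $\bC$ that leaves the face center $p$ and later returns to $p$ making a $90^\circ$ angle, contradicting \Cref{cube lemma}. The delicate part is the bookkeeping: tracking the deck groups of $\bHt\to\bH$, of $\bN\to\bH$, and of $\bN\to\bC$, and the way the two Weierstrass points and the hyperelliptic symmetry of $Y$ distribute over the sheets of these covers, so as to certify both that the two relevant centers land on the \emph{same} face of $\bC$ and that the incoming and outgoing directions there are orthogonal. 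With that contradiction in hand, $\gamma$ cannot be periodic, hence it is drift-periodic.
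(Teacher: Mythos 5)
Your outline reaches the same ingredients as the paper --- the reduction via \Cref{rational implies}, \Cref{prop:weierstrass}, and the contradiction with \Cref{cube lemma} --- but the step you flag as ``the main obstacle'' is exactly where the argument is missing, and the route you sketch points at the wrong level of the covering tower.

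You want to show that two distinct square centers of $\bN$ become identified only after passing to $\bC$, and then argue orthogonality of directions there from slope parity. The paper's mechanism is much simpler and happens already in $\bN$: the two Weierstrass points $a,b$ on the core curve of $Y\subset\bHt$ give, under the immersed cylinder $\iota:Y\to\bN$, two square centers $\tilde a,\tilde b$, and one then invokes \Cref{isometries} to produce an order-two orientation-preserving isometry $f:\bN\to\bN$ with \emph{unique} fixed point $\tilde a$ (a point reflection of the rhombille tiling). Since $f$ fixes $\tilde a$ it preserves $\iota(Y)$ and acts on its core curve as a reflection, so it must fix the only other square center on that curve, $\tilde b$; uniqueness of $f$'s fixed point then forces $\tilde a=\tilde b$. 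The arc of the core curve from $\tilde a$ to $\tilde b$ is therefore already a closed curve in $\bN$. Its angle at $\tilde a$ cannot be $180^\circ$ --- else it would be a closed geodesic whose projection to $\bH$ lifts to a closed geodesic on $\bHt$, contradicting $a\ne b$ --- so, $\bN$ being a quarter-translation surface, the angle is $90^\circ$, and pushing down to $\bC$ gives the forbidden configuration of \Cref{cube lemma} with no bookkeeping of deck groups at all.

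Two smaller points. Your parity observation does not supply the $90^\circ$: a geodesic carries a single direction class in $\SS^1/C_4$, and the right angle is forced by the argument above, not by an alternation of slopes along the curve. And the hypothesis $(q,p)\in\cE$ is consumed entirely by \Cref{prop:weierstrass}; for $(q,p)\in\cO$ all of $\bHt$ is one cylinder (\Cref{lem:odd null-homologous}) containing all six Weierstrass points, so the two-point structure your argument relies on is simply unavailable.
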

\begin{proof}
Let $\alpha$ denote a nonsingular geodesic in $\bN$ parallel to $v \in \cE$. We'll prove the theorem by contradiction. By \Cref{rational implies}, such a geodesic of rational slope is either periodic or drift-periodic. So, assume $\alpha$ is periodic.

Let $\beta$ denote the projection of $\alpha$ to a closed curve on $\bH$, and let $\tilde \beta$ be a lift to $\bHt$, which exists and is closed because the rotational holonomy around a closed geodesic is trivial. \Cref{prop:weierstrass} guarantees that $\tilde \beta$ lies in a cylinder $C \subset \bHt$ containing two distinct Weierstrass points $a$ and $b$ on its central curve. Let $\gamma:[0,L] \to C$ be a unit speed arc of the central core curve of $C$ such that $\gamma(0)=a$ and $\gamma(L)=b$.

The cylinder $C$ can be pushed down into $\bH$ and lifted to an immersed cylinder in $\bN$ that contains $\alpha$. Let $\iota:C \to \bN$ denote this immersion. 
Note that $\tilde a=\iota \circ \gamma(0)$ and $\tilde b = \iota \circ \gamma(L)$ must be centers of squares since $\gamma(0)$ and $\gamma(L)$ are Weierstrass points. 
By \Cref{isometries}, there is an order two orientation-preserving isometry $f: \bN \to \bN$ whose only fixed point is $\tilde a$ (that comes from an $180^\circ$ rotational symmetry of the rhombille tiling). Then $f$ must preserve $\iota(C)$. It follows that $f$ acts as an orientation-reversing isometry on the central core curve of $\iota(C)$. Thus $f(\tilde b)=\tilde b$, but since $f$ has only one fixed point we have $\tilde a=\tilde b$. Thus, $\iota \circ \gamma: [0,L] \to \bN$ is a closed curve. 

We claim that 
$\iota\circ \gamma$ makes a right angle at $\iota\circ \gamma(0)=\iota\circ \gamma(L)$.
Otherwise, since $\bN$ is a quarter-translation surface, this angle would have to be $180^\circ$ but in this case we can project the curve back to a closed curve in $\bH$ and lift the curve to a closed curve on $\bHt$. But this recovers $\gamma$ and by construction $\gamma$ is not closed, proving the claim. 

Because the claim holds, the image of $\iota \circ \gamma$ under the covering map $\bN \to \bC$ violates \Cref{cube lemma} giving us our desired contradiction.
\end{proof}

%\notepat{It seems like the length of a period of a drift-periodic geodesic is $2 \sqrt{p^2+q^2}$. We could prove this if desired. (The difficulty seems to be showing that there isn't an automorphism that gives a shorter quotient.)}

\section{Periodic Geodesics}
\label{sec:periodic}

In this section, we prove part of statement $(1)$ of \Cref{thm:main}, guaranteeing that a nonsingular geodesics with a rational slope that is the ratio of two odd integers is periodic. We then finish the proof of \Cref{thm:main}. Then we prove Theorems \ref{thm:order six} and \ref{thm:tiling}, demonstrating that closed geodesics have six-fold rotational symmetry and that simple closed geodesics can be used to construct tilings.

\subsection{Periodic Directions}

\begin{thm}
\label{thm:odd-odd is closed}
Let $\tilde \gamma:\RR \to \bN$ be a nonsingular geodesic whose slope is $\frac{p}{q}$ for two odd, relatively prime integers $p$ and $q$. Then $\tilde \gamma$ is closed geodesic with length $6\sqrt{p^2+q^2}$ and the maximal cylinder containing $\tilde \gamma$ has area six.
\end{thm}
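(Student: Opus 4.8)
Write $(q,p)\in\cO$ for the primitive integer vector parallel to $\tilde\gamma$. The plan is to transport the area‑six cylinder supplied by \Cref{lem:odd null-homologous} from $\bHt$ up to $\bNt$ and then down to $\bN$, and to recognize $\tilde\gamma$ as a core‑parallel closed geodesic inside the resulting cylinder.

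First I would lift $\tilde\gamma$ to a geodesic $\hat\gamma$ on the canonical half‑translation cover $\bNt$ and project it to a geodesic $\delta$ on $\bHt$ along the upper horizontal map of \eqref{eq:cd}. Since every vertex of $\bN$ is a cone point, $\bNt\to\bN$ is branched only over singularities, and $\bNt\to\bHt$ is unbranched (\Cref{commute-branch-cover}), nonsingularity of $\tilde\gamma$ is inherited by $\hat\gamma$ and by $\delta$. The direction of $\hat\gamma$, hence of $\delta$, is one of the two elements of $\SS^1/\{\pm I\}$ lying over $[(q,p)]\in\SS^1/C_4$, and both are parallel to primitive vectors in $\cO$; so \Cref{lem:odd null-homologous} applies in either case. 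As $\bHt$ is built from finitely many squares, the maximal nonsingular geodesic $\delta$ of rational slope is periodic by \Cref{rational implies}(1), and it lies in the maximal cylinder $\tilde C\subset\bHt$ in its direction. By \Cref{lem:odd null-homologous}, $\tilde C$ has area six and its core curve $c$ is null‑homologous of length $6\sqrt{p^2+q^2}$.

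Next I would lift $\tilde C$ to $\bNt$: null‑homology of $c$ gives $i(\beta_j,c)=0$ for each $j$, so $c$ lifts to a loop in $\bNt$ by \Cref{prop:lifting}, and since $\pi_1(\tilde C)$ is generated by $c$, the preimage of $\tilde C$ under $\bNt\to\bHt$ is a disjoint union of isometric copies of $\tilde C$, each a maximal cylinder of $\bNt$ of area six and circumference $6\sqrt{p^2+q^2}$. The geodesic $\hat\gamma$ lies in one such copy $\hat C$, and being nonsingular and parallel to the core it is a core‑parallel closed geodesic of length $6\sqrt{p^2+q^2}$. Finally I would push $\hat C$ down to $\bN$ via $\pi_\bN$. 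The key point is that $\pi_\bN$ is injective on $\hat C$: if $\pi_\bN$ restricted to the core of $\hat C$ had degree two, the core would admit a period equal to half its length, contradicting that it is an embedded circle of full length — indeed a closed geodesic on $\bN$ has translation holonomy and hence lifts to a loop on $\bNt$, which forces the degree to be one. By maximality of $\hat C$, the restriction of $\pi_\bN$ is then an isometry from $\hat C$ onto the maximal cylinder of $\bN$ containing $\tilde\gamma$, so that cylinder has area six and $\tilde\gamma=\pi_\bN\circ\hat\gamma$ is closed of length $6\sqrt{p^2+q^2}$.

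The step I expect to be the main obstacle is the covering‑degree bookkeeping at the two transfers $\bHt\leftarrow\bNt\rightarrow\bN$: one must check that the area‑six cylinder of $\bHt$ is neither doubled nor halved, so that the length comes out to be exactly $6\sqrt{p^2+q^2}$ rather than $3\sqrt{p^2+q^2}$. The two facts that make this work are that a null‑homologous core lifts to a loop (controlling $\bNt\to\bHt$) and that a closed geodesic has translation holonomy and so lifts to a loop in the canonical half‑translation cover (controlling $\bNt\to\bN$).
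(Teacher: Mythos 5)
Your proposal is correct and follows essentially the same strategy as the paper's: transport information through the diagram \eqref{eq:cd} using \Cref{lem:odd null-homologous} together with the lifting criterion \Cref{prop:lifting} to control lengths and areas on $\bNt$, then pin down the degree of $\pi_\bN$ on the core curve using the trivial rotational holonomy of a closed geodesic. One small slip worth repairing: a degree-two image of the core does not endow the embedded core with ``a period equal to half its length'' (the deck involution merely translates the core by half its circumference, which is consistent with it being an embedded circle); the correct and sufficient justification is the clause after your dash --- a closed geodesic on $\bN$ has trivial rotational holonomy and hence lifts to a loop on $\bNt$ of the same length --- and this is exactly the mechanism the paper's proof invokes when it observes that the putative shorter curve would lift and redescend to a geodesic of length $3\sqrt{p^2+q^2}$ on $\bHt$, contradicting \Cref{lem:odd null-homologous}.
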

\begin{proof}
Choose a lift of $\tilde \gamma$ to a geodesic $\tilde \eta$ of slope $\frac{p}{q}$ on $\bNt$. Let $\eta$ be the projection of $\tilde \eta$ to $\bHt$. By hypothesis, $\eta$ has slope $\frac{p}{q}$. \Cref{lem:odd null-homologous} guarantees that $\eta$ is null-homologous and has length $6 \sqrt{p^2+q^2}$. 
\Cref{prop:lifting} tells us that $\eta$ lifts to $\bNt$, and $\tilde \eta$ is such a lift. Thus, $\tilde \eta$ is also a periodic geodesic of length $6 \sqrt{p^2+q^2}$. Considering the double branched covering $\bNt \to \bN$, we see that $\tilde \gamma$ is the image of $\tilde \eta$ and therefore is closed. Furthermore the restriction of this covering gives a covering map of degree at most two $\tilde \eta \to \tilde \gamma$ (viewed as closed curves). We claim that the degree of $\tilde \eta \to \tilde \gamma$ is one. Suppose to the contrary that this covering map is degree two. Then the length of $\tilde \gamma$ is $3 \sqrt{p^2+q^2}$. Then this shorter curve lifts to $\bNt$ and descends to $\bHt$, giving a closed geodesic in $\bHt$ of slope $\frac{p}{q}$ or $\frac{-q}{p}$ and length $3 \sqrt{p^2+q^2}$ contradicting \Cref{lem:odd null-homologous} and proving the claim. Since $\tilde \eta \to \tilde \gamma$ has degree one, $\tilde \gamma$ also has length $6\sqrt{p^2+q^2}$. Similarly, the maximal cylinder containing $\tilde \gamma$ lifts to $\bNt$ and covers a maximal cylinder in $\bHt$ which has area six by \Cref{lem:odd null-homologous}. Since $\tilde \gamma$ has length equal to the length of the core curve of the image cylinder in $\bHt$, the covering of the cylinder in $\bHt$ must be a bijection, so the maximal cylinder containing $\tilde \gamma$ also has area six.
\end{proof}

\subsection{Proof of \texorpdfstring{\Cref{thm:main}}{Theorem \ref{thm:main}}}
\label{sect:main proof}

\begin{proof}[Proof of \Cref{thm:main}] 
Let $m$ be the slope of a nonsingular geodesic $\gamma$ in $\bN$ that is either periodic or drift-periodic. Then \Cref{periodic implies rational}, guarantees that $m \in \hat \QQ$. So $m=\frac{q}{p}$ for some relatively prime pair $(p, q)$ in either $\cO$ or $\cE$. If $(p,q) \in \cO$ then \Cref{thm:odd-odd is closed} guarantees that $\gamma$ is periodic, so both (1) and (2) are true in this case. If $(p,q) \in \cE$ then \Cref{thm:even-drift-periodic} guarantees that $\gamma$ is drift-periodic, so again both (1) and (2) are true.
\begin{comment}
\compat{Fix!}
any periodic or drift-periodic geodesic must have rational slope. 

The proof of this theorem follows readily from Theorems \ref{thm:even-drift-periodic} and \ref{thm:odd-odd is closed}. To prove the secondary statement of $(1)$, let $\gamma$ be a closed geodesic on $\bN$ of slope $p/q$. Its lift, $\tilde \eta$, to $\bNt$ is the lift of some geodesic $\eta$ in the same direction. If $\zeta$ is a slope one geodesic, then \Cref{lem:odd null-homologous} implies there is an element $g \in F_2$ such that $\authom(g) \circ \zeta=\eta$, and $\matrixhom(g): \pm (1,1) \mapsto \pm (q,p)$. 

Observe that $\disp(\zeta)=\pm(6,6)$. By \eqref{eq:lengths_under_affine_map} in \Cref{sec:affine_half_translation_map}, $$\disp(A\circ \zeta)=\pm A'\circ \disp(\zeta)=\pm 6(q,p).$$
Therefore the length of $\eta$ is $6\sqrt{p^2+q^2}$. Because $\eta$ lifts to $\tilde{\eta}$, and $\bNt$ is a branched cover of $\bHt$, \Cref{lem:affinecover} implies that $\disp_{\bNt}(\tilde \eta)=\pm \disp_{\bHt}(\eta)$. Therefore the lengths of the two geodesics are equal. Lastly, because $\bNt$ is a half-translation cover of the genuine quarter-translation surface $\bN$, \Cref{cor:covering-map-displacements} implies that the lengths of $\gamma$ and $\tilde \eta$ must coincide as well.
\end{comment}
\end{proof}

\subsection{Rotational symmetry of closed geodesics}

\begin{comment}
\begin{prop}
Every half-translation automorphism of $\bHt$ commutes with every element of $\authom(F_2) \subset \Aff(\bHt)$.
\end{prop}
\begin{proof}
A half-translation automorphism must permute the horizontal cylinders. Thus, it commutes with the right-affine Dehn twist in the horizontal cylinder decomposition.
Similarly, it permutes the vertical cylinders and so commutes with the vertical twist. Since it commutes with generators of $\authom(F_2)$, it commutes with every element of $\authom(F_2)$.
\end{proof}
\end{comment}

\Cref{fig:order24} depicts half-translation automorphism $\varphi_\bHt:\bHt \to \bHt$ of order six. Observe:
\begin{lemma}
\label{lem:1/6}
If $\tilde{\eta}:\RR \to \bHt$ is a periodic geodesic with least period $\ell$ and slope $\frac{q}{p}$ where $(p,q) \in \cO$, then there is a choice of sign such that 
$$\varphi_\bHt \circ \tilde \eta(x)=\tilde \eta(x \pm \tfrac{\ell}{6}) \quad \text{for every $x \in \RR/\ZZ$}.$$
\end{lemma}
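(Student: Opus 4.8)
The plan is to use the fact, established in the proof of \Cref{lem:odd null-homologous}, that the surface $\bHt$ decomposes into a \emph{single} cylinder $C$ in any direction parallel to an element $(q,p) \in \cO$. Indeed, by \Cref{lem:transitive} there is $g \in F_2$ with $\matrixhom(g)(q,p) \in \{\pm(1,1)\}$, and the affine map $\authom(g)$ carries the cylinder containing $\tilde\eta$ to the (unique, area-six, circumference-$6\sqrt2$) slope-one cylinder. Since $\tilde\eta$ is a periodic geodesic in direction $(q,p)$, it is a core curve of this single maximal cylinder $C$, and by \Cref{lem:odd null-homologous} its least period is $\ell = 6\sqrt{p^2+q^2}$, which equals the circumference of $C$.

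The key observation is then that $\varphi_\bHt$ must permute the cylinders of any invariant cylinder decomposition, and since in direction $(q,p)$ there is only one cylinder $C$, we must have $\varphi_\bHt(C)=C$. (Here I use that $\varphi_\bHt$ is a half-translation automorphism, so it carries geodesics to parallel geodesics, hence carries maximal cylinders to maximal cylinders of the same slope.) Thus $\varphi_\bHt$ restricts to an orientation-preserving isometry of the Euclidean cylinder $C \cong [0,w]\times \RR/\ell\ZZ$. Such an isometry is either a rotation $(x,y)\mapsto(x, y+c)$ or a flip $(x,y)\mapsto(w-x,-y+c)$; but $\varphi_\bHt$ has order six while a flip has order two, and if $\varphi_\bHt$ restricted to a pure rotation of order dividing $2$ it could not have order six unless the rotation part already has order $6$. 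Since $\varphi_\bHt$ has no fixed points of the relevant type forcing a flip (one can also rule out the flip case because $(\varphi_\bHt)^3$ would then act trivially on the core curve yet $(\varphi_\bHt)^3 = \hyp_\bHt \circ(\text{deck}) $-type element is nontrivial), $\varphi_\bHt|_C$ is a rotation by some amount $c \in \RR/\ell\ZZ$ with $6c \equiv 0$. Because $\varphi_\bHt$ has order exactly six and its cube is nontrivial on $C$ (it acts nontrivially on the core curve's homology, as can be checked from the formula $(\tilde\varphi_\bHt)_\ast\gamma_i = -\gamma_{i+4}$ together with the identification of the core curve class), the rotation amount $c$ must be a primitive sixth of $\ell$, i.e. $c = \pm\ell/6$. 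Feeding this back through $\authom(g)^{-1}$ and parametrizing $\tilde\eta$ by arclength along the core curve gives $\varphi_\bHt\circ\tilde\eta(x) = \tilde\eta(x\pm\frac{\ell}{6})$ as claimed.

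The main obstacle I anticipate is pinning down precisely which rotation amount $\varphi_\bHt|_C$ is — i.e., ruling out that it could be a rotation by $\ell/2$ or $\ell/3$ or even the identity, and ruling out the orientation-reversing (flip) possibility. The cleanest way to settle this is a homological computation: the core curve of $C$, call its class $[\eta]$, is (by \Cref{lem:odd null-homologous}) null-homologous, so homology alone won't see the rotation; instead one should track a transversal arc or use the known action of $\varphi_\bHt$ on the \emph{singularities} lying on $\partial C$. By \Cref{lem:odd null-homologous} there are six singularities on each boundary component of $C$ (counting multiplicity), and $\varphi_\bHt$ permutes them cyclically since it has order six and acts on the square-tiling combinatorially as $(AEC)(BFD)$ — a single six-cycle on squares after passing to the appropriate quotient — so it must shift the boundary singularities by one step, which is exactly a rotation by $\ell/6$. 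Choosing the sign amounts to choosing an orientation convention for $\tilde\eta$, so the "$\pm$" in the statement is genuinely necessary and harmless.
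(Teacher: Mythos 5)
Your proposal gets the structure right and leans on the same key inputs as the paper: \Cref{lem:odd null-homologous} supplies a \emph{single} maximal cylinder $C$ filling $\bHt$ in the direction $(q,p)$, $\varphi_\bHt$ preserves parallel closed geodesics and saddle connections and so preserves $C$, and the classification of orientation-preserving cylinder isometries into translations and flips does the rest. But you get tangled up at the final step, and the cleanest closing argument—which is what the paper uses—is one you circle around without quite landing on.

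The decisive observation is this: \emph{because $C$ has full area, the action of $\varphi_\bHt$ on $C$ determines $\varphi_\bHt$ entirely}, so the order of $\varphi_\bHt|_C$ equals the order of $\varphi_\bHt$, which is six. That single fact kills both of your remaining worries. First, it rules out the flip case instantly: a flip of a cylinder has order two, not six. (Your side remark about $(\varphi_\bHt)^3$ being ``$\hyp_\bHt \circ(\text{deck})$-type'' is more than you need and not carefully justified.) Second, it forces the rotation amount: if $\varphi_\bHt|_C$ is $(x,y)\mapsto(x,y+k)$, then the order-six condition means addition by $k$ on $\RR/\ell\ZZ$ has order exactly six, hence $k\in\{\pm\ell/6\}$. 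There is nothing further to check—no need to consult the boundary singularities or saddle connections at all. Your proposed homological check on the core curve is, as you yourself correctly suspect, a dead end: the core curve is null-homologous, so $\varphi_\bHt$ acts trivially on its class, and the formula $(\tilde\varphi_\bH)_\ast\gamma_i=-\gamma_{i+4}$ tells you nothing about the translation amount. Your fallback idea of tracking the cyclic permutation of boundary saddle connections is sound, but it still secretly relies on the full-area observation to know that $\varphi_\bHt$ acts with order six on $\partial C$, and it adds the extra burden of handling the possibility that the six boundary singularities coincide (there are only four singularities on $\bHt$). Better to invoke the full-area fact directly and be done.
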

\begin{proof}
From \Cref{lem:odd null-homologous}, we know there is a unique maximal cylinder $C_{p,q} \subset \bHt$ of slope $\frac{q}{p}$ that fills the surface (has full area) and is bounded by saddle connections. The automorphism $\varphi_\bHt$ sends closed geodesics of slope $\frac{q}{p}$ to closed geodesics of slope $\frac{q}{p}$ and does the same for saddle connections, and so must preserve $C_{p,q}$.
Thus $\varphi_\bHt$ acts on $C_{p,q}$ by isometry. The orientation-preserving isometry group of a cylinder either acts by translation on the cylinder (in which case all parallel closed geodesics have preserved images) or reverses the orientation of the central core curve. Note that all isometries which reverse the orientation of the central core curve have order two. Since the order of the action of $\varphi_\bHt$ on $C_{p,q}$ is six, it must be acting by translation, i.e., there is a constant $k \in \RR$ such that 
$\varphi_\bHt \circ \tilde \eta(x)=\tilde \eta(x + k)$ for every simple closed curve traveling the same direction and speed around $C_{p,q}$. The order of $\varphi_\bHt$ coincides with the order of the action of addition by $k$ on $\RR/\ell \ZZ$. Thus, $k \in \{\pm \frac{\ell}{6}\}$.
\end{proof}

We obtained $\varphi_\bHt$ by lifting the automorphism $\varphi_\bH$ of \Cref{fig:descendrotation}. Pushing down into $\bH$, we see the following:

\begin{cor}
\label{cor:1/6}
Let $\eta:\RR \to \bH$ be a periodic geodesic with least period $\ell$ and slope $\frac{q}{p}$ where $(p,q) \in \cO$.
Then there is a choice of sign such that 
\begin{equation}
\label{eq:1/6}
\varphi_\bH \circ \eta(x)=\eta(x \pm \tfrac{\ell}{6}) \quad \text{for every $x \in \RR/\ZZ$}.
\end{equation}
\end{cor}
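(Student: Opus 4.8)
The plan is to lift $\eta$ to a periodic geodesic on the half-translation cover $\bHt$, apply \Cref{lem:1/6} there, and then push the resulting equivariance back down to $\bH$, using that $\varphi_\bHt$ was constructed as a lift of $\varphi_\bH$ along $\pi_\bH$.

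First I would produce the lift. Since $\eta$ is a closed geodesic on the square-tiled surface $\bH$, the holonomy around one period of $\eta$ is a translation of $\RR^2$, so the rotational holonomy of $\eta$ is trivial; hence the loop $\eta|_{[0,\ell]}$ lifts to a loop in the minimal $G_2$-cover $\bHt$. Extending this lift, we obtain a bi-infinite geodesic $\tilde\eta:\RR \to \bHt$ with $\tilde\eta(x+\ell)=\tilde\eta(x)$ and $\pi_\bH\circ\tilde\eta=\eta$. Its least period is again $\ell$: a smaller period of $\tilde\eta$ would descend under $\pi_\bH$ to a smaller period of $\eta$, contradicting minimality of $\ell$. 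Because $\pi_\bH$ is an orientation-preserving local isometry carrying squares to squares, the slope of $\tilde\eta$ is $\frac{q}{p}$ or $\frac{-p}{q}$; in either case it is a ratio of two odd coprime integers, so \Cref{lem:1/6} applies to $\tilde\eta$.

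By \Cref{lem:1/6} there is a choice of sign with $\varphi_\bHt\circ\tilde\eta(x)=\tilde\eta(x\pm\tfrac{\ell}{6})$ for all $x$. Since $\varphi_\bHt$ is a lift of $\varphi_\bH$, we have $\pi_\bH\circ\varphi_\bHt=\varphi_\bH\circ\pi_\bH$, and therefore
$$\varphi_\bH\circ\eta(x)=\pi_\bH\circ\varphi_\bHt\circ\tilde\eta(x)=\pi_\bH\circ\tilde\eta(x\pm\tfrac{\ell}{6})=\eta(x\pm\tfrac{\ell}{6}),$$
which is exactly \eqref{eq:1/6}. There is no genuine obstacle here; the only care needed is the bookkeeping of the previous paragraph — that $\eta$ lifts to an honest \emph{closed} curve, and that its lift keeps least period exactly $\ell$ rather than $2\ell$ (the alternative, since the deck group of $\pi_\bH$ has order two) — both of which are automatic once we lift the loop $\eta|_{[0,\ell]}$ rather than an arbitrary path.
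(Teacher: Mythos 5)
Your proof is correct and follows essentially the same route as the paper's: lift $\eta$ to a closed geodesic $\tilde\eta$ on $\bHt$, apply \Cref{lem:1/6} to $\tilde\eta$, and push the equivariance back down via the intertwining identity $\pi_\bH\circ\varphi_\bHt=\varphi_\bH\circ\pi_\bH$. The paper leaves the existence of the lift and preservation of least period to the reader, while you spell these out; otherwise the arguments coincide.
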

\begin{proof}
Since $\bHt$ is the minimal half-translation cover, $\eta$ lifts to a closed geodesic $\tilde \eta:\RR/\ZZ \to \bHt$. The conclusion follows from \Cref{lem:1/6} as well as the identities $\varphi_\bH \circ \pi_\bH=\pi_\bH \circ \varphi_\bHt$ and $\eta=\pi_\bH \circ \tilde \eta.$
\end{proof}

\begin{proof}[Proof of \Cref{thm:order six}]
Let $\gamma:\RR \to \bN$ be a closed geodesic with least period $\ell$. 
By \Cref{thm:main}, we know $\gamma$ has slope $\frac{q}{p}$ for some $(p,q) \in \cO$. 
Let $p$ denote the universal covering $\bN \to \bH$ and let $\eta = p \circ \gamma:\RR \to \bH$ be the image of $\gamma$ in $\bH$. 

We claim $\eta$ also has least period $\ell$. If not, then the least period would have to be $\frac{\ell}{k}$ for some integer $k \geq 2$
and there would be a deck transformation $\delta_0:\bN \to \bN$ such that $\delta_0 \circ \gamma(t)=\gamma(t+\frac{\ell}{k})$ for all $t \in \RR$. But, since $\gamma$ has period $\ell$, $\delta_0$ would have to be periodic with period $k$ and this contradicts the fact that the deck group of $\bN \to \bH$ is isomorphic to $\ZZ^2$.

By the claim, we know \eqref{eq:1/6} is satisfied for some choice of sign. Since $\bN \to \bH$ is the universal cover, there is a lift 
$\psi:\bN \to \bN$ of $\varphi_\bH:\bH \to \bH$. Since $\varphi_\bH \circ p \circ \gamma(x)=p \circ \gamma(x \pm \frac{\ell}{6})$,
there is a deck transformation $\delta:\bN \to \bN$ such that
$$\delta \circ \psi \circ \gamma(x) = \gamma(x \pm \tfrac{\ell}{6}) \quad \text{for every $x \in \RR/\ZZ$}.$$
Since $\gamma$ has least period $\ell$, the isometry $\delta \circ \psi: \bN \to \bN$ must be least period six. By \Cref{isometries}, this isometry can be extended to an isometry of $\RR^3$ preserving $\bN$ and, by the uniqueness of this extension, this extension is also least period six.
\end{proof}

\subsection{Tilings from simple closed geodesics}

In this section we prove our \hyperref[thm:tiling]{Tiling Theorem}. Essentially, the tiling appears because of the sixfold symmetry of \Cref{thm:order six} and the following additional symmetry. 

\begin{prop}
\label{prop:center}
Let $\sigma$ be a saddle connection in $\bN$ of slope $\frac{p}{q}$ with $(p,q) \in \cO$. Then the midpoint of $\sigma$ is the center of a square $S$ of $\bN$. Thus, the orientation-preserving symmetry of $\bN$ that fixes this center and rotates $S$ by $180^\circ$ preserves $\sigma$.
\end{prop}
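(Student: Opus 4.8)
The plan is to first establish the analogue of the statement on the half‑translation cover $\bHt$ — namely, that the midpoint of every saddle connection of $\bHt$ whose slope is a ratio of two odd integers is the center of one of the six squares out of which $\bHt$ is built — and then to carry the conclusion down through the covering tower \eqref{eq:cd}.

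For $\bHt$ I would argue in two steps. The base case is slope $1$: by \Cref{lem:odd null-homologous} the slope‑$1$ direction of $\bHt$ is a single maximal cylinder of area six whose boundary is a union of saddle connections, and a slope‑$1$ geodesic issuing from a corner of a square of $\bHt$ runs diagonally across that square to the opposite corner (again a singularity, since every corner of every square of $\bHt$ is a $3\pi$ cone point). Hence the slope‑$1$ saddle connections of $\bHt$ are exactly the six diagonals of its six squares, and the midpoint of each such diagonal is the center of the corresponding square. For a general odd slope $\tfrac{p}{q}$, \Cref{lem:odd null-homologous} (via \eqref{eq:Gamma 2 -I} and \Cref{lem:transitive}) furnishes $g\in F_2$ with $\matrixhom(g)(1,1)$ parallel to $(q,p)$, so the affine automorphism $\authom(g)$ carries the slope‑$1$ saddle connections of $\bHt$ onto the slope‑$\tfrac{p}{q}$ ones; since $\authom(g)$ permutes the squares of $\bHt$, hence their centers (the proposition preceding \Cref{prop:weierstrass}), the statement for slope $1$ transfers to every odd slope.

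Next comes the transport. Because $\bN$ is simply connected and the deck transformations of $\bN\to\bH$ are nontrivial translations of $\RR^3$ — which have no fixed points and cannot carry a bounded segment onto itself — the restriction of $\bN\to\bH$ to $\sigma$ is injective, so $\sigma$ maps isometrically onto a saddle connection $\bar\sigma$ of $\bH$, still of odd slope. Lifting the arc $\bar\sigma$ through the branched cover $\bHt\to\bH$ (lifting of an arc is unobstructed, even at a branch point) yields a saddle connection $\hat\sigma$ of $\bHt$ of the same slope, and the covering map restricts to an arc‑length–preserving homeomorphism $\hat\sigma\to\bar\sigma$. The maps $\hat\sigma\to\bar\sigma$, $\sigma\to\bar\sigma$, and (near the regular point that is the midpoint of $\sigma$) $\bN\to\bH$ all carry squares to squares. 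So, by the previous paragraph, the midpoint of $\hat\sigma$ is the center of a square of $\bHt$, therefore the midpoint of $\bar\sigma$ is a square center of $\bH$, and therefore the midpoint of $\sigma$ is the center of a square $S$ of $\bN$. Finally, the reflection $\bx\mapsto -\bx+2c$ of $\RR^3$ in the center $c$ of $S$ has the form $\bx\mapsto -\bx+\bv$ with $\bv=2c\in\Lambda_0$, so by \Cref{isometries} it restricts to an orientation‑preserving isometry of $\bN$ that fixes $c$ and rotates $S$ by $180^\circ$, and it preserves $\sigma$ because it fixes $\sigma$'s midpoint and preserves both the direction of $\sigma$ and the singular set.

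The part I expect to need the most care is the description of the odd‑slope saddle connections of $\bHt$ in the base case and the identification of their midpoints with square centers (equivalently, with the Weierstrass points of $\bHt$); once that is in hand, the $F_2$‑propagation and the passage down the covering tower are routine bookkeeping.
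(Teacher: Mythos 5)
Your proof is correct and follows essentially the same route as the paper's: push $\sigma$ down to $\bH$, lift to $\bHt$, reduce to slope $1$ via the $\authom$-action, and invoke the fact (established immediately before the proposition) that $\authom(F_2)$ permutes the Weierstrass points, i.e.\ the square centers of $\bHt$. The injectivity discussion for $\bN\to\bH$ restricted to $\sigma$ is a harmless but unneeded detour — only the arc-length parameterization matters, and local isometries respect it and carry square centers to square centers — while your verification of the final symmetry statement fills in a step the paper leaves implicit.
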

\begin{proof}
Let $\sigma'$ be the image of $\sigma$ in $\bH$ and let $\sigma''$ be a lift to $\bHt$. Then there is a $g \in F_2$ and a saddle connection $\gamma \subset \bHt$ of slope one such that $\authom(g)(\gamma)=\sigma''$. Because $\gamma$ is a slope one saddle connection, it is a diagonal of a square. Since the $\authom$-action permutes the centers of squares, the midpoint of $\sigma''$ must also be a center of a square. It follows that the midpoint of $\sigma$ is also the center of a square.
\end{proof}

For the remainder of this section, let $C \subset \bN$ be a maximal open cylinder whose interior closed geodesics are simple. Then $\partial C$ consists of two components. As in the introduction, the component enclosed by a simple closed geodesic in the interior of $C$ will be called the {\em interior component}, $\partial_{\textit{in}} C$. The other component is the {\em outer component} $\partial_{\textit{out}} C$.

\begin{prop}
\label{simple six}
Both boundary components of $C$ are simple closed curves passing through six distinct singularities. The singularities that $\partial_{\textit{in}} C$ passes through all have the same cone angle. The same holds for $\partial_{\textit{out}} C$.
\end{prop}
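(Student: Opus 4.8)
The plan is to first pin down the combinatorial shape of $\partial C$, and then extract the two assertions from the sixfold symmetry of \Cref{thm:order six} together with the central symmetry of \Cref{prop:center}.

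\emph{Setting up.} Since $C$ is foliated by simple closed geodesics, $[\bu]$ has slope $\tfrac pq$ with $(p,q)\in\cO$, and by \Cref{thm:odd-odd is closed} each core curve of $C$ has length $\ell=6\sqrt{p^2+q^2}$. First I would transfer, along the chain of coverings used in the proof of \Cref{thm:odd-odd is closed} and via \Cref{lem:odd null-homologous}, the statement that each boundary component $\partial_\ast C$ ($\ast\in\{\textit{in},\textit{out}\}$) is a cyclic concatenation $\sigma^{(1)}\cup\cdots\cup\sigma^{(6)}$ of six saddle connections of $\bN$ of slope $\tfrac pq$ counted with multiplicity, each of length $\tfrac\ell6=\sqrt{p^2+q^2}$, where $\sigma^{(i)}$ runs from a corner $s_{i-1}$ to a corner $s_i$ (indices mod $6$; a priori the $s_i$ may coincide). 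Since saddle connections of slope $\tfrac pq$ in $\bN$ are simple, pairwise non-crossing (they are parallel in charts), and contain no singularity in their interiors, once the $s_i$ are known to be distinct it follows at once that $\partial_\ast C$ is an embedded circle; so the proposition reduces to proving (a) the six $s_i$ are distinct and (b) they share a common cone angle.

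\emph{Equal cone angles.} For (b) I would invoke \Cref{prop:center}: the midpoint of each $\sigma^{(i)}$ is the center of a square, and the order-two rotation of $\bN$ about that center is an isometry that preserves $\sigma^{(i)}$ and interchanges its endpoints $s_{i-1}$ and $s_i$. As isometries preserve cone angles, consecutive corners have equal cone angle, and walking once around $\partial_\ast C$ forces all six $s_i$ to have a single common cone angle, necessarily $\tfrac{3\pi}{2}$ or $3\pi$.

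\emph{Distinctness.} For (a) I would use the order-six isometry $\xi$ of \Cref{thm:order six} attached to a core geodesic of $C$. Because a simple closed geodesic determines its maximal cylinder, $\xi(C)=C$; being orientation-preserving, $\xi$ preserves the two complementary regions of the embedded annulus $C\subset\bN\cong\RR^2$, hence fixes each boundary component setwise, and in coordinates $C\cong(0,w)\times\RR/\ell\ZZ$ it acts by $(x,y)\mapsto(x,y\pm\tfrac\ell6)$; since each $\sigma^{(i)}$ has length $\tfrac\ell6$, this means $\xi$ cyclically permutes the $\sigma^{(i)}$ and the corners, $\xi(s_i)=s_{i\pm1}$. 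Therefore $\{d\in\ZZ/6\ZZ:\ s_a=s_{a+d}\ \text{for all }a\}$ is a subgroup of $\ZZ/6\ZZ$, so the number $b$ of distinct values among $s_1,\dots,s_6$ divides $6$, and I would rule out $b\in\{1,2,3\}$. If $b=1$, every $\sigma^{(i)}$ is a loop based at one point $P$, so the order-two rotation about the square-center midpoint of $\sigma^{(1)}$ fixes both that midpoint and $P$ — two distinct points of $\bN\cong\RR^2$ — which is impossible for an orientation-preserving involution. If $b=2$, then $\xi$ interchanges the two distinct corner values, so $\xi^2$, an orientation-preserving homeomorphism of $\RR^2$ of order three, fixes two distinct points; if $b=3$, then $\xi$ cyclically permutes the three distinct corner values, so $\xi^3$, an orientation-preserving homeomorphism of $\RR^2$ of order two, fixes three distinct points. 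The last two cases contradict the fact that a nontrivial finite-order orientation-preserving homeomorphism of the plane has a unique fixed point. Hence $b=6$, and with the reduction above this says $\partial_\ast C$ is a simple closed curve through six distinct singularities, completing the argument.

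\emph{The hard part.} I expect the main obstacle to be the transfer step in ``Setting up'': one must verify that the branched cover $\bNt\to\bN$ restricts to an isomorphism of closed cylinders on the lift of $C$, so that the ``six saddle connections / six singularities counted with multiplicity'' bookkeeping of \Cref{lem:odd null-homologous} on $\bHt$ is inherited faithfully by $\partial C$ — or, where $\partial C$ meets the branch locus of $\bNt\to\bN$, one must track the relevant local degrees by hand. Once that is in place, the remainder is the symmetry bookkeeping above, which uses nothing beyond \Cref{thm:order six}, \Cref{prop:center}, and the uniqueness of the fixed point of a finite-order rotation of the plane.
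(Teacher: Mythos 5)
Your proof is correct and takes essentially the same route as the paper: set up the boundary as six saddle connections via \Cref{lem:odd null-homologous}, then extract the conclusions from the sixfold symmetry of \Cref{thm:order six} together with the central symmetry of \Cref{prop:center}. There is one minor divergence worth noting: for the equal-cone-angles claim, the paper simply observes that the six singularities all lie in a single orbit of the order-six isometry $R$ (so equal cone angles follow at once, with no need to invoke \Cref{prop:center} for this part), whereas you instead route through \Cref{prop:center} to argue that the $\pi$-rotation about each saddle connection's midpoint identifies consecutive corners. Both arguments are valid and use the same two ingredients, just in a different order. Your distinctness argument — the orbit size $b$ divides $6$, rule out $b=1$ via \Cref{prop:center} and $b\in\{2,3\}$ via uniqueness of the fixed point of $\xi^3,\xi^2$ — is an expanded version of the paper's remark that every point other than the single fixed point of $R$ has least period six, after which only the $b=1$ case needs \Cref{prop:center}. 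Finally, your assessment of the ``transfer'' step as the place requiring care is fair; the paper handles it in one sentence (``so this holds for $C$ as well''), implicitly relying on the fact, established in the proof of \Cref{thm:odd-odd is closed}, that the covering restricts to a bijection of maximal cylinders $\tilde C \to C$, so the local-degree bookkeeping at branch points does not in fact cause trouble.
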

\begin{proof}
Let $C'$ be the image of $C$ under the covering map $\bN \to \bH$, and let $\tilde C'$ be a lift to $\bHt$. \Cref{lem:odd null-homologous} guarantees that there are six singularities on each boundary component of $\tilde C'$ counting multiplicity, so this holds for $C$ as well. From \Cref{thm:order six}, we know there is an order six symmetry $R:\bN \to \bN$ that preserves $C$. The six singularities on each component lie in the same $R$-orbit, so they must have the same cone angle. Also this implies that if a singularity $p$ in a boundary component is visited with multiplicity, then $p$ must be a point whose period under $R$ is smaller than six. But the action of $R$ only has a single fixed point of cone angle $3 \pi$ and every other point has least period six. So, we need to rule out that all singularities on one boundary component are the fixed point of $R$. But this is already ruled out by \Cref{prop:center}: the saddle connections in each boundary component must have distinct singularities because symmetries of $\bN$ rotating a square by $\pi$ have no fixed singularities.

It remains to show the boundary components are simple curves. Since the singularities passed through are distinct, any self-intersection must occur between the interiors of the six saddle connections making up the boundary component. Any two saddle connections either intersect transversely, are equal, or have disjoint interiors. No two of the six saddle connections can be equal because their endpoints differ. They cannot intersect transversely, because each boundary component is the limit of simple closed geodesics. Thus, the interiors of saddle connections are pairwise disjoint as desired.
\end{proof}

\begin{lemma}
\label{632 detector}
Let $A:\RR^2 \to \RR^2$ be a rotation of order six fixing the point $p_A \in \RR^2$. 
Let $B:\RR^2 \to \RR^2$ be a rotation of order two fixing a point $p_B \neq p_A$.
Then $\langle A, B\rangle$ is the crystallographic group ${\mathbf p}{\mathbf 6}$ with presentation
\begin{equation}
\label{eq:p6}
\langle A, B ~|~ A^6=B^2=(AB)^3=I\rangle.
\end{equation}
Let $D:\langle A,B \rangle \to \SO(2)$ denote the derivative mapping. If $\gamma:\RR/\ZZ \to \RR^2$ is a simple closed curve of area  (Lebesgue measure) zero through $p_B$ such that:
\begin{enumerate}
\item $A \circ \gamma(t)=\gamma(t + \frac{1}{6})$ for each $t \in \RR/\ZZ$, and
\item $B \circ \gamma(t)=\gamma(\frac{1}{6}-t)$ for each $t \in [0,\frac{1}{6}]$,
\end{enumerate}
then the region $R$ bounded by $\gamma$ is a fundamental domain for the action of $\ker D$ on $\RR^2$, and each element of $\langle A, B\rangle$ permutes the tiles in the orbit $\{g(R):~g \in \ker D\}$. Furthermore, the symmetry group of the tiling $\{g(R)\}$ is precisely $\langle A, B\rangle$ unless $R$ is a regular hexagon, in which case the tiling also has reflective symmetry and has the symmetries of the crystallographic group ${\mathbf p}{\mathbf 6}{\mathbf m}$.
\end{lemma}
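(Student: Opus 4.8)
The plan is to carry out three things in turn: identify $\langle A,B\rangle$ with the wallpaper group ${\mathbf p}{\mathbf 6}$ and locate $\ker D$ inside it; show the $\langle A,B\rangle$--orbit of $R$ is a tiling of $\RR^2$ by translates of $R$ via a Poincar\'e--polygon style local-to-global argument; and finally read off the symmetry group of that tiling. For the group: $A$ is a rotation by $\pm\tfrac\pi3$ and $B$ a rotation by $\pi$ about a different center, so $AB$ is a nontrivial rotation by $\pm\tfrac\pi3+\pi$, of order three, giving $A^6=B^2=(AB)^3=I$; the derivative group is generated by $D(A)$ and $D(B)=-I=D(A)^3$, hence is $C_6$. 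Also $\langle A,B\rangle$ contains genuine translations, e.g. $(A^jBA^{-j})B^{-1}$ is a product of the two point reflections about the distinct points $A^j(p_B)$ and $p_B$, and $j=1,2$ give two independent ones; so $\langle A,B\rangle$ is a crystallographic group with point group $C_6$, hence ${\mathbf p}{\mathbf 6}$ with presentation \eqref{eq:p6} (cf. \cite{CM1972}). Consequently $\ker D$ is the translation lattice $T$, normal of index six, $\langle A\rangle\cap T=\{I\}$, and $\{A^k:0\le k\le 5\}$ is a transversal; in particular the $\langle A,B\rangle$--orbit of an $A$--invariant set coincides with its $T$--orbit.

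Next the geometry of $R$ and $\gamma$. By the Jordan curve theorem $\gamma$ bounds a compact disk $R$; hypothesis (1) says $A$ reparametrizes $\gamma$, so $A(R)=R$, and by Brouwer together with simplicity of $\gamma$ the unique fixed point $p_A$ of $A$ lies in $\mathrm{int}\,R$. Write $\sigma=\gamma([0,\tfrac16])$ for one of the six edges $e_k=A^k(\sigma)$ of $R$ and $v_k=\gamma(\tfrac k6)$ for its vertices; hypothesis (2) forces $p_B=\gamma(\tfrac1{12})$, the midpoint of $\sigma$. The decisive observation is that $BA^3$ is the product of the two point reflections about $p_B$ and $p_A$, hence the translation $\tau_0$ by $2(p_B-p_A)\in T$; since $A^3(R)=R$, the tile $B(R)=\tau_0(R)$ glued to $R$ along $\sigma$ is a \emph{translate} of $R$, and it lies on the opposite side of $\sigma$ because $B$ reverses the orientation that $\partial R$ induces on $\sigma$ (this is exactly the reparametrization $t\mapsto\tfrac16-t$ in (2)). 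Conjugating by $A^k$, the six edges of $R$ are paired to the translates $\tau_k(R):=A^k\tau_0A^{-k}(R)$, each on the far side of $e_k$.

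Now the tiling. At the vertex $v_0$ the element $C:=A^{-1}B$ fixes $v_0$ and has derivative $D(A)^2$, so $C$ is the order-three rotation of $\langle A,B\rangle$ centered at $v_0$; moreover $C(R)\ne R$ (else $B=AC\in\langle A\rangle$ would fix $p_A$), and using $(AB)^3=I$ one checks $C(R)=\tau_5(R)$ and $C^2(R)=\tau_0(R)=B(R)$, so the three tiles around $v_0$ are $R$ and its two edge-neighbors there. These fill a neighborhood of $v_0$: in the intended application $\gamma$ is a polygon and the interior angle at each $v_k$ is forced to be $\tfrac{2\pi}{3}$ by the $C_6$--symmetry of $\gamma$ and the edge-symmetry of (2) together with the exterior-angle theorem (in general this step uses a winding-number argument and the zero-area hypothesis). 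These are precisely the edge- and vertex-hypotheses of Poincar\'e's polygon theorem, whose conclusion --- equivalently, a developing-map/monodromy argument using that $\RR^2$ is simply connected --- is that $\{g(R):g\in\langle A,B\rangle\}=\{\tau(R):\tau\in T\}$ is a tiling of $\RR^2$. Since any rotation stabilizing the compact set $R$ fixes a point of $R$, and two such rotations with distinct centers would generate a nontrivial translation preserving $R$, we get $\mathrm{Stab}_{\langle A,B\rangle}(R)=\langle A\rangle$; as $\langle A\rangle\cap T=\{I\}$, the map $\tau\mapsto\tau(R)$ is injective on $T$, so the distinct tiles $\{\tau(R):\tau\in T\}$ cover $\RR^2$ and $R$ is a fundamental domain for $\ker D=T$, with every element of $\langle A,B\rangle$ visibly permuting them. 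I expect this to be the main obstacle: the edge-pairing and the group theory are routine, but making the vertex/cycle condition rigorous --- that exactly three translates of $R$ close up around each vertex with neither overlap nor gap --- is where hypotheses (1)--(2) and the zero-area assumption are really needed, and it is the heart of the argument.

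Finally the symmetry group. The symmetry group $\Sigma$ of the tiling is a wallpaper group containing $\langle A,B\rangle={\mathbf p}{\mathbf 6}$, so its point group contains $C_6$ and hence $\Sigma$ is ${\mathbf p}{\mathbf 6}$ or ${\mathbf p}{\mathbf 6}{\mathbf m}$, the latter exactly when the tiling admits a reflection. If $R$ is a regular hexagon the tiling is the regular hexagonal tiling and $\Sigma={\mathbf p}{\mathbf 6}{\mathbf m}$. Conversely, given a reflection $\rho\in\Sigma$, composing with a translation of $T$ arranges $\rho(p_A)=p_A$; then $\langle A,\rho\rangle\cong D_6$ preserves $R$, so $\rho$ carries the edge $\sigma$ either to itself or to an adjacent edge, and composing $\rho$ with a suitable power of $A$ and with $B$ produces a reflection that fixes both endpoints of $\sigma$ and preserves $\sigma$ --- necessarily the reflection in the line through those endpoints. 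A simple arc with endpoints on a line and invariant under reflection in that line must lie on the line, so $\sigma$, hence every edge, is a straight segment; then $R$ is a hexagon with symmetry group $D_6$, i.e. a regular hexagon. This yields the last sentence of the lemma.
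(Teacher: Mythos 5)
Your overall architecture matches the paper's: show $\langle A,B\rangle\cong{\mathbf p}{\mathbf 6}$ with $\ker D$ the translation lattice $T$; show $R$ is a fundamental domain for $T$; then determine when the tiling has extra symmetry. Your elementary verification that $\langle A,B\rangle$ is crystallographic (via the translations $(A^jBA^{-j})B^{-1}$) is a pleasant alternative to just citing \cite{CM1972}, and your reflection argument at the end is essentially the paper's (reduce to a reflection fixing both endpoints of $\sigma$, then a simple arc joining two points on a reflecting axis and invariant under that reflection must be the segment).

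The genuine gap is exactly where you flag it: the vertex-cycle condition in your Poincar\'e-polygon argument. Two problems. First, the lemma's hypotheses allow $\gamma$ to be an arbitrary simple closed curve of Lebesgue-measure zero, not a polygon; the exterior-angle count you sketch requires $\gamma$ to be piecewise linear, and your fallback -- ``in general this step uses a winding-number argument and the zero-area hypothesis'' -- is a placeholder, not an argument. Second, even granting polygonal $\gamma$, you never establish that the three translates close up around each $v_k$ \emph{without overlap or gap}; you note this is ``the heart of the argument'' and leave it at that. A local-to-global (developing map) argument would also need local finiteness and a verification that the monodromy is trivial, neither of which is addressed. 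As written, the proposal does not prove the tiling statement.

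The paper avoids the local angle analysis entirely by a global area comparison. Using hypothesis (2), the signed area enclosed by each arc-plus-chord curve $\eta_i=\gamma|_{[i/6,(i+1)/6]}$ followed by the chord from $A^{i+1}(p_C)$ to $A^i(p_C)$ is zero (the order-two rotation centered at the arc's midpoint carries $\eta_i$ to itself with reversed orientation while preserving signed area). Hence $\mathrm{Area}(R)$ equals the area of the inscribed regular hexagon with vertices $A^i(p_C)$, which one computes to be $2\sqrt{3}\,|p_A-p_B|^2$, the covolume of $T=\ker D$. The same boundary pairings you identify ($A^3B$, $A^2BA^{-1}$, $ABA^{-2}$, all in $T$) turn $R$ into a closed translation surface $X$, and restricting the covering $\RR^2\to\RR^2/T$ to $R$ gives a branched covering $f:X\to\RR^2/T$. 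Equality of areas forces $\deg f=1$, so $f$ is a homeomorphism and in particular there are no branch points, no overlaps, and no gaps; the $T$-translates of $R$ therefore tile $\RR^2$. If you want to keep the Poincar\'e-style route you would need to prove the vertex-cycle condition directly for measure-zero Jordan curves; otherwise, I'd recommend replacing it with the area-comparison argument, which is where the zero-area hypothesis actually gets used (so that the signed-area computation for $R$ is unaffected by $\gamma$ itself).
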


It is worth pointing out that M. C. Escher used partial symmetries of boundaries of tiles similar to the symmetry described in (2)
to deform a polygonal tiling into creatures and other objects. (These are the $S$ edges described in \cite{S10}.)
A famous example in which this concept was used is the tiling by lizards featured in his {\em Reptiles} lithograph, though of course
these lizards do not have the sixfold rotational symmetry described in (1).

\begin{proof}
For the fact that $\langle A, B\rangle$ is ${\mathbf p}{\mathbf 6}$, we refer the reader to \cite[\S 4.5]{CM1972} which also gives the presentation \eqref{eq:p6}. 

Observe that the image $D(\langle A, B \rangle)$ is a subgroup of $\SO(2)$ isomorphic to $\ZZ/6\ZZ$. The derivative map altered to map to $\ZZ/6\ZZ$ is then determined by
$$A \mapsto 1 \pmod{6}
\quad \text{and} \quad
B \mapsto 3 \pmod{6}.$$
Therefore, $A^3 B \in \ker D$. Observe that $A^3 B$ is a nontrivial translation because $A$ and $B$ fix distinct points.
Since $\ker D$ is normal, $A^2BA^{-1}$ is also in $\ker D$ and because it was conjugated by a rotation of order six, the
two translations $A^3 B$ and $A^2BA^{-1}$ generate a subgroup isomorphic to $\ZZ^2$. It is a simple exercise to check that these
two elements generate $\ker D$. The quotient $Y=\RR^2/\ker D$ is therefore a torus
and using the natural rhombus as a fundamental domain, we compute that $\mathit{Area}(Y)=2 \sqrt{3} |p_A-p_B|^2$.

Now consider the region $R$ bounded by $\gamma$. We claim that the area of $R$ is also $2 \sqrt{3} |p_A-p_B|^2$. 
Let $C=B \circ A$. Then, $C$ is a rotation of order three so has a unique fixed point $p_C$.
By (1) and (2), $\gamma(0)$ is also fixed by $C$. Therefore, $p_C=\gamma(0)$. Then by (1) we have $\gamma(\frac{i}{6})=A^i(p_C)$ for $i=0, \ldots, 5$. For each such $i$, let $\eta_i$ be the closed curve $\gamma|_{[\frac{i}{6},\frac{i+1}{6}]}$ followed by the line segment from $A^{i+1}(p_C)$ back to $A^i(p_C)$. By symmetry via (2), the signed area enclosed by $\eta_i$ is zero. Therefore, the area of $R$ is equal to the area of the regular hexagon $H$ whose vertices are $A^i(p_C)$ for $i=0, \ldots, 5$. Since $A^3B$ carries the first vertex $p_C$ to the fourth vertex $A^4(p_C)$, we conclude that this diagonal of $H$ has length $2|p_0-p_1|$, from which is easy to compute that $\mathit{Area}(H)=2 \sqrt{3} |p_0-p_1|^2$,
which also equals the area of $R$ as claimed.

Based on statements (1) and (2), we have
$$A^3 \circ B \circ \gamma(t)=\gamma(\tfrac{2}{3}-t) \quad \text{for $t \in [0, \tfrac{1}{6}]$},$$
thus identifying $\gamma([0,\frac{1}{6}])$ with $\gamma([\frac{1}{2},\frac{2}{3}])$ by translation.
Similarly, $A^2BA^{-1}$ identifies $\gamma([\frac{1}{6},\frac{1}{3}])$ with $\gamma([\frac{2}{3},\frac{5}{6}])$,
and $ABA^{-2}$ identifies $\gamma([\frac{1}{3},\frac{1}{2}])$ with $\gamma([\frac{5}{6},1])$.
Let $X$ denote the quotient of $R$ by these three boundary identifications. 
Since we identify boundaries using elements of $\ker D$,
there is a natural map $f:X \to Y$ that arises from restricting the covering $\RR^2 \to Y$ to the region $R$.
Consideration of the gluing maps shows that $X$ is a closed translation surface, and that $f$ is a local homeomorphism except possibly at the two points representing the equivalence classes $\{p_C, A^2(p_C), A^4(p_C)\}$ where $\{A(p_C), A^3(p_C), A^5(p_C)\}$
where (a priori) cone points may appear. But then $f$ is a branched covering map branched over at most two points. But, since the area of $X$ equals the area of $R$ which is the same as the area of $Y$, we see that $f$ has degree one.
Therefore, $f$ is a homeomorphism, and our covering $\RR^2 \to Y$ gives a covering $\RR^2 \to X$ with deck group $\ker D$.
The orbit $\{g(R):~g \in \ker D\}$ thus covers $\RR^2$ with $R$ being the fundamental domain.
The index of $\ker D$ in $\langle A, B\rangle$ is six. Since $D(A)$ has order six and $A$ stabilizes $R$, it follows that the full group $\langle A, B\rangle$ permutes the collection of tiles $\{g(R)\}$ in the orbit.

Finally, suppose that the full symmetry group $G \subset \Isom(\RR^2)$ of the tiling $\{g(R)\}$ properly contains $\langle A, B\rangle$. We will show that $R$ must be a regular hexagon as claimed. 
The stabilizer of $R$ in $G$ must be a subgroup of order $6 [G:\langle A, B\rangle] \geq 12$. The stabilizer of $R$, ${\mathit Stab}(R)$, must fix the center of mass $p_A$ of $R$ and hence is isomorphic to a subgroup of $O(2)$. Every orientation-preserving element in the stabilizer must have order at most six (from the classification of wallpaper groups), and because $A$ lives in this stabilizer we know that the only possibility is that ${\mathit Stab}(R)$ is an order twelve dihedral group (including reflections). Observe that the six points $\{A^i(p_C):~i=0,\ldots,5\}$ are special because they are the only points in $\partial R$ that are also in the boundary of two other tiles. Thus the orientation-reversing elements of ${\mathit Stab}(R)$ are then uniquely determined by their action on $\{A^i(p_C)\}$, which must reverse their cyclic ordering. In particular, we conclude that there is an orientation-reversing $E \in {\mathit Stab}(R)$ that swaps $p_C$ with $A(p_C)$. Then the reflection $E$ preserves $\gamma([0,\frac{1}{6}])$ while swapping the endpoints. The map $B$ does the same, but acts as an orientation-preserving isometry. The composition $E \circ B$ is orientation-reversing and fixes both $p_C$ and $A(p_C)$ and therefore is a reflection in the line $\overleftrightarrow{p_C A(p_C)}$. This composition $E \circ B$ fixes $p_C$, $A(p_C)$, and the simple path $\gamma([0,\frac{1}{6}])$ which joins $p_C$ to $A(p_C)$. It is an elementary observation that the only simple path joining two points on the axis of a reflection that is preserved by the reflection is the line segment between the points, so we conclude that $\gamma([0,\frac{1}{6}])$ is a parameterization of $\overline{p_C A(p_C)}$. Using the fact that $R$ is $A$-invariant, we see that $\gamma$ is a parameterization of a regular hexagon and the tiling is the usual tiling by regular hexagons which has symmetry group ${\mathbf p}{\mathbf 6}{\mathbf m}$ \cite[\S 4.5]{CM1972}.

The result further claims that if $R$ is a regular hexagon, then its symmetry group must be ${\mathbf p}{\mathbf 6}{\mathbf m}$. To see this, note that the vertices must be the points $\{A^i(p_C)\}$, because otherwise $p_C$ would lie in the interior of an edge of $R$ and $R \cap C(R)$ would have interior contradicting statements above (since $C$ rotates by $\pm 120^\circ$). Thus if $R$ is a regular hexagon, we are in the situation where the tiling is the usual hexagonal tiling as above.
\end{proof}

The proof of \Cref{thm:tiling} uses the isometric projection $\proj:\RR^3 \to \bone^\perp$ from \Cref{sect:isom}. We need the following observation:
\begin{prop}
\label{shear}
Let $\gamma:[0,1] \to \bN$ be a geodesic segment such that $\gamma(0)$ and $\gamma(1)$ lie in the interiors of adjacent squares in $\bN$ and such that the only edge $\gamma$ crosses is the edge joining these squares. Then $\proj( \gamma ) \subset \bone^\perp$ consists of two segments that form an angle that is not equal to $180^\circ$.
\end{prop}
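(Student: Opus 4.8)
The plan is to compute in the coordinates of $\RR^3$, using the explicit description of constituent squares from \Cref{lem:squares}. First I would pin down the elementary structural fact that two adjacent constituent squares lie in distinct (in fact perpendicular) coordinate planes: if the shared edge $e$ has direction $\be_i$, then \Cref{lem:squares} together with \Cref{cor:squares meeting} shows that the two squares containing $e$ are precisely the two obtained by adjoining a second spanning direction, so one is parallel to the $\{\be_i,\be_j\}$-coordinate plane and the other to the $\{\be_i,\be_k\}$-coordinate plane, where $\{i,j,k\}=\{0,1,2\}$. In particular the two transverse directions $\be_j$ and $\be_k$ are different.

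Next I would describe $\gamma$ near the crossing. Since $\gamma$ is a geodesic it avoids the vertices (which are cone singularities), so it meets $e$ at a single interior point $p$, and inside each of the two squares $\gamma$ is a straight line segment. Let $\bu\in\Span(\be_i,\be_j)$ be the unit direction of $\gamma$ approaching $p$ in the first square and $\bu'\in\Span(\be_i,\be_k)$ its unit direction leaving $p$ in the second; write $\bu=a\be_i+b\be_j$ with $a^2+b^2=1$ and $b\neq0$. Unfolding the second square isometrically into the plane of the first across $e$ is an isometry fixing $e$ pointwise, hence it fixes $\be_i$ and carries $\be_k$ to $\pm\be_j$; since $\gamma$ develops to a straight line, this unfolding takes $\bu'$ to a positive multiple of $\bu$, and as both are unit it takes $\bu'$ to $\bu$ exactly. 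Thus the $\be_i$-components agree and $\bu'=a\be_i\pm b\be_k$. (One can check the sign is always $-$, but the argument below does not need this.)

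Then I would project and conclude. Because $\proj$ restricts to an injective affine map on the plane of each square — its image being a non-degenerate rhombus by \Cref{prop:rhombille tiling} — the two pieces of $\gamma$ map to genuine line segments meeting at $\proj(p)$, with incoming direction $\proj(\bu)$ and outgoing direction $\proj(\bu')$, and the angle at $\proj(p)$ equals $180^\circ$ exactly when $\proj(\bu')$ is a \emph{positive} scalar multiple of $\proj(\bu)$. Since $\proj$ is linear with kernel $\RR\bone$, proportionality of $\proj(\bu)$ and $\proj(\bu')$ forces $\bu,\bu',\bone$ to be linearly dependent; evaluating the determinant of the matrix with columns $\bu,\bu',\bone$ in the basis $(\be_i,\be_j,\be_k)$ gives $-b^2\neq0$ when $\bu'=a\be_i-b\be_k$, and $b(b-2a)$ when $\bu'=a\be_i+b\be_k$. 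Hence the only possibility for proportionality is $\bu'=a\be_i+b\be_k$ with $b=2a$, and in that case a one-line computation gives $\proj(\bu)=a(\be_j-\be_k)$ and $\proj(\bu')=a(\be_k-\be_j)=-\proj(\bu)$, i.e.\ the angle is $0^\circ$, not $180^\circ$. In every case the angle differs from $180^\circ$.

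The only real subtlety is the bookkeeping around the $\pm$ sign produced by the unfolding: one must notice that the degenerate configuration is not excluded outright but instead collapses the two projected segments onto a single ray (angle $0^\circ$), which is still consistent with the claim. Apart from that the argument is routine linear algebra once the coordinates are fixed.
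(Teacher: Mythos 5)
Your proof is correct, and it takes a genuinely different route from the paper's. You compute directly in coordinates: after observing that the two squares lie in distinct coordinate planes sharing direction $\be_i$, you write the incoming and outgoing unit directions as $\bu=a\be_i+b\be_j$ and $\bu'=a\be_i\pm b\be_k$ (obtained by undoing the unfolding), and reduce the question to the rank of the matrix with columns $\bu$, $\bu'$, $\bone$, since $\proj$ is linear with kernel $\RR\bone$. The determinant test then settles the matter by routine algebra. The paper instead unfolds the second square into the plane of the first, notes that $\proj$ of the unfolded path is a line $\ell$, and shows the composition $\proj(S_1)\to g(S_1)\to S_1\to\proj(S_1)$ is a nontrivial shear of $\bone^\perp$ (via the First Fundamental Theorem of Projective Geometry), which cannot preserve $\ell$ because $\ell$ meets the shear axis $\proj(e)$. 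Your approach is more elementary and self-contained, trading the projective-geometry black box for a short linear-algebra check; the paper's argument is more conceptual and makes the geometric mechanism (the projected unfolding is literally a shear) visible, which is also the picture behind \Cref{fig:shear}.

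One small remark: as you suspect, the sign in $\bu'=a\be_i\pm b\be_k$ is in fact always $-$ for adjacent constituent squares of $\bN$ (for any shared edge, the two squares sit on the \emph{same} side in their respective transverse coordinates relative to that edge, which forces the unfolding rotation to send $\be_k$ to $-\be_j$). So the degenerate branch $b=2a$ you analyze for the $+$ sign never actually arises. Your hedged treatment of it is harmless — and, as you note, it would in any case give angle $0^\circ$, not $180^\circ$ — but you could omit it entirely by verifying the sign once, which would slightly streamline the argument.
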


\begin{figure}[htb]
	\includegraphics[height=1in]{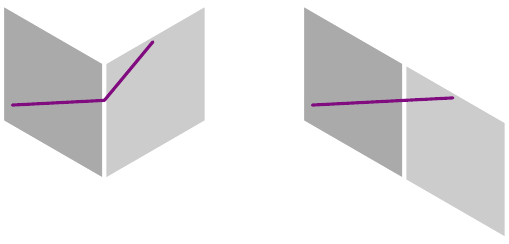}
	\caption{Left: The image under $\proj$ of a geodesic segment $\gamma$ on $\bN$. Right: The projected image of $g(\gamma)$ where $g$ rotates the lighter square by $90^\circ$. Observe that the picture at left can also be obtained from the picture at right by applying a vertical shear to the lighter projected rhombus.}
	\label{fig:shear}
\end{figure}

\begin{proof}
Let the two squares be $S_0, S_1 \subset \bN$, where $\gamma(i) \in S_i^\circ$ for $i \in \{0,1\}$. 
Let $e=S_0 \cap S_1$ be the common edge, and let $t_0 \in (0,1)$ be the point for which $\gamma(t_0) \in e$.
Let $P$ be the plane containing $S_0$. Let $f \in \Isom_+(\RR^3)$ be the isometry rotating by $90^\circ$ about the line extending $e$, chosen such that $f(S_1)$ lies adjacent to $S_0$ in $P$. Let $g:S_0 \cup S_1 \to \RR^3$ be the isometric embedding such that $g|_{S_0}$ is the identity and $g|_{S_1} \equiv f$. 
Since $g(S_0 \cup S_1)$ is contained in a plane and $g$ is an isometric embedding, $g \circ \gamma$ is a line segment.

Now we compare the projections $\proj \circ \gamma$ and $\proj \circ g \circ \gamma$ in $\bone^\perp$ as illustrated in \Cref{fig:shear}. Observe that the restrictions of these paths to $[0, t_0]$ agree, and $\proj \circ g \circ \gamma$ is contained in a line $\ell$. Our task is to show that $\proj \circ \gamma|_{[t_0,1]}$ is not contained in $\ell$. Observe that the composition
\begin{equation}
\label{eq:shear}
\proj \circ g(S_1) \xrightarrow{\proj|_{g(S_1)}^{-1}} g(S_1) \xrightarrow{~g~} S_1 \xrightarrow{\proj|_{S_1}} \proj(S_1)
\end{equation}
extends to a projective transformation of the plane $\bone^\perp$ \cite[\S 4]{Berger}. This composition is relevant to our problem
because it carries $\proj \circ g \circ \gamma|_{[t_0,1]}$ to $\proj \circ \gamma|_{[t_0,1]}$.
Observe that there is a nontrivial shear of $\bone^\perp$ that carries $\proj \circ g(S_1)$ to $\proj(S_1)$ and respecting the identities of vertices of these rhombi. A shear is also a projective transformation, 
and since it agrees at four points in general position (namely, the vertices), by the First Fundamental Theorem of Projective Geometry (\cite[\S 4.5.10]{Berger}) our shear must give the composition in \eqref{eq:shear}. The shear preserves $\proj(e)$, and therefore only preserves lines parallel to $\proj(e)$. Since $\ell$ intersects $\proj(e)$, it is not preserved by the shear and so $\proj \circ \gamma|_{[t_0,1]}$ is not contained in $\ell$.
\end{proof}

\begin{proof}[Proof of \Cref{thm:tiling}]
Let $C$ be a maximal cylinder whose interior is foliated by simple closed geodesics. Let $\psi \in \Isom(\RR^3, \bN)$ be the order-six symmetry preserving $C$. Let $\gamma:[0,1] \to \partial C \subset \bN$ be one of the closed boundary curves, parameterized at constant speed such that $\gamma(0)$ is a singularity. Let $R \subset \bN$ be the region bounded by $\gamma$. 
Then $R$ is a topological disk by \Cref{simple six} and the Jordan curve theorem. 
By the Brouwer fixed point theorem, $\psi$ has a fixed point in $R$, and as a consequence of \Cref{isometries} the fixed point $z \in \bN$ is unique and so must lie in the interior of $R$.
By \Cref{thm:order six}, after possibly reversing the orientation of $\gamma$, we have $\psi \circ \gamma(t)=\gamma(t + \frac{1}{6})$. Then each $\gamma(\frac{i}{6})$ is singular. By \Cref{simple six}, there are six saddle connections making up $\gamma$ and they must be the arcs $\gamma([\frac{i}{6}, \frac{i+1}{6}])$ for $i=0, \ldots, 5$. By \Cref{prop:center}, there is an involution $\iota \in \Isom_+(\RR^3, \bN)$ that fixes the saddle connection $\gamma([0, \frac{1}{6}])$ while swapping the endpoints. Let $x$ be the unique fixed point of $\iota$, which is the center of a square. Observe that $\gamma(\frac{1}{12})=x$.

Now consider the perpendicular projection $\proj:\RR^3 \to \bone^\perp$ of \Cref{sect:isom}. By \Cref{isometries}, $A=\psi|_{\bone^\perp}$ is an order six rotation fixing $p_A=\proj(z)$ and $B=\iota|_{\bone^\perp}$ is an order two rotation fixing $p_B=\proj(x)$. Since $\proj|_{\bN}:\bN \to \bone^\perp$ is a homeomorphism, 
and $z \in R^\circ$ and $x \in \partial R$ are distinct, we see that $p_A \neq p_B$.
Therefore \Cref{632 detector} tells us that $\langle A, B\rangle$ is a discrete subgroup of $\Isom(\bone^\perp)$ isomorphic to ${\mathbf p}{\mathbf 6}$ with presentation given in \eqref{eq:p6}. Furthermore $\proj \circ \gamma$ is a curve satisfying statements (1) and (2) of \Cref{632 detector}, so the disk $\proj(R)$ bounded by $\proj \circ \gamma$ tiles $\bone^\perp$ under the action of the kernel $K \subset \langle A, B \rangle$ of the derivative map to $\SO(2)$, and the symmetry group of the tiling contains $\langle A,B\rangle$.

Let $h:\langle \psi, \iota\rangle \to \langle A, B\rangle$ be the map sending $\phi \in \langle \psi, \iota\rangle$ to its restriction $\phi|_{\bone^\perp}$. This map is clearly surjective and is injective as a consequence of \Cref{isometries}. Let $K'=h^{-1}(K)$. Again using the fact that $\proj|_{\bN}$ is a homeomorphism,
we see that $\{\phi(R):\phi \in K'\}$ is a collection of closed disks tiling $\bN$. This collection is the same as either $\{D_\mathit{out}(C'):~C' \in {\mathcal C}\}$
or 
$\{D_\mathit{in}(C'):~C' \in {\mathcal C}\}$
where
${\mathcal C}=\{\phi(C):~\phi \in K'\},$
depending on whether $\gamma$ was the inner or outer boundary component of $C$.
This proves the tiling statement, and since $h$ is a group isomorphism, the symmetry group of the tiling contains $\langle \psi, \iota\rangle$ which is isomorphic to $\langle A, B\rangle$ and hence also ${\mathbf p}{\mathbf 6}$.

It remains to prove that if the slope of the core curves of $C$ is not $\pm 1$, then the group $\langle \psi, \iota\rangle$ is the full symmetry group of the tiling $\{\phi(R):\phi \in K'\}$. From the hypothesis on slope, we obtain that the saddle connection $\gamma([0,\frac{1}{6}])$ cannot be contained in a single square. From the symmetry (2) and the fact that $\gamma(\frac{1}{12})$ is the center of a square, we see that $\gamma\big((0,\frac{1}{6})\big)$ must cross an even number $n \geq 2$ of interiors of edges of squares. So $\gamma([0,1])$ must cross at least $6n \geq 12$ interiors of edges. Using \Cref{shear}, we see that $\proj(\gamma)$ has at least $6n$ nontrivial angles and so cannot be a regular hexagon. Therefore \Cref{632 detector} guarantees that the symmetry group of the tiling of $\bone^\perp$ by translations of $\proj(R)$ is precisely $\langle A,B\rangle$. It then follows from another application of \Cref{isometries} that the full subgroup of $\Isom(\RR^3,\bN)$ that preserve the tiling $\{\phi(R):\phi \in K'\}$ of $\bN$ is precisely $\langle \psi, \iota\rangle$.
\end{proof}

\section{Affine symmetries of \texorpdfstring{$\bNt$}{the half-translation cover of the Necker cube surface}}
\label{sect:affine group}

In this section, we gain an understanding of the Veech group $V(\bNt)$, consisting of elements of $\PSL(2,\RR)$
that stabilize $\bNt$. In the first subsection, we list elements known to be in $V(\bNt)$. In \Cref{sect: complete veech}, we explain that these known elements generate the Veech group and describe the quotient $\HH/V(\bNt)$, which is significant because its tangent space represents the orbit of $\bNt$ under $\PSL(2,\RR)$ equipped with its natural Teichm\"uller metric. The remainder of this section is devoted to proving this result and \Cref{thm:symmetries}.

\subsection{Elements of the Veech group}

\begin{prop}
\label{rotation by 90}
The unique nontrivial deck transformation of the cover $\bNt \to \bN$ is a quarter-translation automorphism of $\bNt$ with derivative 
$\pm \begin{pmatrix}
0 & -1 \\
1 & 0
\end{pmatrix}$. In particular, this element of $\PSL(2,\RR)$ is in the Veech group of $\bNt$.
\end{prop}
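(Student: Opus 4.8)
The plan is to deduce both assertions from \Cref{deck group and derivative}, applied with $\bS=\bN$, $G=G_4$, and $H=G_2$, so that $\tbS=\bNt$. First I would confirm the hypotheses of that lemma: $G_2$ is normal in $G_4$ (both are the translation group extended by $C_2\trianglelefteq C_4$), and $\bN$ is genuine with respect to $G_2$ because a small loop around any singularity where three squares come together has cone angle $\tfrac{3\pi}{2}$, hence rotational holonomy a rotation by $\pm 90^\circ$, and the class of such a rotation generates $G_4/G_2\cong\ZZ/2\ZZ$; thus $\holhom(\Delta_\bN)$ surjects onto $G_4/G_2$. This is just a restatement of the fact, used throughout and recalled after \Cref{minimal cover} and in \Cref{sect:isom}, that $\bNt\to\bN$ is the canonical double branched cover with branch locus the trivalent singularities.

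With the hypotheses in place, \Cref{deck group and derivative} gives two things at once: the deck group $\Delta(\bNt,\bN)$ consists of $G_4$-automorphisms of $\bNt$ — so every deck transformation is a quarter-translation automorphism, which is the first assertion — and the derivative $D\colon\Delta(\bNt,\bN)\to G_4/G_2$ is a group isomorphism. Since $\bNt\to\bN$ is a double cover, $\Delta(\bNt,\bN)$ has order two, so for the unique nontrivial deck transformation $\delta$ the element $D(\delta)\in G_4/G_2$ is the nontrivial coset of $G_2$.

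It then remains to identify this coset inside $\PSL(2,\RR)=\Aff_{+1}(\RR^2)/G_2$; note $D(\delta)$ is the same element of $\Aff_{+1}(\RR^2)/G_2$ since $G_4\subset\Aff_{+1}(\RR^2)$ and the defining relation \eqref{eq:derivative} does not depend on the choice of ambient group. Writing $r=(\begin{smallmatrix}0 & -1 \\ 1 & 0\end{smallmatrix})$ for the Jacobian of rotation by $90^\circ$, we have $J(G_4)=\langle r\rangle=\{I,r,-I,-r\}\subset\SL(2,\RR)$, which projects onto $\{[I],[r]\}$ in $\PSL(2,\RR)$, with $[r]=\pm r$ nontrivial because $r^2=-I$ becomes trivial in $\PSL(2,\RR)$. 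Hence the nontrivial coset of $G_2$ in $G_4$ is exactly $\pm(\begin{smallmatrix}0 & -1 \\ 1 & 0\end{smallmatrix})$, so $D(\delta)=\pm(\begin{smallmatrix}0 & -1 \\ 1 & 0\end{smallmatrix})$. Finally, since $G_4\subset\Aff_{+1}(\RR^2)$, the $G_4$-automorphism $\delta$ is in particular an element of $\Aut_{\Aff_{+1}(\RR^2)}(\bNt)$, so $D(\delta)$ lies in $V(\bNt)=D\big(\Aut_{\Aff_{+1}(\RR^2)}(\bNt)\big)$. There is no genuine obstacle here; the only points requiring care are verifying the genuineness hypothesis of \Cref{deck group and derivative} and correctly transporting the abstract quotient $G_4/G_2$ into $\PSL(2,\RR)$ — in particular remembering that $-I$ is trivial there, which is what pins the derivative down to $\pm r$ rather than leaving an ambiguity between $r$ and $-r$.
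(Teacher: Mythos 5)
Your proposal is correct and takes essentially the same approach as the paper, which proves this in a single line by citing \Cref{deck group and derivative}. You have simply unpacked that reference: verifying that $G_2\trianglelefteq G_4$ and that $\bN$ is genuine with respect to $G_2$ (via the $\frac{3\pi}{2}$ cone angles at trivalent singularities), applying the lemma to get that the deck group consists of $G_4$-automorphisms with $D$ an isomorphism onto $G_4/G_2\cong\ZZ/2\ZZ$, and then identifying the nontrivial coset in $\PSL(2,\RR)$ as $\pm\left(\begin{smallmatrix}0&-1\\1&0\end{smallmatrix}\right)$.
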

\begin{proof}
This is a direct consequence of \Cref{deck group and derivative}.
\end{proof}

We have the following consequence of our theorem about closed geodesics.

\begin{cor}
\label{cor:odd twist}
For each $(p,q) \in \cO$, there is an affine automorphism $\psi_{p,q}:\bNt \to \bNt$ that performs a single right Dehn twist in each maximal cylinder with slope $\frac{q}{p}$. The derivative of $\psi_{p,q}$ is
$$
P_{p,q}=\pm \begin{pmatrix}
1-6pq & 6p^2 \\
-6q^2 & 1+6pq
\end{pmatrix} \in \GL(2,\ZZ)/\pm 1.
$$
\end{cor}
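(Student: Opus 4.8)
The plan is to reduce to the slope-one case and then transport the resulting multi-twist along the affine action of $F_2$. First I would show that for $(p,q)=(1,1)$, the surface $\bNt$ decomposes into maximal cylinders of slope one. This follows from \Cref{thm:odd-odd is closed}: every nonsingular geodesic of slope one on $\bN$ is closed, hence so is every such geodesic on the double cover $\bNt$, and the maximal cylinders of slope one therefore fill $\bNt$. To apply the multi-twist construction of \Cref{sect:cylinders}, I need all these cylinders to have the same inverse modulus. For this I would lift the unique slope-one maximal cylinder of $\bHt$ (area six, circumference $6\sqrt 2$, from \Cref{lem:odd null-homologous}) through the unbranched cover $\bNt \to \bHt$; each component of the preimage is isometric to a cover of that cylinder, and by the degree-one argument in the proof of \Cref{thm:odd-odd is closed} each is in fact isometric to it, so all slope-one cylinders of $\bNt$ are isometric and share the inverse modulus $m_0 = \tfrac{6\sqrt2}{w}$ where $w$ is the common width. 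Hence there is an affine automorphism $\psi_{1,1}$ performing a single right Dehn twist in each, and its derivative is $R\,\bigl(\begin{smallmatrix}1&0\\ m_0&1\end{smallmatrix}\bigr)\,R^{-1}$ conjugated by the rotation $R$ carrying the vertical direction to slope one. Since a right twist in a slope-one cylinder of a square-tiled surface fixes the lattice $\ZZ^2$ of singular differences (the core curve has integer holonomy $\pm(6,6)$, so the shear is unipotent with integer entries in a basis adapted to that direction), \Cref{SL2Z} forces this derivative to lie in $\GL(2,\ZZ)/\{\pm I\}$; computing it explicitly gives $P_{1,1} = \pm\bigl(\begin{smallmatrix}-5&6\\-6&7\end{smallmatrix}\bigr)$, which is exactly $P_{p,q}$ with $p=q=1$.

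For general $(p,q)\in\cO$, I would use \Cref{lem:transitive}: there is $g\in F_2$ with $\matrixhom(g)\cdot(q,p) = \pm(1,1)$, and the affine automorphism $\authom(g)$ of $\bHt$ lifts to $\bNt$ (it preserves $W_+$, being a product of multi-twists whose core curves project to the $\beta_j$; alternatively one invokes \Cref{cor:lifting} after checking $\authom(h)_\ast,\authom(v)_\ast$ fix $W_+$). Call the lift $\tilde\Psi_g : \bNt \to \bNt$; its derivative is $\pm\matrixhom(g)$. Then $\psi_{p,q} := \tilde\Psi_g^{-1}\circ \psi_{1,1}\circ \tilde\Psi_g$ is an affine automorphism of $\bNt$. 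Since $\tilde\Psi_g$ is affine it carries slope-$\tfrac{q}{p}$ cylinders bijectively to slope-one cylinders, so conjugating the slope-one multi-twist produces a map that performs a single right Dehn twist in each maximal cylinder of slope $\tfrac qp$ — conjugation preserves the property of being a single right twist because it is an affine conjugacy and $\tilde\Psi_g$ matches up the cylinders and their core curves with consistent orientation. The derivative is then $D(\psi_{p,q}) = \matrixhom(g)^{-1} P_{1,1}\,\matrixhom(g)$ in $\GL(2,\ZZ)/\{\pm I\}$.

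The last step is the matrix identity: I would verify that $\matrixhom(g)^{-1} P_{1,1}\, \matrixhom(g) = P_{p,q}$. The clean way to see this without tracking $g$ is to note that $P_{p,q}$ is, by construction, the parabolic matrix fixing the direction $(p,q)$ whose associated Dehn twist on a cylinder with core holonomy $\pm(6p,6q)$ is a single right twist; such a matrix is uniquely $\pm\bigl(I - 6\,\binom{p}{q}(-q\ \ p)\bigr) = \pm\bigl(\begin{smallmatrix}1-6pq & 6p^2\\ -6q^2 & 1+6pq\end{smallmatrix}\bigr)$, and conjugating $P_{1,1}$ by $\matrixhom(g)^{-1}$ (which sends $\pm(1,1)$ to $\pm(p,q)$ up to scaling and is the derivative of an affine map, so preserves the "single twist" normalization by \Cref{affinemap-derivative}) yields precisely this matrix. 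The determinant is $1$ and the entries are integers, confirming membership in $\GL(2,\ZZ)/\{\pm I\}$. I expect the main obstacle to be the bookkeeping in the conjugation step — making sure the conjugated map is genuinely a \emph{single} right twist in \emph{each} maximal cylinder (not, say, a twist to some power, or a twist only in some cylinders), which requires knowing that $\tilde\Psi_g$ induces a bijection between the slope-one cylinders and the slope-$\tfrac qp$ cylinders respecting inverse moduli; this is where the isometry-of-cylinders fact from \Cref{lem:odd null-homologous} and \Cref{thm:odd-odd is closed} is essential.
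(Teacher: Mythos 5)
Your first paragraph is correct and close to the paper's own argument, but the conjugation step in the second paragraph has a genuine gap: the automorphism $\authom(g)$ of $\bHt$ does \emph{not} in general lift to $\bNt$, and the parenthetical justification you give for the lift is false. Specifically, the claim that $\authom(h)_\ast$ and $\authom(v)_\ast$ fix $W_+$ does not hold. Using the relation $\gamma_1+\gamma_3+\gamma_5=0$ and \eqref{eq:Dehn action}, one computes $\authom_\ast(h)(\beta_0)=\authom_\ast(h)(\gamma_0+\gamma_3)=\gamma_0-\gamma_1+\gamma_5+\gamma_3=\gamma_0-2\gamma_1$, which in the symplectic basis $(\gamma_0,\gamma_3;\gamma_1,\gamma_4)$ has coordinates $(1,0,-2,0)$ and therefore does not lie in $W_+=\Span\{(1,1,0,0),(0,0,1,1)\}$. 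The paper's \Cref{W stabilization} makes this systematic: $\authom_\ast(g)(W_+)\in\{W_+,W_-\}$ if and only if $\rho(g)=I$, and $\rho(h)\neq I$. So for a general $g$ with $\matrixhom(g)(q,p)=\pm(1,1)$ there is no lift $\tilde\Psi_g:\bNt\to\bNt$, and $\psi_{p,q}:=\tilde\Psi_g^{-1}\circ\psi_{1,1}\circ\tilde\Psi_g$ is undefined. Indeed, the elements $g$ for which a lift exists form the kernel $F_\cO=\ker\rho$ (\Cref{odd kernel}), and $\Gamma_\cO=\matrixhom(F_\cO)$ does \emph{not} act transitively on $\cO$, so you cannot in general replace your $g$ by one in $F_\cO$.

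The approach can be saved, but the conjugation must happen on $\bHt$ rather than on $\bNt$: conjugate the slope-one single twist on $\bHt$ by $\authom(g)$ to obtain the slope-$\tfrac{q}{p}$ single twist $\authom\big(g^{-1}(vh^{-1})^3 g\big)$ on $\bHt$, observe that this element lies in $F_\cO$ (it is a conjugate of $(vh^{-1})^3$, and $F_\cO$ is normal, or more directly: its core curve is null-homologous by \Cref{lem:odd null-homologous}, so the twist acts trivially on $H_1$ and trivially preserves $W_+$), and only then lift to $\bNt$ via \Cref{cor:lifting}. However, this detour through slope one is unnecessary. The paper applies \Cref{thm:odd-odd is closed} directly for arbitrary $(p,q)\in\cO$: it already gives that every maximal slope-$\tfrac{q}{p}$ cylinder of $\bNt$ has circumference $6\sqrt{p^2+q^2}$ and area six, hence they all share inverse modulus $6(p^2+q^2)$; the multi-twist construction of \Cref{sect:cylinders} then produces $\psi_{p,q}$ immediately, and the derivative is computed as $RQR^{-1}$ with $R=\tfrac{1}{\sqrt{p^2+q^2}}\big(\begin{smallmatrix}p&-q\\q&p\end{smallmatrix}\big)$ and $Q=\big(\begin{smallmatrix}1&6(p^2+q^2)\\0&1\end{smallmatrix}\big)$, giving $P_{p,q}$ by direct multiplication without any reference to $\SL2Z$ or to a rank-one formula (which, incidentally, should read $P_{p,q}=\pm\big(I+6\binom{p}{q}(-q\ \ p)\big)$, not with a minus sign).
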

\begin{proof}
\Cref{thm:odd-odd is closed} guarantees that for each $(p,q) \in \cO$, there is a cylinder decomposition of $\bNt$ such that each cylinder has circumference $6 \sqrt{p^2+q^2}$ and area six. The inverse modulus of each cylinder is therefore $6p^2+6q^2$. From comments in \Cref{sect:cylinders}, we know there is an affine automorphism $\psi_{p,q}:\bNt \to \bNt$ that performs a single right Dehn twist in every cylinder of this slope and its derivative is the image of $R \circ Q \circ R^{-1}$ in $\PSL(2,\RR)$ where
$$R=\frac{1}{\sqrt{p^2+q^2}}\begin{pmatrix}
p & -q \\
q & p
\end{pmatrix} 
\quad \text{and} \quad
Q = \begin{pmatrix}
1 & 6p^2+6q^2 \\
0 & 1
\end{pmatrix}.$$
A computation shows $R \circ Q \circ R^{-1}$ is as stated.
\end{proof}

\subsection{The complete Veech group}
\label{sect: complete veech}

\begin{thm}
\label{Veech group}
The matrices listed in \Cref{rotation by 90} and \Cref{cor:odd twist} generate $V(\bNt)$.
The quotient $\HH/V(\bNt)$ is homeomorphic to an open topological disk with a countable closed discrete subset of points removed.
The quotient $\HH/V(\bNt)$ has exactly one cone singularity, and the cone angle at this point is $\pi$.
\end{thm}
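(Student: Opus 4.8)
\emph{Plan of proof.} The first assertion is essentially a restatement of \Cref{thm:symmetries}: that theorem writes every orientation-preserving affine automorphism of $\bNt$ as an isometry of $\bNt$ followed by a word in the automorphisms $\varphi_m$ with $m$ a ratio of two odd integers, and the $\varphi_m$ are exactly the $\psi_{p,q}$ of \Cref{cor:odd twist} (with $m=q/p$), whose derivatives are the matrices $P_{p,q}$. So $V:=V(\bNt)$ is generated by the $P_{p,q}$ together with the derivatives of isometries of $\bNt$. Since $\bNt$ is square-tiled, an orientation-preserving isometry permutes its unit squares (the square edges are the shortest saddle connections, hence are preserved), so its derivative lies in $C_4/\{\pm I\}$, a group of order two generated by the class of $R_0:=(\begin{smallmatrix}0&-1\\1&0\end{smallmatrix})$; this class is realized by \Cref{rotation by 90}. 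Hence $V$ is generated by $R_0$ and the $P_{p,q}$, as claimed.

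To analyze $\HH/V$ I would transfer everything into $\PSL(2,\ZZ)$. Every listed generator lies there: $R_0$ trivially, and $\det P_{p,q}=1$ by a one-line computation, so $V\subset\PSL(2,\ZZ)$. Reduction mod $2$ gives $\PSL(2,\ZZ)\to\PSL(2,\FF_2)\cong S_3$, which permutes the three residue classes of cusps in $\QQ\cup\{\infty\}$; write $\mathfrak o$ for the odd/odd class. Now $R_0$ fixes $\mathfrak o$ (it swaps the two remaining classes), and $P_{p,q}$ is parabolic fixing the cusp $p/q\in\mathfrak o$ and equals the sixth power of the standard parabolic generator $(\begin{smallmatrix}1-pq&p^2\\-q^2&1+pq\end{smallmatrix})$ of the $\PSL(2,\ZZ)$-stabilizer of $p/q$; in particular $P_{p,q}$ lies in $\overline{\Gamma}_2$, the image of $\Gamma_2$ in $\PSL(2,\ZZ)$, which is $\ker(\PSL(2,\ZZ)\to S_3)$. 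Thus the image of $V$ in $S_3$ is the order-two stabilizer of $\mathfrak o$, $V_0:=V\cap\overline{\Gamma}_2$ has index $2$ in $V$, and (as $R_0\notin\overline{\Gamma}_2$) $V=V_0\rtimes\langle R_0\rangle$. Since $\overline{\Gamma}_2$ acts transitively on the odd/odd cusps (\Cref{lem:transitive} together with $-I\in\Gamma_2$) and conjugation within $\overline{\Gamma}_2$ carries the $P_{p,q}$'s into one another, $V_0$ is the normal closure in $\overline{\Gamma}_2$ of $P_{1,1}$, i.e. of the cube of the $\overline{\Gamma}_2$-peripheral generator at the cusp $1$. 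Writing $\overline{\Gamma}_2=\langle a,b\rangle$ freely with $a,b$ parabolic at the two even cusps and $d:=ab$ parabolic at the cusp $1$, the relevant relation becomes $d^3=1$, so $Q:=\overline{\Gamma}_2/V_0\cong\langle a,d\mid d^3\rangle\cong\ZZ*(\ZZ/3)$.

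From this I would read off $\HH/V$. Because the odd/odd rationals are dense in $\partial\HH$ and $V$ contains a parabolic fixing each of them, $V$ is a nonelementary Fuchsian group of the first kind, so $\HH/V$ is a $2$-orbifold with punctures and cone points but no funnels. Its elliptics all come from $R_0$: an elliptic of $\PSL(2,\ZZ)$ has order $2$ or $3$, but an order-$3$ element maps to a $3$-cycle in $S_3$, impossible in $V$ whose image is a $2$-group; so every elliptic of $V$ is an order-$2$ conjugate of $R_0$ and every cone point has angle $\pi$. The quotient has genus zero: this follows from the first assertion together with the standard fact that a first-kind Fuchsian group without funnels has genus-zero quotient precisely when it is generated by its parabolic and elliptic elements (in positive genus the puncture- and cone-loops span only a proper subgroup of homology). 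Since $V_0$ is a nontrivial infinite-index normal subgroup of the free group $\overline{\Gamma}_2$, it — and hence $V$ — is infinitely generated, so $\HH/V$ has infinitely many cusps; a genus-zero orbifold with these properties is homeomorphic to an open disk with a countable, closed, discrete set of points (the cusps) removed, carrying some discrete set of order-$2$ cone points, and the fundamental domain below makes this explicit. Finally, there is exactly one cone point: a fixed point of $R_0$ on $\HH/V_0$ lies over the unique fixed point of $R_0$ on the thrice-punctured sphere $\HH/\overline{\Gamma}_2$, namely the image of $i$, whose $Q$-orbit is free, and $R_0$ acts on that orbit as the involution $\sigma$ of $Q$ induced by conjugation by $R_0$. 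So the cone points of $\HH/V$ are in bijection with the fixed set $Q^\sigma$, and the claim reduces to $Q^\sigma=\{1\}$ (the image of $i$).

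The two substantive steps are the genus-zero claim and $Q^\sigma=\{1\}$, and I expect the latter to be the real obstacle. Both are cleanest to handle by building an explicit fundamental domain from ideal triangles of the Farey tessellation: each Farey triangle has exactly one odd/odd ideal vertex, around each odd/odd cusp its incident triangles form an infinite fan, $P_{p,q}$ glues triangle $k$ of such a fan to triangle $k+6$, and $R_0$ pairs the remaining free sides; checking that the resulting identification space is a disk gives genus zero and the discreteness of the cusps, and also confirms $Q\cong\ZZ*(\ZZ/3)$. For $Q^\sigma=\{1\}$, the matrix computation $R_0aR_0^{-1}=b$, $R_0bR_0^{-1}=a$ shows $\sigma$ interchanges $a$ and $b$, so with $d=ab$ one gets $\sigma(a)=a^{-1}d$ and $\sigma(d)=a^{-1}da$; then the retraction $\rho\colon Q\to\ZZ$ with $\rho(a)=1$, $\rho(d)=0$ satisfies $\rho\circ\sigma=-\rho$, forcing $Q^\sigma\subset\ker\rho$, which is the free product $*_{n\in\ZZ}D_n$ of copies $D_n=a^nD_0a^{-n}$ of $\ZZ/3$; one checks $\sigma$ permutes the $D_n$ by the fixed-point-free involution $n\mapsto -1-n$, and a Bass--Serre argument on the associated tree then shows no nontrivial element is fixed. (Alternatively, once the Farey fundamental domain is in hand, one can locate directly the single fixed point of the orientation-preserving involution $R_0$ on the genus-zero surface $\HH/V_0$.)
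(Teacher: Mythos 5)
Your first paragraph is circular. In the paper, \Cref{thm:symmetries} is proved \emph{from} \Cref{Veech group} (the short proof of \Cref{thm:symmetries} appears immediately after \Cref{Veech group} and simply factors $D\psi=R^i\cdot Q$ using \Cref{Veech group}); the two are essentially equivalent formulations of the same content, and the hard part is exactly showing that every element of the Veech group really does lie in $\langle R_0\rangle\cdot\Gamma_\cO$. You invoke \Cref{thm:symmetries} to get that, so you have assumed what was to be proved. The paper's actual argument for the generation is the substance of \S6.3--6.5: \Cref{SL2Z} puts $V(\bNt)$ inside $\PSL(2,\ZZ)$, the fact that $V(\bNt)$ fixes the class $\cO$ of odd/odd slopes (from \Cref{thm:main}) lets one multiply by the deck element $R$ to land in $\overline{\Gamma}_2$, and then \Cref{cor:descends} (affine automorphisms of $\bNt$ descend to $\bHt$), \Cref{G4 auto lift}, and the pivotal \Cref{W stabilization} ($\authom_\ast(g)(W_+)\in\{W_\pm\}\iff\rho(g)=I$) together with \Cref{odd kernel} ($F_\cO=\ker\rho$) force the derivative into $\Gamma_\cO$. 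None of that appears in your sketch, and without it the first assertion is not established.

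Your analysis of $\HH/V$ is a genuinely different (and mostly correct) route, but it is more involved than what the paper does and one step is shaky. The paper simply reuses \Cref{odd kernel}: $\HH/\Gamma_\cO$ is combinatorially $\HH$ with the interior vertices of the $(3,\infty,\infty)$ triangulation deleted, so it is visibly a disk minus a closed discrete set; then the order-two map induced by $R$ swaps $0\leftrightarrow\infty$, has a single fixed point on the Farey edge between them, and contributes the one cone point of angle $\pi$. You instead pass to $Q=\overline{\Gamma}_2/V_0\cong\ZZ\ast(\ZZ/3)$ and propose to show the conjugation involution $\sigma$ has trivial fixed subgroup $Q^\sigma$; the reduction of ``one cone point'' to $Q^\sigma=\{1\}$ is a nice idea and your $\rho\colon Q\to\ZZ$ observation does force $Q^\sigma$ into the free product of the $\ZZ/3$ factors, but the remaining Bass--Serre step is only gestured at. More seriously, the ``standard fact'' that a first-kind Fuchsian group without funnels has genus-zero quotient iff it is generated by parabolics and elliptics is not obviously applicable here: $V$ is infinitely generated and $\HH/V$ has infinite area, and I do not think that statement is off-the-shelf in that generality. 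You do mention building a Farey fundamental domain as a fallback; that fallback is precisely the paper's argument and is the part I would trust. In short: fix the circularity by actually proving the generation (this is where \Cref{W stabilization} and \Cref{odd kernel} are needed), and for the topology either carry out the $Q^\sigma$ computation in full or switch to the Farey fundamental domain picture, which gives the disk-with-punctures description and the unique $\pi$ cone point simultaneously.
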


We remark that \Cref{Veech group} determines the topological type of $\HH/V(\bNt)$; it is a {\em flute surface}, a surface of genus zero whose end space is homeomorphic to $\NN \cup \{+\infty\}$. The argument we use allows for a precise description of the geometry of the quotient as well, in terms of a decomposition into ideal triangles, though we will not state this precisely in a theorem.

Recall that a subgroup of $\PSL(2,\RR)$ is called {\em Fuchsian of the first-kind} if its M\"obius action on $\HH$ has a limit set that is dense in the boundary $\partial \HH$.

\begin{cor}
The Veech group $V(\bNt)$ is an infinitely generated Fuchsian of the first-kind
\end{cor}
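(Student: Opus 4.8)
The plan is to deduce both properties from Theorem~\ref{Veech group} together with basic facts about Fuchsian groups. First I would argue that $V(\bNt)$ is infinitely generated. Theorem~\ref{Veech group} states that $V(\bNt)$ is generated by the rotation $[R_{90}]$ of order two together with the infinite family $\{P_{p,q} : (p,q) \in \cO\}$. One must check that no finite subcollection suffices. The cleanest way is geometric: Theorem~\ref{Veech group} also asserts that $\HH/V(\bNt)$ is an infinite-type surface (a flute surface — a disk with a countable closed discrete set of points removed, plus one cone point of angle $\pi$). A finitely generated Fuchsian group has a quotient $\HH/V$ that is a finite-type orbifold (finitely many cone points, cusps, and handles); an infinite-type quotient therefore forces infinite generation. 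Alternatively, and more self-containedly, one can observe that the parabolic elements $\psi_{p,q}$ for distinct $(p,q) \in \cO$ fix distinct points of $\partial\HH$ (namely the slopes $-q/p$), hence lie in distinct maximal parabolic subgroups; since the quotient has infinitely many punctures (one per conjugacy class of maximal parabolic), the group cannot be finitely generated. I would present the argument via the quotient surface, citing Theorem~\ref{Veech group}.

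Next I would establish that $V(\bNt)$ is Fuchsian of the first kind, i.e.\ that its limit set $\Lambda \subset \partial\HH$ is dense in (in fact all of) $\partial\HH$. By definition the limit set is closed and $V(\bNt)$-invariant. The key point is to exhibit a dense set of limit points. The parabolic fixed points of the elements $\psi_{p,q}$ are exactly the boundary points of $\partial\HH$ corresponding to slopes $-q/p$ for $(p,q) \in \cO$; each such point is a parabolic fixed point, hence lies in $\Lambda$. It therefore suffices to note that $\{-q/p : (p,q) \in \cO\}$ is dense in $\RR \cup \{\infty\} \cong \partial\HH$. This is elementary: given any real $r$ and $\epsilon > 0$, one can find odd relatively prime $p,q$ with $|{-q/p} - r|<\epsilon$ (e.g.\ approximate $r$ by rationals with odd numerator and denominator; if a convergent $q'/p'$ has $p'$ or $q'$ even, a small perturbation among nearby fractions with both entries odd still approximates $r$, using that between any two reals there are rationals of every residue type mod $2$ in lowest terms). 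Hence $\Lambda$ is a closed set containing a dense subset of $\partial\HH$, so $\Lambda = \partial\HH$ and $V(\bNt)$ is of the first kind.

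The main obstacle is the density statement for $\{-q/p : (p,q)\in\cO\}$ in $\partial\HH$; everything else is a direct appeal to Theorem~\ref{Veech group} and to standard dictionary facts between Fuchsian groups and their quotient orbifolds. In fact the density of odd/odd slopes can be extracted for free from the paper's own machinery: Lemma~\ref{lem:transitive} shows $\Gamma_2$ acts transitively on $\cO$, and the $\Gamma_2$-orbit of a single slope like $1/1$ under the M\"obius action is dense in $\partial\HH$ because $\Gamma_2$ itself is a lattice (finite index in $\PSL(2,\ZZ)$), hence Fuchsian of the first kind, so every orbit accumulates on all of $\partial\HH$. Thus I would phrase the density argument by citing $\Gamma_2 < \PSL(2,\ZZ)$ of finite index and noting that the images $P_{p,q}$ act on slopes compatibly, reducing to the classical fact that a lattice orbit on $\partial\HH$ is dense. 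This keeps the proof short and rigorous.
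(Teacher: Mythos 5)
Your proposal takes essentially the same route as the paper: infinite generation from the quotient $\HH/V(\bNt)$ being a flute surface (infinitely many punctures), and first-kind from the parabolic fixed points of the $\psi_{p,q}$, $(p,q)\in\cO$, being dense in $\partial\HH$, with the limit set closed. One small slip: under the standard M\"obius action, the fixed point of $P_{p,q}$ is $p/q$, not $-q/p$ (indeed $P_{p,q}=RQR^{-1}$ with $Q$ an upper-triangular unipotent fixing $\infty$ and $R$ carrying $\infty$ to $p/q$). Since $\cO$ is symmetric under swapping coordinates and changing signs, the two sets of boundary points coincide, so the conclusion is unaffected. The paper leaves density of $\{p/q:(p,q)\in\cO\}$ implicit; your final paragraph, invoking that $\Gamma_2$ is a lattice acting transitively on $\cO$ via \Cref{lem:transitive}, is a clean way to make that step explicit and is preferable to the ad hoc approximation argument you sketch first.
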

\begin{proof}
The Veech group is isomorphic to the (orbifold) fundamental group of $\HH/V(\bNt)$, and so is infinitely generated because the quotient has infinitely many punctures. The M\"obius action of $P_{p,q}$ preserves $\frac{p}{q}$ and so $\frac{p}{q}$ lies in the limit set.
Since this statement holds for all $(p,q) \in \cO$ and the limit set is closed, the limit set is all of $\partial \HH$.
\end{proof}

The theorem above compliments \Cref{thm:symmetries} from the introduction, which makes a statement about the affine group of $\bNt$.

\subsection{Affine automorphisms descend}

To prove \Cref{Veech group} and \Cref{thm:symmetries}, we first explain that $V(\bNt)$ is the group of derivatives of elements of $\Aff_+(\bHt)$ that lift to $\bNt$. To show this, we will first argue that the quotient $\bHt$ of $\bNt$ is affinely natural.

\begin{prop}
\label{isomorphic automorphisms}
Every quarter-translation automorphism of $\bN$ has a unique lift that is a half-translation automorphism of $\bNt$, and the induced map $\Aut_{G_4}(\bN) \to \Aut_{G_2}(\bNt)$ is a group isomorphism.
\end{prop}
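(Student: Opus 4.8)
The plan is to build the lifting map from \Cref{commute-branch-cover}, observe quickly that it is a homomorphism and is injective, and then prove surjectivity, which is the only part with real content. For the construction, note first that $\bN$ is genuine with respect to $G_2$: a small loop around a valence-three singularity of $\bN$ has rotational holonomy equal to a rotation by $\pm\tfrac\pi2$, which is nontrivial in $G_4/G_2\cong\ZZ/2$, so $\holhom(\Delta_\bN)$ surjects onto $G_4/G_2$. A $G_4$-automorphism $\phi$ of $\bN$ is in particular a $G_4$-covering map, so \Cref{commute-branch-cover} with $\bS=\bT=\bN$, $G=G_4$, $H=G_2$ yields a unique $G_2$-mapping $\tilde\phi\colon\bNt\to\bNt$ with $\pi_\bN\circ\tilde\phi=\phi\circ\pi_\bN$; since $\phi$ is a homeomorphism so is $\tilde\phi$ (the unique lift of $\phi^{-1}$ composed with $\tilde\phi$ lifts the identity, hence is the identity), so $\tilde\phi\in\Aut_{G_2}(\bNt)$. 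Uniqueness of the lift makes $\phi\mapsto\tilde\phi$ a group homomorphism $\Aut_{G_4}(\bN)\to\Aut_{G_2}(\bNt)$, since $\tilde\phi\circ\tilde\psi$ is a $G_2$-mapping lifting $\phi\circ\psi$ and so equals $\widetilde{\phi\circ\psi}$; and it is injective, because $\tilde\phi=\mathrm{id}$ together with surjectivity of $\pi_\bN$ forces $\phi=\mathrm{id}$.

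For surjectivity, let $\tilde\psi\in\Aut_{G_2}(\bNt)$. It suffices to show that $\tilde\psi$ carries fibres of $\pi_\bN$ to fibres: then $\tilde\psi$ descends to a homeomorphism $\psi\colon\bN\to\bN$, which is an orientation-preserving isometry of $\bN$ sending squares to squares and hence lies in $\Aut_{G_4}(\bN)$ by \Cref{isometries}, and $\widetilde\psi=\tilde\psi$ by the uniqueness of lifts, so $\tilde\psi$ is in the image. Equivalently, it suffices to show that $\tilde\psi$ normalizes the deck group $\Delta(\bNt,\bN)$. By \Cref{deck group and derivative} this deck group is cyclic of order two, generated by the involution $\tau$ of \Cref{rotation by 90}, whose derivative in $\PSL(2,\RR)$ is $\pm(\begin{smallmatrix}0&-1\\1&0\end{smallmatrix})$. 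Since $\tilde\psi$ is a $G_2$-automorphism its derivative in $\PSL(2,\RR)$ is trivial, so $\tilde\psi\tau\tilde\psi^{-1}$ is again an involution with derivative $\pm(\begin{smallmatrix}0&-1\\1&0\end{smallmatrix})$; the whole difficulty is to show $\tilde\psi\tau\tilde\psi^{-1}=\tau$.

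This last equality is the \textbf{main obstacle}: it says precisely that the cover $\bNt\to\bN$, equivalently the branch locus of $\pi_\bN$, is canonically determined by $\bNt$. My plan is to route it through the tower $\bNt\to\bHt\to\bH$ of \eqref{eq:cd}. The key, and hardest, sub-step is that every $G_2$-automorphism $\tilde\psi$ of $\bNt$ descends to a $G_4$-automorphism $\bar\psi$ of $\bHt$ — i.e.\ that the deck group $\Delta(\bNt,\bHt)\cong\ZZ^2$ of \Cref{deck groups} is characteristic in $\Aut_{G_2}(\bNt)$, which I would establish by identifying it as the unique maximal free-abelian normal subgroup of $\Aut_{G_2}(\bNt)$ with compact quotient. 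Granting that, $\bar\psi$ lifts back to $\bNt$, so by \Cref{G4 auto lift} it has parameter $t=1$ in the notation of \Cref{G4 auto}; a one-line check with the formula of \Cref{G4 auto} then shows that any such automorphism commutes with the deck involution $\tilde\delta$ of $\bHt\to\bH$ (precomposing with $\tilde\delta$ shifts the subscripts of the $\gamma_i$ by $3$ and multiplies the accompanying sign by $(-1)^{3(1-t)/2}$, which is $1$ exactly when $t=1$), so $\bar\psi$ descends to a $G_4$-automorphism $\psi_\bH$ of the torus $\bH$. Since $\bN$ is the universal cover of $\bH$, $\psi_\bH$ lifts to a $G_4$-automorphism $\psi$ of $\bN$, and a diagram chase in \eqref{eq:cd} — correcting $\psi$ by a deck transformation of $\bN\to\bH$ if necessary — gives $\pi_\bN\circ\tilde\psi=\psi\circ\pi_\bN$, whence $\widetilde\psi=\tilde\psi$ and $\tilde\psi$ lies in the image. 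Apart from establishing the canonicity of $\Delta(\bNt,\bHt)$, every step is routine diagram-chasing together with the finite computations of \Cref{aut H} and \Cref{G4 auto}.
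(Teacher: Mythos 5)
Your reduction of surjectivity to showing that $\tilde\psi$ normalizes the order-two deck group of $\bNt \to \bN$ is correct and matches the paper's setup. But the route you then propose is stuck at the step you yourself flag as ``key and hardest'': showing that $\Delta(\bNt,\bHt)\cong\ZZ^2$ is characteristic in $\Aut_{G_2}(\bNt)$. You gesture at identifying it as ``the unique maximal free-abelian normal subgroup with compact quotient,'' but you establish neither normality, nor maximality, nor uniqueness --- and doing so would require understanding the structure of $\Aut_{G_2}(\bNt)$, which is precisely what the proposition is supposed to deliver. Worse, the paper's own \Cref{cor:descends} proves exactly this characteristic property \emph{from} the isomorphism of the present proposition (transferring the characterization ``identity together with all infinite-order elements'' from $\Aut_{G_4}(\bN)$, whose structure is known from \Cref{isometries}), so invoking it here risks circularity. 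The later steps of your sketch (using \Cref{G4 auto lift} to force $t=1$, checking commutation with $\tilde\delta$ on homology, descending to $\bH$ and lifting back) are sound in spirit, though to upgrade ``commutes with $\tilde\delta$ on $H_1(\bHt;\ZZ)$'' to genuine commutation you would need the standard fact that a finite-order orientation-preserving homeomorphism of a closed surface of genus at least two acting trivially on integral homology is the identity.

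The paper avoids the characteristic-subgroup question altogether by a local argument at a single square. After reducing (via the transitivity of $\Isom(\RR^3,\bN)$ on constituent squares, \Cref{isometries and squares}, together with the deck transformation of \Cref{rotation by 90}) to a hypothetical $G_4$-automorphism of $\bNt$ fixing a lifted square $\tilde S_0$ and rotating it by $90^\circ$, it derives a contradiction from an asymmetry in the local combinatorics: travelling three squares counterclockwise around either of the two opposite vertices of $\tilde S_0$ lying over cone angle $\frac{3\pi}{2}$ lands on the same square, namely the deck-image of $\tilde S_0$, whereas travelling three squares counterclockwise around the other two vertices (which lie over cone angle $3\pi$) lands on two \emph{different} squares, which the supposed $90^\circ$ symmetry would be forced to identify. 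This argument requires no global information about $\Aut_{G_2}(\bNt)$, and I would recommend it in place of your step (2).
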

\begin{proof}
The fact that this lifting map $L:\Aut_{G_4}(\bN) \to \Aut_{G_2}(\bNt)$ is a homomorphism is \Cref{commute-branch-cover}. This map is injective because the original map can be recovered from the lift. Assume to the contrary that the map is not surjective. Then there is a $G_2$-automorphism $\psi:\bNt \to \bNt$ that does not descend to a well-defined map on $\bN$. Let $S \in \bN$ be any square, and let $\tilde S_0 \cup \tilde S_1$ be its preimage under $\bNt \to \bN$, which consists of two squares. By \Cref{isometries and squares}, the isometry group of $\bN$ acts transitively on squares, so by postcomposing $\psi$ with an element of $L\big(\Aut_{G_4}(\bN)\big)$, we can assume without loss of generality that $\psi(\tilde S_0) \in \{\tilde S_0, \tilde S_1\}$. Let $\tilde \delta:\bNt \to \bNt$ denote the unique nontrivial deck transformation of \Cref{rotation by 90}. Then, there is an $i \in \{0, 1\}$ such that $\tilde \delta^i \circ \psi$ fixes $\tilde S_0$. Since $\tilde \delta^i \circ \psi$ is not in $L\big(\Aut_{G_4}(\bN)\big)$ or in its image under the deck group, $\psi$ must rotate $\tilde S_0$ by either $90^\circ$ or $270^\circ$. By possibly replacing $\tilde \delta^i \circ \psi$ with its third power if necessary, we have produced a $G_4$-automorphism that rotates $\tilde S_0$ by $90^\circ$.

To obtain a contradiction, we can therefore show that the isometry $j$ of $\tilde S_0 \subset \bNt$ that rotates it by $90^\circ$ does not extend to an isometry $\bar j:\bNt \to \bNt$. Let $\tilde v_0, \tilde v_2$ be the two opposite vertices of $\tilde S_0$ that map to cone singularities with cone angle $\frac{3 \pi}{2}$ under $\bNt \to \bN$. Then moving three squares counterclockwise from $\tilde S_0$ around either $\tilde v_0$ or $\tilde v_2$ brings us to $\tilde S_1=\tilde \delta(\tilde S_0)$. If $j$ extends, then $\bar j(\tilde S_1)$ must be obtained by moving three squares counterclockwise around the other two vertices $v_1$ and $v_3$ of $\tilde S_0$. But, because these other two vertices descend in $\bN$ to cone singularities with cone angle $3 \pi$, the two squares obtained in this way differ even in their image in $\bN$. This is our contradiction.
\end{proof}

\begin{cor}
\label{cor:descends}
Any element $\tilde f \in \Aff(\bNt)$ descends to a well-defined $f \in \Aff(\bHt)$.
\end{cor}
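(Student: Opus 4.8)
The plan is to exhibit $f$ as the descent of $\tilde f$ along the covering $\pi\colon\bNt\to\bHt$ from the commutative square \eqref{eq:cd}, and to write $\Delta=\Delta(\bNt,\bHt)$ for its deck group. By \Cref{commute-branch-cover}, $\pi$ is an unbranched $G_2$-covering (unbranched because $\bN\to\bH$ is, being a universal cover), hence a local isometry; and by \Cref{deck groups} it is normal with $\Delta\cong\Delta(\bN,\bH)\cong\ZZ^2$, each element of $\Delta$ projecting under $\pi_\bN$ to a translation of $\bN$ by the commuting square \eqref{eq:deck groups}. By the standard criterion for descending a map through a normal covering, $\tilde f$ descends to a continuous $f\colon\bHt\to\bHt$ with $\pi\circ\tilde f=f\circ\pi$ precisely when $\tilde f\,\Delta\,\tilde f^{-1}=\Delta$; and when it does, $\tilde f^{-1}$ descends as well, so $f$ is a homeomorphism, and since $\pi$ is a local isometry while $\tilde f$ is affine, $f$ is an affine automorphism of $\bHt$. (A common lift of $\tilde f$ and $f$ to the universal branched cover moreover shows $D(f)=D(\tilde f)$, which is why this descent is useful for computing $V(\bNt)$.) So the content of the corollary is that $\Delta$ is normal in $\Aff(\bNt)$.

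The first step is to identify $\Delta$ with the full group $T(\bNt)$ of translation automorphisms of $\bNt$. Each $\sigma\in\Delta$ is an isometry of $\bNt$ lifting a translation of $\bN$ along the local isometry $\pi_\bN$, so $D(\sigma)$ is trivial; an isometry with trivial derivative is, in local charts, either $z\mapsto z+c$ or $z\mapsto -z+c$, and the second would force $\sigma^2=\id$, which is impossible for a nontrivial element of the torsion-free group $\Delta\cong\ZZ^2$. Hence $\Delta\subseteq T(\bNt)$. Conversely, translations of $\bNt$ commute, so $T(\bNt)$ is abelian and normalizes its subgroup $\Delta$; thus every translation of $\bNt$ descends to a translation of $\bHt=\bNt/\Delta$, giving an injection $T(\bNt)/\Delta\hookrightarrow T(\bHt)$. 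But $T(\bHt)$ is trivial: $\bHt$ is a closed orientable surface of Euler characteristic $-2$, a nontrivial translation $\mu$ of a compact flat cone surface with singularities has finite order $n\ge 2$ (its isometry group being finite) and acts freely, so $\bHt\to\bHt/\langle\mu\rangle$ would be an $n$-fold covering of a closed orientable surface, forcing $-2/n$ to be an even integer $\le 2$ — which it is not for any $n\ge 2$. Therefore $T(\bNt)=\Delta$.

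The second step is that affine automorphisms carry translations to translations: if $\tilde\tau$ is a translation of $\bNt$ and $\tilde f$ is affine, pick lifts to the universal branched cover with $\Dev\circ\tilde\tau=\Dev+v$ and $\Dev\circ\tilde f=A\cdot\Dev+w$; then $\Dev\circ(\tilde f\,\tilde\tau\,\tilde f^{-1})=\Dev+Av$, so $\tilde f\,\tilde\tau\,\tilde f^{-1}$ is again a translation. Combined with the first step, $\tilde f\,\Delta\,\tilde f^{-1}\subseteq T(\bNt)=\Delta$, and applying this to $\tilde f^{-1}$ gives the reverse inclusion. Hence $\Delta$ is normal in $\Aff(\bNt)$ and $\tilde f$ descends to the desired $f\in\Aff(\bHt)$.

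I expect the main obstacle — really the only place where anything beyond formal covering-space manipulation is used — to be the identification $\Delta=T(\bNt)$, that is, ruling out ``extra'' translation automorphisms of $\bNt$; the Euler-characteristic computation on the compact surface $\bHt$ is the crux there. Everything else is the normalizer-of-the-deck-group criterion together with the elementary fact that affine maps conjugate translations to translations.
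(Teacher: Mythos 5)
Your overall strategy — show that the deck group $\Delta = \Delta(\bNt,\bHt)$ is normal in $\Aff(\bNt)$ and appeal to the standard descent criterion — is exactly what the paper does, and your first sentence (that this reduces the corollary to showing $\Delta \trianglelefteq \Aff(\bNt)$) is correct. The gap is in how you attempt to characterize $\Delta$.

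You define $T(\bNt)$ to be the group of ``translation automorphisms of $\bNt$,'' but $\bNt$ is a genuine half-translation surface (not a translation surface), so this notion is not well-defined. Concretely: a lift $\tilde\sigma:\bU\to\bU$ of $\sigma\in\Aut_{G_2}(\bNt)$ satisfies $\Dev\circ\tilde\sigma = g\cdot\Dev$ for some $g\in G_2$, but $\tilde\sigma$ is only determined up to precomposition with an element of $\Delta_{\bNt}=\Delta(\bU,\bNt)$, and replacing $\tilde\sigma$ by $\tilde\sigma\circ\delta$ replaces $g$ by $g\cdot\holhom(\delta)$. Since $\bNt$ is genuine as a half-translation surface (the $3\pi$ cone points of $\bN$ give loops with rotational holonomy $\pi$), the image of $\holhom(\Delta_{\bNt})$ in $G_2/G_1\cong\ZZ/2\ZZ$ is all of it. So every $\sigma\in\Aut_{G_2}(\bNt)$ has both a lift with $g\in G_1$ and a lift with $g\in G_2\smallsetminus G_1$. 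Read as ``some lift is a translation,'' $T(\bNt)$ is all of $\Aut_{G_2}(\bNt)$ (which is strictly larger than $\Delta\cong\ZZ^2$, by \Cref{isomorphic automorphisms} and \Cref{aut H}); read as ``every lift is a translation,'' it is trivial. Your step 2 (conjugating the lift by $\tilde f$) is a computation about a particular choice of lift, not a statement about $\sigma$, and so does not produce a well-defined subgroup that could be shown to coincide with $\Delta$. The same issue infects the claim ``the second would force $\sigma^2=\id$'': a half-turn lift of $\sigma$ only forces $\tilde\sigma^2$ to preserve $\Dev$, not that $\sigma^2$ is the identity. The Euler-characteristic argument for $\bHt$ likewise presupposes that ``translation'' is a well-defined, fixed-point-free class of automorphisms of the half-translation surface $\bHt$, which it is not for the same reason.

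The paper sidesteps this by transporting the question to $\bN$ via the isomorphism $L:\Aut_{G_4}(\bN)\to\Aut_{G_2}(\bNt)$ of \Cref{isomorphic automorphisms}, and characterizing $\Delta$ in a manifestly lift-independent way: $\Delta = L(\Delta(\bN,\bH))$, and by \Cref{isometries} the group $\Aut_{G_4}(\bN)$ is the wallpaper group $\mathbf{p6}$, whose infinite-order elements are exactly the nontrivial translations. Hence $\Delta$ is precisely $\{\id\}\cup\{\text{infinite-order elements of }\Aut_{G_2}(\bNt)\}$, which is a characteristic subgroup and therefore preserved by the conjugation action of any $\tilde f\in\Aff(\bNt)$. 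To repair your argument you would either need to work with a characterization of $\Delta$ that does not invoke an ill-defined ``translation'' notion (as the paper does), or lift everything one more level to the translation cover $\bNh$ where ``translation automorphism'' is well-defined --- but note that even there, $\bHh$ has a nontrivial $(\ZZ/2\ZZ)^2$ group of translation automorphisms (it has genus $5$ and $\chi=-8$, so the Euler-characteristic obstruction you use does not rule out free actions of order $2$ or $4$), so the identification of the deck group with the full translation group would fail.
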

\begin{proof}
A map $\tilde f$ descends if and only if it normalizes the deck group $\tilde \Delta$ of $\bNt \to \bHt$. Note that $\tilde \Delta \subset \Aut_{G_2}(\bNt)$. Let $L:\Aut_{G_4}(\bN) \to \Aut_{G_2}(\bNt)$ be the isomorphism of \Cref{isomorphic automorphisms}, and let $\Delta$ denote the deck group of $\bN \to \bH$. By \Cref{commute-branch-cover}, $\tilde \Delta=L(\Delta)$. Now observe from \Cref{isometries} that $\Delta$ is the subgroup of $\Aut_{G_4}(\bN)$ consisting of the identity and all infinite-order elements of $\Aut_{G_4}(\bN)$. 
Since $L$ is an isomorphism, $\tilde \Delta$ consists of the identity and all infinite-order elements of $\Aut_{G_2}(\bNt)$. It follows that $\tilde \Delta$ is a characteristic subgroup of $\Aut_{G_2}(\bNt)$. Observe that $\tilde f$ acts by conjugation on $\Aut_{G_2}(\bNt)$, inducing an automorphism of $\Aut_{G_2}(\bNt)$. Since $\tilde \Delta$ is characteristic, it must be preserved by this action of $\tilde f$, so $\tilde f$ descends as desired.
\end{proof}

\subsection{Lifting affine automorphisms}

Since every element of $\Aff(\bNt)$ descends to $\Aff(\bHt)$, $V(\bNt)$ is the collection of all derivatives of elements of $\Aff(\bHt)$ that lift. The action $\authom:F_2 \to \Aff(\bHt)$ induces an action $\authom_\ast:F_2 \to \Aut\big(H_1(\bHt; \QQ)\big)$. 
By \Cref{cor:lifting}, $\authom(g)$ lifts if and only if $\authom_\ast(g)(W_+)=W_+$. \Cref{W stabilization} below then gives a criterion for when $\authom(g)$ lifts.

The action $\authom_\ast$ turns out to be closely related to the representation $\rho:F_2 \to \SL(2,\ZZ)$ given by 
$$\rho(h) = \begin{pmatrix}
1 & 3 \\
0 & 1 \\
\end{pmatrix} 
\quad \text{and} \quad
\rho(v) = \begin{pmatrix}
1 & 0 \\
 1 & 1 \\
\end{pmatrix}.
$$
The following two lemmas explain the relationship between $\authom$ and $\rho$, and exploit this relationship to understand how the action $\authom_\ast$ moves the subspace $W_+$.

\begin{lemma}
\label{rho conjugation}
The action $\authom_\ast$ is conjugate via a linear map $H_1(\bHt; \QQ) \to \QQ^4$ to the direct sum representation $\rho \oplus \rho$.
\end{lemma}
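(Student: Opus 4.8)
\subsection*{Proof plan for \Cref{rho conjugation}}

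The plan is to compute the action of the two generators $\authom(h)_\ast$ and $\authom(v)_\ast$ explicitly in the symplectic basis $(\gamma_0,\gamma_3;\gamma_1,\gamma_4)$, and then to split $H_1(\bHt;\QQ)$ as a direct sum of two two-dimensional $\authom_\ast$-invariant subspaces on each of which the action is the representation $\rho$.

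First I would record the core curves of the two relevant cylinder decompositions. From \Cref{fig:homologycover} together with \Cref{sect:multi-twists}, the horizontal cylinder decomposition has core curves homologous to $\gamma_0,\gamma_2,\gamma_4$ and the vertical one to $\gamma_1,\gamma_3,\gamma_5$ (up to interchanging the two coordinate directions, which only swaps the roles of $h$ and $v$ in what follows). Since $\authom(h)$ is a single right Dehn twist in each horizontal cylinder and $\authom(v)$ a single left Dehn twist in each vertical cylinder, \eqref{eq:Dehn action} gives
\[
\authom(h)_\ast(\alpha)=\alpha+\sum_{j\in\{0,2,4\}} i(\gamma_j,\alpha)\,\gamma_j
\qquad\text{and}\qquad
\authom(v)_\ast(\alpha)=\alpha-\sum_{j\in\{1,3,5\}} i(\gamma_j,\alpha)\,\gamma_j .
\]
Using the intersection numbers \eqref{eq:gamma intersect} and the relations \eqref{eq:gamma id} to eliminate $\gamma_2$ and $\gamma_5$, this is a short computation producing explicit unipotent $4\times4$ matrices $M_h$ and $M_v$; one checks in particular that $(M_h-I)^2=(M_v-I)^2=0$ and that $\ker(M_h-I)=\Span\{\gamma_0,\gamma_4\}$, $\ker(M_v-I)=\Span\{\gamma_1,\gamma_3\}$, namely the spans of the respective core curves.

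Next I would exhibit the splitting. For $u=a\gamma_0+b\gamma_4$ set $w=\authom(v)_\ast(u)-u$; the formulas above give $w=(2a-b)\gamma_1+(a-2b)\gamma_3$ and, crucially, $\authom(h)_\ast(w)=w+3u$ (while of course $\authom(h)_\ast(u)=u$ and $\authom(v)_\ast(w)=w$, since $u,w$ lie in the respective kernels). Hence $\Span\{u,w\}$ is $\authom_\ast$-invariant, and in the ordered basis $(u,w)$ the maps $\authom(v)_\ast$ and $\authom(h)_\ast$ act by $(\begin{smallmatrix}1&0\\1&1\end{smallmatrix})$ and $(\begin{smallmatrix}1&3\\0&1\end{smallmatrix})$, that is, exactly as $\rho(v)$ and $\rho(h)$. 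Taking $(a,b)=(1,0)$ and $(a,b)=(0,1)$ yields submodules $S_1=\Span\{\gamma_0,\,2\gamma_1+\gamma_3\}$ and $S_2=\Span\{\gamma_4,\,-\gamma_1-2\gamma_3\}$; the transition matrix from $(\gamma_0,\,2\gamma_1+\gamma_3,\,\gamma_4,\,-\gamma_1-2\gamma_3)$ to the symplectic basis has determinant $\pm3\neq0$, so this quadruple is a basis of $H_1(\bHt;\QQ)$ and $H_1(\bHt;\QQ)=S_1\oplus S_2$. The linear isomorphism $H_1(\bHt;\QQ)\to\QQ^4$ carrying this basis to the standard basis then conjugates $\authom_\ast$ to $\rho\oplus\rho$, as claimed.

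The only genuine difficulty is bookkeeping: correctly extracting the homology classes and orientations of the core curves of the two decompositions from \Cref{fig:homologycover}, and tracking the sign in \eqref{eq:Dehn action} when passing between right and left twists. Once $M_h$ and $M_v$ are pinned down, producing the common conjugating map is an elementary verification. It is worth noting that the splitting $S_1\oplus S_2$ is far from canonical — any two linearly independent choices of $(a,b)$ work — so the conclusion is insensitive to this choice, and it is precisely this $\rho$-structure that makes the stabilization of $W_+$ tractable in \Cref{W stabilization}.
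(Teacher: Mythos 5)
Your proof plan is correct and, once one unwinds the notation, essentially the same as the paper's. The paper (via \Cref{U invariant}) defines $U_i=\Span\{\gamma_i,\gamma_{i+1}-\gamma_{i-1}\}$ for $i\in\ZZ/6\ZZ$, verifies each is $\authom_\ast$--invariant, computes that the matrix is $\rho$ in that basis for odd $i$, and then uses $\cB$ built from $U_1\oplus U_3$. Your $S_1$ and $S_2$ are exactly $U_0$ and $U_4$: using $\gamma_5=-\gamma_1-\gamma_3$ one checks $\gamma_1-\gamma_5=2\gamma_1+\gamma_3$ and $\gamma_5-\gamma_3=-\gamma_1-2\gamma_3$, so you have recovered the paper's subspaces and even the paper's bases, just with even indices in place of odd ones. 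What you add is a more constructive derivation: starting from the fixed space of $\authom(h)_\ast-I$ (the span of the $h$--core curves) and generating $w=(\authom(v)_\ast-I)(u)$, rather than guessing the $U_i$ and verifying invariance. That buys a slightly cleaner conceptual picture of where the subspaces come from, at the cost of being no shorter; the verification $\authom(h)_\ast(w)=w+3u$ is the same computation either way.

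One bookkeeping point you flagged yourself and should be careful about: the paper's convention (\Cref{U invariant}) is that $\authom(h)$ performs a right twist in cylinders whose core curves are $\gamma_1,\gamma_3,\gamma_5$, not $\gamma_0,\gamma_2,\gamma_4$ as you wrote. Your hedge ``up to interchanging the two coordinate directions'' is a bit too casual here: if one merely swaps the names $h$ and $v$ in your formulas while keeping $u=a\gamma_0+b\gamma_4$, the resulting representation is $\rho\circ\iota$ with $\iota(h)=v^{-1},\ \iota(v)=h^{-1}$, which is not literally $\rho$ (though it is conjugate to it, so the lemma still holds after that extra observation). The clean fix is to also change which curves you start from: with the paper's convention, take $u=a\gamma_1+b\gamma_3$ and $w=(\authom(v)_\ast-I)(u)$. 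Running your identical computation then gives $w=(-2a+b)\gamma_0+(-a+2b)\gamma_4$ and again $\authom(h)_\ast(w)=w+3u$, so in the basis $(u,w)$ one finds exactly $\rho(h)=(\begin{smallmatrix}1&3\\0&1\end{smallmatrix})$ and $\rho(v)=(\begin{smallmatrix}1&0\\1&1\end{smallmatrix})$, and your determinant $\pm3$ check goes through the same way. With that correction your proof is complete and matches the paper.
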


The action $\authom$ of $F_2$ on $\bHt$ does not give all automorphisms of $\bHt$. In particular, the $G_4$-automorphism group of $\bHt$ also acts, and some of these elements interchange $W_+$ with $W_-$, see \Cref{G4 auto lift}. So, we also want to rule out the possibility that $\authom_\ast(g)(W_+)=W_-$.

\begin{lemma}
\label{W stabilization}
Let $g \in F_2$. The following statements are equivalent:
\begin{enumerate}
\item $\authom_\ast(g)(W_+) \in \{W_+, W_-\}$.
\item $\rho(g)=I$.
\item $\authom_\ast(g)$ acts trivially on $H_1(\bHt; \QQ)$.
\end{enumerate}
\end{lemma}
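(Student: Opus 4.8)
The plan is to prove the equivalences $(2) \Leftrightarrow (3) \Rightarrow (1)$ first, and then close the loop with $(1) \Rightarrow (2)$, which I expect to be the only step with any real content.

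\textbf{Setup and the easy implications.} By \Cref{rho conjugation}, there is a linear isomorphism $T:H_1(\bHt;\QQ) \to \QQ^2 \oplus \QQ^2$ conjugating $\authom_\ast$ to $\rho \oplus \rho$. In particular $\authom_\ast(g)$ is the identity on $H_1(\bHt;\QQ)$ if and only if $\rho(g) \oplus \rho(g)$ is the identity on $\QQ^4$, which happens if and only if $\rho(g) = I$. This gives $(2) \Leftrightarrow (3)$ immediately, and $(3) \Rightarrow (1)$ is trivial since if $\authom_\ast(g)$ is the identity then $\authom_\ast(g)(W_+) = W_+ \in \{W_+, W_-\}$. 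So the whole lemma reduces to showing $(1) \Rightarrow (2)$: if $\authom_\ast(g)$ carries $W_+$ to $W_+$ or to $W_-$, then $\rho(g) = I$.

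\textbf{The main step: $(1) \Rightarrow (2)$.} The idea is to identify, under the conjugacy $T$ of \Cref{rho conjugation}, the two-dimensional subspaces $W_+$ and $W_-$ as ``graph-like'' subspaces of $\QQ^2 \oplus \QQ^2$ relative to the $\rho \oplus \rho$ decomposition. Concretely, I would compute the images $T(\beta_0), T(\beta_1), T(\beta_2)$ and $T(\gamma_0 - \gamma_3)$, etc., in terms of an explicit basis realizing the conjugacy; I expect $W_+$ and $W_-$ to each be the graph of an invertible linear map $\QQ^2 \to \QQ^2$ (equivalently, each meets both $\QQ^2 \oplus 0$ and $0 \oplus \QQ^2$ trivially), distinguished by a sign. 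Then $\authom_\ast(g) = \rho(g) \oplus \rho(g)$ preserving such a graph $\{(v, Mv) : v \in \QQ^2\}$ forces $\rho(g) M = M \rho(g)$, i.e. $\rho(g)$ commutes with $M$. Since for $W_\pm$ the matrix $M$ is (up to sign) a fixed invertible matrix that is \emph{not} a scalar — I'd verify $M$ is not $\pm I$ and has distinct eigenvalues or otherwise generates its own centralizer's complement — the centralizer of $M$ in $\SL(2,\QQ)$ is a proper subgroup, in fact a torus. Combined with the constraint that $\rho(g) \in \SL(2,\ZZ)$ and the structure of $\rho(F_2)$ (a free group, being conjugate to $\Gamma_2$ by the relationship between $\rho$ and $\matrixhom$ — note $\rho(h), \rho(v)$ generate a free group), the only element of $\rho(F_2)$ in this centralizer should be $I$. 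Actually the cleanest route: show that any nonidentity element of $\rho(F_2)$ that commutes with $M$ is impossible because $\rho(F_2)$ is free of rank two and its centralizers of nontrivial elements are cyclic, generated by a primitive element whose eigendirections are irrational (parabolic or hyperbolic fixed points), whereas $M$ has rational eigendirections; so no nontrivial $\rho(g)$ commutes with $M$.

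\textbf{Where the difficulty lies.} The main obstacle is the explicit linear algebra identifying $W_+$ and $W_-$ inside the $\rho\oplus\rho$ model and pinning down the ``slope'' matrix $M$: one must carefully track how the basis $(\gamma_0, \gamma_3; \gamma_1, \gamma_4)$, the identities \eqref{eq:gamma id}, and the conjugating map $T$ from \Cref{rho conjugation} interact, since \Cref{rho conjugation} as stated does not hand us $T$ explicitly. A convenient shortcut, avoiding computing $T$ at all, is to argue directly on $H_1(\bHt;\QQ)$: from \Cref{rho conjugation} the minimal polynomial of $\authom_\ast(g)$ equals that of $\rho(g)$, so if $\authom_\ast(g) \ne I$ then $\rho(g) \ne I$ and we may assume $\rho(g)$ (hence $\authom_\ast(g)$, with each eigenvalue doubled) is parabolic or hyperbolic; its fixed subspaces in $H_1(\bHt;\QQ)$ have dimensions $2$ or $1+1$. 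One then checks $W_+$ (and $W_-$) is not among the $\authom_\ast(g)$-invariant subspaces unless $\authom_\ast(g) = I$, using that $\authom(h)$ and $\authom(v)$ are the horizontal and vertical multi-twists whose actions on homology are given by \eqref{eq:Dehn action} with core curves among the $\gamma_i$, so one can compute $\authom_\ast(h)(\beta_j)$ and $\authom_\ast(v)(\beta_j)$ by hand and see that no power or word fixes $W_+$ or $W_-$ nontrivially. I expect this direct computation, while slightly tedious, to be routine once the actions of $\authom_\ast(h), \authom_\ast(v)$ on $\{\beta_0, \beta_1, \beta_2\}$ and $\{\gamma_i - \gamma_{i+3}\}$ are written out.
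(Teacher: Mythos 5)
Your skeleton (reduce everything to $(1)\Rightarrow(2)$; view $W_\pm$ under the $\rho\oplus\rho$ conjugacy as graphs of a linear map $M$, so that the action on $W_+$ translates into a matrix equation on $M$) is exactly the shape of the paper's argument. But the concrete step that is supposed to finish the proof is wrong in several places, and it is precisely the part you flagged as "the only step with any real content."

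First, the matrix $M$ you would compute is $N=\tfrac{1}{2}\bigl(\begin{smallmatrix}1 & -3 \\ 1 & 1\end{smallmatrix}\bigr)$ (this is \Cref{W is transverse}), which has trace $1$ and determinant $1$, hence complex eigenvalues $e^{\pm i\pi/3}$ and complex eigenvectors $(1,\pm\tfrac{i\sqrt{3}}{3})$. So your final sentence, "$M$ has rational eigendirections," is false, and the contrast you draw with "irrational eigendirections" of elements of $\rho(F_2)$ collapses. Second, $\rho(F_2)$ is not free of rank two: \Cref{rho action} shows $\rho$ has nontrivial kernel (normally generated by $(vh^{-1})^3$) and that the image is a $(3,\infty,\infty)$ triangle group, which contains elliptic elements of order $3$. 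So "centralizers of nontrivial elements are cyclic, generated by a primitive parabolic or hyperbolic" is also not true of $\rho(F_2)$. Third, your commutation condition $\rho(g)M = M\rho(g)$ only corresponds to $\authom_\ast(g)(W_+)=W_+$; the hypothesis (1) also allows $\authom_\ast(g)(W_+)=W_-$, which instead forces $\rho(g)M\rho(g)^{-1}=M^{-1}$, a case your argument never touches.

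The fix, and what the paper actually does: show $\authom_\ast(g)(W_+)=W_s$ iff $\rho(g)N\rho(g)^{-1}=N^s$, i.e. iff $\rho(g)$ preserves the two-element set of eigenlines of $N$ (possibly swapping them when $s=-1$). Writing $\rho(g)(\bv_+)=\lambda\bv_s$ and using $\bv_-=\overline{\bv_+}$ forces $\rho(g)=\bigl(\begin{smallmatrix}a & 3b\\ -sb & sa\end{smallmatrix}\bigr)$ with $a,b\in\ZZ$ and $\det=s(a^2+3b^2)=1$. The only integer solution is $a=\pm1$, $b=0$, $s=1$; this simultaneously rules out the $W_-$ case and leaves $\rho(g)=\pm I$, after which you invoke the last claim of \Cref{rho action} (no $g$ has $\rho(g)=-I$) to conclude $\rho(g)=I$. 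The arithmetic fact $a^2+3b^2=\pm1$ has no integer solutions other than $(\pm1,0)$ with the plus sign is what replaces the (incorrect) appeal to eigendirection rationality or freeness.
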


We devote the remainder of this subsection to the proofs of these lemmas.
To begin see why \Cref{rho conjugation} holds, consider the six subspaces 
$$U_i=\Span \{\gamma_i, \gamma_{i+1}-\gamma_{i-1}\} \subset H_1(\bH; \QQ) \quad \text{for $i \in \ZZ/6\ZZ$.}$$

\begin{prop}
\label{U invariant}
Each $U_i$ is invariant under the action of every element $\authom_\ast(g)$ for $g \in F_2$. Furthermore, if $i$ is odd, then 
the matrix representation associated to the restriction $\authom_\ast |_{U_i}: U_i \to U_i$ in the basis $\{\gamma_i, \gamma_{i+1}-\gamma_{i-1}\}$ is given by $\rho$.
\end{prop}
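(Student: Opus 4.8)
The plan is to prove \Cref{U invariant} by computing the actions of the two generators $\authom(h)$ and $\authom(v)$ of $F_2$ on $H_1(\bHt;\QQ)$ directly from the multi-twist formula \eqref{eq:Dehn action}, then reading off invariance of each $U_i$ and the identification of $\authom_\ast(g)|_{U_i}$ with $\rho(g)$ for odd $i$. First I would record the two pieces of data about $\gamma_0,\dots,\gamma_5$ that drive everything: by \eqref{eq:gamma intersect} and antisymmetry of the intersection form, $i(\gamma_a,\gamma_b)$ equals $1$ if $b\equiv a+1$, equals $-1$ if $b\equiv a-1$, and equals $0$ otherwise (subscripts mod $6$); and by \eqref{eq:gamma id}, $\gamma_a+\gamma_{a+2}+\gamma_{a+4}=0$ for each $a$. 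Note this already gives $i(\gamma_i,\gamma_{i+1}-\gamma_{i-1})=1-(-1)=2\neq 0$, so $\gamma_i$ and $\gamma_{i+1}-\gamma_{i-1}$ are linearly independent and $U_i$ is genuinely two-dimensional.

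Next I would pin down the cylinder decompositions of $\bHt$ used in \Cref{sect:multi-twists} to define $\authom$. By inspection of \Cref{fig:homologycover} (together with the choices made when naming the $\gamma_j$, and consistent with the intersection data above and the area count — six unit squares split into three cylinders of circumference two), the horizontal decomposition of $\bHt$ has three cylinders whose core curves are homologous, with either orientation, to $\gamma_1,\gamma_3,\gamma_5$, and the vertical decomposition has three cylinders with core curves homologous to $\gamma_0,\gamma_2,\gamma_4$. Since $\authom(h)$ performs a single right Dehn twist in each (pairwise disjoint) horizontal cylinder and $\authom(v)$ a single left Dehn twist in each vertical cylinder, \eqref{eq:Dehn action} and \Cref{sect:cylinders} give
\[
\authom(h)_\ast(\alpha)=\alpha+\sum_{j\in\{1,3,5\}} i(\gamma_j,\alpha)\,\gamma_j,
\qquad
\authom(v)_\ast(\alpha)=\alpha-\sum_{j\in\{0,2,4\}} i(\gamma_j,\alpha)\,\gamma_j
\]
for all $\alpha\in H_1(\bHt;\QQ)$; the orientation ambiguity in the core curves is harmless since each $\gamma_j$ enters its summand quadratically.

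Then the proposition is a short computation. Fix $i$ odd, so $\gamma_i$ is a horizontal core and $\{1,3,5\}=\{i,i+2,i-2\}$. All odd-index pairs are non-adjacent, so $i(\gamma_j,\gamma_i)=0$ for $j\in\{1,3,5\}$, giving $\authom(h)_\ast(\gamma_i)=\gamma_i$; and one computes $i(\gamma_i,\gamma_{i+1}-\gamma_{i-1})=2$ while $i(\gamma_{i\pm2},\gamma_{i+1}-\gamma_{i-1})=-1$, so $\authom(h)_\ast(\gamma_{i+1}-\gamma_{i-1})=(\gamma_{i+1}-\gamma_{i-1})+2\gamma_i-\gamma_{i+2}-\gamma_{i-2}=(\gamma_{i+1}-\gamma_{i-1})+3\gamma_i$, using $\gamma_{i+2}+\gamma_{i-2}=-\gamma_i$ from \eqref{eq:gamma id}. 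In the basis $(\gamma_i,\gamma_{i+1}-\gamma_{i-1})$ this is $\rho(h)$. For $\authom(v)$ the sum is over even indices: $i(\gamma_j,\gamma_{i+1}-\gamma_{i-1})=0$ for all even $j$ (a nonzero term would force $j\in\{i,i\pm2\}$, all odd), so $\authom(v)_\ast(\gamma_{i+1}-\gamma_{i-1})=\gamma_{i+1}-\gamma_{i-1}$; and $\authom(v)_\ast(\gamma_i)=\gamma_i-i(\gamma_{i-1},\gamma_i)\gamma_{i-1}-i(\gamma_{i+1},\gamma_i)\gamma_{i+1}=\gamma_i+(\gamma_{i+1}-\gamma_{i-1})$, which in the same basis is $\rho(v)$. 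Since $h,v$ generate $F_2$ and $\authom_\ast$ is a homomorphism, this proves $\authom_\ast(g)(U_i)=U_i$ for all $g$ and that the matrix of $\authom_\ast(g)|_{U_i}$ in this basis is $\rho(g)$. The case $i$ even is the identical computation with the roles of $h$ and $v$ swapped (and the even relation of \eqref{eq:gamma id} in place of the odd one); it again yields $U_i$-invariance, with matrices $(\begin{smallmatrix}1&0\\-1&1\end{smallmatrix})$ and $(\begin{smallmatrix}1&-3\\0&1\end{smallmatrix})$ — not $\rho$, but the proposition only asserts $\rho$ in the odd case.

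The one genuinely non-formal step is the identification in the second paragraph of which $\gamma_j$ are the horizontal cores versus the vertical cores (and that each cylinder contributes exactly one core): everything after that is bookkeeping, but this input must be read off \Cref{fig:homologycover}, and it is crucial that the \emph{odd}-indexed curves are the horizontal cores, so that $\authom(h)$ fixes $\gamma_i$ for odd $i$ and the basis vector $\gamma_{i+1}-\gamma_{i-1}$ is built from the vertical cores — if the parity were reversed the two generators would exchange roles and the odd-case matrices would not come out as $\rho(h),\rho(v)$ but as their even-case counterparts.
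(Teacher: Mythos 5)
Your computation is correct and for the odd case coincides with the paper's proof: both apply the multi-twist/Dehn-twist formula \eqref{eq:Dehn action} together with the intersection data \eqref{eq:gamma intersect} and the relation \eqref{eq:gamma id} to read off that $\authom_\ast(h)|_{U_i}$ and $\authom_\ast(v)|_{U_i}$ are $\rho(h)$ and $\rho(v)$ in the basis $\{\gamma_i,\gamma_{i+1}-\gamma_{i-1}\}$. The only divergence is in the even case: you repeat the direct computation and exhibit the matrices $\left(\begin{smallmatrix}1&0\\-1&1\end{smallmatrix}\right)=\rho(v^{-1})$ and $\left(\begin{smallmatrix}1&-3\\0&1\end{smallmatrix}\right)=\rho(h^{-1})$, whereas the paper instead deduces invariance of $U_j$ for even $j$ from a symmetry argument, conjugating by a $G_4$-automorphism of $\bHt$ with $90^\circ$ rotation derivative (from \Cref{G4 auto}), which swaps $\authom(h)$ with $\authom(v^{-1})$ and sends $U_j$ to some odd $U_i$. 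Your explicit even-case matrices are consistent with that conjugation relation, so the two treatments agree; yours is marginally more computational and self-contained, while the paper's is shorter and records the useful fact that the odd and even representations are intertwined by the $90^\circ$ symmetry. Your attention to the sign conventions (right twist with $+$, left twist with $-$) and to the quadratic appearance of the core curve orientation are both correct and worth keeping.
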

\begin{proof}
Let $i$ be odd.
Recall that $\authom(h)$ performs a right Dehn twist in cylinders whose core curves are in $\{\gamma_1, \gamma_3, \gamma_5\}$.
Thus $\authom_\ast(h)(\gamma_i)=\gamma_i$. Using \eqref{eq:Dehn action} and intersection numbers from \eqref{eq:gamma intersect}, we see
\begin{align*}
\authom_\ast(h)(\gamma_{i+1}-\gamma_{i-1}) &=
\authom_\ast(h)(\gamma_{i+1})- \authom_\ast(h)(\gamma_{i-1}) \\
&= (\gamma_{i+1} - \gamma_{i+2} + \gamma_i) - (\gamma_{i-1} - \gamma_{i} + \gamma_{i-2}) \\
&= 3 \gamma_i + (\gamma_{i+1}-\gamma_{i-1}),
\end{align*}
with the last equality using the fact that $\gamma_{i-2}+\gamma_i+\gamma_{i+2}=0$.
Similarly, $\authom(v)$ performs a left twist in $\{\gamma_0, \gamma_2, \gamma_4\}$.
Since $\gamma_{i+1}-\gamma_{i-1}$ is a sum of vertical curves, it is fixed by $\authom_\ast(v)$. We have
$$\authom_\ast(v)(\gamma_{i}) = \gamma_i + \gamma_{i+1} - \gamma_{i-1}.$$
This gives the matrix description, and proves the invariance in the odd case. 
If $j$ is even, then an automorphism $\varphi$ of $\bHt$ whose derivative is a $90^\circ$ rotation (which exists by \Cref{G4 auto}) will satisfy $\varphi_\ast(U_j)=U_i$ for some $i$ odd. 
Note that $\authom(h)$ is a right Dehn twist, while $\authom(v)$ is a left Dehn twist. Therefore, conjugation by $\varphi_\ast$ swaps $\authom(h)$ with $\authom(v^{-1})$. Letting $\iota:F_2 \to F_2$ denote the automorphism such that $\iota(h)=v^{-1}$ and $\iota(v)=h^{-1}$, we have $\varphi_\ast \circ \authom_\ast(g)=\authom_\ast(\iota(g)) \circ \varphi_\ast$ for all $g \in F_2$ so the action $\authom_\ast$ must preserve $U_j$ as well.
\end{proof}

\begin{prop}
If $i$ is odd, then $\Span (U_i \cup U_{i+2})=H_1(\bHt; \QQ)$.
\end{prop}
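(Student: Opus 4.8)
The statement is at heart a linear-algebra fact about $H_1(\bHt;\QQ)\cong\QQ^4$, so the strategy is: (i) settle one explicit odd value of $i$ by rewriting everything in the symplectic basis $(\gamma_0,\gamma_3;\gamma_1,\gamma_4)$, and then (ii) bootstrap to the remaining odd values using the $120^\circ$ rotation already available on $\bHt$.

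For step (i) I would take $i=1$. Using the relation $\gamma_2=-\gamma_0-\gamma_4$ from \eqref{eq:gamma id}, the generators rewrite as
$$U_1=\Span\{\gamma_1,\ \gamma_2-\gamma_0\}=\Span\{\gamma_1,\ -2\gamma_0-\gamma_4\},\qquad U_3=\Span\{\gamma_3,\ \gamma_4-\gamma_2\}=\Span\{\gamma_3,\ \gamma_0+2\gamma_4\}.$$
The two nonobvious generators have coordinate vectors $(-2,-1)$ and $(1,2)$ in the basis $(\gamma_0,\gamma_4)$, and since $\left|\begin{smallmatrix}-2&-1\\ 1&2\end{smallmatrix}\right|=-3\neq 0$ they already span $\Span\{\gamma_0,\gamma_4\}$. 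Hence $\Span(U_1\cup U_3)$ contains $\gamma_0,\gamma_1,\gamma_3,\gamma_4$, i.e. all of $H_1(\bHt;\QQ)$, because $(\gamma_0,\gamma_3;\gamma_1,\gamma_4)$ is a basis.

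For step (ii) I would avoid repeating this computation and instead invoke \Cref{G4 auto}: since $(\tilde\varphi_\bH)_\ast(\gamma_j)=-\gamma_{j+4}$, the automorphism $\sigma:=(\tilde\varphi_\bH)_\ast^{2}$ of $H_1(\bHt;\QQ)$ satisfies $\sigma(\gamma_j)=\gamma_{j+2}$ for every $j\in\ZZ/6\ZZ$, and therefore $\sigma(U_j)=U_{j+2}$ for every $j$ (the key point being that $(\tilde\varphi_\bH)_\ast$ rescales every $\gamma_j$ by the same sign, so $\sigma$ acts as a pure index shift on the $U_j$). Applying $\sigma$ and $\sigma^2$ to the case $i=1$ already proved gives $\Span(U_3\cup U_5)=\Span(U_5\cup U_1)=H_1(\bHt;\QQ)$, and $\{1,3,5\}$ exhausts the odd residues modulo six.

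I do not expect a genuine obstacle here; the only thing requiring care is bookkeeping with the relations \eqref{eq:gamma id} and confirming that the index shift $U_j\mapsto U_{j+2}$ is exactly correct. (If one preferred, step (ii) could instead be replaced by two more direct determinant computations for the pairs $(U_3,U_5)$ and $(U_5,U_1)$, but the symmetry argument is cleaner.)
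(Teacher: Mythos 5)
Your proof is correct and follows essentially the same route as the paper: a direct determinant check in the symplectic basis $(\gamma_0,\gamma_3;\gamma_1,\gamma_4)$, using $\gamma_0+\gamma_2+\gamma_4=0$, for the pair $(U_1,U_3)$, followed by using a half-translation automorphism of $\bHt$ (in your version, $\sigma=(\tilde\varphi_\bH)_\ast^2$, which shifts $U_j\mapsto U_{j+2}$) to cover the remaining odd pairs. Incidentally, your expression $\gamma_4-\gamma_2=\gamma_0+2\gamma_4$ is the correct one---the paper records it as $2\gamma_4-\gamma_0$, a harmless sign slip that does not affect the linear-independence conclusion.
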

\begin{proof}
Recall from \Cref{sect:cover of H} that $(\gamma_0, \gamma_1; \gamma_3, \gamma_4)$ is a basis for $H_1(\bHt; \QQ)$.
Consider $\Span (U_1 \cup U_{3})$. Writing in the standard basis using the relations in \eqref{eq:gamma id}, we have
$$U_1=\Span \{\gamma_1, - 2\gamma_0 - \gamma_4\} \quad \text{and} \quad 
U_3=\Span \{\gamma_3, 2 \gamma_4-\gamma_0\}.$$
A simple linear algebra calculation checks that the four elements listed above are linearly independent, proving that the span has dimension four, equal to $\dim H_1(\bHt; \QQ)$. This completes the proof in the case $i=1$.
For any other odd $i$, there is a half-translation automorphism (obtained via \Cref{G4 auto}) that sends $U_i$ to $U_1$ and $U_{i+2}$ to $U_3$, so the conclusion holds here too.
\end{proof}

Since $U_1$ and $U_3$ are two dimensional subspaces that span $H_1(\bHt; \QQ)$, a union of bases of these two subspaces gives a basis
for $H_1(\bHt; \QQ)$. We choose bases for $U_1$ and $U_3$ from \Cref{U invariant}, obtaining the basis
$\cB = \{\gamma_1, \gamma_2-\gamma_0, \gamma_3, \gamma_4-\gamma_2\}$.

\begin{proof}[Proof of \Cref{rho conjugation}]
By \Cref{U invariant}, the action $\authom_\ast$ in the basis $\cB$ has the matrix representation $\rho \oplus \rho$.
\end{proof}

We will now explain how we understand the subspaces $W_+$ and $W_-$ in the basis $\cB$. Given $w \in H_1(\bHt; \QQ)$, write $\bw_\cB \in \QQ^4$ to denote the vector representation of $w$ in the basis $\cB$. Write $\bw_1 \in \QQ^2$ for the first two entries of $\bw_\cB$,
and write $\bw_3 \in \QQ^2$ for the last two entries. (Then $\bw_1$ represents the projection of $\bw_\cB$ to $U_1$ along $U_3$, and vice versa.)

\begin{prop}
\label{W is transverse}
Let $s \in \{\pm 1\}$.
For any $w \in W_s$, we have $\bw_3=N^s \bw_1$ where $N=\frac{1}{2}\begin{pmatrix} 1 & -3 \\ 1 & 1 \end{pmatrix}$ and $N^{-1}=\frac{1}{2}\begin{pmatrix} 1 & 3 \\ -1 & 1 \end{pmatrix}$.
\end{prop}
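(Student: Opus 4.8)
The plan is to identify each $W_s$ with the graph of the matrix $N^s$ and then verify the asserted relation only on a spanning set. First I would record convenient generators of the two subspaces: since $\beta_0+\beta_1+\beta_2=0$, the subspace $W_+$ is spanned by $\beta_0=\gamma_0+\gamma_3$ and $\beta_1=\gamma_1+\gamma_4$; and since $(\gamma_0-\gamma_3)-(\gamma_1-\gamma_4)+(\gamma_2-\gamma_5)=(\gamma_0+\gamma_2+\gamma_4)-(\gamma_1+\gamma_3+\gamma_5)=0$ by \eqref{eq:gamma id}, the subspace $W_-$ is spanned by $\gamma_0-\gamma_3$ and $\gamma_1-\gamma_4$.

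Next I would note that the assignment $w\mapsto(\bw_1,\bw_3)$ is a linear isomorphism from $H_1(\bHt;\QQ)$ to $\QQ^2\oplus\QQ^2$, so for each sign $s$ the locus $L_s=\{w:\bw_3=N^s\bw_1\}$ is a $2$-dimensional linear subspace, namely the graph of the invertible matrix $N^s$. Since $\dim W_s=2$ as well, it suffices to check that the two generators of $W_s$ listed above lie in $L_s$; this forces $W_s=L_s$, which is exactly the assertion of the proposition.

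The remaining step is a short coordinate computation. Using the relations $\gamma_0+\gamma_2+\gamma_4=0$ and $\gamma_1+\gamma_3+\gamma_5=0$ to rewrite vectors in terms of $\gamma_0,\gamma_1,\gamma_2,\gamma_3$, and then passing to the basis $\cB=\{\gamma_1,\ \gamma_2-\gamma_0,\ \gamma_3,\ \gamma_4-\gamma_2\}$, one finds for instance $\gamma_0=-\tfrac23(\gamma_2-\gamma_0)-\tfrac13(\gamma_4-\gamma_2)$, so that $\beta_0=\gamma_0+\gamma_3$ has $\cB$-coordinates $(0,-\tfrac23,1,-\tfrac13)$; thus for $\beta_0$ one has $\bw_1=(0,-\tfrac23)$ and $\bw_3=(1,-\tfrac13)=N(0,-\tfrac23)$, so $\beta_0\in L_+$. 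The same computation applied to $\beta_1$ (checking $\bw_3=N\bw_1$), and to $\gamma_0-\gamma_3$ and $\gamma_1-\gamma_4$ (checking $\bw_3=N^{-1}\bw_1$), confirms membership in the respective $L_s$ and finishes the proof.

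I do not expect any genuine obstacle here: the entire content is linear algebra in a four-dimensional space, and the only thing requiring care is the bookkeeping when rewriting the generators of $W_\pm$ in the basis $\cB$ and correctly splitting each coordinate vector into its $U_1$-part $\bw_1$ and its $U_3$-part $\bw_3$.
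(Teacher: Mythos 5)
Your proposal is correct and follows essentially the same route as the paper: reduce by linearity/dimension-counting to a check on a spanning pair of $W_s$, rewrite those spanning vectors in the basis $\cB$ using the relations $\gamma_0+\gamma_2+\gamma_4=0$ and $\gamma_1+\gamma_3+\gamma_5=0$, and verify $\bw_3=N^s\bw_1$ directly. The paper phrases the spanning set uniformly as $\{\gamma_3+s\gamma_0,\ \gamma_1+s\gamma_4\}$ and states the reduction as "by linearity it suffices to check on a basis," while you phrase it as showing $W_s$ equals the graph $L_s$ of $N^s$; these are the same computation, and your sample coordinate check for $\beta_0$ matches the paper's.
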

\begin{proof}
By linearity, it suffices to check the statement on a basis for $W_s$. First consider $w^1=\gamma_3 + s \gamma_0 \in W_s$. It can be checked that
$$\bw^1_\cB=\begin{pmatrix}
0 \\ \frac{-2s}{3} \\ 1 \\ \frac{-s}{3}
\end{pmatrix}
\quad \text{and} \quad
N^s \begin{pmatrix}
0 \\ \frac{-2s}{3}
\end{pmatrix} = 
\begin{pmatrix}
1 \\ \frac{-s}{3}
\end{pmatrix}.$$
Similarly, when $w^2=\gamma_1+s\gamma_4 \in W_s$, we have 
$$\bw^2_\cB=\begin{pmatrix}
1 \\ \frac{s}{3} \\ 0 \\ \frac{2s}{3}
\end{pmatrix}
\quad \text{and} \quad
N^s \begin{pmatrix}
1 \\ \frac{s}{3}
\end{pmatrix} = 
\begin{pmatrix}
0 \\ \frac{2s}{3}
\end{pmatrix}.$$
\end{proof}

To prove \Cref{W stabilization}, we need one more ingredient: a basic understanding of the representation $\rho$.
To gain this, recall that matrices in $\SL(2,\RR)$ act on the upper half-plane via M\"obius transformations according to the formula that
$$\begin{pmatrix}
a & b \\
c & d
\end{pmatrix}(z)=\frac{az+b}{cz+d}.$$

Recall that the {\em Farey triangulation} of $\HH^2$ is the ideal triangulation whose vertices are the points in $\hat \QQ=\QQ \cup \{\infty\} \subset \partial \HH^2$ where there is an edge from $\frac{a}{b}$ to $\frac{c}{d}$ (expressed as ratios of relatively prime integers) if and only if $ad-bc \in \{\pm 1\}$. This triangulation is depicted in \Cref{fig:Farey}. It is well known that $\PSL(2,\ZZ)$ is the orientation-preserving automorphism group of this triangulated space. The image of $\Gamma_2$ in $\PSL(2,\ZZ)$ is the group of orientation-preserving automorphisms that permute the equivalence classes of relatively prime fractions taken modulo two. Thus, the quotient $\HH/\Gamma_2$ is a hyperbolic three punctured sphere.

\begin{figure}[thb]
\labellist
\small\hair 2pt
 \pinlabel {$-1$} [ ] at 280 -50
 \pinlabel {$\frac{-1}{2}$} [ ] at 890 -70
 \pinlabel {$0$} [ ] at 1500 -50
 \pinlabel {$\frac{1}{2}$} [ ] at 2110 -70
 \pinlabel {$1$} [ ] at 2700 -50
\endlabellist
\centering
	\includegraphics[width=\textwidth]{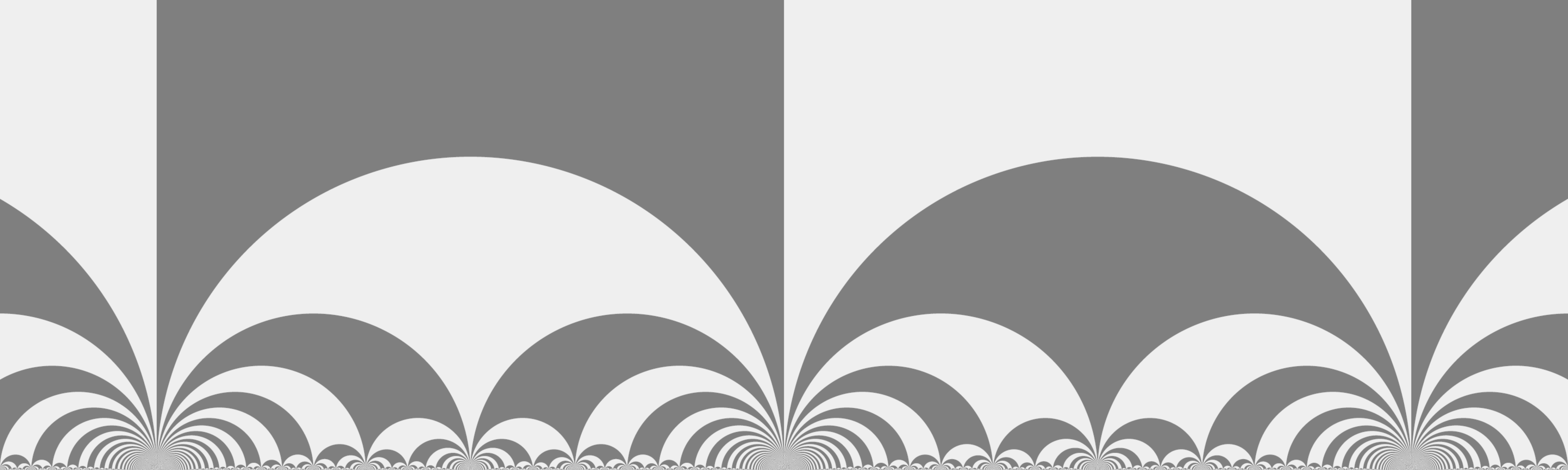}
	\vspace{-0.5em}
	\caption{A portion of the Farey triangulation of the upper half-plane.}
	\label{fig:Farey}
\end{figure}

\begin{figure}[bht]
\labellist
\small\hair 2pt
 \pinlabel {$-2$} [ ] at 146 -50
 \pinlabel {$\frac{-3}{2}$} [ ] at 500 -70
 \pinlabel {$-1$} [ ] at 813 -50
 \pinlabel {$0$} [ ] at 1500 -50
 \pinlabel {$1$} [ ] at 2166 -50
 \pinlabel {$\frac{3}{2}$} [ ] at 2500 -70
 \pinlabel {$2$} [ ] at 2833 -50
\endlabellist
\centering
	\includegraphics[width=\textwidth]{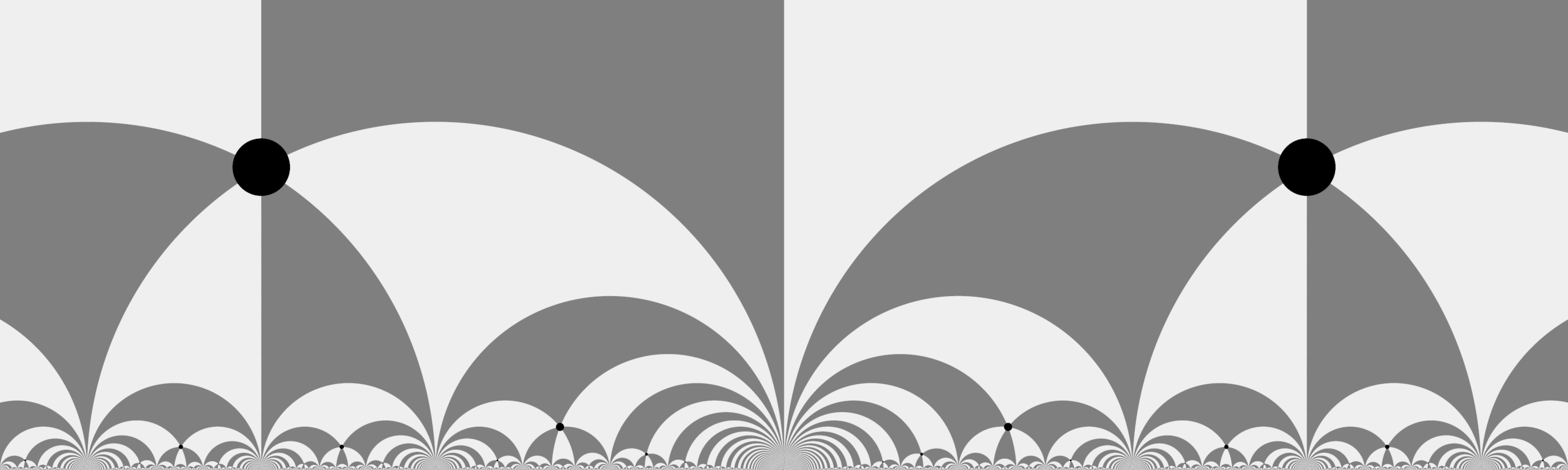}
	\vspace{-0.5em}
	\caption{The triangulation of the upper half-plane associated to the $(3, \infty, \infty)$ triangle group, $\rho(\Gamma_2)$, 
	punctured at the interior vertices.}
	\label{fig:veech}
\end{figure}

We can similarly understand the action of $\rho(F_2)$:

\begin{prop}
\label{rho action}
The action of $\rho$ on $\HH$ via M\"obius transformations is a $(3,\infty, \infty)$ triangle group. That is, $\HH/\rho(F_2)$ is a sphere with two punctures and one cone singularity with cone angle $\frac{2 \pi}{3}$. 
The kernel of this map to the M\"obius group is the smallest normal subgroup of $F_2$ containing $(vh^{-1})^3$. 
There is no $g \in F_2$ such that $\rho(g)=-I$.
\end{prop}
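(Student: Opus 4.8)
The plan is to pin down the M\"obius action of $\rho$ by producing an explicit fundamental domain and invoking Poincar\'e's polygon theorem; the cone‑angle statement, the identification of the kernel, and the absence of $g$ with $\rho(g)=-I$ will then all fall out. Write $\bar\rho\colon F_2\to\PSL(2,\RR)$ for $\rho$ followed by $\SL(2,\RR)\to\PSL(2,\RR)$. First I would record that $\bar\rho(h)$ is the parabolic $z\mapsto z+3$ fixing $\infty$ and $\bar\rho(v)$ is the parabolic $z\mapsto\frac{z}{z+1}$ fixing $0$. The computational heart of the argument is that $R:=\rho(vh^{-1})=(\begin{smallmatrix}1&-3\\1&-2\end{smallmatrix})$ has characteristic polynomial $\lambda^2+\lambda+1$, so $R^3=I$ \emph{already in} $\SL(2,\ZZ)$, and $\bar\rho(vh^{-1})$ is elliptic of order $3$, fixing $z_0=\frac{3+i\sqrt3}{2}\in\HH$ and rotating by $\frac{2\pi}{3}$ there.

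Next I would exhibit the fundamental domain. Let $\Pi\subset\HH$ be the region lying between the vertical geodesics $\{\mathrm{Re}\,z=-\tfrac32\}$ and $\{\mathrm{Re}\,z=\tfrac32\}$ and exterior to the two semicircular geodesics with endpoints $\{-2,0\}$ (center $-1$, radius $1$) and $\{0,2\}$ (center $1$, radius $1$). A direct check shows that $\Pi$ is a pinched hyperbolic quadrilateral with ideal vertices (cusps) $\infty$ and $0$ and finite vertices $a=-\tfrac32+\tfrac{\sqrt3}{2}i$ and $b=\tfrac32+\tfrac{\sqrt3}{2}i$, that the interior angle at each finite vertex equals $\tfrac{\pi}{3}$, that $\bar\rho(h)$ carries the left vertical side onto the right one, and that $\bar\rho(v)$ carries the left circular side onto the right one with $\bar\rho(v)(a)=b$. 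Thus $\bar\rho(h)$ and $\bar\rho(v)$ are side‑pairings of $\Pi$; the cusp cycles at $\infty$ and at $0$ each consist of a single vertex with a single parabolic pairing, and the one finite‑vertex cycle $\{a,b\}$ has angle sum $\tfrac{\pi}{3}+\tfrac{\pi}{3}=\tfrac{2\pi}{3}=\tfrac{2\pi}{3}$ with cycle transformation conjugate to the order‑three element $\bar\rho(vh^{-1})$. Hence the cycle conditions of Poincar\'e's theorem hold, and the theorem yields that $\bar\rho(F_2)$ is discrete with fundamental domain $\Pi$ and presentation $\langle\,\bar\rho(h),\bar\rho(v)\mid(\bar\rho(v)\bar\rho(h)^{-1})^3=1\,\rangle$. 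Reading off $\Pi/\!\!\sim$: the two ideal vertices become two distinct punctures, the finite vertices are identified to one point of cone angle $\tfrac{2\pi}{3}$, and $V-E+F=3-2+1=2$, so $\HH/\bar\rho(F_2)$ is a sphere with two punctures and one cone point of angle $\tfrac{2\pi}{3}$; equivalently $\mathrm{Area}(\Pi)=2\pi-\tfrac{2\pi}{3}=\tfrac{4\pi}{3}$ is exactly the area of a $(3,\infty,\infty)$ orbifold, so $\bar\rho(F_2)$ is precisely the $(3,\infty,\infty)$ (von Dyck) triangle group. (A less computational route: conjugation by $\mathrm{diag}(\sqrt3,\tfrac1{\sqrt3})$ identifies $\bar\rho(F_2)$ with the image in $\PSL(2,\RR)$ of $\langle(\begin{smallmatrix}1&1\\0&1\end{smallmatrix}),(\begin{smallmatrix}1&0\\3&1\end{smallmatrix})\rangle$, which one recognizes as $\overline{\Gamma_0(3)}$, whose quotient orbifold is classically this one.)

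Finally I would deduce the last two sentences. The presentation above says exactly that the natural surjection $F_2\twoheadrightarrow\bar\rho(F_2)$, $h\mapsto\bar\rho(h)$, $v\mapsto\bar\rho(v)$, has kernel the normal closure $N=\langle\langle(vh^{-1})^3\rangle\rangle$ in $F_2$ (conjugating the relator does not change $N$), which is the asserted description of the kernel of the map to the M\"obius group. Moreover $\rho$ sends every conjugate of $(vh^{-1})^3$ to a conjugate of $R^3=I$, so $N\subseteq\ker\rho\subseteq\ker\bar\rho=N$, whence $\ker\rho=\ker\bar\rho$; in particular, if $\rho(g)=-I$ then $g\in\ker\bar\rho=\ker\rho$, forcing $\rho(g)=I\neq-I$, a contradiction, so no such $g$ exists.

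I expect the only delicate point to be the Poincar\'e step — specifically, verifying that the circular sides of $\Pi$ are paired by $\bar\rho(v)$ itself (rather than by some conjugate of it), since that is what forces the Poincar\'e relator to be the word $(vh^{-1})^3$ in the original generators and not merely a conjugate‑type statement. With the explicit coordinates for $\Pi$, $a$, $b$ above this is a short M\"obius‑transformation computation; the order of $R$, the cusp/cone bookkeeping for $\Pi/\!\!\sim$, and the kernel deductions are then routine.
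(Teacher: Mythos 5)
Your argument is correct and essentially matches the paper's proof: the same fundamental domain (the quadrilateral between $\Re z=\pm\tfrac32$ exterior to the unit semicircles centred at $\pm1$), the same elliptic element $\rho(vh^{-1})$ of order three, and the same identification of the kernel with the normal closure of $(vh^{-1})^3$. Your explicit observation that $R=\rho(vh^{-1})=(\begin{smallmatrix}1&-3\\1&-2\end{smallmatrix})$ has characteristic polynomial $\lambda^2+\lambda+1$, hence $R^3=I$ already in $\SL(2,\ZZ)$ and not merely in $\PSL(2,\ZZ)$, is a nice touch that makes the final deduction about $-I$ slightly cleaner than the paper's. One small slip in the parenthetical alternative: the conjugating matrix should be $\mathrm{diag}(1/\sqrt3,\sqrt3)$ rather than $\mathrm{diag}(\sqrt3,1/\sqrt3)$, since conjugation by $\mathrm{diag}(\alpha,\alpha^{-1})$ multiplies the upper-right entry by $\alpha^2$, so $\alpha=\sqrt3$ would send $\rho(h)$ to $(\begin{smallmatrix}1&9\\0&1\end{smallmatrix})$ rather than $(\begin{smallmatrix}1&1\\0&1\end{smallmatrix})$.
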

\begin{proof}
Under the M\"obius action, $\rho(h):z \mapsto z+3$ and $\rho(v):z \mapsto \frac{z}{1+z}$. Thus a fundamental domain is given by 
$$\{z:~\text{$\frac{-3}{2} \leq \Re z \leq \frac{3}{2}$ and $|z\pm 1| \geq 1$ for each sign}\},$$
the union of the two triangles in the top center of \Cref{fig:veech}. The quotient $Q = \HH/\rho(F_2)$ then is a hyperbolic cone structure on the twice punctured sphere with two punctures and a cone singularity with cone angle $\frac{2 \pi}{3}$.
The two punctures are images of neighborhoods of $\infty$ (which is stabilized by the M\"obius action of $h$) and $0$ (stabilized by $v$).
The M\"obius action of $\rho(v h^{-1})$ rotates by $\frac{2 \pi}{3}$ about the point $\frac{3 + i\sqrt{3}}{2}$, so the M\"obius action of $(vh^{-1})^3$ is trivial. The action on the fundamental domain gives a presentation for the M\"obius action, namely 
$\langle h, v|~(vh^{-1})^3\rangle$. This is a standard presentation for the $(3, \infty,\infty)$ triangle group, and geometrically the fundamental domain can be cut into two triangles as in \Cref{fig:veech} that tile the plane under the M\"obius action. Our presentation for the M\"obius action also tells us that the kernel is the smallest normal subgroup containing $(vh^{-1})^3$.

To see the last statement, suppose to the contrary that $\rho(g)=-I$. Note that $-I$ acts trivially on $\HH$, so $g$ lies in the kernel defined above. From work above, we know $g$ can be written as a product of conjugates of $(vh^{-1})^3$ and its inverse.
But then because $\rho\big(vh^{-1})^3\big)=I$, we must also have $\rho(g)=I$, contradicting our supposition.
\end{proof}

\begin{proof}[Proof of \Cref{W stabilization}]
That (2) and (3) are equivalent follows directly from \Cref{rho conjugation}. Also, certainly (3) implies (1), so it suffices to prove (1) implies (2). 

To prove this statement, we use \Cref{W is transverse} to describe $\authom_\ast(g)(W_+)$ in terms of $\rho(g)$. 
We take the point of view that \Cref{W is transverse} explains that $W_+$ written in the basis $\cB$ is the graph of $N:\QQ^2 \to \QQ^2$. Here the domain is representing our basis coordinates on $U_1$ and the codomain represents coordinates on $U_3$. By \Cref{U invariant}, we see that if $w \in W_+$ and $x=\authom_\ast(g)(w)$ then in the notation of \Cref{W is transverse}
we have $\bx_1=\rho(g)(\bw_1)$ and $\bx_3=\rho(g)(\bw_3)$. By \Cref{W is transverse}, we have $\bw_3=N \bw_1$. It follows that
$\bx_3 = N_g \bx_1$ where 
$$N_g = \rho(g) \cdot N \cdot \rho(g^{-1}).$$
Thus, the vector representation in the basis $\cB$ of the subspace $\authom_\ast(g)(W_+)$ is given by 
$$\{(\bx_1, \bx_3) \in \QQ^4:~\text{$\bx_1, \bx_3 \in \QQ^2$ and $\bx_3 = N_g \bx_1$}\}.$$
This expresses $\authom_\ast(g)(W_+)$ in a similar manner to $W_+$: as a graph of $N_g$. We conclude that $\authom_\ast(g)(W_+)=W_s$ if and only if $N_g=N^s$.

From our definition of $N_g$, $\authom_\ast(g)(W_+)=W_s$ if and only if $\rho(g)$ preserves the set $\{\bv_\pm\}$ of eigenvectors
of $N$. (If $\rho(g)$ preserves the eigenvectors then $\authom_\ast(g)$ preserves $W_+$ and if it swaps them, then it sends $W_+$ to $W_-$.) The eigenvectors are $\bv_\pm = (1, \pm \frac{i \sqrt{3}}{3})$.

Now we can show that (1) implies (2). Suppose $\authom_\ast(g)(W_+)=W_s$.
From the above, we must have $\rho(g)(\bv_+)=\lambda \bv_s$ for some $\lambda \in \CC$. Then we also have $\rho(g)(\bv_-)=\bar \lambda \bv_{-s}$ since complex conjugation swaps $\bv_+$ with $\bv_-$. Noting that $(1,0)=\frac{1}{2}(\bv_+ + \bv_-)$ and $(0,1)=\frac{-i\sqrt{3}}{2}(\bv_+ - \bv_-)$, we have
$$\rho(g)\begin{pmatrix} 1 \\ 0 \end{pmatrix} =  \frac{1}{2}(\lambda \bv_s + \bar \lambda \bv_{-s})=\Re(\lambda \bv_s)
\quad \text{and} \quad
\rho(g)\begin{pmatrix} 0 \\ 1 \end{pmatrix} =  \frac{-i\sqrt{3}}{2}(\lambda \bv_s - \bar \lambda \bv_{-s})=\sqrt{3} \Im(\lambda \bv_s).$$
It follows that
$$\rho(g)=\begin{pmatrix}
a & 3b \\
-sb & sa
\end{pmatrix}
\quad \text{where} \quad a=\Re \lambda \quad \text{and} \quad b = \frac{\sqrt{3}}{3} \Im \lambda.$$
Since $\rho$ is a representation whose image is contained in $\SL(2,\ZZ)$, we know $a,b \in \ZZ$. Also the determinant must equal one,
so $s(a^2 +3b^2)=1$. But the only integer solutions to this equation are $a= \pm 1$, $b=0$ and $s=1$. By \Cref{rho action}, there is no $g \in F_2$ such that $\rho(g)=-I$, so we must have $\rho(g)=I$. Thus (2) holds.
\end{proof}

\subsection{Geometry of the Teichm\"uller disk}

Recall the group isomorphism $M:F_2 \to \Gamma_2$ from \eqref{eq:matrixhom}.
We define 
$$\Gamma_\cO=\langle P_{p,q}:~(p,q) \in \cO\rangle \subset \Gamma_2$$
where the matrices $P_{p,q}$ are taken from \Cref{cor:odd twist}.  By this corollary $\Gamma_\cO \subset V(\bNt)$.
Set $F_\cO = M^{-1}(\Gamma_\cO) \subset F_2$.

\begin{lemma}
\label{odd kernel}
We have that $F_\cO=\ker \rho$. The surface $\HH/\Gamma_\cO$ is a topological disk with countably many punctures located at the images of $\frac{p}{q}$ with $(p,q) \in \cO$. As $\Gamma_\cO \subset \SL(2,\ZZ)$, the $\Gamma_\cO$ action respects the Farey triangulation and the image triangles come together six to a vertex at images of $\frac{p}{q}$ for $(p,q) \in \cO$ and infinitely many triangles come together at the other vertices which all lie in the boundary of the disk.
\end{lemma}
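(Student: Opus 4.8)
The plan is to identify $\Gamma_\cO$ explicitly as $M(\ker\rho)$, and then to read off the topology of $\HH/\Gamma_\cO$ by viewing it as a cover of the thrice-punctured sphere $\HH/\Gamma_2$ associated with the normal subgroup $\ker\rho\triangleleft F_2$.

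For the identification I would argue directly inside the Farey triangulation. A short matrix computation gives $P_{p,q}=I+6\,\bv\bw^{\top}$, where $\bv=(p,q)^{\top}$ and $\bw=(-q,p)^{\top}$. Since $\bv$ is primitive, $I+\bv\bw^{\top}$ is the primitive parabolic of $\SL(2,\ZZ)$ fixing $\frac pq$, so $P_{p,q}$ is its sixth power — equivalently, up to $-I$, the cube of the primitive parabolic of $\Gamma_2$ fixing $\frac pq$ — and hence $P_{p,q}$ shifts the Farey triangles around the vertex $\frac pq$ by six. In particular $P_{1,1}=\pm M\big((vh^{-1})^3\big)$, since $M(vh^{-1})$ is the primitive parabolic of $\Gamma_2$ at the cusp $[1]$. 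By \Cref{lem:transitive}, $\Gamma_2=M(F_2)$ acts transitively on $\cO$, so choosing $w\in F_2$ with $M(w)(1)=\frac pq$ yields $M(w)P_{1,1}M(w)^{-1}=\pm P_{p,q}$, the left-hand side being again the cube of the primitive parabolic of $\Gamma_2$ at $\frac pq$. Thus $\Gamma_\cO$ is generated by the elements $M\big(w(vh^{-1})^3w^{-1}\big)$; that is, $\Gamma_\cO=M(N)$ where $N$ is the normal closure of $(vh^{-1})^3$ in $F_2$, and $N=\ker\rho$ by \Cref{rho action}. Applying $M^{-1}$ gives $F_\cO=\ker\rho$.

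For the geometry, since $\ker\rho\triangleleft F_2$, the surface $\HH/\Gamma_\cO$ is the normal cover of $\HH/\Gamma_2$ corresponding to $\ker\rho\subset\pi_1(\HH/\Gamma_2)=F_2$; the three cusps of $\HH/\Gamma_2$ are the $\Gamma_2$-orbits $\cO$, $\cE_1$, $\cE_2$ on $\hat\QQ$, with peripheral loops conjugate to $vh^{-1}$, $v$, $h$ respectively. For $\frac pq\in\cO$, the $\Gamma_2$-stabiliser of $\frac pq$ is generated by the primitive parabolic (a shift by two of the surrounding triangles), and its $k$th power lies in $\Gamma_\cO=M(\ker\rho)$ if and only if $(vh^{-1})^k\in\ker\rho$, i.e. $3\mid k$, because $\rho(vh^{-1})$ has order three by \Cref{rho action}; hence $\mathrm{Stab}_{\Gamma_\cO}(\frac pq)=\langle P_{p,q}\rangle$ and exactly six image triangles meet at the resulting puncture of $\HH/\Gamma_\cO$. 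For $\frac pq\in\cE_1\cup\cE_2$ the same computation, with the infinite-order elements $\rho(h)$ or $\rho(v)$ replacing $\rho(vh^{-1})$, forces a trivial $\Gamma_\cO$-stabiliser, so infinitely many triangles meet at the image of $\frac pq$ and that vertex lies on the ideal boundary rather than being a puncture. Finally, filling in all the $\cO$-punctures produces a cover of the surface obtained from $\HH/\Gamma_2$ by filling its $[vh^{-1}]$-cusp — a twice-punctured sphere — namely the cover associated with the image of $\ker\rho$ in $F_2/(\text{normal closure of }vh^{-1})\cong\ZZ$; since $(vh^{-1})^3$ maps to $0$ this image is trivial, so the filled surface is the universal cover of an annulus, hence an open disk. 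Therefore $\HH/\Gamma_\cO$ is an open disk with a countable, closed, discrete set of points — the $\cO$-punctures — removed, and the $\cE$-vertices accumulate on the ideal boundary circle.

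The one nonroutine point — the expected main obstacle — is the identity $P_{p,q}=I+6\,\bv\bw^{\top}$ together with its consequence that conjugation within $\Gamma_2$ permutes the $P_{p,q}$ exactly as $\Gamma_2$ permutes the cusps $\frac pq$; this is the bridge between the twist magnitudes $6(p^2+q^2)$ produced by \Cref{cor:odd twist} and the Farey combinatorics, and it is what legitimises the normal-closure bookkeeping. Everything else is standard covering-space theory resting on the group-theoretic input of \Cref{rho action}. One should also be consistent about the central element $-I$ (i.e. about working in $\SL(2,\RR)$ or $\PSL(2,\RR)$), but this changes nothing, since $-I$ acts trivially on $\HH$ and preserves the Farey triangulation.
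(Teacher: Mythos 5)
Your first step, $F_\cO=\ker\rho$, is essentially the paper's argument: both rest on the observation that $\Gamma_2$ acts transitively on $\cO$, that each $P_{p,q}$ is the parabolic shifting the Farey triangles at $\frac pq$ by six, and hence that $\{P_{p,q}\}$ is exactly the set of $\Gamma_2$-conjugates of $P_{1,1}=\pm M\big((vh^{-1})^3\big)$, whose normal closure is $\ker\rho$ by \Cref{rho action}. Your explicit rank-one formula $P_{p,q}=I+6\,\bv\bw^{\top}$ is a clean way to exhibit $P_{p,q}$ as the sixth power of a primitive $\SL(2,\ZZ)$-parabolic, which is a nice touch. Your valence computation is, however, a genuinely different route from the paper's: the paper builds a triangulation-respecting covering map $f:\HH\to\HH\setminus P$ (with $P$ the interior vertices of the $(3,\infty,\infty)$ triangulation) and identifies its deck group with $\Gamma_\cO$, so that $\HH/\Gamma_\cO\cong\HH\setminus P$ is read off by inspection; you instead compute the $\Gamma_\cO$-stabilizer of each Farey vertex directly from $\rho$, which correctly gives six triangles at $\cO$-vertices and infinitely many at $\cE$-vertices, and is arguably more elementary.

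The ``open disk'' step has a genuine gap. Filling in the $\cO$-punctures of $\HH/\Gamma_\cO$ and the $\cO$-cusp of $\HH/\Gamma_2$ produces a \emph{branched} covering, of local degree $3$ at each newly filled point, because the peripheral loop upstairs is $(vh^{-1})^3$ while downstairs it is $vh^{-1}$. Consequently the filled surface is \emph{not} the unbranched cover of the twice-punctured sphere associated with the image of $\ker\rho$ in $\pi_1(Y')\cong\ZZ$; that identification would require the normal closure of $vh^{-1}$ to be contained in $\ker\rho$, which fails since $\rho(vh^{-1})$ has order $3$. The conclusion is nonetheless correct and can be repaired in either of two standard ways: pass to orbifolds, filling the $\cO$-cusp of $\HH/\Gamma_2$ with a cone point of order $3$ to get a hyperbolic orbifold whose orbifold fundamental group is $F_2$ modulo the normal closure of $(vh^{-1})^3$, so the filled cover is its orbifold universal cover $\HH$ (a disk); or run van Kampen directly, noting that the filled surface has fundamental group $\ker\rho$ modulo the normal closure of the glued-in boundary loops, and these loops are precisely the $F_2$-conjugates of $(vh^{-1})^3$, which already generate $\ker\rho$, so the quotient is trivial. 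The paper's triangulation-respecting homeomorphism onto $\HH\setminus P$ avoids this branched-cover subtlety entirely, which is part of why it is phrased that way.
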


The proof shows that the triangulation of $\HH/\Gamma_\cO$ is combinatorially the same as the $(3,\infty,\infty)$ triangulation stabilized by $\rho$ and depicted in \Cref{fig:veech}, but the geometry is different: In $\HH/\Gamma_\cO$ all the triangles are ideal triangles.

\begin{proof} \notepat{I'm frustrated by the double use of $M$ in the second sentence.}
Recall the identification $M:F_2 \to \Gamma_2$ from \eqref{eq:matrixhom}. By a computation, we can check that $P_{1,1}$ of \Cref{cor:odd twist} equals $M\big((v h^{-1})^3\big)$. Thus we have $M^{-1}(P_{1,1}) \in \ker \rho$. Recall that the image of $\Gamma_2$ in $\PGL(2,\RR)$ acts transitively on ratios of odd integers. The matrix $P_{p,q}$ acts on the Farey triangulation, fixing $\frac{p}{q}$ and shifting triangles with vertex $\frac{p}{q}$ six triangles counterclockwise. It follows that for every $(p,q) \in \cO$, we have $P_{p,q}=Q P_{1,1} Q^{-1}$ for some $Q \in \Gamma_2$. Since $\ker \rho$ is normal, it follows that $M^{-1}(P_{p,q}) \in \ker \rho$ for all $(p,q) \in \cO$.
Thus $F_\cO \subset \ker \rho$. Conversely, every conjugate $Q P_{1,1} Q^{-1}$ with $Q \in \Gamma_2$ equals $P_{p,q}$ where $\frac{p}{q}$ is the image of $1$ under the M\"obius action of $Q$. It also follows from this that $F_\cO$ is the smallest normal subgroup of $F_2$ containing $M^{-1}(P_{1,1})=(v h^{-1})^3$. From \Cref{rho action}, we see that $F_\cO=\ker \rho$. 

To see the remaining statements, let $P$ denote the collection of interior vertices of the $(3,\infty,\infty)$ triangle preserved
by $\rho$ (and depicted in \Cref{fig:veech}). There is a unique orientation-preserving covering map $f:\HH \to \HH \setminus P$ that respects the triangulations and sends $0 \mapsto 0$ and $\infty \mapsto \infty$. We can extend this covering map to a sequence of covers
$$\HH \xrightarrow{f} \HH \setminus P \to (\HH \setminus P)/\rho(F_2),$$
where the last space is a three punctured sphere whose fundamental group is naturally identified with $F_2=\langle h,v\rangle$.
The fundamental group of $\HH \setminus P$ is then naturally isomorphic to $F_\sigma$, and the deck group of the cover $f$ is $\Gamma_\cO$. It follows that $f$ descends to a triangle-respecting homeomorphism from $\HH/\Gamma_\cO$ to $\HH \setminus P$ as desired.
\end{proof}

\begin{proof}[Proof of \Cref{Veech group}]
Let $R=\pm \begin{pmatrix}
0 & 1 \\
-1 & 0
\end{pmatrix} \in \PSL(2, \RR)$ be the element mentioned in \Cref{rotation by 90}.
The matrices listed in \Cref{cor:odd twist} generate $\Gamma_\cO$. 
We first need to show that $V(\bNt)$ is generated by $\{R\} \cup \Gamma_\cO$. 

\compat{WORKING HERE... Check that new map names are consistent.}
Let $\tilde \psi_0 \in \Aff(\bNt)$. We must show that $D \tilde \psi_0$ lies in the group generated by $R$ and $\Gamma_\cO$. 
By \Cref{SL2Z}, $D \tilde \psi_0 \in \PSL(2,\ZZ)$. Therefore, $D \tilde \psi_0$ must permute the three collections of relatively prime vectors $\cO$, $\cE_1$ and $\cE_2$. By \Cref{thm:main}, $\cO$ must be fixed, therefore if this permutation action is nontrivial, it must swap $\cE_1$ with $\cE_2$. \Cref{rotation by 90} then guarantees that there is a deck transformation $\tilde \delta:\bNt \to \bNt$ of the covering $\bNt \to \bN$ (possibly trivial) such that the derivative of $\tilde \psi_1=\tilde \delta \circ \tilde \psi_0$ preserves each of the three classes $\cE_1$, $\cE_2$ and $\cO$. Since the action of $\tilde \psi_1$ preserves these classes, its derivative lies in the congruence two subgroup of $\PSL(2,\ZZ)$, i.e., there is a $g \in F_2$ such that $D \tilde \psi_1=M(g)$ where $M$ is defined as in \eqref{eq:matrixhom}.

By \Cref{cor:descends}, $\tilde \psi_1$ descends to an automorphism $\psi_1:\bHt \to \bHt$ also with derivative $M(g)$. Then $\psi_1=\authom(g)\circ \iota $ where $\iota:\bHt \to \bHt$ is a half-translation automorphism. By \Cref{G4 auto lift}, we have $\iota_\ast(W_+)=W_s$ for some sign $s \in \{\pm 1\}$. Since $\psi_1$ lifts, we must have that $\psi_1(W_+)=W_+$. Therefore $\authom_\ast(g)(W_s)=W_+$. But then $\authom_\ast(g^{-1})(W_+)=W_s$. \Cref{W stabilization} then tells us that $\rho(g^{-1})=I$. Thus, $g \in \ker \rho$,
and via \Cref{odd kernel} we conclude that $g \in F_\cO$. Then 
$$D \tilde \psi_1 = D \psi_1 = D \authom(g)=M(g) \in \Gamma_\cO.$$
We have $\tilde \psi_0=\tilde \delta^{-1} \circ \tilde \psi_1$, and $D \tilde \delta$ is either the identity or $R$; see \Cref{rotation by 90}. We conclude that $D \tilde \psi_0 = (D \tilde \delta)^{-1} \cdot (D \tilde \psi_1)$ is in the group generated by $R$ and $\Gamma_\cO$ as desired.

The remainder of the statement concerns the geometry of $\HH/V(\bNt)$. Observe that $R$ preserves $\cO$. It follows that the M\"obius action of $R$ preserves the vertices of the Farey triangulation consisting of ratios of odd integers. From the geometric description of how elements $P_{p,q}$ from \Cref{cor:odd twist} act on the Farey triangulation, we conclude that $R \Gamma_\cO R^{-1}=\Gamma_\cO$.
Therefore, there is a covering map $\HH / \Gamma_\cO \to \HH / \langle \Gamma_\cO \cup \{R\}\rangle$ of degree two (equal to the order of $R$). From \Cref{odd kernel}, we saw that $\HH/\Gamma_\cO$ can be combinatorially identified with $H\setminus P$ where $P$ represents the interior vertices of the $(3, \infty, \infty)$ triangulation. The M\"obius action of $R$ swaps $0$ and $\infty$ and acts as an orientation-preserving automorphism of the Farey triangulation. Thus, this action descends to an order two orientation-preserving automorphism of $\HH \setminus P$ that preserves an edge joining two vertices of infinite valence. This introduces a cone point with cone angle $\pi$ at the fixed point on the edge. The quotient of a disk punctured at a discrete countable set by an involution is still a disk with a discrete countable set of punctures.
\end{proof}

\begin{proof}[Proof of \Cref{thm:symmetries}]
Let $\psi \in \Aff(\bNt)$. Here we need to argue that $\psi$ is the composition of sequence of affine multitwists preserving cylinder decompositions of odd over odd slope and an isometry. \Cref{Veech group} guarantees that $D\psi=R^i \cdot Q$ where $i \in \{0,1\}$,
$R$ is as in the previous proof, and $Q \in \Gamma_\cO$. Since $Q \in \Gamma_\cO$, there is a concatenation of multitwists $\eta$ whose derivative is $Q$. Then $D(\psi \circ \eta^{-1})=R^i$. Since $R^i$ is a rotation matrix, the composition $\psi \circ \eta^{-1}$ is an isometry.
\end{proof}

\section{Dynamical Results}
\label{sec:additional}

\subsection{Conjugacy to straight-line flow on a translation surface}
\label{sect:setup}
We will explain here that under mild conditions, the geodesic flow on a fixed direction in quarter-translation surface $\bS$ is defined for all time almost everywhere, and is conjugate to a straight-line flow on its minimal translation cover $\hat \bS$. This is a special case of the unfolding construction due to Fox-Kershner; see \cite{FK36} and the discussion in \cite{AAH}.

Suppose $\bS$ is a genuine quarter-translation surface. Recall that $\bSs$ denotes $\bS$ with its singularities removed. Then, the metric on $\bSs$ is a flat Riemannian metric, and we may consider the unit speed geodesic flow $F^t:T^1 \bSs \to T^1 \bSs$. In local coordinates coming from the charts defining the structure, 
$F^t(\bp, \bu)=(\bp+t\bu, \bu)$
for $t$ small, where $(\bp, \bu)$ denotes the unit tangent vector based at $\bp \in \RR^2$ in direction $\bu \in \SS^1$. Recall from the introduction and \Cref{sect:geodesics} that 
the direction $\dir(\bv) \in \SS^1/C_4$ 
of $\bv \in T^1 \bSs$
is invariant under the geodesic flow. For $[\bu] \in \SS^1/C_4$, we define $T^1_{[\bu]} \bSs = \dir^{-1}([\bu])$
and let $F^t_{[\bu]}:T^1_{[\bu]} \bSs \to T^1_{[\bu]} \bSs$
denote the restricted geodesic flow.

Note that $\bSs$ will not typically be complete, so trajectories may not be defined for all time. In the case of a closed surface obtainable by gluing together finitely many polygons, it is an elementary fact that for $t>0$, $F^t_{[\bu]}(\bv)$ only fails to be defined if there is a $t_0 \in (0,t]$ such that $\lim_{s \to t_0^-} F^s_{[\bu]}(\bv)$ is a singularity. That is, trajectories only fail to be defined through singularities. The set of all such $\bv$ has codimension one in $T^1_{[\bu]} \bSs$, so in this case $F^t_{[\bu]}$ is defined almost everywhere for all time. Note that this also holds on any cover of $\bSs$.

Let $\hat \bS$ denote the canonical translation surface cover of $\bS$, and let $\hbSs$ denote the preimage of $\bSs$. On a translation surface, the direction of a geodesic is an element $\bu \in \SS^1$ and is invariant under the geodesic flow $\hat F^t:T^1 \hbSs \to T^1 \hbSs$, which we define in local coordinates as before. Let $T^1_\bu \hbSs \subset T^1 \hbSs$ denote the unit tangent vectors with direction $\bu$. Observe that the projection to the base $T^1 \hbSs \to \hbSs$ restricts to a homeomorphism $\beta_\bu: T^1_\bu \hbSs \to \hbSs$. The {\em straight-line flow} in direction $\bu$ is
\begin{equation}
\label{eq:conjugacy 1}
\hat F_\bu^t: \hbSs \to \hbSs; \quad p \mapsto \beta_\bu \circ \hat F^t \circ \beta_\bu^{-1}(p)
\end{equation}
which can alternately be defined by the fact that, in local coordinates, $\hat F_\bu^t$ sends $\bp$ to $\bp+t\bu.$

Let $\hat \pi_\bS:\hat \bS \to \bS$ be the branched covering map. This covering also induces a covering map of degree four from $T^1 \hbSs$ to $T^1 \bSs$, which we will also denote by $\hat \pi_\bS$. From local considerations, we can see that $\hat \pi_\bS$ conjugates the geodesic flows on these surfaces. Since $\bS$ is a genuine quarter-translation surface, 
\Cref{deck group and derivative} guarantees the derivative map restricts to an isomorphism from the deck group to $C_4 \subset \SO(2)$. 
For $[\bu] \in \SS^1/C_4$, the preimage $\hat \pi_\bS^{-1}\big(T^1_{[\bu]} \bSs)$ consists of vectors in $T^1 \hbSs$ whose directions lie in $[\bu]$. This preimage has four connected components, namely $T^1_\bu \hbSs$ for $\bu \in [\bu]$. It follows that for each $\bu \in \SS^1$, the restriction 
$$\hat \pi_\bS|_{T^1_\bu \hbSs}: T^1_\bu \hbSs \to T^1_{[\bu]} \bSs$$
is a homeomorphism that conjugates the geodesic flows restricted to these subsets.
Combining this with \eqref{eq:conjugacy 1}, we have that:

\begin{prop}
\label{prop:conjugacy}
For each $\bu \in \SS^1$, the composition 
$$\hat \pi_\bS \circ \beta_u^{-1}:\hbSs \to T^1_{[\bu]} \bSs$$
is a topological conjugacy from the straight-line flow $F_\bu^t:\hbSs \to \hbSs$ to the restricted geodesic flow $F_{[\bu]}^t: T^1_{[\bu]} \bSs \to T^1_{[\bu]} \bSs$.
\end{prop}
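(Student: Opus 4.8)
The plan is to simply assemble the two conjugacies that the preceding paragraphs have already set up, and check that their composition does what is claimed. First I would recall that, by the very definition \eqref{eq:conjugacy 1} of the straight-line flow, the projection-to-the-base homeomorphism $\beta_\bu:T^1_\bu\hbSs\to\hbSs$ satisfies $\hat F^t_\bu=\beta_\bu\circ \hat F^t\circ\beta_\bu^{-1}$ on $\hbSs$; equivalently, $\beta_\bu^{-1}$ is a topological conjugacy from $\hat F^t_\bu$ to the restriction of the unit-speed geodesic flow $\hat F^t$ to the invariant subset $T^1_\bu\hbSs$ (invariance being exactly the statement that direction is a flow invariant on the translation surface $\hat\bS$). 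Second, I would recall the other conjugacy already obtained: since $\bS$ is a genuine quarter-translation surface, \Cref{deck group and derivative} gives that the derivative restricts to an isomorphism from the deck group of $\hat\pi_\bS$ onto $C_4\subset\SO(2)$, so $\hat\pi_\bS^{-1}\big(T^1_{[\bu]}\bSs\big)$ breaks into the four connected pieces $T^1_{\bu'}\hbSs$ with $\bu'\in[\bu]$, and the restriction $\hat\pi_\bS|_{T^1_\bu\hbSs}:T^1_\bu\hbSs\to T^1_{[\bu]}\bSs$ is a homeomorphism intertwining the geodesic flows $\hat F^t$ and $F^t_{[\bu]}$ (this uses only that a branched covering is a local isometry away from the singular set, which is where both flows are defined).

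With these two facts in hand, the composition $\hat\pi_\bS\circ\beta_\bu^{-1}:\hbSs\to T^1_{[\bu]}\bSs$ is a composition of homeomorphisms, hence a homeomorphism, and for each $t$ one has the diagram chase
\[
\big(\hat\pi_\bS\circ\beta_\bu^{-1}\big)\circ \hat F^t_\bu
=\hat\pi_\bS\circ\big(\hat F^t|_{T^1_\bu\hbSs}\big)\circ\beta_\bu^{-1}
=F^t_{[\bu]}\circ\big(\hat\pi_\bS\circ\beta_\bu^{-1}\big),
\]
valid wherever the flows are defined. I would then remark on the domain bookkeeping: as noted in the paragraph preceding the statement, on a surface glued from finitely many polygons (and on any of its covers) the flows $\hat F^t_\bu$ and $F^t_{[\bu]}$ fail to be defined only along the codimension-one set of vectors whose forward or backward orbit runs into a singularity, so each is defined for all time on a full-measure set; the conjugating homeomorphism $\hat\pi_\bS\circ\beta_\bu^{-1}$ carries the maximal domain of definition of $\hat F^t_\bu$ onto that of $F^t_{[\bu]}$, since $\hat\pi_\bS$ matches singularities with singularities. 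This is what ``topological conjugacy'' means in this partially-defined-flow setting, so the claim follows.

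I do not expect a serious obstacle here: the proposition is essentially a packaging statement, and all the geometric input (that $\beta_\bu$ trivializes $T^1_\bu\hbSs$, that $\hat\pi_\bS$ is a local isometry realizing a degree-four cover of unit tangent bundles, and that its deck group has derivative exactly $C_4$) has been established above. If anything needs care it is the last point, namely confirming that $\hat\pi_\bS^{-1}\big(T^1_{[\bu]}\bSs\big)$ really has the four pieces $T^1_{\bu'}\hbSs$ as its connected components rather than fewer pieces glued by deck transformations — but this is immediate from \Cref{deck group and derivative}, since a deck transformation with nontrivial derivative in $C_4$ moves the direction $\bu'$ to a different element of $[\bu]$, and hence cannot connect $T^1_{\bu'}\hbSs$ to itself or map it to another $T^1_{\bu''}\hbSs$ while fixing directions. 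Thus the restriction to a single $T^1_\bu\hbSs$ is injective, and being a proper local homeomorphism onto $T^1_{[\bu]}\bSs$ it is the desired homeomorphism.
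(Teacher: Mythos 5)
Your proposal is correct and follows the paper's own argument essentially verbatim: the paper establishes the same two conjugacies in the paragraphs preceding the proposition (that $\beta_\bu$ conjugates $\hat F^t_\bu$ to $\hat F^t|_{T^1_\bu\hbSs}$ by definition \eqref{eq:conjugacy 1}, and that $\hat\pi_\bS$ restricts to a flow-conjugating homeomorphism $T^1_\bu\hbSs\to T^1_{[\bu]}\bSs$ using \Cref{deck group and derivative}), and the proposition is just their composition. Your added remarks on domain bookkeeping and on the four-component decomposition are accurate and match the paper's surrounding discussion.
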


\subsection{Recurrence via cylinders and hyperbolic geometry}
\label{sect:recurrence}
We introduce a recurrence criterion coming from consideration of cylinders in a translation surface closely related to the approach in \cite[\S 5]{HLT11}. The main idea is that cylinders enable close returns:

\begin{prop}
\label{prop:cylinder}
Let $\bS$ be a translation surface, let $C \subset \bS$ be a cylinder, and let $\bv \in \RR^2$ be the holonomy of a core curve of $C$.
If $\bu \in \SS^1$ is a unit vector, then the statement defined for $x \in X$,
\begin{equation}
\label{eq:inequality}
d\big(x,F_\bu^{|\bu \cdot \bv|}(x)\big) \leq |\bu \wedge \bv|
\end{equation}
holds for a subcylinder of $C$ whose complement has measure at most $|(\bu \cdot \bv)(\bu \wedge \bv)|$. Here $\wedge$ denotes the usual wedge product on $\RR^2$, $(a,b) \wedge (c,d)=ad-bc$.
\end{prop}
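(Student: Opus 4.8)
The plan is to develop the cylinder $C$ into the plane and compute directly with the straight-line flow in these coordinates. Write $\ell=\|\bv\|$ for the circumference of $C$ and $w$ for its width, and fix a unit vector $\bv^\perp$ orthogonal to $\bv$. Developing $C$ identifies its interior with the quotient of the open strip $\{a\,\tfrac{\bv}{\ell}+t\,\bv^\perp:a\in\RR,\ 0<t<w\}$ by the translation $a\mapsto a+\ell$, and $F_\bu^s$ acts by $x\mapsto x+s\bu$. After replacing $\bv$ by $-\bv$ if necessary (still the holonomy of a core curve), I may assume $\bu\cdot\bv\ge 0$, so that $\tau:=|\bu\cdot\bv|=\bu\cdot\bv$. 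Two elementary identities will drive everything: since $\bu$ is a unit vector, $(\bu\cdot\bv)^2+(\bu\wedge\bv)^2=\|\bv\|^2$; and expanding the square gives $\|\tau\bu-\bv\|^2=\tau^2-2\tau(\bu\cdot\bv)+\|\bv\|^2=\|\bv\|^2-(\bu\cdot\bv)^2=(\bu\wedge\bv)^2$.

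Next I would extract the subcylinder. In the strip coordinates $(a,t)$, a flow trajectory of length $\tau$ changes the transverse coordinate $t$ monotonically by $\delta:=\tau\,(\bu\cdot\bv^\perp)$, and one computes $|\delta|=|\bu\cdot\bv|\,|\bu\wedge\bv|/\ell\le|\bu\wedge\bv|$ using $|\bu\cdot\bv^\perp|=|\bu\wedge\bv|/\ell$ and $|\bu\cdot\bv|\le\ell$. Thus the trajectory $\{x+s\bu:0\le s\le\tau\}$ stays inside the strip — hence inside the singularity-free set $C$, so $F_\bu^\tau(x)$ is defined — precisely when the transverse coordinate of $x$ avoids a boundary strip of width $|\delta|$. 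This cuts out a subcylinder $C'\subseteq C$ of circumference $\ell$ and width $\max(w-|\delta|,0)$, whose complement in $C$ has measure $\ell\cdot\min(|\delta|,w)\le \ell|\delta|=|(\bu\cdot\bv)(\bu\wedge\bv)|$, as required.

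It remains to bound the return distance for $x\in C'$. Because $\bv$ is the holonomy of a core curve, the points $x$ and $x+\tau\bu$ of $C$ are equally well represented by $x$ and $x+(\tau\bu-\bv)$; the straight segment from $x$ to $x+(\tau\bu-\bv)$ has the same transverse endpoints $t$ and $t+\delta$ as the flow trajectory, so it too stays in the developed strip, i.e.\ inside $C$. Hence $d\big(x,F_\bu^\tau(x)\big)\le\|\tau\bu-\bv\|=|\bu\wedge\bv|$, using the identity from the first paragraph and the fact that the inclusion $C\hookrightarrow\bS$ is $1$-Lipschitz for the intrinsic metrics. This is exactly \eqref{eq:inequality}.

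The argument is really just careful bookkeeping, and the one point to get right is geometric: the segment realizing the distance bound is a $\bv$-translate of the flow segment, and it survives inside the strip for the same reason the flow segment does (the transverse coordinate moves monotonically by $|\delta|$), while the measure lost is exactly the $\ell\times|\delta|$ boundary layer. The degenerate cases are immediate: if $\bu\parallel\bv$ then $\bu\wedge\bv=0$, $\delta=0$, $C'=C$, and $F_\bu^\tau$ is the identity on $C$; and if $w\le|\delta|$ then $C'=\varnothing$, so the complement is all of $C$, still of measure at most $|(\bu\cdot\bv)(\bu\wedge\bv)|$.
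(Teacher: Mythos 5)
Your proof is correct and takes essentially the same approach as the paper: develop the cylinder into the plane, observe that the length-$|\bu\cdot\bv|$ flow segment stays inside $C$ except on a boundary strip of transverse width $|\bu\cdot\bv|\,|\bu\wedge\bv|/\|\bv\|$, and bound the return distance by the $\bv$-translate of the flow endpoint, whose displacement is $(\bu\cdot\bv)\bu-\bv$ of norm $|\bu\wedge\bv|$. The paper phrases this with a parallelogram fundamental domain and trigonometry while you use strip coordinates and the identities $(\bu\cdot\bv)^2+(\bu\wedge\bv)^2=\|\bv\|^2$, but the underlying geometric argument and the resulting estimates are the same.
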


%\compat{{\bf Loose idea:} From this it suffices to find a sequence of vectors $\bv_n$ such that $|\bu \cdot \bv_n| \to +\infty$ while $|(\bu \cdot \bv)(\bu \wedge \bv)| \to 0$. Note that $$|\bu \wedge \bv|=\frac{|(\bu \cdot \bv)(\bu \wedge \bv)|}{|\bu \cdot \bv_n|} \to 0$$ automatically. The first condition $|\bu \cdot \bv_n| \to +\infty$ is easy to arrange by choosing $\bv_n$ to approximate the direction of $\bu$. The size of $|(\bu \cdot \bv)(\bu \wedge \bv)|$ is related to the depth to which a geodesic in the hyperbolic plane in direction $\bu$ enters a horoball centered at $\bv/\|\bv\|$.}

\begin{proof}
First we eliminate the case when $\bu$ is orthogonal to $\bv$. In this case statement \eqref{eq:inequality} becomes $d(x, x) \leq \|\bv\|$ which is trivially true all $x \in C$. For the remainder of the proof, assume that $\bu$ is not orthogonal to $\bv$.

We will show \eqref{eq:inequality} holds for $x \in C$ unless the orbit segment of $F_\bu^t(x)$ with $t \in \big[0,|\bu \cdot \bv|\big]$ intersects $\partial C$.

Observe that there is a $180^\circ$ rotation $C \to C$ that conjugates the flow $F_\bu$ to $F_{-\bu}$. Also if $\bv$ is a holonomy of $C$, then so is $-\bv$. Using these symmetries and by applying a linear isometry (perhaps orientation-reversing) to all the objects under discussion, we can assume that $\bu=(0,1)$ and that $\bv$ lies in the first quadrant. 

\begin{figure}[htb]
\labellist
\small\hair 2pt
 \pinlabel {$X'$} [t] at 32 3
 \pinlabel {$X$} [t] at 215 3
 \pinlabel {$A$} [bl] at 269 150
 \pinlabel {$B$} [ ] at 208 141
 \pinlabel {$B'$} [ ] at 23 141
 \pinlabel {$D$} [ ] at 272 127
 \pinlabel {$\theta$} [ ] at 222 33
 \pinlabel {$\bu$} [ ] at 366 51
 \pinlabel {$\bv$} [ ] at 417 140
\endlabellist
\centering
\includegraphics[width=4in]{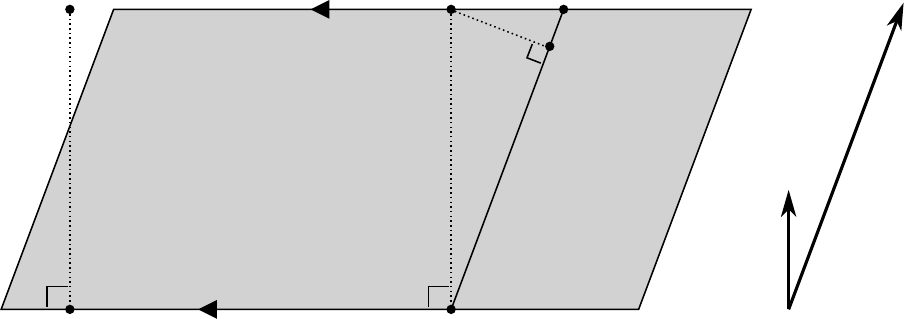}
\caption{The parallelogram $P$ utilized in the proof of \Cref{prop:cylinder}.}
\label{fig:strip}
\end{figure}

We can form a cylinder that is translation-equivalent to $C$ by identifying opposite sides of a parallelogram $P$ with one pair of horizontal sides, and the others pair of sides given by $\bv$. See \Cref{fig:strip}. Given any $x \in C$, we can define a translation-equivalence $f:P \to C$ such that a point $X$ on the bottom side of $P$ satisfies $f(X)=x$.
Let $A$ be the point on the top edge of $P$ such that $\overrightarrow{XA}=\bv$. Then we also have $f(A)=x$. Let $B$ be the point that has the same $x$-coordinate as $X$ and the same $y$-coordinate as $A$. Note that $B$ may or may not be in $P$. If $B \in P$, then because $f(\overline{AB})$ is a path joining $x$ to its image under vertical straight-line flow, we have
$$f(B)=F_{\bu}^{|XB|}(x)
\quad \text{and so} \quad
\dist\big(x, F_{\bu}^{|XB|}(x)\big) \leq |AB|$$
where 
$|XB|$ and $|AB|$ denote distances between two points in the plane.
Let $\theta$ be the angle between $\bu$ and $\bv$. Then we have $|\bu \wedge \bv|=\|\bv\| \sin \theta$ and
$|\bu \cdot \bv|=\|\bv\| \cos \theta$. Since $\angle XBA$ is a right angle, we have
$$|XB|=\|\bv\| \cos \theta=|\bu \cdot \bv| 
\quad \text{and} \quad
|AB|=\|\bv\| \sin \theta=|\bu \wedge \bv|.$$
We conclude that if $B \in P$, then \eqref{eq:inequality} holds. On the other hand, if $B \not \in P$, then the vertical segment $\overline{XB}$ intersects the left (as illustrated by $\overline{X'B'}$). In this case, the perpendicular distance from $X$ to the line extending the left side of $P$ is less than the length $|BD|$, where $D$ is the perpendicular projection of $B$ onto $\overline{XA}$. We have
$$|BD|=|XB| \sin \theta = |\bu \cdot \bv|\frac{|\bu \wedge \bv|}{\|\bv\|}.$$
Thus the set of points for which \eqref{eq:inequality} fails is contained in the collection of points contained within distance $|BD|$ from one boundary component. This is a subcylinder whose area is
$|BD| \|\bv\| = |(\bu \cdot \bv)(\bu \wedge \bv)|$ as claimed.
\end{proof}

We will connect \Cref{prop:cylinder} to hyperbolic geometry. The collection of images of a translation surface $\bS$ under area- and orientation-preserving affine maps up to rotation is naturally identified with 
\begin{equation}
\label{eq:affine images}
\SO(2) \backslash \SL(2,\RR)/D\big(\Aff(\bS)\big),
\end{equation}
where $D\big(\Aff(\bS)\big)$ is the Veech group of $\bS$ (i.e., the set of matrices whose actions fix the translation surface $\bS$).
This is a naturally a hyperbolic orbifold.

Let $\HH$ denote the upper half plane in $\CC$, which consists of complex numbers whose imaginary parts are positive. The boundary of $\HH$ in the Riemann sphere $\hat \CC=\CC \cup \{\infty\}$ is $\hat \RR=\RR \cup \{\infty\}$, which can naturally be identified with the collection of slopes of lines in the plane. The orientation-preserving isometry group of the hyperbolic plane is $\PSL(2,\RR)=\SL(2,\RR)/\langle -I \rangle$, and so we get an action of $\SL(2,\RR)$ on $\HH$ by isometry in which $-I$ acts trivially. For compatibility with \eqref{eq:affine images}, we need $\SL(2,\RR)$ to act on $\HH$ on the right. Therefore, we define the anti-homomorphism to the M\"obius group:
\begin{equation}
\label{eq:tau}
\tau: \SL(2,\RR) \to \Aut(\CC); \quad \begin{pmatrix}
a & b \\
c & d
\end{pmatrix} \mapsto \left(z \mapsto \frac{az-c}{d-bz}\right).
\end{equation}
We have defined $\tau$ to ensure that $\tau(M)(z)=z'$ implies that if $\bv \in \CC^2$ is a vector of slope $z$ then $M^{-1} \bv$ has slope $z'$. Since this is an anti-homomorphism, we have $\tau(M) \circ \tau(N) = \tau(NM)$ for all $M,N \in \SL(2,\RR)$.

The curve in $\bH$ given by $t \mapsto e^t i$ is a geodesic and we take it to be at unit speed. Observe that we can obtain this geodesic as 
\begin{equation}
\label{eq:vertical}
t \mapsto \tau(g_t)(i) \quad \text{where} \quad
G_t = \begin{pmatrix}
e^{t/2} & 0 \\
0 & e^{-t/2}
\end{pmatrix}.
\end{equation}
The isometry group of $\HH$ acts transitively on unit-speed parameterized geodesics. Thus, every unit-speed geodesic has the form 
$$\gamma:t \mapsto \tau(M)\big(\tau(G_t)(i)\big)=\tau(G_t M)(i).$$

The {\em Busemann function} associated to a unit-speed geodesic $\gamma:\RR \to \HH$ is $1$-Lipschitz function
$$B_\gamma: \HH \to \RR; \quad z \mapsto \lim_{t \to +\infty} \left[ d\big(\gamma(t),z\big)-t \right].$$
Note that if we are given a Busemann function $B_\gamma$, any sequence of points $z_n \in \HH$ such that $\lim B_\gamma(z_n)=-\infty$ must satisfy $\lim z_n=\gamma(+\infty) \in \hat \RR$. We call $\gamma(+\infty)$ the {\em boundary point associated to $B_\gamma$}. Note that not every sequence approaching the boundary point satisfies $\lim B_\gamma(z_n)=-\infty$.

There is a natural bijection between nonzero vectors in $\RR^2$ modulo the action of $-I$ and Busemann functions:

\begin{prop}
\label{prop:Busemann}
For any Busemann function $B_\gamma$, there is a nonzero $\bv \in \RR^2$ such that
\begin{equation}
\label{eq:Busemann}
B_\gamma\big(\tau(M)(i)\big)=2 \log \left\|M\bv\right\|
\quad \text{for all $M \in \SL(2,\RR)$}.
\end{equation}
Moreover the slope of $\bv$ is the endpoint $\gamma(+\infty)$. Conversely, given any nonzero $\bv \in \RR^2$, there is a Busemann function satisfying \eqref{eq:Busemann}.
\end{prop}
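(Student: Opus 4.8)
The claim has two directions: (1) every Busemann function $B_\gamma$ arises from some nonzero $\bv \in \RR^2$ via \eqref{eq:Busemann}, and (2) every nonzero $\bv$ gives rise to a Busemann function satisfying \eqref{eq:Busemann}. I would handle (2) first, since it essentially reduces to a computation, and then deduce (1) by a surjectivity/transitivity argument together with the naturality of the construction.

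\textbf{Step 1: the model computation.} Define $f_\bv:\HH \to \RR$ by $f_\bv\big(\tau(M)(i)\big) = 2\log\|M\bv\|$. The first thing to check is that this is well-defined: if $\tau(M)(i)=\tau(M')(i)$ then $M' = RM$ for some $R$ in the stabilizer of $i$, which under $\tau$ is $\SO(2)$ (up to sign), and since $\SO(2)$ preserves the Euclidean norm, $\|M'\bv\| = \|RM\bv\| = \|M\bv\|$. So $f_\bv$ descends to a function on $\HH$. Next I would verify directly that $f_\bv$ is a Busemann function for the geodesic $\gamma_\bv$ ending at the slope of $\bv$. Concretely, normalize: applying $\tau$, it suffices by the transitivity of the isometry group on geodesics to treat the case $\bv = (0,1)^{T}$ (slope $\infty$), whose associated vertical geodesic is $t \mapsto \tau(G_t)(i) = e^t i$ from \eqref{eq:vertical}. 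Here $\|G_t\bv\| = e^{-t/2}$... wait, one must be careful with the exponent conventions in \eqref{eq:vertical}; I would instead pick the normalization making $\bv$ the eigenvector with the growing eigenvalue, so that $2\log\|G_t \bv\|$ grows linearly like $t$, matching $-B_{\gamma}$ up to sign, i.e.\ I'd arrange $f_\bv\big(\gamma_\bv(t)\big) = -t + c$. Then for a general point $z = x+iy = \tau(M)(i)$, one computes $\|G_t M\bv\|$ as $t\to\infty$ and checks $2\log\|G_t M \bv\| - (-t) \to$ a finite limit equal to $\lim_{t\to\infty}[d(\gamma_\bv(t),z) - t]$, which for the vertical geodesic is the standard horocycle computation $-\log y$. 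This pins down the additive constant and shows $f_\bv = B_{\gamma_\bv}$. The sign in \eqref{eq:Busemann} as written is $+2\log\|M\bv\|$, so whichever of $\pm\bv$ (they give the same value by $\|{-M\bv}\| = \|M\bv\|$) one picks, the normalization is forced by requiring the slope to be $\gamma(+\infty)$; I'd just make the conventions consistent with the displayed formula and note $\bv$ is determined up to $-I$.

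\textbf{Step 2: the converse.} Given an arbitrary Busemann function $B_\gamma$, write $\gamma(t) = \tau(G_t M)(i)$ for some $M \in \SL(2,\RR)$, as noted just before the proposition. Let $\bv_0 = (0,1)^T$ and set $\bv = M^{-1}\bv_0$ (with the norm-growing normalization from Step 1). Then by Step 1 applied to the vertical geodesic, $B_{\tau(G_\bullet)(i)} = f_{\bv_0}$, and both $B_\gamma$ and $f_\bv$ transform the same way under precomposition by the isometry $\tau(M)$ — Busemann functions because $B_{h\circ\gamma} = B_\gamma \circ h^{-1}$ for an isometry $h$, and $f_\bv$ because $f_{M^{-1}\bv_0}(\tau(N)(i)) = 2\log\|NM^{-1}\bv_0\| = f_{\bv_0}(\tau(NM^{-1})(i))$. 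Matching these gives $B_\gamma = f_\bv$, which is exactly \eqref{eq:Busemann}; and the slope of $\bv = M^{-1}\bv_0$ is $\tau(M)(\infty) = \gamma(+\infty)$ by the defining property of $\tau$.

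\textbf{Expected obstacle.} The only real friction is bookkeeping: getting the sign and the exponent conventions in \eqref{eq:tau}, \eqref{eq:vertical}, and \eqref{eq:Busemann} to line up, and carrying out the limit $\lim_{t\to\infty}\big[2\log\|G_t M\bv\| + t\big]$ cleanly. This is a routine but error-prone computation — the factor of $2$ in \eqref{eq:Busemann} is there precisely to match the speed of $G_t$, and I would double-check it by evaluating at $M = I$, $\bv = (0,1)^T$, where both sides should equal $B_\gamma(i) = 0$. Once the normalization in Step 1 is nailed down, Step 2 is a formal consequence of equivariance and needs no further estimates.
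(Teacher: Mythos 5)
Your approach is the same as the paper's: establish the formula for the vertical geodesic $\eta(t) = e^t i$, using $B_\eta(z) = -\log\Im z$ together with the computation $\Im\big(\tau(M)(i)\big) = \frac{1}{b^2+d^2}$ for $M = (\begin{smallmatrix}a&b\\c&d\end{smallmatrix})$ to get $B_\eta\big(\tau(M)(i)\big) = \log(b^2+d^2) = 2\log\|M(0,1)^T\|$; then transport to a general geodesic by the equivariance $B_{h\circ\gamma} = B_\gamma\circ h^{-1}$, which is exactly your Step 2 and exactly what the paper does (in the opposite logical order, but with the same content).

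The one genuine flaw is the second-guess in Step 1 about switching to the eigenvector with the growing eigenvalue. Your first computation was the right one. The vertical geodesic $\eta(t) = \tau(G_t)(i) = e^t i$ has $\eta(+\infty) = \infty$, and $\infty$ is the slope of $(0,1)^T$, which is the \emph{shrinking} eigenvector of $G_t$; indeed $\|G_t(0,1)^T\| = e^{-t/2}$ gives $2\log\|G_t(0,1)^T\| = -t = B_\eta\big(\eta(t)\big)$, which already matches the displayed equation \eqref{eq:Busemann} with no sign adjustment. Switching to the growing eigenvector $(1,0)^T$ would both move the slope to $0$, the wrong endpoint (namely $\eta(-\infty)$), and flip the sign to $+t$, which is $-B_\eta$. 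So drop the hesitation: $\bv_0 = (0,1)^T$ is correct, the factor of $2$ in \eqref{eq:Busemann} is what makes the speed of $G_t$ match the unit speed of the geodesic, and your sanity check at $M = I$ does confirm this, since both sides are then $0$.
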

\begin{proof}
It is an elementary exercise to observe that if $\eta$ is a vertical geodesic of the form $t \mapsto a + e^t i$ for some $a \in \RR$, then the corresponding Busemann function is $B_\eta(z)=-\log \Im z$. We can then compute that 
$$B_\eta\big(\tau(M)(i)\big)= \log (b^2 + d^2) = \log \left\|M \begin{pmatrix}
0 \\
1 
\end{pmatrix}\right\|^2
\quad 
\text{for }
M = \begin{pmatrix}
a & b \\
c & d
\end{pmatrix} \in \SL(2,\RR).$$
Now if $\gamma$ is a different unit-speed geodesic in $\HH$, there is an $N \in \SL(2,\RR)$
such that $[\tau(N) \circ \gamma](t)=\eta(t)$. By naturality of the definition of the Busemann function, we have $B_\gamma(z)=B_{\tau(N) \circ \gamma}\big(\tau(N)(z)\big)$. Thus,
$$
B_\gamma\big(\tau(M)(i)\big) = 
B_\eta\big(\tau(MN)(i)\big) 
 =\log \left\|MN \begin{pmatrix}
0 \\
1 
\end{pmatrix}\right\|^2=2\log \left\|M\bv\right\|
$$
where $\bv=N\begin{pmatrix}
0 \\
1 
\end{pmatrix}.$ Write $N=\begin{pmatrix}
a' & b' \\
c' & d'
\end{pmatrix}.$ Then $\bv=\begin{pmatrix}
b' \\
d'
\end{pmatrix}.$
Since the endpoint of the geodesic $\eta$ is $\eta(+\infty)=\infty$, we have
$\gamma(+\infty)=\tau(N^{-1})(\infty)=\frac{d'}{b'}$, which agrees with the slope of $\bv$.
To see the converse, it suffices to observe that any nonzero $\bv$ can be realized as the second column of a matrix in $N \in \SL(2,\RR)$. Then defining $\gamma$ by $\gamma(t)=[\tau(N^{-1}) \circ \eta](t)$ gives a geodesic satisfying \eqref{eq:Busemann} by the computations above.
\end{proof}

\begin{lemma}
\label{min busemann}
Let $\bu \in \RR^2$ be a unit vector. Let $\bv \in \RR^2$ be a nonzero vector, and let $B$ be the Busemann function given satisfying \eqref{eq:Busemann} for this $\bv$. Let $\gamma:\RR \to \HH$ be the geodesic such that $\gamma(0)=i$ and $\gamma(+\infty)$ is the slope of $\bu$.
Then the function $\RR \to [0, +\infty]$ given by $t \mapsto \exp B\big(\gamma(t)\big)$ extends to a convex continuous function $[-\infty,+\infty] \to [0,+\infty]$ that attains its minimum value of $2\big|(\bu \cdot \bv)(\bu \wedge \bv)\big|$ only at 
$t=\log |\bu \cdot \bv| - \log |\bu \wedge \bv|$.
\end{lemma}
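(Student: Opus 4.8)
The plan is to produce an explicit unit-speed parametrization of $\gamma$ and then read off $\exp B(\gamma(t))$ from the formula \eqref{eq:Busemann} of \Cref{prop:Busemann}. First I would fix a rotation $N \in \SO(2)$ carrying $\bu$ to $\pm(0,1)$, and set $\gamma(t) = \tau(G_t N)(i)$ with $G_t = \bigl(\begin{smallmatrix} e^{t/2} & 0 \\ 0 & e^{-t/2} \end{smallmatrix}\bigr)$ as in \eqref{eq:vertical}. Since $\tau$ is an anti-homomorphism and elements of $\SO(2)$ fix $i$, this curve is the image under the isometry $\tau(N)$ of the vertical unit-speed geodesic $t \mapsto e^t i = \tau(G_t)(i)$; hence $\gamma$ is a unit-speed geodesic with $\gamma(0)=\tau(N)(i)=i$, and because the vertical geodesic has forward endpoint $\infty$ (the slope of $(0,1)$) and $\tau(N)$ sends the slope of $\bw$ to the slope of $N^{-1}\bw$, we get $\gamma(+\infty) = $ slope of $N^{-1}(0,1) = \pm\bu$. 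So $\gamma$ is exactly the geodesic in the statement, and by \eqref{eq:Busemann} with $M = G_t N$,
$$\exp B\big(\gamma(t)\big) = \exp\big(2\log\|G_t N\bv\|\big) = \|G_t N\bv\|^2.$$

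Next I would compute $N\bv$. Writing $\bv = (\bu\cdot\bv)\,\bu + (\bu\wedge\bv)\,\bu^\perp$, where $\bu^\perp$ is $\bu$ rotated by $90^\circ$ so that $\bu^\perp\cdot\bv = \bu\wedge\bv$, and using that $N$ carries $\bu$ to $\pm(0,1)$ and therefore $\bu^\perp$ to $\mp(1,0)$, one obtains $N\bv = \big(\mp(\bu\wedge\bv),\ \pm(\bu\cdot\bv)\big)$; in particular the coordinates of $N\bv$ have absolute values $|\bu\wedge\bv|$ and $|\bu\cdot\bv|$ regardless of the sign choices. Applying $G_t$ then gives the closed form
$$\exp B\big(\gamma(t)\big) = \|G_t N\bv\|^2 = e^{t}(\bu\wedge\bv)^2 + e^{-t}(\bu\cdot\bv)^2.$$

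From this formula the remaining claims are immediate. The right-hand side is a sum of two exponentials, hence continuous and convex on $\RR$, and its limits as $t \to \pm\infty$ exist in $[0,+\infty]$ (each being $+\infty$ or $0$ according to whether the relevant coefficient is nonzero), so it extends to a convex continuous map $[-\infty,+\infty] \to [0,+\infty]$. By the AM--GM inequality $e^t\alpha^2 + e^{-t}\beta^2 \ge 2|\alpha\beta|$, with equality exactly when $e^{2t} = \beta^2/\alpha^2$, the minimum value is $2\big|(\bu\cdot\bv)(\bu\wedge\bv)\big|$, attained only at $t = \log|\bu\cdot\bv| - \log|\bu\wedge\bv|$; in the degenerate cases $\bu\wedge\bv = 0$ or $\bu\cdot\bv = 0$ (not both, since $\bv \neq 0$) the function is monotone and the unique minimizer is $+\infty$ or $-\infty$ respectively, which is consistent with the stated formula under the conventions $\log 0 = -\infty$. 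I do not expect a genuine obstacle here; the only points needing care are the bookkeeping of signs in the choice of $N$ and the computation of $N\bv$, and a clean treatment of the two degenerate cases.
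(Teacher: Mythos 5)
Your proof is correct and follows essentially the same route as the paper: choose a rotation carrying $\bu$ to a vertical vector, write $\gamma(t)=\tau(G_t N)(i)$, read off $\exp B(\gamma(t))=\|G_t N\bv\|^2 = e^t(\bu\wedge\bv)^2 + e^{-t}(\bu\cdot\bv)^2$, and optimize. The only cosmetic difference is that you compute $N\bv$ via the orthogonal decomposition of $\bv$ rather than by explicit matrix multiplication as the paper does, and you spell out the degenerate cases $\bu\cdot\bv=0$ and $\bu\wedge\bv=0$, which the paper leaves implicit.
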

\begin{proof}
Write $\bu=(u_1, u_2)$. Let $R$ be the rotation matrix such that $R(\bu)=(0,1)$. Then,
$$R = \begin{pmatrix}
u_2 & -u_1 \\
u_1 & u_2  \\
\end{pmatrix}.$$
Define $\gamma(t)=[\tau(R) \circ \tau(G_t)](i)$. Then $\gamma$ is the image under $\tau(R)$ of the geodesic in \eqref{eq:vertical}. Therefore, we have $\gamma(+\infty)=\tau(R)(\infty)=\frac{u_2}{u_1}$, the slope of $\bu$, by \eqref{eq:tau}. Using our definition of $B$, we see that
$$\exp~B\big(\gamma(t)\big) = \exp~B\big(\tau(G_t R)(i)\big)=\|G_t R \bv\|.$$
We have
$$G_t R \bv = G_t \begin{pmatrix}
-\bu \wedge \bv \\
\bu \cdot \bv
\end{pmatrix} = \begin{pmatrix}
-e^{t/2}(\bu \wedge \bv) \\
e^{-t/2} (\bu \cdot \bv)
\end{pmatrix}.$$
Thus we see that
$$\exp B\big(\gamma(t)\big) = e^{t}(\bu \wedge \bv)^2 + e^{-t}(\bu \cdot \bv)^2,$$
which is clearly convex. It is a standard calculus exercise that this function, which is continuous and well-defined for $t \in [-\infty, +\infty]$ is minimized when $e^t=|\frac{\bu \cdot \bv}{\bu \wedge \bv}|$. Thus, we have
$$\inf\,\Big\{\exp B\big(\gamma(t)\big):~t \in \RR\Big\} = 2 \big|(\bu \cdot \bv)(\bu \wedge \bv)\big|$$
as desired.
\end{proof}

We turn our attention to the structure arising from completely periodic directions on a normal cover $\tbS$ of the closed translation surface $\bS$, possibly branched over the singularities of $\bS$. Let $\Delta$ denote the deck group. By normality of the cover, $\bS=\tbS/\Delta$.

\newcommand{\mcr}{\mathrm{mcr}}%
\newcommand{\mcs}{\mathrm{mcs}}%

Let $\bv$ be unit vector parallel to a cylinder decomposition on $\tbS$. We can use the deck group to construct a fundamental domain $D \subset \tbS$ for the action of $\Delta$. First, observe that the deck group permutes cylinders in the direction $\tbS$. Thus, we can choose a cylinder from every $\Delta$-orbit of a maximal cylinder. The image of each such cylinder is a different cylinder on $\bS$, so we end up choosing finitely many cylinders $C_1, \ldots, C_n \subset \tbS$. Let $c_i$ denote the circumference of $C_i$. 
The {\em maximal circumference ratio} for the completely periodic direction $\bv$ is 
\begin{equation}
\label{eq:mcr}
\mcr(\bv)=\max~\left\{\frac{c_i}{c_j}:~i,j \in \{1, \ldots, n\}\right\}.
\end{equation}
The stabilizer of a single cylinder $C_i$ must act on $C_i$ as a translation automorphism. Therefore each $C_i$ is stabilized by cyclic subgroup of $\Delta$ with finite order say $r_i \geq 1$. The maximal cylinder stabilizer in direction $\bv$ is 
\begin{equation}
\label{eq:mcs}
\mcs(\bv)=\max~\left\{r_i:~i \in \{1, \ldots, n\}\right\}.
\end{equation}
A rectangular fundamental domain $R_i$ for the action of the stabilizer in $\Delta$ on $C_i$ can be be obtained by cutting along two segments perpendicular to the core curve of $C_i$. (If $r_i=1$, we define $R_i=C_i$.) The area of $R_i$ is $\frac{1}{r_i}$ times the area of $C_i$. The union $D=\bigcup R_i$ is a fundamental domain for the $\Delta$ action on $\tbS$.

\begin{cor}
\label{cor:cylinder decomp}
Continue the notation of the paragraph above for working with a completely periodic direction described by a unit vector $\bv$. Assume that the cylinders are listed in order of increasing circumferences: $c_1 \leq c_2 \leq \ldots \leq c_n$.
Then for any unit vector $\bu$,
$$d(x,F^t_{\bu}(x)\big) \leq c_n |\bu \wedge \bv|$$
holds for some $t>c_1|\bu \cdot \bv|$ except for a subset of the fundamental domain $D$ whose measure is at most 
$$|(\bu \cdot \bv^\ast)(\bu \wedge \bv^\ast)|
\quad \text{where} \quad 
\bv^\ast = {\textstyle \sqrt{\sum_{i=1}^n \frac{c_i^2}{r_i}}}\,\bv.$$
\end{cor}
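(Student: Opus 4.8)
The plan is to apply \Cref{prop:cylinder} to each of the cylinders $C_1,\ldots,C_n$ in turn, transport the resulting estimates to the fundamental domain $D=\bigcup R_i$, and then combine the measures of the bad sets. The key point is that for a point $x$ lying in the rectangle $R_i \subset C_i$, \Cref{prop:cylinder} (applied on the translation surface $\tbS$ to the cylinder $C_i$, whose core holonomy is $c_i \bv / \|\bv\|$... more precisely, $\bv$ scaled so that $\|\mathbf{w}_i\|=c_i$) produces a return time $t_i \in [0, |\bu\cdot \mathbf{w}_i|]$ with $d(x, F^t_\bu(x)) \leq |\bu \wedge \mathbf{w}_i| = c_i|\bu\wedge \bv|$ where $\bv$ is now the \emph{unit} vector parallel to the decomposition, so $\mathbf{w}_i = c_i \bv$. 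Since $c_i \leq c_n$, the displacement bound $c_n|\bu\wedge\bv|$ holds uniformly, and since $c_1 \leq c_i$ the return time satisfies $t_i \geq $ something; one must be slightly careful to get $t > c_1|\bu\cdot\bv|$, which should follow because the smallest cylinder forces a strictly positive lower bound — I would take $t$ to be the return time for the \emph{largest} cylinder containing $x$, or argue that the closest return in $C_i$ has $t_i \geq c_i |\bu \cdot \bv| \geq c_1|\bu\cdot\bv|$ by the formula $|XB| = \|\mathbf{w}_i\|\cos\theta$ in the proof of \Cref{prop:cylinder}. (A small wrinkle: \Cref{prop:cylinder} gives $t = |\bu\cdot\bv|$ exactly on the good subcylinder, so one gets $t = c_i|\bu\cdot\bv| \geq c_1|\bu\cdot\bv|$; the strict inequality may require noting $t>0$ and rescaling, or simply stating $t \geq c_1|\bu\cdot\bv|$ — I would double-check the exact claim wanted.)

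The next step is the measure bookkeeping. \Cref{prop:cylinder} says the bad set inside $C_i$ is a subcylinder of area at most $|(\bu\cdot\mathbf{w}_i)(\bu\wedge\mathbf{w}_i)| = c_i^2 |(\bu\cdot\bv)(\bu\wedge\bv)|$. But we only care about the portion of this bad set lying in the rectangular fundamental domain $R_i$, which is $\frac{1}{r_i}$ of $C_i$; since the bad subcylinder is a union of core-parallel curves and $R_i$ is cut out by two perpendicular segments, the bad set in $R_i$ has measure at most $\frac{1}{r_i}$ times the bad set in $C_i$, i.e.\ at most $\frac{c_i^2}{r_i}|(\bu\cdot\bv)(\bu\wedge\bv)|$. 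Summing over $i=1,\ldots,n$, the total measure of the set of $x \in D$ for which the conclusion fails is at most
$$\Big(\sum_{i=1}^n \frac{c_i^2}{r_i}\Big)\,|(\bu\cdot\bv)(\bu\wedge\bv)| = |(\bu\cdot\bv^\ast)(\bu\wedge\bv^\ast)|,$$
using $\bv^\ast = \sqrt{\sum_i c_i^2/r_i}\,\bv$ and bilinearity of both $\cdot$ and $\wedge$ under scaling (the scalar comes out squared, matching $\sum c_i^2/r_i$). This gives exactly the stated bound.

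I would organize the write-up as: (1) normalize $\bv$ to be the unit vector parallel to the decomposition so each $C_i$ has core holonomy $c_i\bv$; (2) for each $i$, invoke \Cref{prop:cylinder} with $\bv$ replaced by $c_i\bv$ to get the good subcylinder $C_i'\subset C_i$ on which $d(x,F^{c_i|\bu\cdot\bv|}_\bu(x)) \leq c_i|\bu\wedge\bv| \leq c_n|\bu\wedge\bv|$, noting $c_i|\bu\cdot\bv|\geq c_1|\bu\cdot\bv|$; (3) intersect with $R_i$ and bound the complement's measure by $\frac{c_i^2}{r_i}|(\bu\cdot\bv)(\bu\wedge\bv)|$; (4) sum over $i$ and rewrite via $\bv^\ast$. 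The main obstacle I anticipate is purely technical rather than conceptual: making sure the flow time is measured in the correct units (the core curve of $C_i$ has \emph{circumference} $c_i$, so its holonomy vector has length $c_i$, and \Cref{prop:cylinder}'s $\bv$ must be this length-$c_i$ vector, not the unit vector), and making sure the intersection of the bad subcylinder with the rectangular fundamental domain $R_i$ really does scale the measure by $1/r_i$ — this is true because $R_i$ is obtained by cutting $C_i$ along two curves perpendicular to the core direction, so it meets every core-parallel subcylinder in exactly a $1/r_i$ fraction of its length. I would also need to handle the degenerate case $\bu \perp \bv$ (where the bad set is empty and the statement is vacuous) exactly as in \Cref{prop:cylinder}.
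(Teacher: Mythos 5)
Your proposal is correct and matches the paper's proof essentially step for step: apply \Cref{prop:cylinder} to each $C_i$ with core holonomy $c_i\bv$, restrict the resulting bad subcylinder to the rectangular fundamental domain $R_i$ (picking up the $1/r_i$ factor because $R_i$ is cut by segments perpendicular to the core), and sum the bounds $\tfrac{c_i^2}{r_i}|(\bu\cdot\bv)(\bu\wedge\bv)|$ to recover $|(\bu\cdot\bv^\ast)(\bu\wedge\bv^\ast)|$. The strict-versus-nonstrict wrinkle you flag about $t>c_1|\bu\cdot\bv|$ is a real imprecision, but it is present in the paper's own proof too (which only obtains $t_i=c_i|\bu\cdot\bv|\geq c_1|\bu\cdot\bv|$) and is harmless where the corollary is applied.
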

\begin{proof}
Fix $i$. \Cref{prop:cylinder} gives a return under $F_{[\bu]}$ at time $t_i=c_i |\bu \cdot \bv| \geq |\bu \cdot \bv_i\|$ to a distance within $c_i \|\bu \wedge \bv\| \leq c_n |\bu \wedge \bv\|$ for all $x \in C_i$ except a subcylinder $C_i'$ whose area is at most $c_i^2 |(\bu \cdot \bv)(\bu \wedge \bv)|$. Since $R_i \subset C_i$ is a rectangle formed by segments perpendicular to the boundary whose area is $\frac{1}{r_i}$ times the area of $C_i$, we have 
$R_i \cap C_i' = \frac{c_i^2}{r_i} |(\bu \cdot \bv)(\bu \wedge \bv)|$. Thus, the statement holds for all $x \in D$ except for those in $\bigcup_i (R_i \cap C_i')$, which has area $|(\bu \cdot \bv^\ast)(\bu \wedge \bv^\ast)|$ with $\bv^\ast$ as in the statement.
\end{proof}

Let $P \subset \SS^1$ denote a collection of completely periodic directions on $\tbS$.
Then for each $\bv \in P$, we can define $\bv^\ast$ as in \Cref{cor:cylinder decomp}. Via \Cref{prop:Busemann}, we can define the Busemann function associated to the direction $\bv$ via
\begin{equation}
\label{eq:Busemann of v}
B_\bv\big(\tau(M)(i)\big)=2 \log \|M \bv^\ast\|.
\end{equation}
For each $m \geq 1$, define 
\begin{equation}
\label{eq:P directions}
P_m=\{\bv \in P:~ \text{$\mcr(\bv) \leq c$ and $\mcs(\bv) \leq c$}\}.
\end{equation}

For a constant $m \geq 1$, we define
\begin{equation}
\label{eq:busemann inf}
B^{\inf}_m: \HH \to [-\infty, +\infty]; \quad B^{\inf}(z) = \inf \{B_\bv(z):~\bv \in P_m\}.
\end{equation}
Note that because it is the infimum of $1$-Lipschitz functions, $B^{\inf}_m$ is also $1$-Lipschitz. (If $P_m$ is empty, we take $B^{\inf}_m \equiv +\infty$.)

The following result is our main method for detecting recurrent directions:
\begin{lemma}
\label{lem:recurrence}
Let $\bu \in \RR^2$ be a unit vector. 
Let $\gamma:\RR \to \HH$ be the geodesic such that $\gamma(0)=i$ and $\gamma(+\infty)$ is the slope of $\bu$. If there is a $m \geq 1$ such that
$$\inf \big\{B^{\inf}_m\big(\gamma(t)\big):~ t \geq 0\}=-\infty,$$
then the flow $F_\bu$ is recurrent.
\end{lemma}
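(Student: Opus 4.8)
The plan is to unwind the divergence hypothesis into a Borel--Cantelli argument on a single fixed fundamental domain $D \subset \tbS$ for the deck group $\Delta$: the directions $\bv$ witnessing $B^{\inf}_m(\gamma(t)) \to -\infty$ will supply, via \Cref{cor:cylinder decomp}, times tending to $+\infty$ at which most of $D$ returns arbitrarily close to itself under $F_\bu$, while the exceptional sets will have summable measure along a subsequence. Throughout, $\mu$ denotes the natural $F_\bu$-invariant Lebesgue measure on $\tbS$.

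First I would translate the hypothesis. Choose $m \geq 1$ with $\inf\{B^{\inf}_m(\gamma(t)) : t \geq 0\} = -\infty$, and pick $\bv_n \in P_m$ and $t_n \geq 0$ with $B_{\bv_n}(\gamma(t_n)) \to -\infty$. Writing $\bv_n^\ast = \kappa_n \bv_n$ for the rescaling from \Cref{cor:cylinder decomp} (so $\kappa_n^2 = \sum_i c_i(\bv_n)^2/r_i(\bv_n)$), the computation inside the proof of \Cref{min busemann}, applied to the vector $\bv_n^\ast$, gives
\[
\exp B_{\bv_n}\big(\gamma(t)\big) = e^{t}\,(\bu \wedge \bv_n^\ast)^2 + e^{-t}\,(\bu \cdot \bv_n^\ast)^2 ,
\]
so $\exp B_{\bv_n}(\gamma(t_n)) \to 0$. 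Since $t_n \geq 0$, the first term alone forces $|\bu \wedge \bv_n^\ast| \to 0$; and since $\exp B_{\bv_n}(\gamma(t_n))$ is at least the minimum value $2\,|(\bu\cdot\bv_n^\ast)(\bu\wedge\bv_n^\ast)|$ from \Cref{min busemann}, we also get $\delta_n := |(\bu\cdot\bv_n^\ast)(\bu\wedge\bv_n^\ast)| \to 0$. Finally, because $r_i(\bv_n) \leq \mcs(\bv_n) \leq m$ we have $\kappa_n \geq c_{\max}(\bv_n)/\sqrt{m}$, so the candidate return distance $\epsilon_n := c_{\max}(\bv_n)\,|\bu\wedge\bv_n| \leq \sqrt{m}\,\kappa_n|\bu\wedge\bv_n| = \sqrt{m}\,|\bu\wedge\bv_n^\ast| \to 0$.

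Next I would feed each $\bv_n$ into \Cref{cor:cylinder decomp}: it yields a time $t'_n > c_{\min}(\bv_n)\,|\bu\cdot\bv_n|$ and a $\Delta$-fundamental domain $D^{(n)}$ such that $d\big(x, F^{t'_n}_\bu(x)\big) \leq \epsilon_n$ for all $x \in D^{(n)}$ outside a set of measure $\leq \delta_n$. Because $\Delta$ acts by measure-preserving isometries commuting with $F_\bu$, pushing this estimate around by $\Delta$ and intersecting with one fixed fundamental domain $D$ produces a bad set $\tilde E_n \subset D$ with $\mu(\tilde E_n) \leq \delta_n$ off which $d(x, F^{t'_n}_\bu(x)) \leq \epsilon_n$ still holds. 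Now either $t'_n \to +\infty$ along some subsequence, in which case I pass to a further subsequence $(n_k)$ with $\sum_k \delta_{n_k} < \infty$ and apply Borel--Cantelli in the finite-measure space $D$: for $x \in D$ outside $\limsup_k \tilde E_{n_k}$ (a $\mu$-null set), one has $d(x, F^{t'_{n_k}}_\bu(x)) \leq \epsilon_{n_k}$ for all large $k$, and since $t'_{n_k} \to \infty$ and $\epsilon_{n_k} \to 0$, such $x$ is positively recurrent, so $\mu$-a.e. point of $D$ — and, by $\Delta$-equivariance, $\mu$-a.e. point of $\tbS$ — is positively recurrent. Or else $t'_n$ stays bounded; then $c_{\min}(\bv_n)|\bu\cdot\bv_n|$ is bounded, and ruling out $|\bu\cdot\bv_n| \to 0$ (which with $\epsilon_n \to 0$ would force $c_{\max}(\bv_n) \to 0$, impossible on the compact base $\bS$) leaves $c_{\min}(\bv_n)$ bounded, hence by $\mcr(\bv_n) \leq m$ all circumferences bounded, hence only finitely many $\bv_n$ occur, hence $\bv_n$ is eventually a constant direction which, by $\epsilon_n \to 0$, must be $\pm\bu$; but then $\bu$ is parallel to a cylinder decomposition of $\tbS$, so $F_\bu$ has a periodic orbit through $\mu$-a.e. point and is trivially recurrent. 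Either way $F_\bu$ is positively recurrent $\mu$-a.e.; replacing $\bu$ by $-\bu$ leaves $\gamma$ unchanged (it depends only on the slope of $\bu$), so the same argument gives negative recurrence, and $F_\bu$ is recurrent.

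I expect the main obstacle to be the bookkeeping in the third paragraph: \Cref{cor:cylinder decomp} only controls a fundamental domain that depends on the direction $\bv$, so one must use deck-group equivariance to move all the estimates onto a single fixed finite-measure piece before Borel--Cantelli applies, and one must separately dispatch the degenerate alternative in which the return times produced by the corollary fail to diverge.
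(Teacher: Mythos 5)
Your proof is correct and follows essentially the same route as the paper's: both feed the estimates from \Cref{min busemann} and \Cref{cor:cylinder decomp} into a Borel--Cantelli argument and dispose of the degenerate alternative by observing that $\bu$ must then be a cylinder-decomposition direction. You sort the special cases at the end rather than eliminating them up front (the paper's ``special case 1'' becomes your bounded-$t'_n$ branch), and your explicit passage to a subsequence with $\sum_k \delta_{n_k} < \infty$ is in fact a welcome tightening of the paper's more informal ``for almost every point there are infinitely many $n$'' step.
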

\begin{proof}
First note that we are really taking an infimum over both $\bv \in P$ and $t \in \RR$. That is, our hypothesis is equivalent to 
$$\inf \big\{B_\bv\big(\gamma(t)\big):~\text{$\bv \in P_m$ and $t\geq 0$}\}=-\infty.$$
Suppose this equation is true.

We eliminate two special cases.
For the first special case, it is possible that there is at least one $\bv \in P_m$ such that
$$\inf \{B_\bv\big(\gamma(t)\big):~\text{$t \geq 0$}\}=-\infty.$$
In this case, \Cref{min busemann} guarantees that after continuously extending the function $t \mapsto B_\bv\big(\gamma(t)\big),$ the minimum is attained at $t=+\infty$. Further, since the minimum is attained when $e^t=\big|\frac{\bu \cdot \bv^\ast}{\bu \wedge \bv^\ast}\big|$, $\bu$ must be parallel to $\bv$. Since $\bv$ is the direction of a cylinder decomposition of $\tbS$, this is a recurrent direction.

For the second special case, it is possible that there is a nonrepeating sequence $\bv_n \in P_m$ such that
$y_n = B_{\bv_n} \big(\gamma(0)\big) \to -\infty$.
We will show this is actually impossible. Since $\gamma(0)=i=\tau(I)(i)$, via our definition of $B_\bv$ we have
$y_n = 2 \log \|\bv^\ast_n\|.$
We conclude that $\|\bv^\ast_n\| \to 0$. Note that in each direction $\bv_n$, the cylinders have circumference less than $m\|\bv^\ast_n\|$. The image of such a cylinder in $\bS$ is a cylinder of (nonstrictly) smaller circumference. So, $\bS$ has a sequence of cylinders whose circumference tends to zero. This contradicts the fact that there are only finitely many holonomy vectors of cylinders in a bounded set (which follows from the main results in \cite{Masur88}).

For the remainder of the proof, suppose we not in the first special case above. Then there is a nonrepeating sequence $\bv_n \in P_m$ such that
$$x_n = \inf \{B_{\bv_n}\big(\gamma(t)\big):~\text{$t \geq 0$}\} \to -\infty \quad \text{as $n \to +\infty$}$$
\Cref{min busemann} allows us to evaluate these infima; We have
$$\frac{1}{2} \exp x_n = \begin{cases}
\frac{1}{2} \exp B_{\bv_n}\big(\gamma(0)\big) & \text{if $|\bu \cdot \bv_n^\ast| \leq |\bu \wedge \bv_n^\ast|$} \\
\big|(\bu \cdot \bv_n^\ast)(\bu \wedge \bv_n^\ast)\big| & \text{otherwise.}
\end{cases}$$
Since the second special case above is impossible, we can assume by passing to a subsequence that $|\bu \cdot \bv_n^\ast| \geq |\bu \wedge \bv_n^\ast|$ for all $n$ and therefore
$$\tfrac{1}{2} \exp x_n = \big|(\bu \cdot \bv_n^\ast)(\bu \wedge \bv_n^\ast)\big| \to 0 \quad \text{as $n \to +\infty$}.$$

We now claim that $\lim_{n \to +\infty} \|\bv_n^\ast\| = +\infty$. This follows from the fact that the set of all $\bv$ that satisfy the condition 
$$K \geq |\bu \cdot \bv| \geq |\bu \wedge \bv|$$
is compact and star-shaped. By linearity of the dot and wedge product, this region is star-shaped in the sense that if $\bv$ satisfies this condition, and $y \in [0,1]$, then $y \bv$ satisfies the statement. The region is compact because it is closed and by the Pythagorean theorem it is bounded:
$$\|\bv\|^2 = |\bu \cdot \bv|^2 + |\bu \wedge \bv|^2.$$
Again, by definition of $\bv_n^\ast$, for each $n$, there is a cylinder on $\tbS$ (and hence also on $\bS$) whose holonomy has the form $y m \bv_n^\ast$ for some $y \in [0,1]$. So, existence of a bounded subsequence of the $\bv_n^\ast$ would again lead to a contradiction of the fact that given any bound, the number of holonomy vectors of cylinders on $\bS$ with norm less than the bound is finite.

Since we have $\|\bv_n^\ast\| \to +\infty$, $|\bu \cdot \bv_n^\ast| \geq |\bu \wedge \bv_n^\ast|$, and $\big|(\bu \cdot \bv_n^\ast)(\bu \wedge \bv_n^\ast)\big| \to 0$, we can conclude that
$$|\bu \cdot \bv_n^\ast| \to +\infty \quad \text{and} \quad |\bu \wedge \bv_n^\ast| \to 0.$$
Let $t_n$ denote the lower bound on the close return times provided by \Cref{cor:cylinder decomp} for the direction $\bv_n$ (namely, $c_1 \|\bu \cdot \bv_n\|$ in the notation of the corollary). Again, minimal cylinder circumferences in a periodic direction $\bv_n$ on $\bS$ must tend to $+\infty$ as $n \to +\infty$, so we have $t_n \to +\infty$. Let $d_n$ be the maximal return distance guaranteed by the Corollary ($c_n \|\bu \wedge \bv_n\|$ in the notation of the corollary). Noting that cylinder circumferences in direction $\bv_n$ are bounded by $m \|\bv_n^\ast\|$, we see that
$$d_n \leq m \|\bv_n^\ast\| |\bu \wedge \bv_n| \leq m |\bu \cdot \bv_n^\ast| |\bu \wedge \bv_n| \leq m\big|(\bu \cdot \bv_n^\ast)(\bu \wedge \bv_n^\ast)\big|,$$
for any $n$ such that $\|\bv_n^\ast\| \geq 1$. The last expression in the inequality above is $m$ times something we know tends to zero, so $d_n \to 0$. The subset of $A_n$ of a fundamental domain satisfying the return statement of \Cref{cor:cylinder decomp} has a compliment in the fundamental domain with measure at most $\big|(\bu \cdot \bv_n^\ast)(\bu \wedge \bv_n^\ast)\big| \to 0$. These compliments have measure tending to zero, so for almost every point $x \in \tbS$, there are infinitely many $n$ such that there is a deck transformation $\delta \in \Delta$ such that $\delta(x) \in A_n$. Any such point is positively recurrent under $F_{\bu}$. The same reasoning holds for $F_{-\bu}$ and a positively recurrent point for $F_{-\bu}$ is negatively recurrent for $F_{\bu}$. Thus, $F_\bu$ is recurrent.
\end{proof}

Now we can prove our recurrence theorem.

\begin{proof}[Proof of \Cref{thm:recurrence}]
Let $\bNh$ denote the translation cover of $\bN$, and let $\bHh$ denote the translation cover of $\bH$. Then $\bNh$ is a normal cover of $\bHh$ with deck group isomorphic to $\ZZ^2$. Note that $\bHh$ has Veech group $\Gamma_2 \subset \SL(2,\RR)$ where $\Gamma_2$ is as in \eqref{eq:Gamma 2 -I}.

Let $\bv$ be a unit vector whose slope is $\frac{p}{q}$ for two odd, relatively prime integers $p$ and $q$. As a consequence of \Cref{thm:odd-odd is closed}, the straight-line flow in this direction is completely periodic and each cylinder in this direction has circumference $6 \sqrt{p^2+q^2}$ and area six. This single cylinder on $\bHt$ has two lifts to $\bHh$, forming a cylinder decomposition of $\bHh$ with slope $\frac{p}{q}$ that consists of two cylinders of circumference $6 \sqrt{p^2+q^2}$ and area six.
Thus a fundamental domain for the deck group action on $\bNh$ consists of two cylinders of slope $\frac{p}{q}$, circumference $6 \sqrt{p^2+q^2}$, and area six. It follows that in the notation of \Cref{cor:cylinder decomp}, for $\bv$ as above, we have
$$\bv^\ast = 6\sqrt{2(p^2+q^2)} \bv=6 \sqrt{2}\begin{pmatrix}
q \\ p \end{pmatrix}.$$

We let $P$ be the collection of unit vectors with rational, odd over odd slope. From the above, this is a collection of completely periodic directions for $\bNh$ and we have $P_1=P$ where $P_1$ is defined as in \eqref{eq:P directions}. Note that the collection of vectors
$P^\ast=\{\bv^\ast:~\bv \in P\}$
is invariant under the action of $\Gamma_2$ by \Cref{lem:transitive}. For $\bv \in P$, we define the associated Busemann function $B_\bv$ as in \eqref{eq:Busemann of v}. This collection of Busemann functions is $\Gamma_2$ invariant in the sense that
$$B_\bw(z) = [B_\bv \circ \tau(N)](z) \quad \text{for $N \in \Gamma_2$ and $\bv, \bw \in P$ such that $\bw^\ast=N\bv^\ast$.}$$
%Explaination:
%$$B_\bw\big(\tau(M)(i)\big)=2 \log \|M \bw^\ast\|=2 \log \|M N \bv^\ast\|
%\quad \text{and} \quad
%[B_\bv \circ \tau(N)](\tau(M)(i)\big)=B_\bv\big(\tau(MN) \bv^\ast\big)=2 \log \|M N \bv^\ast\|.$$
Therefore, the function $B_1^{\inf}:\HH \to [-\infty, +\infty)$ as defined in \eqref{eq:busemann inf} is $\Gamma_2$-invariant and therefore descends to a well-defined function $F:\Gamma_2 \backslash \HH \to [-\infty, +\infty]$ which is also $1$-Lipschitz because $B_1^{\inf}$ is. Now fix a $\bu \in \SS^1$ and let $\gamma_\bu:[0,+\infty) \to \HH$ be the geodesic ray with $\gamma_\bu(0)=i$ and $\gamma_\bu(+\infty)$ the slope of $\bu$. We have
\begin{equation}
\label{eq:B equals F}
B_1^{\inf} \circ \gamma_\bu(t) = F(\Gamma_2 \cdot \gamma_\bu(t)\big).
\end{equation}

Note that $\Gamma_2 \backslash \HH$ is a finite-volume complete hyperbolic surface. Therefore, the geodesic flow on $\Gamma_2 \backslash \HH$ is ergodic. So almost every geodesic ray equidistributes with respect to the Harr measure on the quotient. Noting that two geodesic rays with the same endpoint either both equidistribute or neither does, we see that for almost every $\bu$, $\gamma_\bu$ equidistributes. Since $F$ is a continuous function whose infimum is $-\infty$, it follows that if $\bu$ is a unit vector such that $\gamma_\bu$ equidistributes, we have $\inf \{F(\Gamma_2 \cdot \gamma_\bu(t)\big)=-\infty$. Therefore, by
\eqref{eq:B equals F} and \Cref{lem:recurrence}, any such $\bu$ is a recurrent direction for the straight-line flow on $\bNh$. From \Cref{prop:conjugacy}, we conclude that for almost every $[\bu] \in \SS^1/C_4$, the flow $F_{[\bu]}^t$ is recurrent.
\end{proof}

\subsection{Ergodicity}

\begin{proof}[Proof of \Cref{thm:ergodicity}]
Recall that $\bN$ is a normal $\ZZ^2$-cover of $\bH$ as quarter-translation surfaces. It follows from \Cref{commute-branch-cover} that this covering lifts
to a covering between the minimal translation covers, $\bNh \to \bHh$, and \Cref{deck groups} guarantees that this is a normal cover with deck group isomorphic to $\ZZ^2$.

By \Cref{cor:odd twist}, $\bNt$ has twistable cylinder decompositions in the dense set of directions whose slopes are the ratio of two odd integers. Since $\ZZ^2$ is a nilpotent group, Corollary G.4 of \cite{H15} gives ergodicity of the straight-line flow on $\bNt$ in a dense set of directions with Hausdorff dimension larger than $\frac{1}{2}$. (In the notation of Appendix G of \cite{H15}, note that Lebesgue measure on $\bNt$ is the unique $\chi$-Maharam measure up to scale where $\chi:\ZZ^2 \to \RR$ is the trivial homomorphism $\chi \equiv 0$ \cite[Appendix G.2]{H15}.)
\end{proof}

\begin{remark}[Measure classification]
Corollary G.4 of \cite{H15} gives a classification of the locally finite ergodic straight-line flow invariant measures in the set of directions appearing in the proof; they are the Maharam measures and are in bijection with group homomorphisms $\ZZ^2 \to \RR$.
\end{remark}

\subsection{Divergence rate}
In \cite{DHL14}, the authors study a $\GL(2,\RR)$-invariant sublocus $\cG$ of the stratum $\cH(2^4)$ of genus five translation surfaces with four singularities with $6 \pi$ cone angles. (We follow the convention of \cite[\S 2]{DHL14} that the singularities be marked and come with a choice of a horizontal separatrix emanating from each singularity.) The sublocus $\cG$ is a collection of surfaces in $\cH(2^4)$ arising from a covering construction from a genus two surface with one singularity. These covers will be normal with the Klein four-group $(\ZZ/2\ZZ)^2$ as the Deck group.

By definition, $\cG$ is the $\GL(2,\RR)$-orbit closure of branched covers of $L$-shaped translation surfaces with genus two. We will verify that the translation cover $\bHh$ of the half-cube $\bH$ lies in the locus $\cG$, because this will enable use to utilize the analysis in \cite{DHL14}. Therefore, in this section we will follow to the notation in \cite{DHL14} as much as possible. The analysis of drift on $\bNh$ is almost identical to that of unfolded Ehrenfest wind-tree models associated to the simplest lattice examples in $\cG$, and all the hard work is done in \cite{DHL14}. This section summarizes their approach, and explains the relationship with the Necker cube surface.

First, we will explain that $\bHh$ is in $\cG$.
Take four square tori with central $\frac{1}{2} \times \frac{1}{2}$ sub-squares removed from each square. Let $T_{i,j}$ denote the resulting tori with squares removed for $i,j \in \ZZ/2\ZZ=\{0,1\}$. Then we identify each vertical edge of the removed square of $T_{i,j}$ to the opposite vertical edge of $T_{i+1,j}$, and we identify each horizontal edge of the removed square of $T_{i,j}$ to the opposite horizontal edge of $T_{i,j+1}$. A surface describable in this manner is shown on the right side of \Cref{fig:diffusion}. This figure also provides a new presentation of $\bH$ as a union of polygons with edge identifications and makes it clear that $\bHh$ is the translation cover of $\bH$. The surface $\bHh$ is also one of the $L$-shaped surface covers described in \cite[\S 3]{DHL14}. In particular, we conclude that $\bHh \in \cG$. The translation automorphism group of $\bHh$ contains an action of $(\ZZ/2\ZZ)^2$; Each $(k,l) \in (\ZZ/2\ZZ)^2$ acts by sending each $T_{i,j}$ to $T_{i+k,j+l}$ by translation. The quotient of $\bHh$ modulo this group action is the genus two surface covered by $\bHh$.

\begin{figure}[htb]
\begin{minipage}{5.7in}
\labellist
\small\hair 2pt
 \pinlabel {$x$} [l] at 150 79
 \pinlabel {$y$} [b] at 74 152
\endlabellist
\centering
\raisebox{-0.5\height}{\includegraphics[width=1.5in]{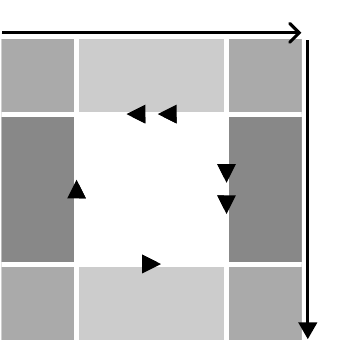}}
\labellist
\small\hair 2pt
 \pinlabel {$\hat x$} [l] at 150 275
 \pinlabel {$\hat y$} [b] at 71 348
 \pinlabel {$\hat x$} [b] at 272 348
 \pinlabel {$\hat y$} [r] at 195 273
 \pinlabel {$\hat x$} [t] at 69 14
 \pinlabel {$\hat y$} [l] at 150 91
 \pinlabel {$\hat x$} [r] at 195 93
 \pinlabel {$\hat y$} [t] at 267 14
 \pinlabel {$T_{0,0}$} [ ] at 71 91
 \pinlabel {$T_{1,0}$} [ ] at 270 93
 \pinlabel {$T_{0,1}$} [ ] at 71 273
 \pinlabel {$T_{1,1}$} [ ] at 272 271
\endlabellist
\hspace{0.5in}
\raisebox{-0.5\height}{\includegraphics[width=3.5in]{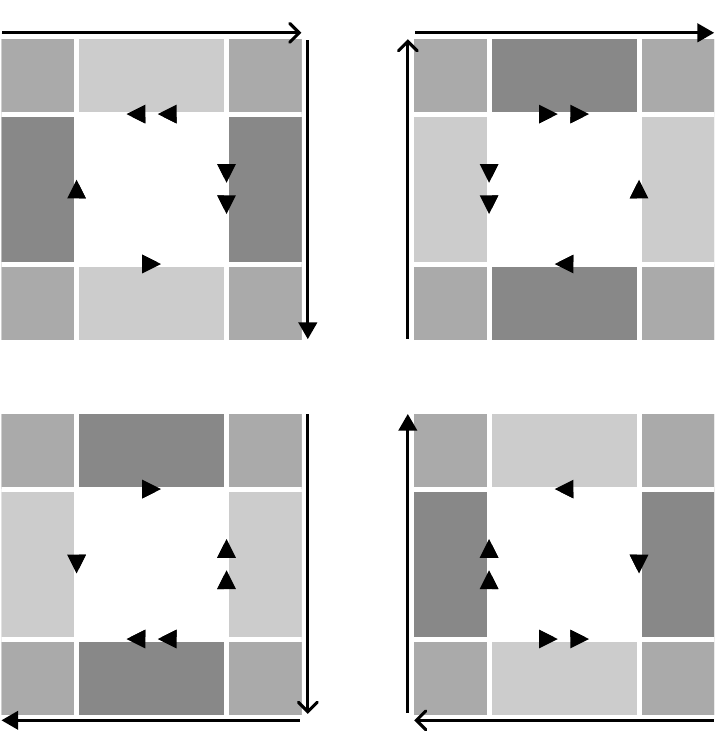}}
\end{minipage}
\caption{Left: A representation of the half-cube $\bH$ as the square torus with an inner square removed with inner edges identified. Right: The translation cover $\bHh$ of $\bH$ can be constructed from four such tori with inner squares removed together with edge identifications. This time inner edges are identified if the markings agree up to translation. In this presentation, the covering map $\bHh \to \bH$ given by orientation-preserving isometries from the tori with squares removed $T_{i,j}$ to the tori with square removed in the presentation for $\bH$ which respect the markings on the inner squares.}
\label{fig:diffusion}
\end{figure}

We will need to name some cohomology classes on the surface $\bHh$. For each pair $i,j \in \ZZ/2\ZZ$, let $h_{i,j} \in H^1(\bHh; \ZZ)$ denote the cohomology class such that for $\gamma \in H_1(\bHh;\ZZ)$, $h_{ij}(\gamma)$ returns the (signed) number of times $\gamma$ moves upward across a closed horizontal leaf in $T_{i,j}$. Similarly, let $v_{ij}$ denote the cohomology class keeping track of the number of times $\gamma$ moves rightward across a closed vertical leaf in $T_{i,j}$. The article \cite{DHL14} also uses the cohomology classes $h_{ij}$ and $v_{ij}$. 

\Cref{fig:diffusion} illustrates the Poincar\'e duals to two additional cohomology classes:
$$
\hat x = h_{00}-h_{11}+v_{01}-v_{10}
\quad \text{and} \quad
\hat y = h_{10}-h_{01}+v_{00}-v_{11}.$$
Let $\hat g=(\hat x, \hat y) \in H^1(\bHh; \ZZ^2)$. Observe:

\begin{prop}
The translation cover $\bNh$ of $\bN$ is isomorphic to the cover of $\bHh$ associated to $\hat g$, i.e., a curve $\hat \gamma$ in $\bHh$ lifts to $\bNh$ if and only if $\langle \hat x, \hat \gamma \rangle=\langle \hat y, \hat \gamma \rangle=0.$
\end{prop}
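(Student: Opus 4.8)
The plan is to prove the displayed characterization (the ``i.e.''\ clause), which is exactly what is needed: two connected normal covers of $\bHh$ are isomorphic precisely when the same loops lift, so identifying the cover attached to $\hat g$ with $\bNh$ amounts to showing that a loop $\hat\gamma$ in $\bHh$ lifts to $\bNh$ if and only if $\langle\hat x,\hat\gamma\rangle=\langle\hat y,\hat\gamma\rangle=0$. The first step is to identify $\bNh\to\bHh$ as a cover. Applying \Cref{deck groups} to the normal $\ZZ^2$-covering $\bN\to\bH$ (with $H=G_1$, so the minimal $H$-covers are the translation covers $\bNh$ and $\bHh$; both surfaces are genuine with respect to $G_1$ since their valence-three singularities have order-four rotational holonomy), we learn that $\bNh\to\bHh$ is a normal covering whose deck group is canonically the deck group $\Delta(\bN,\bH)\cong\ZZ^2$ of $\bN\to\bH$; by the local-degree argument in the proof of \Cref{commute-branch-cover} it is moreover unbranched. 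Chasing the commuting square \eqref{eq:deck groups} shows that, under this identification of deck groups, the classifying homomorphism of $\bNh\to\bHh$ is the composite of $p_\ast\colon H_1(\bHh;\ZZ)\to H_1(\bH;\ZZ)$ with the classifying homomorphism of $\bN\to\bH$, where $p\colon\bHh\to\bH$ is the composite covering map. Equivalently: a loop $\hat\gamma$ in $\bHh$ lifts to $\bNh$ if and only if its image $\alpha=p_\ast\hat\gamma$ in $\bH$ lifts to $\bN$.

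By the description of the deck group of $\bN\to\bH$ in \Cref{sect:half-cube}, the deck transformation attached to $\alpha$ is translation by $\big(i(\alpha,\beta_0^\ast),\,i(\alpha,\beta_1^\ast),\,i(\alpha,\beta_2^\ast)\big)\in\Lambda_0\subset\ZZ^3$; since $\beta_0^\ast+\beta_1^\ast+\beta_2^\ast=0$ in $H_1(\bH;\ZZ)$ (which follows from \eqref{eq:gamma id} by pushing forward to $\bH$), this triple sums to zero and so vanishes as soon as two of its entries do. Hence $\hat\gamma$ lifts to $\bNh$ if and only if $i(p_\ast\hat\gamma,\beta_j^\ast)=0$ for $j\in\{0,1,2\}$ (equivalently, for two of the three). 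It therefore remains to show that this system of conditions is equivalent to $\langle\hat x,\hat\gamma\rangle=\langle\hat y,\hat\gamma\rangle=0$. Writing $b_j^\ast\in H^1(\bH;\ZZ)$ for the Poincar\'e dual of $\beta_j^\ast$, naturality of the Kronecker pairing gives $i(p_\ast\hat\gamma,\beta_j^\ast)=\langle p^\ast b_j^\ast,\hat\gamma\rangle$, so it suffices to verify the equality of subspaces $\Span_\QQ\{p^\ast b_0^\ast,p^\ast b_1^\ast,p^\ast b_2^\ast\}=\Span_\QQ\{\hat x,\hat y\}$ in $H^1(\bHh;\QQ)$.

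This last equality is checked by direct computation on the polygonal model of \Cref{fig:diffusion}, in which $\bHh$ is the union of four copies $T_{i,j}$ ($i,j\in\ZZ/2\ZZ$) of ``$\bH$ with its central square removed,'' and $p$ restricts to the marking-preserving isometry $T_{i,j}\to\bH$. Each $\beta_j^\ast$ is a core curve of $\bH$ in one of the three coordinate directions, so each $p^\ast b_j^\ast$ is a signed sum, over the indices $i,j$, of the local crossing classes $h_{i,j}$ and $v_{i,j}$ (with a further contribution coming from the gluings of the inner squares); carrying this out and comparing with $\hat x=h_{00}-h_{11}+v_{01}-v_{10}$ and $\hat y=h_{10}-h_{01}+v_{00}-v_{11}$ yields the asserted equality of spans, which completes the proof.

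The main obstacle is the computation in the last paragraph: one has to fix once and for all the correspondence between the curves $\beta_0^\ast,\beta_1^\ast,\beta_2^\ast$ and the coordinate directions of $\bN$, and then keep careful track of signs and of how the inner-square identifications interact with the covering $p$; everything else is a formal application of covering-space theory together with \Cref{deck groups} and \Cref{commute-branch-cover}. An alternative that sidesteps Poincar\'e duality uses that $\bN\subset\RR^3$ and that the deck group of $\bN\to\bH$ is $\Lambda_0\subset\ZZ^3$: the deck transformation attached to a loop is literally the $\ZZ^3$-displacement of its lift to $\bN$, and one then checks directly, on a generating set of loops of $\bHh$, that $\hat x$ and $\hat y$ record two of the three coordinates of this displacement. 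This reduces the same content to a finite check on the pictures.
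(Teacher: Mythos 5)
Your proof is correct and follows essentially the same route as the paper's: both reduce the claim to ``$\hat\gamma$ lifts to $\bNh$ iff its image in $\bH$ lifts to $\bN$,'' then match this against the cohomology classes. The only differences are cosmetic: where you invoke \Cref{deck groups} and chase the square \eqref{eq:deck groups}, the paper argues the backward implication directly via rotational holonomy (a loop in a translation surface has trivial rotational holonomy, hence so do all its images under the covering maps, which forces the needed lift to close up); and where you express the lifting criterion through the Poincar\'e duals of $\beta_0^\ast,\beta_1^\ast,\beta_2^\ast$ and an equality of spans, the paper uses the curves $x,y$ of \Cref{fig:diffusion} and asserts $\hat x=\hat\pi_\bH^\ast x$, $\hat y=\hat\pi_\bH^\ast y$ by construction. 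Both treatments defer the same final pictorial verification, so your sketch of that step is at the level of detail the paper itself adopts.
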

\begin{proof}
By \Cref{commute-branch-cover}, we obtain the following commutative diagram: 
%\begin{equation}
$$
\begin{tikzcd}
	\bNh \arrow[r, "\hat \pi_\bN"] \arrow[d]
		& \bN \arrow[d] \\
	 \bHh \arrow[r, "\hat \pi_\bH"]
	& \bH
\end{tikzcd}
%\label{eq:cd2}
$$
%\end{equation}
We claim that a loop $\hat \gamma$ in $\bHh$ lifts to $\bNh$ if and only if $\gamma = \hat \pi_\bH(\hat \gamma)$ in $\bH$ lifts to $\bN$. It is clear by commutativity that if $\hat \gamma$ lifts to a loop $\hat \eta$ on $\bNh$, then also $\eta =\hat \pi_\bN(\hat \eta)$ is a lift of $\gamma$. In the other direction, suppose $\gamma$ in $\bH$ lifts to a loop $\eta$ in $\bN$. Note that because $\hat \gamma$ is a loop in a translation surface, it has trivial rotational holonomy. Since $\gamma = \hat \pi_\bH(\hat \gamma)$, it must also have trivial rotational holonomy. Since $\eta$ is a lift of $\gamma$, the loop $\eta$ must also have trivial rotational holonomy. Since $\bNh$ is the translation cover of $\bN$ and $\eta$ has trivial rotational holonomy, $\eta$ lifts to a loop $\hat \eta$ on $\hat \bN$. This is the desired lift of $\eta$, proving the claim.

Note $x$ and $y$ generate $H^1(\bH; \ZZ)$.
Thus a loop $\hat \gamma$ in $\bHh$ lifts to $\bNh$ if and only if $\langle x,\gamma\rangle=0$ and $\langle y, \gamma\rangle=0$ where $\gamma=\hat \pi_\bH(\hat \gamma)$. The classes $\hat x, \hat y \in H^1(\bHh;\ZZ)$ were chosen to ensure that 
$\langle \hat x,\hat \gamma \rangle=\langle x,\gamma\rangle$ and $\langle \hat y, \hat \gamma\rangle=\langle y, \gamma\rangle$
for all $\hat \gamma \in H_1(\bHh; \ZZ)$. Thus $\hat \gamma$ lifts to $\bNh$ if and only if $\langle \hat x,\hat \gamma\rangle=\langle \hat y,\hat \gamma\rangle=0$ as desired.
\end{proof}

Let $(X_1, d_1)$ and $(X_2, d_2)$ be two metric spaces.
A {\em quasi-isometry} $f:X_1 \to X_2$ is a map such that there are constants $a \geq 1$, $b \geq 0$, and $c \geq 0$ such that the two following statements hold:
\begin{enumerate}
\item For every $p,q \in X_1$, we have 
$\frac{1}{a} \cdot d_1(p,q) - b\leq d_2\big(f(p), f(q)\big) \leq a \cdot d_1(p,q) + b$.
\item For every $r \in X_2$, there is a $p \in X_1$ such that $d_2\big(r, f(p)\big)\leq c$.
\end{enumerate}
We will call a surjective map satisfying (1) an {\em $(a,b)$-quasi-isometry}. It is well known that the composition of quasi-isometries is a quasi-isometry. Also, the following standard result guarantees that the exponential growth rates between trajectories are quasi-isometry invariant.
\begin{prop}
\label{div and qi}
Suppose $p_n, q_n \in X_1$ are sequences and $t_n \in \RR$ is a sequence such that $t_n \to +\infty$. If $f:X_1 \to X_2$ is a quasi-isometry, then 
$$\lim_{n \to \infty} \frac{\log d_1(p_n,q_n)}{\log t_n} \quad \text{exists if and only if} \quad
\lim_{n \to \infty} \frac{\log d_2\big(f(p_n),f(q_n)\big)}{\log t_n} \quad \text{exists}$$
and if they exist they are equal.
\end{prop}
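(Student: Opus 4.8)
\textbf{Proof plan for \Cref{div and qi}.}

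The plan is to reduce to the case $b=0$ by a standard trick and then observe that the two limits differ only by a bounded additive and multiplicative distortion of the numerator, which vanishes after dividing by $\log t_n$. First I would fix the quasi-isometry constants $a \geq 1$ and $b \geq 0$ for $f$ and set $D_n^{(1)} = d_1(p_n,q_n)$ and $D_n^{(2)} = d_2\big(f(p_n), f(q_n)\big)$. From condition (1) in the definition of $(a,b)$-quasi-isometry we have
$$\tfrac{1}{a} D_n^{(1)} - b \;\leq\; D_n^{(2)} \;\leq\; a D_n^{(1)} + b.$$
The one subtlety is that if $D_n^{(1)}$ (or $D_n^{(2)}$) stays bounded, then $\log D_n^{(1)}$ need not tend to anything and the quotient $\frac{\log D_n^{(1)}}{\log t_n}$ tends to $0$ (since $\log t_n \to +\infty$); in that case I would check directly from the displayed inequalities that the other distance also stays bounded below a fixed constant and above by a fixed multiple, so that quotient is also $0$. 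So without loss of generality I may pass to the situation where both $D_n^{(1)} \to \infty$ and $D_n^{(2)} \to \infty$ (if one of the limits in the statement is to be something other than $0$, this is forced; and if it is $0$, the argument just given handles it).

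In the regime where both distances tend to infinity, I would take logarithms of the displayed inequalities. For $n$ large enough that $D_n^{(1)} \geq 2ab$, we have $a D_n^{(1)} + b \leq 2a D_n^{(1)}$ and $\tfrac{1}{a} D_n^{(1)} - b \geq \tfrac{1}{2a} D_n^{(1)}$, hence
$$\log D_n^{(1)} - \log(2a) \;\leq\; \log D_n^{(2)} \;\leq\; \log D_n^{(1)} + \log(2a).$$
Thus $\big|\log D_n^{(2)} - \log D_n^{(1)}\big| \leq \log(2a)$ for all large $n$, i.e.\ the two log-distances differ by a bounded amount. Dividing through by $\log t_n$ and using $\log t_n \to +\infty$, we get
$$\left| \frac{\log D_n^{(2)}}{\log t_n} - \frac{\log D_n^{(1)}}{\log t_n} \right| \;\leq\; \frac{\log(2a)}{\log t_n} \;\longrightarrow\; 0.$$
Therefore the sequences $\frac{\log D_n^{(1)}}{\log t_n}$ and $\frac{\log D_n^{(2)}}{\log t_n}$ have the same limiting behaviour: one converges if and only if the other does, and the limits agree. (The surjectivity hypothesis (2) plays no role here; only condition (1) is needed, and that is why the statement is phrased for $(a,b)$-quasi-isometries.)

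I do not expect any serious obstacle; the only thing requiring a little care is the bookkeeping around distances that fail to grow, which is why I would isolate that as a preliminary case rather than trying to absorb it into the main estimate. Everything else is the routine observation that an affine two-sided bound on $D_n^{(2)}$ in terms of $D_n^{(1)}$ becomes an additive $O(1)$ bound after taking logs, which is then killed by dividing by $\log t_n \to \infty$.
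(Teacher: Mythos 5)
The paper does not supply a proof of this proposition; it is cited as a ``standard result'' and the reader is expected to fill it in. Your argument is exactly the standard one, and the heart of it---that the two-sided affine bound $\tfrac{1}{a}D_n^{(1)} - b \leq D_n^{(2)} \leq a D_n^{(1)} + b$ becomes, for $D_n^{(1)} \geq 2ab$, a two-sided multiplicative bound and hence an $O(1)$ additive distortion of the logarithms, which is then killed by dividing by $\log t_n \to +\infty$---is correct and is what the authors surely had in mind. (The surjectivity clause in the definition of quasi-isometry is indeed unused, as you observe.) Two small remarks. First, your preliminary ``bounded'' reduction is not quite right as phrased: if $D_n^{(1)}$ is merely bounded above, $\log D_n^{(1)}$ can still tend to $-\infty$, so the quotient $\log D_n^{(1)}/\log t_n$ need not go to $0$, and the displayed inequalities give you no lower bound on $D_n^{(2)}$ at all when $D_n^{(1)} < ab$ (the left inequality only says $D_n^{(2)} \geq \tfrac{1}{a}D_n^{(1)} - b$, which is negative there). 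To make the reduction clean you need $D_n^{(1)}$ bounded \emph{away from zero} as well as above; in the application to $\bN$ and $\ZZ^2$ (and in fact in the statement being proved, where the limit is $\tfrac{2}{3} > 0$) the distances blow up, so the case you are trying to set aside simply does not occur, but the sentence as written overclaims. Second, it is worth noting that the paper actually invokes this proposition in the proof of \Cref{thm:divergence} with $\limsup$ in place of $\lim$; your key estimate $|\log D_n^{(2)} - \log D_n^{(1)}| \leq \log(2a)$ for large $n$ delivers the $\limsup$ (and $\liminf$) versions equally well, so this is no obstacle, but it is the version one really needs.
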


Consider $\bH$ as depicted on the left side of \Cref{fig:diffusion}. Removing the curves labeled $x$ and $y$ from $\bH$ produces a simply connected domain $D \subset \bH$. Let $\tilde D_{0,0}$ be a lift of $D$ to $\bN$. Let $(m,n) \mapsto \delta_{m,n}$ be the natural isomorphism from $\ZZ^2$ to the deck group of the covering $\bN \to \bH$ such that if $\gamma:[0,1] \to \bH$ is a loop and $\eta:[0,1] \to \bN$ is a lift we have
$$\delta_{m,n}\big(\eta(0)\big)=\eta(1) \quad \text{where} \quad
m=\langle x,\gamma\rangle \quad \text{and} \quad n=\langle y, \gamma\rangle.$$
Let $\tilde D_{m,n}=\delta_{m,n}(\tilde D_{0,0})$. Then we can define
$$\iota:\bN \to \ZZ^2 \quad \text{by} \quad
\iota(p)=(m,n) \quad \text{if $p \in \tilde D_{m,n}$,}$$
where we also just make a choice for points on the common boundary of two domains.

We endow $\ZZ^2$ with its Euclidean metric to make it into a metric space.
We have:
\begin{prop}
\label{quasi-isometry}
The map $\iota:\bN \to \ZZ^2$ is a quasi-isometry.
\end{prop}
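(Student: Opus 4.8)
<br>

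The plan is to show that $\iota$ is a quasi-isometry by comparing the intrinsic metric on $\bN$ with a word-metric-type metric on $\ZZ^2$ coming from the tiling $\{\tilde D_{m,n}\}$. The key point is that the domains $\tilde D_{m,n}$ are translates of a single compact fundamental domain $\tilde D_{0,0}$ for the cocompact $\ZZ^2$-action on $\bN$, so both the diameters of the $\tilde D_{m,n}$ and the distances between "adjacent" domains are uniformly bounded above and below. First I would record that $\tilde D_{0,0}$ has finite diameter $R$ (it is a finite union of squares, or its closure is compact) and that there is a constant $\rho>0$ so that any two domains $\tilde D_{m,n}$ and $\tilde D_{m',n'}$ whose closures meet satisfy $|(m,n)-(m',n')|=1$ — that is, the "adjacency graph" of the domains is exactly the standard Cayley graph of $\ZZ^2$. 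This follows because crossing from $D$ to an adjacent copy in $\bH$ corresponds to crossing one of the curves $x$ or $y$ exactly once, which changes $\langle x,\gamma\rangle$ or $\langle y,\gamma\rangle$ by $\pm 1$.

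The upper bound in condition (1) is the easy direction: given $p,q\in\bN$, a shortest path (or a path of length within $1$ of the infimum) of length $\ell=d_\bN(p,q)$ passes through a chain of domains $\tilde D_{m_0,n_0},\dots,\tilde D_{m_k,n_k}$ with consecutive ones adjacent, hence $|(m_0,n_0)-(m_k,n_k)|\le k$; and since each time the path enters a new domain it must travel at least the minimal "width" $w>0$ of a domain (the minimal distance a geodesic must traverse a copy of $\tilde D_{0,0}$ to cross it, or more crudely the minimal distance between two non-adjacent boundary pieces), we get $k\le \ell/w + O(1)$, giving $d_{\ZZ^2}(\iota(p),\iota(q))\le a\,d_\bN(p,q)+b$. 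For the lower bound, given $p\in\tilde D_{m,n}$ and $q\in\tilde D_{m',n'}$, concatenate: a path within $\tilde D_{m,n}$ from $p$ to a point near the boundary, then step through a shortest chain of $|(m,n)-(m',n')|$ adjacent domains (each contributing at most $\mathrm{diam}(\tilde D_{0,0})=R$ to the length), then a path within $\tilde D_{m',n'}$ to $q$; this produces a path in $\bN$ of length at most $R\cdot(|(m,n)-(m',n')|+2)$, so $d_\bN(p,q)\le R\,d_{\ZZ^2}(\iota(p),\iota(q))+2R$, which rearranges to the desired lower bound $\tfrac1a d_\bN(p,q)-b\le d_{\ZZ^2}(\iota(p),\iota(q))$. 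Condition (2) (coarse surjectivity) is immediate with $c=0$ since $\iota$ is literally onto $\ZZ^2$.

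The main obstacle is making precise and proving the two geometric constants — that the adjacency graph of $\{\tilde D_{m,n}\}$ is the standard $\ZZ^2$ grid and that there is a uniform positive lower bound $w$ on the distance a geodesic must travel to cross a domain. The first requires checking that the only boundary identifications among the $\tilde D_{m,n}$ come from the curves $x$ and $y$ in $\bH$, each crossed once changes one coordinate by $\pm 1$, and that no domain's closure meets a "diagonal" neighbor except along a point (which one can absorb into the choice made on common boundaries); alternatively one can allow diagonal adjacencies and note that this only changes the quasi-isometry constants, since $\ZZ^2$ with the "king-move" metric is quasi-isometric to $\ZZ^2$ with the Euclidean metric. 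The second constant exists because $\tilde D_{0,0}$ is compact with the interiors of its boundary edges in $\bN^\circ$, so the distance between any two edges that are not shared with a common adjacent domain is a positive minimum over finitely many pairs. Once these two facts are in hand, the inequalities above are routine, and the proposition follows; it will then be combined with \Cref{div and qi} to transfer the divergence-rate computation from the $\ZZ^2$-cover studied in \cite{DHL14} to $\bN$.
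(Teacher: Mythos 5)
Your strategy is a direct Milnor--\v{S}varc-style comparison and is genuinely different from the paper's, which factors $\iota$ through the lattice $\{q_{m,n}=\delta_{m,n}(q_{0,0})\}$: collapsing $\tilde D_{m,n}$ to $q_{m,n}$ moves each point a bounded distance, and $(m,n)\mapsto q_{m,n}$ is bi-Lipschitz because the deck transformations are literal translations of $\RR^3$, so this map is the restriction of an affine map $\RR^2\to\RR^3$. Your route avoids the $\RR^3$-translation structure entirely, which makes it more portable to general cocompact $\ZZ^2$-covers, but it also gives up the bi-Lipschitz control that structure provides for free.

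The step that needs repair is the claim that every passage into a new chamber costs at least some fixed $w>0$. The chambers $\tilde D_{m,n}$ share corners --- the intersection points of lifts of $x$ with lifts of $y$ --- and a geodesic can enter $\tilde D_{m,n}$ across an $x$-lift near such a corner and immediately leave across a $y$-lift, spending arbitrarily little length inside $\tilde D_{m,n}$. Your king-move device absorbs the diagonal transition $\tilde D_{m,n}\to\tilde D_{m+1,n}\to\tilde D_{m+1,n+1}$ into a single move, but it does not make that move cost at least $w$, and it does not help with the L-shaped transition $\tilde D_{m-1,n}\to\tilde D_{m,n}\to\tilde D_{m,n+1}$, which also grazes a corner for free. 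One clean fix: by cocompactness, any two chambers whose index vectors differ by at least $2$ in $\|\cdot\|_\infty$ have closures at a uniformly positive distance, which bounds $\|\iota(q)-\iota(p)\|_\infty$ linearly in the path length $\ell$. Alternatively, bound each coordinate separately: let $\epsilon>0$ be the minimal $\bN$-distance between two distinct lifts of $x$ (positive because $x$ is embedded in $\bH$ and the cover is cocompact); every subarc of $\gamma$ of length $\epsilon/2$ lies in a ball meeting at most one lift of $x$, so $\iota_1\circ\gamma$ changes by at most $1$ on it, giving $|\iota_1(\gamma(\ell))-\iota_1(\gamma(0))|\le 2\ell/\epsilon+1$, and likewise for $\iota_2$. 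Your concatenation argument for the other inequality and the coarse surjectivity are both fine.
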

\begin{proof}
Choose a point $q_{0,0} \in \tilde D_{0,0}$. Define $q_{m,n}=\delta_{m,n}(q_{0,0})$. Because the deck group is a group of translations of $\RR^3$, the map $q_{m,n} \mapsto (m,n) \in \ZZ^2$ is affine. It follows that $q_{m,n} \mapsto (m,n)$ is an $(a,0)$-quasi-isometry where $a$ is the operator norm of the derivative of this affine map. The map sending all of $\tilde D_{m,n}$ to $q_{m,n}$ for each $m,n \in \ZZ$ is a $(0,b)$-quasi-isometry where $b$ is the diameter of the compact set $\tilde D_{0,0}$. The map $\iota$ is the composition of the map sending $\tilde D_{m,n}$ to $q_{m,n}$ followed by the map sending $q_{m,n}$ to $(m,n)$, and therefore $\iota$ is a quasi-isometry.
\end{proof}

As in \cite[Lemma 4]{DHL14}, the action of the Klein-four group allows for us to split absolute cohomology into natural pieces for surfaces with presentations such as $\bHh$. We have following the direct sum:
\begin{equation}
\label{eq:splitting}
H_1(\bHh; \QQ)=E^{++} \oplus E^{+-} \oplus E^{-+} \oplus E^{--}.
\end{equation}
Here the signs are related to the manner in which generators for the Klein four-group act on cohomology. \cite{DHL14} computes that $E^{+-}$ and $E^{-+}$ are two dimensional subspaces with bases given by:
\begin{align*}
E^{+-} &= \Span \{h_{00} - h_{01} + h_{10} - h_{11}, v_{00}-v_{01}+v_{10}-v_{11}\}, \quad \text{and} \\
E^{-+} &= \Span \{h_{00} + h_{01} - h_{10} - h_{11}, v_{00}+v_{01}-v_{10}-v_{11}\}.
\end{align*}
The following is a trivial observation:

\begin{prop}
We have $\hat x, \hat y \in E^{+-} \oplus E^{-+}$.
\end{prop}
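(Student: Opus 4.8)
The plan is to read off the claim from the Klein four-group action that produces the splitting \eqref{eq:splitting}. Recall that $(\ZZ/2\ZZ)^2$ acts on $\bHh$ by translation automorphisms, with $(k,l)$ sending $T_{i,j}$ to $T_{i+k,j+l}$, and that the four summands of \eqref{eq:splitting} are the common eigenspaces of the induced action on $H_1(\bHh;\QQ)$: writing $\sigma=(1,0)$ and $\tau=(0,1)$, a class lies in $E^{\epsilon_1\epsilon_2}$ precisely when $\sigma$ acts on it by the scalar $\epsilon_1$ and $\tau$ acts by $\epsilon_2$. Consequently the central element $\mu=(1,1)=\sigma\tau$ acts on $E^{\epsilon_1\epsilon_2}$ by $\epsilon_1\epsilon_2$, so its $(+1)$-eigenspace is $E^{++}\oplus E^{--}$ while its $(-1)$-eigenspace is exactly $E^{+-}\oplus E^{-+}$. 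It therefore suffices to show that $\mu$ acts on $\hat x$ and on $\hat y$ by $-1$.

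First I would record the action of $\mu$ on the generating classes $h_{ij},v_{ij}$. Since $\mu$ is an orientation-preserving translation automorphism carrying $T_{i,j}$ to $T_{i+1,j+1}$ (subscripts taken in $\ZZ/2\ZZ$), it sends closed horizontal leaves of $T_{i,j}$ to closed horizontal leaves of $T_{i+1,j+1}$ with co-orientation preserved, and likewise for vertical leaves; hence $\mu$ carries $h_{ij}$ to $h_{i+1,j+1}$ and $v_{ij}$ to $v_{i+1,j+1}$. This is the only point that requires a word of justification, and it is immediate from the presentation of $\bHh$ in \Cref{fig:diffusion}.

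Then I would simply substitute into the definitions of $\hat x$ and $\hat y$. From $\hat x=h_{00}-h_{11}+v_{01}-v_{10}$ one gets
$$\mu\cdot\hat x = h_{11}-h_{00}+v_{10}-v_{01}=-\hat x,$$
and from $\hat y=h_{10}-h_{01}+v_{00}-v_{11}$ one gets
$$\mu\cdot\hat y = h_{01}-h_{10}+v_{11}-v_{00}=-\hat y.$$
Hence both $\hat x$ and $\hat y$ lie in the $(-1)$-eigenspace of $\mu$, which is $E^{+-}\oplus E^{-+}$, as claimed. I do not expect any genuine obstacle here: the statement is a bookkeeping exercise once the equivariance of the decomposition \eqref{eq:splitting} under $(\ZZ/2\ZZ)^2$ is invoked, which is presumably why the excerpt calls it trivial. (Alternatively one could check directly that $\hat x$ and $\hat y$ are rational combinations of the four displayed basis vectors spanning $E^{+-}$ and $E^{-+}$, but the eigenspace argument is shorter and makes transparent where the statement comes from.)
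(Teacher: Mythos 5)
Your argument is correct. Since the paper offers no proof (it calls the statement trivial and simply displays the bases of $E^{+-}$ and $E^{-+}$), the implicit route is to write $\hat x$ and $\hat y$ explicitly as rational combinations of those four spanning vectors — e.g. $\hat x=\tfrac12(h_{00}-h_{01}+h_{10}-h_{11})-\tfrac12(v_{00}-v_{01}+v_{10}-v_{11})+\tfrac12(h_{00}+h_{01}-h_{10}-h_{11})+\tfrac12(v_{00}+v_{01}-v_{10}-v_{11})$ — which is a short but slightly fiddly linear-algebra check. Your eigenspace argument sidesteps that: identifying $E^{+-}\oplus E^{-+}$ as the $(-1)$-eigenspace of the central element $\mu=(1,1)$ of the deck group reduces the claim to the one-line verification that $\mu$, acting by the index permutation $00\leftrightarrow 11$, $01\leftrightarrow 10$, negates $\hat x$ and $\hat y$. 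The one point you flagged — that the deck transformation $\mu$ carries $h_{ij}$ to $h_{i+1,j+1}$ (rather than, say, the pullback going the other way) — is genuinely immaterial here since $\mu$ has order two, so there is nothing to worry about. Your approach also matches the sign convention for the superscripts in \eqref{eq:splitting}: applying $(1,0)$ to the displayed basis of $E^{+-}$ fixes it while $(0,1)$ negates it, confirming that $E^{\epsilon_1\epsilon_2}$ is the simultaneous eigenspace and hence that $E^{+-}\oplus E^{-+}$ is exactly $\ker(\mu+1)$. This is a clean alternative that makes the structural reason for the statement visible rather than relying on a coefficient match.
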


In section 4 of \cite{DHL14}, it is explained that the splitting \eqref{eq:splitting} extends to an $\SL(2,\RR)$-equivariant splitting of the Hodge bundle over $\cG$. In particular, we get vector bundles over $\cG$ for each of the four subspaces. Further, the Lyapunov exponents are computed for any $\SL(2,\RR)$-invariant probability measure on $\cG$. In our case, $\bHh$ is a lattice surface as it is preserved by $\Gamma_2 \subset \SL(2,\RR)$ and Harr measure on the quotient $\Gamma_2 \backslash \SL(2,\RR)$ induces a finite measure on the orbit $\SL(2,\RR) \cdot \bHh$. But regardless of the measure, the set of Lyapunov exponents of the Kontsevich-Zorich cocycle restricted to the bundles associated to $E^{+-}$ (or $E^{-+}$) is computed to be $\{\pm \frac{2}{3}\}$.

Now consider a unit vector $\bu \in \SS^1 \subset \RR^2$ and consider the straight-line flow $\hat F^t_\bu$ on $\bHh$. For each $\bu$ and each $T>0$ and each $p \in \bHh$, $\gamma_{\bu, T}$ to be the path 
%\begin{equation}
%\label{eq:segment}
$$
[0,T] \to \bHh; \quad t \mapsto \hat F^t_\bu(p).
$$
%\end{equation}
closed to form a loop by an arc joining $p$ to $\hat F^t_\bu(p)$ by a path which intersects neither $\hat x$ nor $\hat y$, viewed as curves depicted in \Cref{fig:diffusion}. By making compatible boundary choices as with the definition of $\iota$, we can arrange that
\begin{equation}
\label{pairing meaning}
\langle \hat g, \gamma_{\bu, T}(p) \rangle = \iota \circ \hat \pi_\bN \circ \hat F^T_\bu(p) - \iota \circ \hat \pi_\bN(p)
\end{equation}
for all $p$ and $T>0$ such that $F^T_\bu(p)$ is defined.

We have the following as a consequence of Theorem 2 of \cite{DHL14} as well as further reasoning in that paper:

\begin{lemma}
\label{divergence main}
For Lebesgue almost every $\bu \in \SS^1$, for every $p \in \bHh$ with an infinite $\hat F_\bu$-trajectory, we have
$$\limsup_{T \to \infty} \frac{\log \| \langle \hat g, \gamma_{\bu, T}(p)\rangle\|_2}{\log T}=\frac{2}{3}.$$
\end{lemma}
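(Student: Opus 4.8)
The plan is to reduce the statement to the deviation results of \cite{DHL14} by identifying $\langle \hat g, \gamma_{\bu,T}(p)\rangle$ with the quantity whose growth rate those authors compute for $L$-shaped surface covers. First I would reinterpret the pairing: by \eqref{pairing meaning}, $\langle \hat g, \gamma_{\bu,T}(p)\rangle$ records the $\ZZ^2$-displacement of the lifted trajectory in $\bNh$, so up to the bounded error introduced by closing the orbit segment into a loop with a short arc avoiding the curves $\hat x$ and $\hat y$, it equals the ergodic integral along the $\hat F_\bu$-orbit of length $T$ of the closed $1$-forms representing $\hat x$ and $\hat y$ on $\bHh$. Thus the lemma is exactly a statement about the deviation of ergodic integrals of $\hat x, \hat y$ on the lattice surface $\bHh$, and the $\limsup$ is insensitive to the bounded closing error.

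Next I would place this in the framework of the Kontsevich--Zorich cocycle. Since $\bHh \in \cG$ (verified above via the presentation in \Cref{fig:diffusion}) and $\bHh$ is a lattice surface stabilized by $\Gamma_2 \subset \SL(2,\RR)$, the $\SL(2,\RR)$-orbit of $\bHh$ carries a finite invariant probability measure, and the Hodge bundle over $\cG$ splits $\SL(2,\RR)$-equivariantly as in \eqref{eq:splitting}. Because $\hat x, \hat y \in E^{+-} \oplus E^{-+}$, the cohomological displacement is governed by the restriction of the cocycle to the rank-two subbundles associated to $E^{+-}$ and $E^{-+}$, whose Lyapunov spectra were computed in \cite[\S 4]{DHL14} to be $\{\pm \tfrac{2}{3}\}$. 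The Forni--Zorich deviation machinery, in the precise form packaged as Theorem 2 of \cite{DHL14}, then asserts that for Lebesgue-a.e.\ direction $\bu$ and every point with an infinite forward trajectory, $\limsup_{T} \frac{\log \|\langle \hat g, \gamma_{\bu,T}(p)\rangle\|_2}{\log T}$ equals the top exponent of this subbundle, namely $\tfrac{2}{3}$. I would invoke this directly, after observing that $\hat g$ plays precisely the role of the wind-tree diffusion class of \cite{DHL14}: it lies in the non-tautological part of the Hodge bundle with the stated exponents, and the surface lies in $\cG$, so the hypotheses of their analysis are met verbatim.

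The hard part will be making the ``for every $p$'' uniformity precise. The upper bound $\limsup \le \tfrac{2}{3}$ at every starting point follows from a sub-exponential drift estimate that is uniform in the basepoint, coming from recurrence of the Teichm\"uller geodesic in direction $\bu$ (valid for a.e.\ $\bu$ since $\Gamma_2 \backslash \SL(2,\RR)$ has finite volume) together with temperedness of the cocycle. The matching lower bound $\limsup \ge \tfrac{2}{3}$ at every $p$ is the delicate step; it is exactly where the ``further reasoning'' of \cite{DHL14} enters, showing that the displacement cannot be systematically trapped in the contracting $(-\tfrac{2}{3})$-Oseledets direction along a full-density set of times. I would reproduce their argument: for a.e.\ $\bu$ the Teichm\"uller geodesic returns to a fixed compact set infinitely often, and at such recurrence times the cocycle image of $\hat g$ has norm comparable to its expanding component, forcing the claimed growth along a subsequence independently of $p$. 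A final routine check is that the choice of bounded closing arc in the definition of $\gamma_{\bu,T}(p)$ contributes an $O(1)$ change to the pairing with $\hat g$, so the two $\limsup$'s coincide; this is immediate since the closing arc avoids $\hat x$ and $\hat y$.
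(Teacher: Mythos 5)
Your overall strategy coincides with the paper's: verify $\bHh \in \cG$, use the $\SL(2,\RR)$-equivariant splitting \eqref{eq:splitting}, note $\hat x, \hat y \in E^{+-}\oplus E^{-+}$, appeal to the Lyapunov exponent calculation $\{\pm\tfrac{2}{3}\}$ and Theorem 2 of \cite{DHL14}. Your points about the $O(1)$ closing error and about $\hat g$ being a nonzero integer class, hence not lying in the contracting Oseledets subspace (which is what ``cannot be systematically trapped'' amounts to), are correct and match the paper.

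The genuine gap is the passage from Theorem 2 of \cite{DHL14} to the statement of the Lemma. You write that the theorem ``asserts that for Lebesgue-a.e.\ direction $\bu$'' the $\limsup$ equals $\tfrac{2}{3}$, and propose to ``invoke this directly.'' But Theorem 2 is a statement about the \emph{vertical} flow on Haar-almost-every surface $\bS$ in the $\SL(2,\RR)$-orbit $\cX$ of $\bHh$, not about almost every direction on the single fixed surface $\bHh$. These two formulations are not automatically interchangeable, and converting one to the other is precisely the content of the second half of the paper's proof. Concretely: one must first check that if $A(\bS)=\bHh$ then the pulled-back classes $\hat x_A, \hat y_A$ again lie in $E^{+-}\oplus E^{-+}$ and avoid the $(-\tfrac{2}{3})$-Oseledets subspace (this uses $\SL(2,\RR)$-equivariance plus the integer-class argument); then one shows that whether the rate-$\tfrac23$ statement holds for the vertical flow on $\bS=A^{-1}(\bHh)$ depends only on the $\SS^1$-coordinate of $A$ under the $KAN$ decomposition — this requires the explicit invariance computation $\langle \hat g_{B\cdot A}, \beta^{B\cdot A}_{e^{-s}T}(p)\rangle = \langle \hat g_A, \beta^A_T(p)\rangle$ for $B$ preserving the vertical; and finally one disintegrates Haar measure over the rotation angle to deduce that for Lebesgue-a.e.\ $\theta$ the full fiber $X_\theta$ satisfies the property, which is what gives the ``a.e.\ $\bu$'' statement on $\bHh$. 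Your appeal to recurrence of the Teichm\"uller geodesic addresses the basepoint-uniformity issue, not this surface-to-direction conversion; without the latter, the deduction from \cite{DHL14} does not close.
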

\begin{proof}
Let $\cX \subset \cG$ denote the $\SL(2,\RR)$ orbit of $\bHh$, which is endowed with its Harr measure $\mu$. 
Let $\bS \in \cX$. Then by definition of $\cX$, there is an $A \in \SL(2, \RR)$ be such that $A(\bS)=\bHh$.
Also, there is an affine homeomorphism $\psi_A:\bS \to \bHh$ with derivative $A$.
Let $\beta^A_T(p)$ denote the vertical segment of length $T$ starting at $p \in \bS=A^{-1}(\bHh)$, closed while avoiding the images of curves $\hat x_A=\psi_A^{-1}(\hat x)$ and $\hat y_A = \psi_A^{-1}(\hat y)$ in $\bS$. Thus $\beta^A_T(p)$ is analogous to the definition of $\gamma_{\bu,T}$. We view $\hat x_A$ and $\hat y_A$ as cohomology classes in $H^1(\bS; \ZZ)$. These classes depend on $A$, and $\psi_A$ is uniquely determined by $A$ from the convention that singularities in our stratum are marked points and come with a choice of a horizontal prong emanating from each singularity.

% Notes: What is the main point?
% For almost every class $\hat z \in E^{+-} \oplus E^{-+}$

We claim that for almost every $\bS$, and any $A$ such that $A(\bS)=\bHh$, the class $\hat x_A$ lies in $E^{+-} \oplus E^{-+}$ and not in the subspace associated to $\frac{-2}{3}$.
The fact that $\hat x_A \in E^{+-} \oplus E^{-+}$ is just $\SL(2,\RR)$-invariance of the splitting; see \cite[\S 4.1]{DHL14}. 
As in \cite[Proof of Lemma 11]{DHL14}, $\hat x_A$ cannot lie in the subspace associated to Lyapunov exponent $\frac{-2}{3}$ because it is a nonzero integer cohomology class, so the norm of its orbit under the Kontsevich-Zorich cocycle cannot tend to zero. The same arguments hold for $\hat y_A$.

For any of the $\mu$-almost every $\bS \in \cX$ satisfying the hypotheses of Theorem 2 of \cite{DHL14} (guaranteeing generic recurrence and Oceledets genericity), we have
$$\limsup_{T \to \infty} \frac{\log |\langle \hat z, \beta^A_T(p)\rangle|}{\log T}=\frac{2}{3}$$
for any $A$ such that $A(\bS)=\bHh$ and any class $\hat z$ in $E^{+-} \oplus E^{-+} \subset H^1(\bS; \RR)$ that does not lie in the subspace for the Oseledets decomposition associated to the 
Lyapunov exponent $\frac{-2}{3}$ coming for the Kontsevich-Zorich cocycle over the Teichm\"uller flow.

We have shown that for $\mu$-almost every $\bS$, for any $A$ such that $A(\bHh)=\bS$, we have
$$\limsup_{T \to \infty} \frac{\log |\langle \hat x_A, \beta^A_T(p)\rangle|}{\log T}=\limsup_{T \to \infty} \frac{\log |\langle \hat y_A, \beta^A_T(p)\rangle|}{\log T}=\frac{2}{3}.$$
Since for $(a,b) \in \RR^2$, we have $\max \{|a|,|b|\} \leq \|(a,b)\|_2 \leq |a|+|b|$, we conclude under the same hypotheses that
\begin{equation}
\label{eq:limsup1}
\limsup_{T \to \infty} \frac{\log \|\langle \hat g_A, \beta^A_T(p)\rangle\|_2}{\log T}=\frac{2}{3},
\quad
\text{where $\hat g_A=(\hat x_A, \hat y_A) \in H^1(\bS; \ZZ^2)$}.
\end{equation}

Now we claim that the condition in \eqref{eq:limsup1} is invariant in the sense that if $A$ makes \eqref{eq:limsup1} true
and $B \in \SL(2,\RR)$ preserves the vertical direction in the sense that $B \begin{pmatrix}0 \\ 1\end{pmatrix}=e^s \begin{pmatrix}0 \\ 1\end{pmatrix}$ for some $s \in \RR$, then we have
\begin{equation}
\label{eq:limsup2}
\limsup_{T \to \infty} \frac{\log \|\langle \hat g_{B \cdot A}, \beta^{B \cdot A}_T(p)\rangle\|_2}{\log T}=\frac{2}{3}.
\end{equation}
With the stated hypotheses on $B$ and $s$, let $\bT \in \cX$ be such that $B(\bT)=\bS$. Then there is an affine homeomorphism $\phi_B:\bT \to \bS$ with derivative $B$. We have $\phi_B\big(\beta^{B \cdot A}_{e^{-s} T}(q)\big)=\beta^A_T\big(\phi_B(q)\big)$ for all $q \in \bT$ and all $T>0$, and $(\phi_B)_\ast(\hat g_{B \cdot A})=\hat g_A$.
Using these observations, we see that
$$\langle \hat g_{B \cdot A}, \beta^{B \cdot A}_{e^{-s}T}(p)\rangle=\langle \hat g_{A}, \beta^{A}_T(p)\rangle.
$$
It follows that statements \eqref{eq:limsup1} and \eqref{eq:limsup2} are equivalent when $B$ preserves the vertical direction.

Let $R_\theta \in \SL(2,\RR)$ denote the rotation by $\theta \in \RR/2\pi \ZZ$.
The KAN decomposition of $\SL(2,\RR)$ tells us that for every $A \in \SL(2, \RR)$, there is a rotation matrix $R_\theta$ and a $B$ preserving the vertical direction in the sense above such that $A=B \cdot R_\theta$. Set $X_\theta=\{B \cdot R_\theta\}$ where $B \in \SL(2,\RR)$ varies over the matrices preserving the vertical direction. Then we have the partition $\SL(2,\RR)=\bigsqcup_\theta X_\theta$. 
Let $Y \subset \SL(2,\RR)$ be the set of matrices $A \in \SL(2, \RR)$ that satisfy \eqref{eq:limsup1}. From remarks above we know the compliment of $Y$ has zero Harr measure. The previous paragraph implies that for any $\theta$ either $X_\theta \subset Y$ or $X_\theta \cap Y=\emptyset$. 
From the expression for Harr measure in terms of the KAN decomposition, we see that for almost every $\theta$, $X_\theta \subset Y$.
In particular, it follows that for almost every $\theta$, \eqref{eq:limsup1} holds with $A=R_\theta$. This is equivalent to the statement in the Lemma
when we have $\bu=R_\theta \begin{pmatrix}0 \\ 1 \end{pmatrix}$.
\end{proof}

\begin{proof}[Proof of \Cref{thm:divergence}]
Let $\bu$ be one of the almost-every directions such that \Cref{divergence main} applies. Recall from \Cref{prop:conjugacy} that $L_\bu=(\pi_\bS \circ \beta_\bu^{-1})^{-1}$ conjugates $F^t_{[\bu]}$ to $\hat F^t_\bu$. Furthermore, if $p \in \bN$ and $\bv \in \dir^{-1}(\bu)$ has basepoint $p$, we have 
$\hat \pi_\bN \circ L_\bu(\bv)=p$. Thus letting $q_T \in \bN$ denote the basepoint of $F^T_{[\bu]}(\bv)$, assuming $q_T$ is defined, we have
$$
\iota(q_T)=
\iota \circ \hat \pi_\bN \circ \hat F^T_{\bu} \circ L_\bu(\bv).$$
By \eqref{pairing meaning}, we further have
$$\iota(q)-\iota(p) = \iota \circ \hat \pi_\bN \circ \hat F^T_{\bu} \circ L_\bu(\bv)-
\iota \circ \hat \pi_\bN \circ L_\bu(\bv) = \big\langle \hat g, \gamma_{\bu, T}\big(L_\bu(\bv)\big) \big\rangle.$$
From \Cref{divergence main} we obtain
$$\limsup_{T \to +\infty} \frac{\log\|\iota(q_T)-\iota(p)\|_2}{\log T}=\frac{2}{3}.$$
By \Cref{quasi-isometry} we know $\iota$ is a quasi-isometry and so we get the same divergence rate for $F_{[\bu]}$ as stated in the theorem because of \Cref{div and qi}.
\end{proof}

\subsection*{Acknowledgments}

W. P. Hooper was supported by a grant from the Simons
Foundation and by a PSC-CUNY Award, jointly funded by The Professional Staff Congress and The City
University of New York. Work of Pavel Javornik on this project was supported by the Dr. Barnett and Jean Hollander Rich Mathematics Summer Internship Program at City College of New York.

% Use tiny font for urls.
\renewcommand{\UrlFont}{\ttfamily\tiny}

\bibliographystyle{amsalpha}
\bibliography{thebibliography}

\end{document}